\documentclass{amsart}

\title{On extension of overconvergent log isocrystals on log smooth varieties}

\sloppy

\pagestyle{plain}

\usepackage{amsmath, amsfonts, amssymb, amsthm, graphicx,tikz-cd, mathtools, braket}
\usetikzlibrary{arrows}
\newtheorem{thm}{Theorem}[subsection]
\theoremstyle{definition}
\newtheorem{defn}[thm]{Definition}
\newtheorem{lem}[thm]{Lemma}
\newtheorem{prop}[thm]{Proposition}
\newtheorem{rem}[thm]{Remark}
\newtheorem{cor}[thm]{Corollary}

\newtheorem{situation}[thm]{Situation}

\def\N{\mathbb{N}}
\def\Z{\mathbb{Z}}

\def\R{\mathbb{R}}
\def\AA#1{\mathbb{A}_{#1}}
\def\A#1#2{\mathbb{A}_{#1,#2}}
\def\lr#1{\left(#1\right)}
\def\op#1{\!\left(#1\right)}
\def\abs#1{\left\vert #1 \right\vert}
\def\brace#1{\left\lbrace #1 \right\rbrace}
\def\angbra#1{\left\langle#1\right\rangle}
\def\brack#1{\left[#1\right]}
\def\clop#1{\left[#1\right)}
\def\LNM{\mathrm{LNM}}
\def\ULNM{\mathrm{ULNM}}
\def\id{\mathrm{id}}
\def\Ker{\mathrm{Ker}}
\def\Im{\mathrm{Im}}
\def\Coker{\mathrm{Coker}}
\def\Mat{\mathrm{Mat}}
\def\Spec{\mathrm{Spec}}
\def\Spm{\mathrm{Spm}}
\def\Spf{\mathrm{Spf}}
\def\Ext{\mathrm{Ext}}
\def\Hom{\mathrm{Hom}}
\def\O#1{\mathcal{O}_{#1}}
\def\tens#1{\mathbin{\mathop{\otimes}\displaylimits_{#1}}}
\def\fibpro#1{\mathbin{\mathop{\times}\displaylimits_{#1}}}
\def\amsum#1{\mathbin{\mathop{\oplus}\displaylimits_{#1}}}
\def\Kbar{\overline{K}}
\def\Spm{\mathrm{Spm}}
\def\dsum{\displaystyle \sum}
\def\RNN{\left[0,\infty\right)}
\def\tube#1{\left]#1\right[}
\def\ov#1{\overline{#1}}
\def\ul#1{\underline{#1}}
\def\mc#1{\mathcal{#1}}
\def\mb#1{\mathbf{#1}}
\def\mr#1{\mathrm{#1}}

\def\OK{\mathcal{V}}
\def\et{\mathrm{\acute{e}t}}

\numberwithin{equation}{thm}
\newtheorem{claim}[equation]{Claim}

\allowdisplaybreaks

\begin{document}

\author[K. Kasaura]{Kazumi Kasaura}
\address{Graduate~School~of~Mathematical~Sciences, University~of~Tokyo, 3-8-1 Komaba, Meguro-ku, Tokyo 153-8914, Japan}
\email{kasaura@ms.u-tokyo.ac.jp}

\begin{abstract}
By works of Kedlaya and Shiho, it is known that, for a smooth variety $\overline{X}$ over a field of positive characteristic and its simple normal crossing divisor $Z$, an overconvergent isocrystal on the compliment of $Z$ satisfying a certain monodromy condition can be extended to a convergent log isocrystal on $\left(\overline{X}, \mathcal{M}_Z\right)$, where $\mathcal{M}_Z$ is the log structure associated to $Z$.
We prove a generalization of this result: for a log smooth variety $\left(\overline{X},\mathcal{M}\right)$ satisfying some conditions, an overconvergent log isocrystal on the trivial locus of a direct summand of $\mathcal{M}$ satisfying a certain monodromy condition can be extended to a convergent log isocrystal on $\left(\overline{X}, \mathcal{M}\right)$.
\end{abstract}

\subjclass{12H25}

\keywords{Isocrystal; logarithmic geometry.}

\maketitle

\setcounter{section}{-1}
\section{Introduction}
Let $K$ be a field of characteristic $0$ complete with respect to a non-Archimedean valuation
whose residue field $k$ has a positive characteristic $p$. Let $\OK$ be the ring of integer of $K$.

Kedlaya proved in \cite {Ked} that, for a smooth variety $\ov{X}$ and an open subset $X$ of $\ov{X}$
such that $Z\coloneqq\ov{X}\setminus X$ is a simple normal crossing divisor,
an overconvergent isocrystal on $\lr{X,\ov{X}}/K$ with unipotent monodromy
can be extended to a convergent log isocrystal on $\lr{\ov{X},\mc{M}}/K$ where $\mc{M}$ is the log structure on $\ov{X}$
associated to $Z$. Shiho extended this result to the case of more general monodromy ($\Sigma$-unipotence) in \cite{Shi}.
These results are $p$-adic analogue of the theory of canonical extension of regular singular integrable connections
on algebraic varieties over $\mathbb{C}$ which is developed by Deligne in \cite{Del}.

We give an overview of Shiho's result.
Let $Z=\bigcup_{i=1}^r Z_i$ be the decomposition of $Z$ to irreducible components.
Let $Z_i'$ be the smooth locus of $Z_i$. Take a sufficiently local lift $\ov{X}\hookrightarrow P$ into a smooth $p$-adic formal scheme $P$ over $\OK$.
Then the tube $\tube{Z_i'}_P$ is isomorphic to a product of a disk $\AA{K}^1\clop{0,1}$ and the rigid space $Q_K$ associated to a locally closed smooth
$p$-adic formal subscheme $Q$ of $P$ which lifts $Z_i'$.
An overconvergent isocrystal $\mc{E}$ on $\lr{X,\ov{X}}/K$ is represented by a $\nabla$-module on some strict neighborhood $W$ of $\tube{X}_P$.
The intersection of $W$ and $\tube{Z_i'}_P$ contains a product of an annulus $\AA{K}^1\clop{\lambda,1}$ and $Q_K$ for some $0<\lambda<1$.
Let $\Sigma_i$ be a subset of $\Z_p$ satisfying some non-Liouville hypothesis.
$\mc{E}$ is $\Sigma_i$-unipotent along $Z_i$ if $\mc{E}$, restricted to the above product, has a filtration whose successive quotients
are pullbacks of $\nabla$-modules on $Q_K$ twisted by $\nabla$-modules $\lr{\O{\AA{K}^1\clop{\lambda,1}},d+\xi\cdot \id}$ on the annulus for some element $\xi$ of $\Sigma_i$.
For $\Sigma=\prod_{i=1}^r \Sigma_i$, if $\mc{E}$ has $\Sigma_i$-unipotent along $Z_i$ for each $i$,
then $\mc{E}$ can be extended to a convergent log isocrystal on $\lr{\ov{X},\mc{M}}/K$ whose exponents are contained in $\Sigma$.

Di Proietto extended this theory  in \cite{diPr2} to the case where $\ov{X}$ is the log special fiber of a proper
semistable variety over $\OK$.

In this paper, we extend the above results to the case when $\ov{X}$ is not necessarily smooth but log smooth with some log structure on $\ov{X}$
and $X$ is the trivial locus of a part of the log structure, under some assumptions:
Let $N\rightarrow \OK$ be a log structure on
$\Spf\, \OK$ which is trivial on $\Spm\,K$. Let $\lr{\ov{X},\mc{M}_0\oplus \mc{M}}$ be a log smooth variety over $\lr{\Spec\,k,N}$
satisfying some conditions. (In particular,
the morphism $N\rightarrow \mc{M}_0\oplus \mc{M}$ must factor through $\mc{M}_0\rightarrow \mc{M}_0\oplus \mc{M}$.)
Let $X$ be the trivial locus of $\mc{M}$.
Let $\mc{S}$ be a subsheaf of $\ov{\mc{M}}^{\mr{gp}}\tens{\Z} \Z_p$ satisfying some non-Liouville hypothesis.
We prove that an overconvergent log isocrystal on $\lr{X,\ov{X},\mc{M}_0\oplus\mc{M}}/\lr{\Spf\,\OK,N}$
with ``$\mc{S}$-unipotent'' monodromy can be extended to a convergent log isocrystal on $\lr{\ov{X},\mc{M}_0\oplus\mc{M}}/\lr{\Spf\,\OK,N}$.

For the definition of $\mc{S}$-unipotence and the proof of the above result, a generalized polyannulus $\A{M}{K}\op{I}$ associated to
a fine monoid $M$ which is a polyannulus in $\Spm\,K\!\angbra{M}$ plays the role of
the polyannulus $\AA{K}^n\op{I}$ defined in \cite{Ked}.

Our result is an extension of the results of Kedlaya \cite{Ked}, Shiho \cite{Shi} and Di Proietto \cite{diPr2},
except that our definition of unipotence of overconvergent log isocrystals is stronger than theirs.
In the situation of \cite{Ked}, \cite{Shi} and \cite{diPr2},
we can check the unipotence at each codimension one component. (See Proposition 4.4.4 in \cite{Ked}.)
However in our situation, we must assume the unipotence at every point. (See Remark \ref{codimension 2}.)
So some arguments in \cite{Ked} or in \cite{Shi} can be omitted in our paper.

We explain the contents of each section.
In Section 1, we prepare the basic notions and results about monoid theory and log structures.
Especially, we define the notion of semi-saturatedness of monoids in \S 1.1, which is a weaker version of saturatedness.
In \S 1.2, we define the notions of log structure and log smoothness on schemes, formal schemes and rigid spaces.

In Section 2, we prove some properties of log $\nabla$-modules on log rigid spaces.
Roughly speaking, the contents of this section is a generalization of those of Section 1 and Section 2 of \cite{Shi}.
In \S 2.1, we define the notion of polyannuli associated to fine weighted monoids and prove their basic properties.
We introduce the notion of log $\nabla$-modules with exponents in $\Sigma$ in \S 2.2
and the notion of $\Sigma$-unipotent log $\nabla$-modules in \S 2.3.
Next, we adapt key propositions in Section 2 of \cite{Shi} to our situation.
Generization proposition is proven in \S 2.4 and transfer theorem is proven in \S 2.5.

In Section 3, we develop the theory of log isocrystals and prove the main result.
In \S 3.1, we generalize the equivalence between the category of convergent log isocrystals and the category of convergent log $\nabla$-modules
(Theorem 6.4.1 in \cite{Ked}) to our situation.
We introduce the notion of log isocrystals with exponents in $\mc{S}$ in \S 3.2
and the notion of $\mc{S}$-unipotent log isocrystals in \S 3.3. But the global definition of $\mc{S}$-unipotence is
postponed to \S 3.5.
In \S 3.4, we adapt overconvergent generization proposition (Proposition 2.7 in \cite{Shi}) to our situation.
In \S 3.5, we define the notion of $\mc{S}$-unipotence of log isocrystals and prove the main result.

\subsection{Conventions}
\begin{enumerate}
\item Throughout this paper, $K$ is a field of characteristic $0$
complete with respect to a non-Archimedean valuation $\abs{\cdot}$.
$k$ is the residue field of $K$ and it is assumed to be a field of characteristic $p>0$.
$\OK$ is the ring of integers of $K$.
$\Gamma^{*}$ is the subset $\sqrt{\abs{K^{\times}}} \cup \brace{0}$ of $\R_{\geq 0}$.
In Section 3, we will assume moreover that the valuation of $K$ is discrete.
\item For a field extesion $K\subseteq K'$, we denote the scalar extension $-\tens{K}K'$ by $\lr{-}_{K'}$.
\item
In this paper, a \textit{monoid} means a commutative monoid with an identity element.
The operation of a monoid is written additively and the identity element of a monoid is denoted by $0$.
The set of units of a monoid $M$ is written as $M^*$ and $\overline{M}$ means $M/M^{*}$.
The Grothendieck group of a monoid $M$ is written as $M^{\mr{gp}}$.
\item
In this paper, for a fine monoid $M$ and $m,m' \in M$, we write $m' \leq m$ if there exists $m'' \in M$ such that $m'+m''=m$
and we write $m'<m$ if $m' \leq m$ and $m' \neq m$. If $M$ is sharp, with this binary relation $\leq$, $\lr{M, \leq}$ is a poset.
\item
  For a finitely generated abelian group $G$, the torsion subgroup of $G$ is written as $G^{\mr{tor}}$. We write $G/G^{\mr{tor}}$ as $G^{\mr{free}}$.
\item
  In this paper, $\N$ denotes the set of non-negative integers (hence $0\in \N$) and it is regarded as a monoid with
  the additive operation $+$.
  The set of positive integers is denoted by $\N_{>0}$.
\item For a complete topological ring $A$ with respect to a norm $\abs{\cdot}$,
  $A\!\angbra{X_1, \ldots, X_n}$ denotes the ring of series $\sum_{\underline{q}\in \N^n} c_{\underline{q}}X_1^{q_1}\ldots X_n^{q_n}$ such that
  the set \[\Set{\ul{q}\in \N^n|\abs{c_{\ul{q}}}>a}\] is finite for any $a\in \R_{>0}$.
  More generally, for a monoid $M$, $A\!\angbra{M}$ denotes the ring of series $\sum_{m \in M} c_{m}t^m$ such that
  the set $\Set{m\in M|\abs{c_{m}}>a}$ is finite for any $a\in \R_{>0}$. Here, $t^m$ is the element associated to $m\in M$ and
  $t^{m_1+m_2}=t^{m_1}\cdot t^{m_2}$ for any $m_1,m_2\in M$.
\item
  In this paper, a point of a rigid space means a closed point of it. A geometric point
  of a rigid space means a geometric point lying over a closed point of it.
\end{enumerate}
\section{Preliminaries}
\subsection{Monoids}

Let $M$ be a monoid. $M$ is called \textit{finitely generated} if there exists a surjective monoid homomorphism $\N^r\twoheadrightarrow M$ for some $r \in \N$.
$M$ is called \textit{integral} if the natural map $M \rightarrow M^{\mr{gp}}$ is injective, i.e., for any $a, b, c \in M$ with $a+c=b+c$, $a=b$.
$M$ is called \textit{fine} if it is finitely generated and integral.
$M$ is called \textit{sharp} if $M^*=\brace{0}$.
An integral monoid $M$ is called \textit{saturated} if for any $a \in M^{\mr{gp}}$ and $ n \in \N_{>0}$
with $na \in M$, $a \in M$.

Moreover, we define a property of monoids which is weaker than saturatedness.

\begin{defn}
  A fine monoid $M$ is called \textit{semi-saturated} if for any $a \in M^{\mr{gp}}$ and $n \in \mathbb{N}_{>0}$ with
  $na\in M$ the exists $m \in \mathbb{N}$ such that $\lr{nm+1}a \in M$.
\end{defn}

A \textit{submonoid} of $M$ is a subset $N$ which is stable under the operation $+$ and contains the identity element $0$.
For a submonoid $N \subseteq M$, we define a monoid $M/N$ by dividing $M$ by the equivalence relation $\cdot \equiv \cdot \pmod N$ which
is defined as follows: $a \equiv b \pmod N$ if there exist $c, d \in N$ such that $a+c=b+d$.
If $M$ is fine, $\lr{M/N}^{\mr{gp}}=M^{\mr{gp}}/N^{\mr{gp}}$ and $M/N$ is the image of $M$ under the canonical map $M \rightarrow M^{\mr{gp}}\rightarrow M^{\mr{gp}}/N^{\mr{gp}}$.
$N^{-1}M$ denotes the submonoid of $M^{\mr{gp}}$ generated by $M$ and $-N=\Set{-n|n\in N}$.
For two submonoids $N_1,N_2\subseteq M$, $N_1+N_2$ denotes the submonoid of $M$ generated by $N_1$ and $N_2$.

A submonoid $F$ of a monoid $M$ is called a \textit{face} if for any $a, b \in M$ with $a+b \in F$, $a,b \in F$.
For any face $F$ of $M$, $M/F$ is sharp. A face $F$ is called a \textit{facet} there exists no face $F'$ such that $F \subsetneq F' \subsetneq M$.

\begin{prop}\label{semi-saturated}
\begin{enumerate}
\item[]
\item If $M$ is saturated, it is semi-saturated.
\item If $M$ is semi-saturated and sharp, $M^{\mr{gp}}$ is torsion-free.
\item If $M$ is semi-saturated, for any submonoid $N$, $M/N$ is semi-saturated.
\item A fine monoid $M$ is semi-saturated if and only if for any face $F$ of $M$, $\lr{M/F}^{\mr{gp}}$ is torsion-free.
\end{enumerate}
\end{prop}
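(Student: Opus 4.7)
The plan is to prove the four assertions in order, using (2) and (3) to reduce (4) to its nontrivial direction. Part (1) is immediate: saturatedness gives $a \in M$ directly from $na \in M$, so $(n \cdot 0 + 1)a = a \in M$. For part (2), if $a \in M^{\mr{gp}}$ satisfies $na = 0$ for some $n > 0$, then $na = 0 \in M$ and semi-saturation produces $m \in \N$ with $(nm+1)a \in M$; but $na = 0$ forces $(nm+1)a = a$, so $a \in M$, and by symmetry $-a \in M$, whence $a \in M^* = \brace{0}$ by sharpness.

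Part (3) is a lifting exercise. For $\bar{a} \in (M/N)^{\mr{gp}} = M^{\mr{gp}}/N^{\mr{gp}}$ with $n\bar{a} \in M/N$, I pick a lift $a \in M^{\mr{gp}}$; the hypothesis translates to $na + d = b + c \in M$ for some $b \in M$ and $c, d \in N$. Replacing $a$ by the equivalent lift $a' := a + d$, one gets $na' = (na + d) + (n-1)d \in M$ since $(n-1)d \in N \subseteq M$. Applying semi-saturation of $M$ to $a'$ gives $(nm+1)a' \in M$, whose image in $M/N$ is $(nm+1)\bar{a}$ as $\bar{a}' = \bar{a}$.

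For part (4), the forward direction follows at once: for any face $F$, the quotient $M/F$ is sharp and, by (3), semi-saturated, so (2) yields $(M/F)^{\mr{gp}}$ torsion-free. The converse, which I expect to be the main obstacle, I argue contrapositively. Assuming the failure of semi-saturation, pick $a \in M^{\mr{gp}}$ and $n > 0$ with $b := na \in M$ but $(nm+1)a \notin M$ for all $m \in \N$, and take $F$ to be the face of $M$ generated by $b$, namely $F = \Set{f \in M | f + g = kb \text{ for some } g \in M,\ k \in \N}$. A direct check confirms $F$ is a face containing $b$, so $n\bar{a} = 0$ in $(M/F)^{\mr{gp}}$. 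The crux is to rule out $\bar{a} = 0$: if $a = f_1 - f_2$ with $f_i \in F$ (valid since $F$ is integral), then the relation $f_2 + g = kb$ gives $a + kb = f_1 + g \in M$, i.e., $(kn+1)a \in M$, contradicting the choice of $a$ when $k \geq 1$; the boundary case $k = 0$ forces $f_2 \in M^*$ and hence $a \in M$, also contradictory. Thus $\bar{a}$ is a nonzero $n$-torsion element of $(M/F)^{\mr{gp}}$, producing the required torsion.
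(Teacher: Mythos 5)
Your proof is correct and follows essentially the same route as the paper's: the same reductions for (1)--(3), and for the converse of (4) the same face $F$ generated by $na$, with the identical key computation $(kn+1)a = (a+f_2) + g \in M$ — you merely phrase it contrapositively where the paper argues directly.
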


\begin{proof}
  \begin{enumerate}
  \item[]
  \item
    It is clear.
  \item
    Assume that for $a \in M^{\mr{gp}}$ and $n \in \N_{>0}$, $na=0$. Then there exists $m \in \N$ such that $\lr{nm+1}a =a \in M$.
    Then, since $a +\lr{n-1}a=0$ and $M$ is sharp, $a=0$.
  \item
    Take $a \in M^{\mr{gp}}$ and $n \in \N_{>0}$ such that $n\overline{a} \in M/N$ where $\overline{a}$ is the image of $a$ in $\lr{M/N}^{\mr{gp}}$.
    There exists $b \in N$ such that $na+b \in M$, so $n\lr{a+b} \in M$. Then there exists $m \in \N$ such that $\lr{mn+1}\lr{a+b} \in M$.
    This implies $\lr{mn+1}\overline{a} \in M/N$.
  \item
    The former condition implies the latter condition as a consequence of 2 and 3. Conversely, assume that for any face $F$, $\lr{M/F}^{\mr{gp}}$ is torsion-free.
    Take $a \in M^{\mr{gp}}, n \in \N_{>0}$ such that $na \in M$. Let $F\coloneqq\Set{b \in M | \exists m \in \N, b \leq nma}$. This is a face of $M$.
    Let $\overline{a}$ be the image of $a$ in $(M/F)^{\mr{gp}}$. Then $n\overline{a}=0$ by the definition of $F$. Since $\lr{M/F}^{\mr{gp}}$ is torsion-free by assumption, $\overline{a}=0$.
    Then there exists $b \in F$ such that $a+b \in F$ so we can take $m \in \N, c \in M$ such that $b+c=nma$ by the definition of $F$. Therefore
    $\lr{nm+1}a=\lr{a+b}+c \in M$.    
  \end{enumerate}
\end{proof}

\begin{rem}
  One of the simplest examples of semi-saturated but not saturated monoid is $\N\setminus\brace{1}$. Moreover, any submonoid $M \subseteq \N$ such
  that $\N \setminus M$ is finite is semi-saturated.
\end{rem}

The first result of the following lemma is almost the same as Corollary 3.1.5 of \cite{FKato} but the assumption is a bit generalized.
\begin{lem}\label{monoid_section}
Let $f: N \rightarrow M$ be a surjective morphism of fine monoids such that $M^{\mr{gp}}$ is torsion-free.
Put $\tilde{N}\coloneqq \lr{f^{\mr{gp}}}^{-1}\op{M}\subseteq N^{\mr{gp}}$. Then the induced morphism $\tilde{f}: \tilde{N}\rightarrow M$ has a section $M\rightarrow \tilde{N}$
and it gives a splitting $\tilde{N} \cong M \oplus \Ker\op{f^{\mr{gp}}}$. Moreover, if $M$ is sharp, $\lr{\Im\op{s}+N}\cap \Ker\op{f^{\mr{gp}}}=\Ker\op{f}$.
\end{lem}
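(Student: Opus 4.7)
The plan is to reduce the construction of the section to a purely group-theoretic splitting problem, then to extract the statement about the submonoid $\tilde{N}$ from it. First I would observe that since $f\colon N\to M$ is surjective, the induced map $f^{\mr{gp}}\colon N^{\mr{gp}}\to M^{\mr{gp}}$ is a surjection of finitely generated abelian groups; since $M^{\mr{gp}}$ is torsion-free by hypothesis, it is in fact a free abelian group, so the short exact sequence
\[
0\longrightarrow \Ker\op{f^{\mr{gp}}}\longrightarrow N^{\mr{gp}}\xrightarrow{f^{\mr{gp}}} M^{\mr{gp}}\longrightarrow 0
\]
admits a group-theoretic section $\sigma\colon M^{\mr{gp}}\to N^{\mr{gp}}$.

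Next I would restrict $\sigma$ to $M\subseteq M^{\mr{gp}}$. For any $m\in M$ the element $\sigma(m)$ maps to $m\in M$ under $f^{\mr{gp}}$, so $\sigma(m)\in \tilde{N}$ by the definition of $\tilde{N}$. Hence $s\coloneqq \sigma|_M\colon M\to \tilde{N}$ is a monoid homomorphism and a section of $\tilde{f}$. For the splitting $\tilde{N}\cong M\oplus \Ker\op{f^{\mr{gp}}}$, I would use the explicit isomorphism sending $x\in \tilde{N}$ to $\lr{f^{\mr{gp}}(x),\ x-s\op{f^{\mr{gp}}(x)}}$; note that the second component lies in $\Ker\op{f^{\mr{gp}}}\subseteq \tilde{N}$ (since $0\in M$), so this is well-defined, and its inverse is simply $(m,k)\mapsto s(m)+k$.

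For the last assertion, I would prove both inclusions. The inclusion $\Ker\op{f}\subseteq \lr{\Im\op{s}+N}\cap \Ker\op{f^{\mr{gp}}}$ is immediate since $s(0)=0$ and $\Ker\op{f}\subseteq N\cap \Ker\op{f^{\mr{gp}}}$. For the reverse inclusion, take $x=s(m)+n$ with $m\in M$, $n\in N$ and $x\in \Ker\op{f^{\mr{gp}}}$. Applying $f^{\mr{gp}}$ gives $m+f(n)=0$ in $M^{\mr{gp}}$; since $m+f(n)\in M$ and $M$ is fine (hence $M\hookrightarrow M^{\mr{gp}}$), this equality holds in $M$. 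The sharpness of $M$ then forces $m=0=f(n)$, so $x=s(0)+n=n\in \Ker\op{f}$.

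This argument is essentially routine once the free quotient $M^{\mr{gp}}$ is used to split the group-level sequence; the main subtlety, and the place to be a bit careful, is checking that the group section really does land inside the prescribed submonoid $\tilde{N}$ when restricted to $M$, and then using integrality and sharpness of $M$ to transfer a group identity into the statement about $\Ker\op{f}$ in the final part.
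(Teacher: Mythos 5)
Your proof is correct. For the existence of the section and the splitting $\tilde{N}\cong M\oplus\Ker\op{f^{\mr{gp}}}$ the paper simply cites Lemma 3.1.3 and Proposition 3.1.4 of F.~Kato, and the argument you supply (split the surjection $f^{\mr{gp}}$ onto the free group $M^{\mr{gp}}$, restrict the section to $M$, and write down the explicit mutually inverse maps) is exactly the standard argument behind that citation; your treatment of the final equality, using integrality to pass from $m+f\op{n}=0$ in $M^{\mr{gp}}$ to $M$ and then sharpness to conclude $m=0=f\op{n}$, coincides with the paper's own proof.
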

\begin{proof}
  The results except the last equality is a consequence of Lemma 3.1.3 and Proposition 3.1.4 in \cite{FKato}.
  Assume that $M$ is sharp. Take $a\in M$ and $b\in N$ such that $s\op{a}+b\in \Ker\op{f^{\mr{gp}}}$. Then $a+f\op{b}=0$, so
  $a=0$ and $f\op{b}=0$. Thus $s\op{a}+b=b\in \Ker\op{f}$.
\end{proof}

We also have to define the notion of vertical homomorphisms.
\begin{defn}
  A homomorphism $f:N\rightarrow M$ of fine monoids is \textit{vertical} if
  for any $m\in M$ there exists $n\in N$ such that $m\leq f\op{n}$.
\end{defn}

\subsection{Log structures}
\def\Esp{\mb{Esp}}

In this section, let the category of spaces $\Esp$ be one of the followings:

\begin{enumerate}
\item The category of schemes over $k$.
\item The category of $p$-adic formal schemes over $\OK$.
  \item The category of rigid spaces over $K$.
\end{enumerate}
Let $q$ be $p$ in the first or the second case, $0$ in the third case.

\begin{rem}
  In \cite{GM}, Gillam and Molcho developed a general theory of log structures on categories of spaces
  satisfying some axioms. But they consider only classical topologies, so we cannot use their theory.
  Talpo and Vistoli defined the notion of log structures on more general topoi in \cite{TV},
  but their formalism is too general for our purpose. It seems to be possible to find
  appropriate axioms of spaces to explicate log structure theories on them,
  but we do not do it and consider only the categories which appear in this paper.
\end{rem}

In this section, we call an object of $\Esp$ a \textit{space}.
Let $X$ be a space. Let $X_{\et}$ be the \'etale site over $X$ and let $\O{X}$ be the structure sheaf on $X_{\et}$
(for the proof of the sheaf property in the case of rigid spaces, see Appendix A. of \cite{Diao}.)
A \textit{pre-log structure on $X$} is a pair of a sheaf of monoids $\mc{M}$ on $X_{\et}$ and a morphism of sheaves of monoids
$\alpha: \mc{M}\rightarrow \O{X}$ where $\O{X}$ is the structure sheaf which is regarded as a sheaf of monoids by the multiplication.
A pre-log structure $\lr{\mc{M},\alpha}$ is called a \textit{log structure} if $\alpha^{-1}\op{\O{X}^*}\cong \O{X}^*$ via $\alpha$.
For a pre-log structure $\lr{\mc{M},\alpha}$, we define its associated log structure as the push-out of
$\mc{M}$ and $\O{X}^*$ over $\alpha^{-1}\op{\O{X}^*}$ in the category of sheaves of monoids on $X_{\et}$.

A \textit{log space} is a pair of a space $X$ and a log structure $\lr{\mc{M},\alpha}$ on $X_{\et}$.
$\ov{\mc{M}}$ denotes $\mc{M}/\O{X}^*$.
For a rigid space $X$, $\O{X}^+$ denotes a subsheaf of power-bounded elements of $\O{X}$.
For a scheme or a formal scheme $X$, $\O{X}^+$ denotes $\O{X}$.
A chart of $\lr{\mc{M},\alpha}$ is a pair of a monoid $M$ and a monoid morphism $M\rightarrow \O{X}^+\op{X}$
such that, when it is regarded as a pre-log structure on $X$, its associated log structure is isomorphic to $\mc{M}$.
A chart $M\rightarrow \O{X}^+\op{X}$ is \textit{fine} if $M$ is fine.
$\mc{M}$ is \textit{fine} if there exists a fine chart \'etale locally on $X$.

The category of fine spaces has fiber products.

Let $\ov{x}$ be a geometric point of $X$. A chart $M\rightarrow \O{X}\op{X}$ of $\lr{\mc{M},\alpha}$
is \textit{good at $\ov{x}$} if the canonical homomorphism $M\rightarrow \ov{\mc{M}}_{\ov{x}}$ is an isomorphism.

Let $X$ be a space and $\mc{M}_1$, $\mc{M}_2$ be two log structures. We define the \textit{direct sum} $\mc{M}_1\oplus\mc{M}_2$ of
$\mc{M}_1$ and $\mc{M}_2$ as the push-out $\mc{M}_1\amsum{\O{X}^{*}}\mc{M}_2$ in the category of
sheaves of monoids on $X$. It is the direct sum in the category of
log structures on $X$.

A morphism $f:\lr{X,\mc{M}}\rightarrow \lr{Y,\mc{N}}$ of log spaces is a pair of a morphism $\underline{f}:X\rightarrow Y$ of spaces
and a morphism $f^{\#}:\underline{f}^{-1}\op{\mc{N}}\rightarrow \mc{M}$ of sheaves of monoids such that the following diagram commutes:
\[\begin{tikzcd}
\underline{f}^{-1}\op{\mc{N}}\arrow{r}\arrow{d}& \mc{M}\arrow{d}\\
\underline{f}^{-1}\op{\O{Y}}\arrow{r}& \O{X}.
\end{tikzcd}\]
$f$ is \textit{strict} if the log structure associated to $\ul{f}^{-1}\op{\mc{N}}$ is isomorphic to $\mc{M}$ via $f^{\#}$.

A chart of $f$ is a triplet of a chart $M\rightarrow \O{X}^+\op{X}$ of $\mc{M}$, a chart $N\rightarrow\O{Y}^+\op{Y}$ of $\mc{N}$ and a monoid morphism
$h: N\rightarrow M$ such that the diagram commutes:
\[\begin{tikzcd}
N\arrow{r}\arrow{d}{h}& \ul{f}^{-1}\op{\mc{N}}\arrow{d}{f^{\#}}\\
M\arrow{r}& \mc{M}.
\end{tikzcd}\]
A chart of $f$ is \textit{fine} if $N$ and $M$ are fine.
A chart of $f$ is \textit{good} at a geometric point $\ov{x}$ of $X$ if $M\rightarrow \O{X}^+\op{X}$ is good at $\ov{x}$.

For a monoid $M$, $\AA{M}$ denotes
\begin{itemize}
\item $\A{M}{k}\coloneqq \Spec\,k\!\brack{M}$ with log structure associated to $M\rightarrow k\!\brack{M}$
  if $\Esp$ is the category of schemes over $k$,
\item $\hat{\mathbb{A}}_{M,\OK}\coloneqq \Spf \,\OK\!\angbra{M}$ with log structure associated to $M\rightarrow \OK\!\angbra{M}$
  if $\Esp$ is the category of $p$-adic formal schemes over $\OK$,
\item $\A{M}{K}\!\brack{0,1}\coloneqq \Spm\, K\!\angbra{M}$ with log structure associated to $M\rightarrow K\!\angbra{M}$
  if $\Esp$ is the category of rigid spaces over $K$.
\end{itemize}

For a monoid $M$, giving a chart of a log space 
$(X,{\mathcal{M}})$ of the form $M \to {\mathcal{O}}_X^+(X)$ 
is equivalent to giving a strict morphism 
$(X,{\mathcal{M}}) \to {\mathbb{A}}_M$. 
Thus, when a morphism $X \to {\mathbb{A}}_M$ of spaces is given, 
we can associate to it a log structure ${\mathcal{M}}$ on 
$X$ endowed with a chart the form 
$M \to {\mathcal{O}}_X^+(X)$, by defining 
${\mathcal{M}}$ as the pullback of the log structure of ${\mathbb{A}}_M$ 
and considering the resulting strict morphism 
$(X,{\mathcal{M}}) \to {\mathbb{A}}_M$. 

Next, we fix a chart $N \to {\mathcal{O}}_Y^+(Y)$ of a log space 
$(Y,{\mathcal{N}})$ and a morphism $N \to M$ of monoids. 
Then, giving a chart of a morphism of log spaces 
$f: (X,{\mathcal{M}}) \to (Y, {\mathcal{N}})$ which contains 
the chart $N \to {\mathcal{O}}_Y^+(Y)$ and the morphism $N \to M$
as a part of data is equivalent to giving a strict morphism 
$(X,{\mathcal{M}}) \to {\mathbb{A}}_M \times_{{\mathbb{A}}_N} 
(Y, {\mathcal{N}})$ such that the composition of it and the 
projection ${\mathbb{A}}_M \times_{{\mathbb{A}}_N} 
(Y, {\mathcal{N}}) \to (Y, {\mathcal{N}})$ is equal to $f$. 
Thus, when a morphism $X \to {\mathbb{A}}_M \times_{{\mathbb{A}}_N} Y$
 of spaces is given, we can associate to it 
a log structure ${\mathcal{M}}$ on $X$, a morphism of log spaces 
$f: (X,{\mathcal{M}}) \to (Y,{\mathcal{N}})$ whose underlying morphism of 
spaces is equal to the composition $X \to {\mathbb{A}}_M \times_{{\mathbb{A}}_N} Y \to Y$ and a chart of $f$ which contains 
the chart $N \to {\mathcal{O}}_Y^+(Y)$ and the morphism $N \to M$ above 
as a part of data, by defining 
${\mathcal{M}}$ as the pullback of the log structure of 
${\mathbb{A}}_M \times_{{\mathbb{A}}_N} 
(Y, {\mathcal{N}})$ and considering the resulting strict morphism 
$(X,{\mathcal{M}}) \to {\mathbb{A}}_M \times_{{\mathbb{A}}_N} 
(Y, {\mathcal{N}})$. 

\begin{defn}
  Let $f: \lr{X,\mathcal{M}}\rightarrow \lr{Y,\mathcal{N}}$ be a morphism of fine log spaces.

  Assume there exists a chart $N\rightarrow \O{Y}\op{Y}$ of $\mc{N}$.
  $f$ is \textit{log smooth} (resp. \textit{log \'etale}) if, \'etale locally on $X$ and $Y$, there exists a fine chart $h:N\rightarrow M$ of $f$
  such that $h$ is injective, the torsion part of the cokernel of $h^{\mr{gp}}$ (resp. the cokernel of $h^{\mr{gp}}$) is finite group whose order is prime to $q$
  and the induced morphism $X\rightarrow Y\fibpro{\AA{N}}\AA{M}$ is \'etale.

  In general, $f$ is \textit{log smooth} (resp. \textit{log \'etale}) if there exists an (admissible) \'etale covering $\lr{Y_{\lambda}}_{\lambda\in\Lambda}$
  such that $\mc{N}|_{Y_{\lambda}}$ has a chart and $f|_{f^{-1}\op{Y_{\lambda}}}$ is log smooth (resp. log \'etale).
\end{defn}

\begin{rem}
  This definition is equivalent to the classical one in the case of schemes by Theorem 3.5 of \cite{Kat}.
  In the case of rigid spaces, this definition is the same with the one in \cite{Diao}.
\end{rem}

\begin{prop}
  A morphism $f=\lr{\ul{f},f^{\#}}:\lr{X,\mc{M}}\rightarrow \lr{Y,\mc{N}}$ of fine log spaces is log smooth (resp. log \'etale) and strict,
  then $\ul{f}$ is smooth (resp. \'etale).
\end{prop}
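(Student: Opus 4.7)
Since the statement is étale-local on $X$ and $Y$, I fix a geometric point $\ov{x}$ of $X$ above $\ov{y}$ of $Y$ and work in a sufficiently small étale neighborhood. I first replace the chart of $\mc{N}$ by one good at $\ov{y}$, so that a fine chart $N\rightarrow \O{Y}^+\op{Y}$ of $\mc{N}$ induces an isomorphism $N\xrightarrow{\sim}\ov{\mc{N}}_{\ov{y}}$, and then invoke log smoothness to obtain a fine chart $h:N\rightarrow M$ of $f$ such that $h$ is injective, the torsion of $\mr{coker}\op{h^{\mr{gp}}}$ is finite of order prime to $q$ (the whole cokernel in the log étale case), and $X\rightarrow Y\fibpro{\AA{N}}\AA{M}$ is étale.

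The crucial use of strictness is to extract a retraction of $h$. Strictness gives $f^{\#}:\ov{\mc{N}}_{\ov{y}}\xrightarrow{\sim}\ov{\mc{M}}_{\ov{x}}$, so composing the chart map $M\rightarrow\ov{\mc{M}}_{\ov{x}}$ with the inverses of this isomorphism and of $N\xrightarrow{\sim}\ov{\mc{N}}_{\ov{y}}$ yields a monoid homomorphism $\psi:M\rightarrow N$ with $\psi\circ h=\id_N$ by the compatibility of charts. Set $K\coloneqq\Set{m\in M|\psi\op{m}=0}$, a face of $M$ whose elements map to units in $\mc{M}_{\ov{x}}$ and hence in $\O{X,\ov{x}}$. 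After further étale shrinking to arrange that $K$ is carried into $\O{X}^+\op{X}^{\times}$, the chart factors through the localization $M'\coloneqq K^{-1}M\subseteq M^{\mr{gp}}$, and the triple $h:N\rightarrow M'$ continues to satisfy the log smoothness conditions because $\AA{M'}\hookrightarrow\AA{M}$ is an open immersion.

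Next, the identity $\psi\circ h=\id_N$ gives an isomorphism $N\oplus\Ker\op{\psi^{\mr{gp}}}\xrightarrow{\sim}M'$ via $\lr{n,k}\mapsto h\op{n}+k$: surjectivity follows from writing $m=h\op{\psi^{\mr{gp}}\op{m}}+\lr{m-h\op{\psi^{\mr{gp}}\op{m}}}$ for $m\in M'$, and injectivity from $h\op{N^{\mr{gp}}}\cap\Ker\op{\psi^{\mr{gp}}}=0$. Since $\Ker\op{\psi^{\mr{gp}}}$ is canonically $\mr{coker}\op{h^{\mr{gp}}}$, it takes the form $\Z^r\oplus T$ with $T$ finite of order prime to $q$ (and $r=0$ in the log étale case). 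Hence $\AA{M'}\cong\AA{N}\times\AA{\Z^r}\times\AA{T}$; the torus $\AA{\Z^r}$ is smooth and $\AA{T}$ is étale as $|T|$ is prime to $q$, so $\AA{M'}\rightarrow\AA{N}$ is smooth (resp. étale).

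Finally, since $N\hookrightarrow M'\cong N\oplus\Ker\op{\psi^{\mr{gp}}}$ is an integral monoid homomorphism (inclusion into a direct sum with a group), the fine fiber product $Y\fibpro{\AA{N}}\AA{M'}$ has underlying space equal to the ordinary fiber product, which is smooth (resp. étale) over $Y$. The given étale morphism $X\rightarrow Y\fibpro{\AA{N}}\AA{M}$ factors through the open immersion $Y\fibpro{\AA{N}}\AA{M'}\hookrightarrow Y\fibpro{\AA{N}}\AA{M}$, yielding $X\rightarrow Y\fibpro{\AA{N}}\AA{M'}$ étale, and composition gives the desired smoothness (resp. étaleness) of $\ul{f}$. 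The main obstacle I anticipate is the construction of the retraction $\psi$ from strictness, which requires carefully aligning the good chart of $\mc{N}$ with the chart provided by log smoothness; once $\psi$ is in place, the splitting into the direct-sum form reduces the problem to the manifestly smooth toric projection.
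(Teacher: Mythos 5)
Your argument is, at bottom, the same as the paper's: localize $M$ at the face of elements mapping to units, use strictness to identify the resulting sharp quotient with the base monoid, and split off a group summand whose torsion is controlled by $\Coker\op{h^{\mr{gp}}}$, so that $\AA{M'}\rightarrow\AA{N}$ is visibly smooth (resp. \'etale). The paper phrases the splitting as ``$M$ is the pushout of $F\leftarrow G\rightarrow N$'' with $F=a^{-1}\op{\O{X,\ov{x}}^*}$ and $G=b^{-1}\op{\O{Y,\ov{y}}^*}$, which is exactly your $M'\cong N\oplus\Ker\op{\psi^{\mr{gp}}}$ after localization, together with the identification $\Ker\op{\psi^{\mr{gp}}}\cong\Coker\op{h^{\mr{gp}}}$.

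The genuine gap is your opening move. The proposition is stated for arbitrary fine log spaces, and for a fine log structure a chart good at $\ov{y}$ need not exist: the obstruction is the splitting of $1\rightarrow\O{Y,\ov{y}}^*\rightarrow\mc{N}^{\mr{gp}}_{\ov{y}}\rightarrow\ov{\mc{N}}^{\mr{gp}}_{\ov{y}}\rightarrow 1$, which can be nontrivial when $\ov{\mc{N}}^{\mr{gp}}_{\ov{y}}$ has $p$-torsion; note that the paper only produces good charts under explicit torsion-freeness hypotheses (Lemmas \ref{3.1.1.A} and \ref{3.1.1.B}). Moreover, even when a good chart exists, the definition of log smoothness here is phrased relative to a fixed chart of $\mc{N}$, so replacing that chart by a good one requires knowing that log smoothness can be tested against the new chart --- a chart-independence statement the paper does not prove. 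The paper's proof sidesteps both problems by keeping the given chart $N$ and localizing it at the face $G=b^{-1}\op{\O{Y,\ov{y}}^*}$ rather than demanding $N\cong\ov{\mc{N}}_{\ov{y}}$; in that setting your retraction $\psi$ becomes a map $M\rightarrow G^{-1}N/\lr{G^{-1}N}^*$ and the rest of your argument goes through with $G^{-1}N$ in place of $N$. One further small point: for $\lr{n,k}\mapsto h\op{n}+k$ to be a well-defined map into $M'$ you need $\Ker\op{\psi^{\mr{gp}}}\subseteq M'$; this is true because $\Ker\op{\psi^{\mr{gp}}}=K^{\mr{gp}}\subseteq K^{-1}M$, but you should say so, since surjectivity as you state it only produces the decomposition for elements already known to lie in $M'$.
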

\begin{proof}
  This proof is the same as the proof of Theorem 6.5.6 in \cite{GM}. 
  By Definition, we may assume that there exists a fine chart
  \[\begin{tikzcd}
  N\arrow{r}{b}\arrow{d}{h}& \mc{N}\op{Y}\arrow{d}{f^{\#}}\\
  M\arrow{r}{a}& \mc{M}\op{X}
  \end{tikzcd}\]
  satisfying the condition in the definition of log smoothness (resp. log \'etaleness).

  Take a geometric point $\ov{x}$ of $X$ arbitrarily. Let $\ov{y}\coloneqq \ul{f}\op{\ov{x}}$.
  Let $F\coloneqq a^{-1}\op{\O{X,\ov{x}}^*}$, $G\coloneqq b^{-1}\op{\O{Y,\ov{y}}^*}$.
  $F$ is a face of $M$ and $G$ is a face of $N$. By Theorem 2.1.9 of \cite{Og}, $F$ and $G$ are finitely generated.
  $\AA{F^{-1}M}$  is an open subspace of $\AA{M}$ and $\AA{G^{-1}N}$ is an open subspace of $\AA{N}$.
  Shrinking $X$ to some neighborhood of $\ov{x}$ and $Y$ to some neighborhood of $\ov{y}$,
  we can replace $M$ by $F^{-1}M$ and $N$ by $G^{-1}N$. So we may assume that $F=M^*$ and $G=N^*$.
  Since $f$ is strict, $M/F=N/G$. Thus, $F/G$ is a subgroup of $M^{\mr{gp}}/N^{\mr{gp}}$, so $\AA{F}\rightarrow \AA{G}$ is smooth (resp. \'etale) by the assumption of $h$.
  Also, since $M$ is the pushout of $F\leftarrow G \rightarrow N$, we see that $\AA{M}\rightarrow \AA{N}$ is also smooth (resp. \'etale).
  Since $X\rightarrow Y\fibpro{\AA{N}}\AA{M}$ is \'etale by assumption, $X\rightarrow Y$ is smooth (resp. \'etale).
\end{proof}

\begin{lem}(cf. Lemma 3.1.1 of \cite{FKato}) \label{3.1.1.A}
Let $f: \lr{X,\mathcal{M}}\rightarrow \lr{Y,\mathcal{N}}$ be a morphism of fine log spaces.
Let $\overline{x}$ be a geometric point of $X$ and $\overline{y}\coloneqq f\op{\overline{x}}$.
Put $M\coloneqq \overline{\mathcal{M}}_{\overline{x}}$, $N\coloneqq \ov{\mc{N}}_{\ov{y}}$ and let $h: N \rightarrow M$ be the induced morphism.
Assume $h$ is injective and the cokernel of $h^{\mr{gp}}$ is torsion-free. Take an arbitrary chart $N\rightarrow \mc{N}$ on a neighborhood of $\ov{y}$
which is good at $\ov{y}$.
Then there exists a good chart of $f$ at $\ov{x}$
\[\begin{tikzcd}
      N \arrow[r] \arrow[d, "h"] & \mc{N} \arrow[d] \\
      M \arrow[r] & \mc{M}
\end{tikzcd}\]
on some neighborhood of $\ov{x}$.
\end{lem}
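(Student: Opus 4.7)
The plan is to construct the desired chart on the stalk at $\ov{x}$ and then extend it to an \'etale neighborhood. It suffices to exhibit a monoid homomorphism $s: M \to \mc{M}_{\ov{x}}$ such that its composition with the canonical projection $\pi:\mc{M}_{\ov{x}}\twoheadrightarrow \ov{\mc{M}}_{\ov{x}} = M$ is $\id_M$ and such that $s\circ h$ coincides with the map $\phi: N \to \mc{M}_{\ov{x}}$ obtained from the given chart $b: N \to \mc{N}$ composed with $f^{\#}$. Once such $s$ is produced at the stalk, lifting the images of a finite generating set of $M$ to a common \'etale neighborhood yields a chart $M \to \mc{M}$ near $\ov{x}$; after shrinking, the square expressing the compatibility with $b$ and $h$ commutes on the whole neighborhood since it does so at the stalk.

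To produce a preliminary monoid section $s_0: M \to \mc{M}_{\ov{x}}$ of $\pi$, I start from any fine chart $\alpha: P \to \mc{M}$ on a neighborhood of $\ov{x}$, which exists by fineness of $\mc{M}$. Replacing $P$ by its localization at the face $F\coloneqq\brace{p\in P\mid \alpha\op{p}\in \mc{O}_{X,\ov{x}}^*}$ and shrinking $X$, I may assume $P/P^{*}\cong M$, so that $P\to M$ is surjective. Since $M^{\mr{gp}}$ is torsion-free in the intended applications of the lemma, Lemma \ref{monoid_section} then supplies a section $s_0: M \to \lr{P^{\mr{gp}}\to M^{\mr{gp}}}^{-1}\op{M} \subseteq \mc{M}_{\ov{x}}^{\mr{gp}}$ of $\pi$, whose image automatically lies in $\mc{M}_{\ov{x}}=\lr{\pi^{\mr{gp}}}^{-1}\op{M}$.

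Given $s_0$, both $s_0\circ h$ and $\phi$ are lifts of $h$ along $\pi$, so their difference in $\mc{M}_{\ov{x}}^{\mr{gp}}$ takes values in $\Ker\op{\pi^{\mr{gp}}}=\mc{O}_{X,\ov{x}}^*$ and defines a group homomorphism $u: N^{\mr{gp}}\to \mc{O}_{X,\ov{x}}^*$ by $u\op{n}\coloneqq \phi\op{n}-s_0\op{h\op{n}}$. Because $\Coker\op{h^{\mr{gp}}}$ is finitely generated and torsion-free, it is free, so the short exact sequence $0\to N^{\mr{gp}}\to M^{\mr{gp}}\to \Coker\op{h^{\mr{gp}}}\to 0$ splits; I choose a retraction $r:M^{\mr{gp}}\to N^{\mr{gp}}$ of $h^{\mr{gp}}$ and define
\[s\op{m}\coloneqq s_0\op{m}+u\op{r\op{m}}\in \mc{M}_{\ov{x}}.\]
Since $u\circ r$ takes values in $\mc{O}_{X,\ov{x}}^{*}\subseteq \mc{M}_{\ov{x}}$, the map $s$ is a well-defined monoid homomorphism, and a direct verification shows $\pi\circ s=\id_M$ and $s\op{h\op{n}}=s_0\op{h\op{n}}+u\op{n}=\phi\op{n}$ for all $n\in N$.

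The main obstacle is setting up the preliminary section $s_0$, which requires the correct localization of a fine chart of $\mc{M}$ so that Lemma \ref{monoid_section} applies; after that, the cocycle-type adjustment above together with the standard passage from the stalk to an \'etale neighborhood completes the proof.
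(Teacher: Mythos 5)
Your overall strategy --- produce a monoid section $s$ of $\mc{M}_{\ov{x}}\twoheadrightarrow M$ compatible with the lift $\phi=f^{\#}\circ b$ of $h$, then spread it out to a neighborhood by the argument of Kato's Lemma 2.10 --- is essentially the paper's, and your coboundary correction $s=s_0+u\circ r$ is a valid way to enforce compatibility with $\phi$ once $s_0$ exists: the retraction $r$ of $h^{\mr{gp}}$ exists precisely because $\Coker\op{h^{\mr{gp}}}$ is finitely generated and torsion-free, hence free.

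The gap is in the construction of the preliminary section $s_0$. You invoke Lemma \ref{monoid_section} for the surjection $F^{-1}P\twoheadrightarrow M$, which requires $M^{\mr{gp}}$ to be torsion-free, and you explicitly concede that this holds only ``in the intended applications''. It is not a hypothesis of the lemma: the assumptions are that $h$ is injective and $\Coker\op{h^{\mr{gp}}}$ is torsion-free, which give $M^{\mr{gp}}\cong N^{\mr{gp}}\oplus\Coker\op{h^{\mr{gp}}}$ with all the torsion of $M^{\mr{gp}}$ contained in $N^{\mr{gp}}$; the statement is meant to apply even when $N^{\mr{gp}}$ (hence $M^{\mr{gp}}$) has torsion, and it is used in the paper in such generality. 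The paper's proof sidesteps this by never constructing an unconstrained section: it first produces a (not necessarily good) fine chart $h':N\to M'$ of $f$, and then defines the group-level section $M^{\mr{gp}}\to\lr{M'}^{\mr{gp}}$ directly through the splitting --- by $\lr{h'}^{\mr{gp}}$ on the summand $N^{\mr{gp}}$, where a lift is already in hand, and by an arbitrary lift on the free summand $\Coker\op{h^{\mr{gp}}}$, where lifting is unobstructed. To repair your argument along your own lines you would need to build $s$ in one step the same way: define $s^{\mr{gp}}$ on $N^{\mr{gp}}$ as $\phi^{\mr{gp}}$ and on a free complement by lifting basis elements to $\mc{M}^{\mr{gp}}_{\ov{x}}$, then observe that $s^{\mr{gp}}\op{M}\subseteq\lr{\pi^{\mr{gp}}}^{-1}\op{M}=\mc{M}_{\ov{x}}$ --- rather than first producing $s_0$ and correcting it afterwards.
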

\begin{proof}
  It can be proven as in Lemma 3.1.1 of \cite{FKato}.

  First, we can take a fine chart chart of $f$
  \[\begin{tikzcd}
  N \arrow[r] \arrow[d, "h'"] & \mc{N} \arrow[d] \\
  M' \arrow[r] & \mc{M}
  \end{tikzcd}\]
  on some neighborhood of $\ov{x}$.
  Indeed, we can take a fine chart $M''\rightarrow \mc{M}$ on some neighborhood of $\ov{x}$.
  Let $g:\lr{M''\oplus N}^{\mr{gp}}\rightarrow \mc{M}^{\mr{gp}}_{\ov{x}}\rightarrow M^{\mr{gp}}$ be the natural surjective morphism and let $M'\coloneqq g^{-1}\op{M}$. Then, by the same argument as the proof of Lemma 2.10 of \cite{Kat}, there exists a chart $M'\rightarrow \mc{M}$ on some neighborhood of $\ov{x}$ and we can take $h'$ as the natural map $N\rightarrow M'$.

  By the assumption that $h^{\mr{gp}}$ is injective and its cockernel is torsion-free, $M^{\mr{gp}}\cong N^{\mr{gp}}\oplus \lr{M/N}^{\mr{gp}}$ and we can take a section of the natural surjective morphism $\lr{M'}^{\mr{gp}}\twoheadrightarrow M^{\mr{gp}}\twoheadrightarrow\lr{M/N}^{\mr{gp}}$. So we can take a section $s:M^{\mr{gp}}\rightarrow \lr{M'}^{\mr{gp}}$ of $\lr{M'}^{\mr{gp}}\twoheadrightarrow M^{\mr{gp}}$ such that $s\circ h^{\mr{gp}} = \lr{h'}^{\mr{gp}}$.
  Let $p:M'\rightarrow \mc{M}_{\ov{x}}\rightarrow M$ be the natural morphism and let $\tilde{M}\coloneqq \lr{p^{\mr{gp}}}^{-1}\op{M}$. Then the image of $M$ under $s$ is contained by $\tilde{M}$ and $M \xrightarrow{s} \tilde{M} \rightarrow \mc{M}^{\mr{gp}}_{\ov{x}} \rightarrow M^{\mr{gp}}$ is the inclusion $M\hookrightarrow M^{\mr{gp}}$. Thus, by the same argument as the proof of Lemma 2.10 of \cite{Kat}, there exists a chart $M\rightarrow \mc{M}$ on some neighborhood of $\ov{x}$.

\end{proof}

\begin{lem}(cf. Lemma 3.1.1 of \cite{FKato}) \label{3.1.1.B}
Let $f: \lr{X,\mathcal{M}}\rightarrow \lr{Y,\mathcal{N}}$ be a log smooth morphism of fine log spaces.
Let $\overline{x}$ be a geometric point of $X$ and $\overline{y}\coloneqq f\op{\overline{x}}$.
Put $M\coloneqq \overline{\mathcal{M}}_{\overline{x}}$, $N\coloneqq \ov{\mc{N}}_{\ov{y}}$ and let $h: N \rightarrow M$ be the induced morphism.
Assume $h$ is injective and $M^{\mr{gp}}$ is torsion-free. Take an arbitrary chart $N\rightarrow \mc{N}$ on a neighborhood of $\ov{y}$
which is good at $\ov{y}$. Then there exists a good chart of $f$ at $\ov{x}$
\[\begin{tikzcd}
      N \arrow[r] \arrow[d, "h"] & \mc{N} \arrow[d] \\
      M \arrow[r] & \mc{M}
\end{tikzcd}\]
such that the induced morphism $X\rightarrow Y \times_{\AA{N}}\AA{M}$ is smooth on a neighborhood of $\ov{x}$.
\end{lem}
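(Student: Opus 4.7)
The strategy mirrors that of Lemma~\ref{3.1.1.A}: we first produce a fine chart of $f$ near $\ov{x}$ coming from log smoothness, then refine it so that the resulting chart of $\mc{M}$ has source $M=\ov{\mc{M}}_{\ov{x}}$, and finally verify that the smoothness of the induced morphism to $Y\fibpro{\AA{N}}\AA{M}$ is preserved.

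By the definition of log smoothness, \'etale-locally around $\ov{x}$ there exists a fine chart $h':N\rightarrow M'$ of $f$ such that $h'$ is injective, the torsion part of $\mr{coker}\op{\lr{h'}^{\mr{gp}}}$ is a finite group of order prime to $q$, and the induced morphism $X\rightarrow Y\fibpro{\AA{N}}\AA{M'}$ is \'etale. Following the construction in the first paragraph of the proof of Lemma~\ref{3.1.1.A}, we may arrange that the given chart $N\rightarrow\mc{N}$ appears as the top row of this chart of $f$ and that the composition $p:M'\rightarrow\mc{M}\op{X}\rightarrow\ov{\mc{M}}_{\ov{x}}=M$ is surjective with $p\circ h'=h$.

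Set $\tilde{M}\coloneqq\lr{p^{\mr{gp}}}^{-1}\op{M}\subseteq\lr{M'}^{\mr{gp}}$. Since $M^{\mr{gp}}$ is torsion-free by hypothesis, Lemma~\ref{monoid_section} yields a section $s:M\rightarrow\tilde{M}$ of $\tilde{M}\twoheadrightarrow M$ together with a splitting $\tilde{M}\cong M\oplus\Ker\op{p^{\mr{gp}}}$. Exactly as in the second paragraph of the proof of Lemma~\ref{3.1.1.A}, Lemma 2.10 of \cite{Kat} then produces a chart $M\rightarrow\mc{M}$ on a neighborhood of $\ov{x}$ which is good at $\ov{x}$; together with $N\rightarrow\mc{N}$ and $h:N\rightarrow M$, this constitutes a good chart of $f$ at $\ov{x}$.

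It remains to verify that $X\rightarrow Y\fibpro{\AA{N}}\AA{M}$ is smooth on a neighborhood of $\ov{x}$. The face $\Ker\op{p}\subseteq M'$ consists of elements whose images in $\mc{M}_{\ov{x}}$ lie in $\O{X,\ov{x}}^{*}$ and hence become units of $\O{X}^{+}$ on a neighborhood of $\ov{x}$; on such a neighborhood, $X\rightarrow\AA{M'}$ factors through the open subspace $\AA{\tilde{M}}\subseteq\AA{M'}$ obtained by inverting this face, giving an \'etale morphism $X\rightarrow Y\fibpro{\AA{N}}\AA{\tilde{M}}$. The splitting $\tilde{M}\cong M\oplus\Ker\op{p^{\mr{gp}}}$ identifies $\AA{\tilde{M}}\cong\AA{M}\times\AA{\Ker\op{p^{\mr{gp}}}}$, so the projection $\AA{\tilde{M}}\rightarrow\AA{M}$ is smooth provided $\Ker\op{p^{\mr{gp}}}$ has torsion of order prime to $q$; composing with the preceding \'etale morphism then gives the required smoothness. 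The main obstacle is to arrange that the chart $M'$ produced in the first step satisfies a torsion condition on $\lr{M'}^{\mr{gp}}$, from which the condition on $\Ker\op{p^{\mr{gp}}}$ follows via the exact sequence $0\rightarrow\Ker\op{p^{\mr{gp}}}\rightarrow\lr{M'}^{\mr{gp}}\rightarrow M^{\mr{gp}}\rightarrow 0$ together with the torsion-freeness of $M^{\mr{gp}}$. This is achieved by the standard refinement of the chart as in the first paragraph of the proof of Lemma~\ref{3.1.1.A}.
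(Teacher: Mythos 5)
Your construction follows the paper's proof quite closely up to the last step: start from the log-smoothness chart $h':N\to M'$, pass to $\tilde{M}=\lr{p^{\mr{gp}}}^{-1}\op{M}$, split it as $M\oplus\Ker\op{p^{\mr{gp}}}$ via Lemma \ref{monoid_section}, and reduce the smoothness of $X\to Y\fibpro{\AA{N}}\AA{M}$ to a torsion bound on $\Ker\op{p^{\mr{gp}}}$. (The paper phrases the intermediate step as ``$\AA{\tilde{M}}\to\AA{M'}$ is log \'etale and strict near the image of $\ov{x}$, hence \'etale there'' rather than as an open immersion, but your localization description is compatible with that.)

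The genuine gap is in the final paragraph. You need $\Ker\op{p^{\mr{gp}}}^{\mr{tor}}$ to be finite of order prime to $q$, and you propose to get it from a torsion bound on $\lr{M'}^{\mr{gp}}$ itself, asserting that this bound ``is achieved by the standard refinement of the chart as in the first paragraph of the proof of Lemma \ref{3.1.1.A}.'' That refinement controls nothing about the torsion of $\lr{M'}^{\mr{gp}}$, and the definition of log smoothness only bounds the torsion of $\Coker\op{\lr{h'}^{\mr{gp}}}$, not of $\lr{M'}^{\mr{gp}}$; a priori $N^{\mr{gp}}$ could contribute $q$-torsion. The missing ingredient is precisely the hypothesis that $h$ is injective, which your final step never uses. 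The paper's argument is: since $p^{\mr{gp}}\circ\lr{h'}^{\mr{gp}}=h^{\mr{gp}}$ and $h^{\mr{gp}}$ is injective, $\Ker\op{p^{\mr{gp}}}$ meets $\Im\op{\lr{h'}^{\mr{gp}}}$ trivially, so $\Ker\op{p^{\mr{gp}}}$ injects into $\Coker\op{\lr{h'}^{\mr{gp}}}$ and inherits the prime-to-$q$ torsion bound from there. (Alternatively, one can note that $N^{\mr{gp}}$ injects into the torsion-free $M^{\mr{gp}}$, hence is itself torsion-free, so $\lr{\lr{M'}^{\mr{gp}}}^{\mr{tor}}$ injects into $\Coker\op{\lr{h'}^{\mr{gp}}}^{\mr{tor}}$ --- but this must be argued, not attributed to the chart refinement.) With that substitution your proof is correct.
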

\begin{proof}
  Take  a chart
  \[\begin{tikzcd}
      N \arrow[r] \arrow[d, "h'"] & \mc{N} \arrow[d] \\
      L \arrow[r] & \mc{M}.
  \end{tikzcd}\]
  on a neighborhood $U$ of $\ov{x}$ such that $h'$ is injective, the order of $\Coker\op{\lr{h'}^{\mr{gp}}}^{\mr{tor}}$ is finite and prime to $q$ and
  $U \rightarrow Y \times_{\AA{N}}\AA{L}$ is \'etale.
  There is a natural surjective morphism $s: L \rightarrow \ov{\mc{M}}_{\ov{x}}\cong M$. Let $\tilde{L}\coloneqq \lr{s^{\mr{gp}}}^{-1}\op{M}$.
  There exists a homomorphism $\tilde{L}\rightarrow \mc{M}_{\ov{x}}$, and we can take a chart $\tilde{L}\rightarrow \mc{M}$
  on some neighborhood of $\ov{x}$ by augment in the proof of Lemma 2.10 of \cite{Kat}. The morphism $\AA{\tilde{L}}\rightarrow \AA{L}$ is log-\'etale and strict on some neighborhood
  of the image of $\ov{x}$, so \'etale there.
  Hence the morphism $X\rightarrow Y\fibpro{\AA{N}}\AA{\tilde{L}}$ is \'etale on the neighborhood of $\ov{x}$.
  By Lemma \ref{monoid_section},
  $\tilde{L}\cong M \oplus \Ker\op{s^{\mr{gp}}}$. Then
  \[\begin{tikzcd}
      N \arrow[rr] \arrow[d, "h"] & & \mc{N} \arrow[d] \\
      M \arrow[r, hook] & \tilde{L} \arrow[r] & \mc{M}
  \end{tikzcd}\]
  is a good chart of $f$ at $\ov{x}$. Since $s^{\mr{gp}}\circ \lr{h'}^{\mr{gp}}=h^{\mr{gp}}$ and $h^{\mr{gp}}$ is injective, $\Ker\op{s^{\mr{gp}}} \rightarrow \Coker\op{\lr{h'}^{\mr{gp}}}$
  is injective, so $\Ker\op{s^{\mr{gp}}}^{\mr{tor}}$ is finite and its order is prime to $q$. Thus the map $\AA{\tilde{L}}\rightarrow \AA{M}$ induced by the inclusion $M\hookrightarrow  M \oplus \Ker\op{s^{\mr{gp}}}\cong\tilde{L}$ is log-smooth and strict on some neighborhood of the image of $\ov{x}$, so smooth there. Therefore $X \rightarrow Y \times_{\AA{N}}\AA{M}$ is smooth on some neighborhood of $\ov{x}$.
\end{proof}

We define the notion of differential modules on log spaces.
\begin{defn}
  Let $f:\lr{X,\mc{M},\alpha}\rightarrow \lr{Y,\mc{N},\beta}$ be a morphism of log spaces.
  The log differential module is defined as follows:
  \[\Omega^{\mr{log},1}_{\lr{X,\mc{M}}/\lr{Y,\mc{N}}}\coloneqq \lr{{\Omega^1_{X/Y}\oplus \lr{O_X\tens{}\mc{M}^{\mr{gp}}}}}/R\]
  where $R$ is the sub-$\O{X}$-module locally generated by
  \begin{itemize}
  \item $\lr{d\alpha\op{a},0}-\lr{0,\alpha\op{a}\tens{}a}$ for each section $a\in \mc{M}$,
  \item $\lr{0,1\tens{}a}$ for each section $a\in \Im\op{f^{\#}}$.
  \end{itemize}
\end{defn}
We denote the element $\lr{0,1\tens{}a}$ by $d\log a$ or $d\log \alpha\op{a}$ for each $a\in \mc{M}^{\mr{gp}}$.

When $Y=\Spec\,k$, $\Spf\,\OK$ or $\Spm\,K$ and $\mc{N}$ is trivial, $\Omega^{\mr{log},1}_{\lr{X,\mc{M}}/\lr{Y,\mc{N}}}$ may be simply denoted by $\Omega^{\mr{log},1}_{\lr{X,\mc{M}}}$. 

\section{Log $\nabla$-modules}
\subsection{Polyannuli}

Let $M$ be a fine monoid.
The rigid analytic space $\A{M}{K}$ is defined as $\lr{\Spec \, K\!\brack{M}}^{\mr{an}}$.
We regard $\A{M}{K}$ as a log rigid space by the log structure associated to $M \rightarrow \Gamma\op{\A{M}{K}, \O{\A{M}{K}}}$.
For $m \in M$, we write $t^m$ as the image of $m$ by map $M \rightarrow \Gamma\op{\A{M}{K}, \O{\A{M}{K}}}$.
If $M$ is sharp, the point of $\A{M}{K}$ defined by $t^m=0$ for all $m\in M\setminus\brace{0}$ is called the \textit{vertex} and denoted by $\mb{0}$.

\begin{defn}\label{weighted_monoid}
  A \textit{weighted monoid} is a monoid $M$ equipped with a monoid homomorphism
  $h: M \rightarrow \N$ (which is called \textit{weighting} of $M$) such that $h^{-1}\op{0}=M^{*}$.
\end{defn}

\begin{rem}
  By Lemma 2.2.2 of \cite{Og}, for any fine monoid $M$, there always exists a monoid homomorphism $h$
  as in Definition \ref{weighted_monoid}.
\end{rem}

Note that for any weighted monoid $M$, $h$ can be extended
to the group homomorphism $M^{\mr{gp}} \rightarrow \Z$ which is also denoted by $h$.

\begin{defn}(Definition 3.1.1 of \cite{Ked})
  A subinterval of $\left[0, +\infty\right)$ is called \textit{aligned} if
any endpoint at which it is closed is contained in $\Gamma^*$.
  A subinterval of $\left[0, +\infty\right)$ is called \textit{quasi-open} if
  it is of the form $\left[0,a\right)$ with $a>0$ or of the form of $\lr{a,b}$ with $0<a<b$.
\end{defn}

\begin{defn}(a generalization of 3.1.2 of \cite{Ked})
  Let $M$ be a fine weighted monoid. For an aligned subinterval $I$ of $\RNN$ we define
  the { \it polyannuli} $\A{M}{K}\op{I}$ as follows:
  \begin{enumerate}
\item If $I=\brack{a,b}$ for $a,b \in \Gamma^{*}$ with $0 \leq a \leq b$,
  \[ \A{M}{K}\op{I}=\Set{x \in \A{M}{K} | \forall m \in M, a^{h\op{m}} \leq \abs{t^m\op{x}} \leq b^{h\op{m}} }. \]
\item In general,
\[ \A{M}{K}\op{I}=\bigcup_{\brack{a,b}\subseteq I}\A{M}{K}\!\brack{a,b}.\]
  \end{enumerate}
  We write $\A{M}{K}\!\brack{a,b}$ or $\A{M}{K}\!\clop{a,b}$ etc. instead of $\A{M}{K}\op{\brack{a,b}}$ or $\A{M}{K}\op{\clop{a,b}}$ etc.
\end{defn}

\begin{rem}
It is enough to check the condition $a^{h\op{m}} \leq \abs{t^m\op{x}} \leq b^{h\op{m}}$ in the above definition of $\A{M}{K}\!\brack{a,b}$
for a set of (finite) generators of $M$ instead of all $m \in M$. So $\A{M}{K}\!\brack{a,b}$ is an affinoid open subset
of $\A{M}{K}$ for any $a\leq b, 0<b$ and $\A{M}{K}\op{I}$ is an admissible open subset for any $I\neq \brack{0,0}$.
\end{rem}

\begin{rem}
  For $I=\brack{0,0}$, if $M$ is sharp, $\A{M}{K}\!\brack{0,0}$ is a single point (the vertex) $\Spm\,K$ with a log structure defined by $\alpha:M\rightarrow K$
  such that $\alpha\op{m}$ is $1$ if $m=0$ and $0$ otherwise. Thus
  \[\Omega^{\mr{log},1}_{\A{M}{K}\!\brack{0,0}}=M^{\mr{gp}}\tens{\Z}K.\]
\end{rem}

\begin{rem}
If $M=\N^n$ and $h: \N^n\rightarrow \N$ is the morphism which takes the sum of all entries, $\A{K}{M}\op{I}$
is equal to $\mathbb{A}_{K}^n\op{I}$ in \cite{Ked}.
\end{rem}

\begin{prop}\label{GammaAMK}
Let $M$ be a fine sharp weighted monoid.
\begin{enumerate}
\item For $0<b \in \Gamma^{*}$,
\[\Gamma\op{\A{M}{K}[0,b],\O{\A{M}{K}[0,b]}}
=\Set{\sum_{m \in M} c_mt^m | c_m \in K, \lim_{h\op{m} \rightarrow \infty}
\abs{c_m}b^{h\op{m}} = 0}.\]
\item For $m \in M^{\mr{gp}}$, we write
\begin{align*}
h^{+}\op{m}&=\min\Set{h\op{m^{+}} | m^{+},m^{-} \in M, m=m^{+}-m^{-}}, \\
h^{-}\op{m}&=\min\Set{h\op{m^{-}} | m^{+},m^{-} \in M, m=m^{+}-m^{-}} =h^{+}\op{m}-h\op{m},\\
\abs{h}\op{m}&=h^{+}\op{m}+h^{-}\op{m}.
\end{align*}
Then for $a,b \in \Gamma^{*}$, $0 < a \leq b$,
\[\Gamma\op{\A{M}{K}[a,b],\O{\A{M}{K}[a,b]}}
=\Set{\sum_{m \in M^{\mr{gp}}} c_mt^m | c_m \in K, \lim_{\abs{h}\op{m} \rightarrow \infty}
\abs{c_m}a^{-h^{-}\op{m}}b^{h^{+}\op{m}} = 0}.\]
\end{enumerate}
\end{prop}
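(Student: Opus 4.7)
The plan is to realize each polyannulus as a closed affinoid subspace of a standard polydisc (or Laurent subdomain thereof) via a surjection of monoids, and then to identify its ring of global sections by direct series manipulation.

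For Part 1, I would fix finite generators $m_1,\ldots,m_r$ of $M$; note that sharpness of $M$ combined with $h^{-1}\op{0}=M^{*}=\brace{0}$ forces $h\op{m_i}\ge 1$. The surjection $\pi\colon\N^r\twoheadrightarrow M$, $e_i\mapsto m_i$, induces a surjection of $K$-algebras $K[\N^r]=K[X_1,\ldots,X_r]\twoheadrightarrow K[M]$ with kernel $I$, finitely generated by Hilbert's basis theorem, and correspondingly a closed immersion $\A{M}{K}\hookrightarrow\lr{\Spec\,K[X_1,\ldots,X_r]}^{\mr{an}}$. Setting $b_i\coloneqq b^{h\op{m_i}}$, multiplicativity of $\abs{\cdot}$ shows that the inequalities $\abs{t^{m_i}\op{x}}\le b_i$ already imply $\abs{t^m\op{x}}\le b^{h\op{m}}$ for every $m\in M$, so $\A{M}{K}\!\brack{0,b}$ is a closed affinoid subspace of the polydisc $\Spm K\!\angbra{X_1/b_1,\ldots,X_r/b_r}$ cut out by $I$. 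Since $I$ is finitely generated, its extension $I\cdot K\!\angbra{X_i/b_i}$ is closed, so
\[\Gamma\op{\A{M}{K}\!\brack{0,b},\O{\A{M}{K}\!\brack{0,b}}}=K\!\angbra{X_1/b_1,\ldots,X_r/b_r}/I\cdot K\!\angbra{X_i/b_i}.\]
The substitution $X_i\mapsto t^{m_i}$ would then define a continuous surjection of Banach $K$-algebras onto the ring $R$ on the right-hand side of Part 1 (continuity uses $\prod b_i^{\alpha_i}=b^{h\op{\pi\op{\alpha}}}$; convergence uses $h\op{m_i}\ge 1$, which forces $h\op{\pi\op{\alpha}}\to\infty$ as $\abs{\alpha}\to\infty$). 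Its kernel contains $I$; for the reverse inclusion, I would fix for each $m\in M$ a representative $\alpha_m\in\pi^{-1}\op{m}$ and rewrite
\[\sum_\alpha c_\alpha X^\alpha = \sum_\alpha c_\alpha\lr{X^\alpha-X^{\alpha_{\pi\op{\alpha}}}} + \sum_{m\in M}\lr{\sum_{\pi\op{\alpha}=m}c_\alpha}X^{\alpha_m},\]
observing that the first summand lies in $I\cdot K\!\angbra{X_i/b_i}$ and that the image of the second in $R$ is $\sum_m\lr{\sum_{\pi\op{\alpha}=m}c_\alpha}t^m$, which vanishes precisely when each inner sum does. This completes Part 1.

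For Part 2, $\A{M}{K}\!\brack{a,b}$ is the Laurent subdomain of $\A{M}{K}\!\brack{0,b}$ further cut out by $\abs{t^{m_i}\op{x}}\ge a^{h\op{m_i}}$, so each $t^{m_i}$ is invertible on it and global sections extend to $K[M^{\mr{gp}}]$. For any $m\in M^{\mr{gp}}$ and any decomposition $m=m^+-m^-$ with $m^\pm\in M$, the identity $h\op{m^-}=h\op{m^+}-h\op{m}$---valid because $h$ extends to a group homomorphism on $M^{\mr{gp}}$---shows that the minimizers defining $h^+\op{m}$ and $h^-\op{m}$ can be chosen simultaneously. For such an optimal decomposition,
\[\abs{t^m\op{x}}=\abs{t^{m^+}\op{x}}/\abs{t^{m^-}\op{x}}\le b^{h^+\op{m}}a^{-h^-\op{m}},\]
with equality attained at generic points where each $\abs{t^{m_i}\op{x}}$ reaches its extremal value; the supremum of $\abs{t^m\op{x}}$ on $\A{M}{K}\!\brack{a,b}$ would then be exactly $b^{h^+\op{m}}a^{-h^-\op{m}}$. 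The ring of global sections would therefore be the completion of $K[M^{\mr{gp}}]$ under this norm, and identifying this completion with the claimed series ring would proceed verbatim as in Part 1, with $\pi$ replaced by a surjection $\Z^s\twoheadrightarrow M^{\mr{gp}}$ from a finitely generated free abelian group. The main obstacle will be this sup-norm computation in Part 2---specifically the verification that the minima $h^+\op{m}$ and $h^-\op{m}$ are realized by a common decomposition and that the resulting extremum is attained on the polyannulus; once that is in hand, the remaining identification of convergent series reduces to the fiber-sum bookkeeping of Part 1.
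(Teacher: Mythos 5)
Your Part 1 is correct and follows essentially the paper's own route: the paper likewise identifies $\Gamma\op{\A{M}{K}\!\brack{0,b},\O{\A{M}{K}\!\brack{0,b}}}$ with $K\!\angbra{t^{m_1}/b^{h\op{m_1}},\ldots,t^{m_n}/b^{h\op{m_n}}}$ and performs the same fiber-sum re-indexing over $\pi^{-1}\op{m}$, using $h\op{m_i}\geq 1$ to see that $\abs{\mathbf{k}}\to\infty$ if and only if $h\op{\pi\op{\mathbf{k}}}\to\infty$.

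Part 2, however, contains a genuine gap, located exactly at the step you flag as the main obstacle. The simultaneous-minimizer observation is fine (it is immediate from $h\op{m^-}=h\op{m^+}-h\op{m}$), but the attainment claim is false: in general $\sup_{x\in\A{M}{K}\!\brack{a,b}}\abs{t^m\op{x}}$ is \emph{strictly smaller} than $b^{h^+\op{m}}a^{-h^-\op{m}}$. Take $M=\Set{\lr{x,y}\in\N^2 | x+y\equiv 0\pmod{2}}$ with $h\op{x,y}=x+y$ (the monoid of Remark \ref{codimension 2}) and $m=\lr{1,-1}\in M^{\mr{gp}}$. Any $m^-\in M$ with $m+m^-\in M$ has $h\op{m^-}\geq 2$, so $h^+\op{m}=h^-\op{m}=2$ and your claimed supremum is $\lr{b/a}^2$; but $\lr{t^m}^2=t^{\lr{2,0}}/t^{\lr{0,2}}$ forces $\abs{t^m\op{x}}\leq b/a$ everywhere on $\A{M}{K}\!\brack{a,b}$. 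The underlying issue is that $b^{h^+\op{m}}a^{-h^-\op{m}}$ is the optimum of an integer program, while the sup norm computes its linear relaxation, and these differ once $M$ is not saturated. Consequently the assertion that the global sections are ``the completion of $K\!\brack{M^{\mr{gp}}}$ under this norm'' is unjustified as written (one would additionally have to show that the two norms are equivalent, which is a nontrivial proximity statement about integral versus real optima). The repair is to bypass the sup norm entirely and argue as in your Part 1: $\A{M}{K}\!\brack{a,b}$ is the Laurent domain with coordinate ring $K\!\angbra{a^{h\op{m_1}}/t^{m_1},\ldots,t^{m_n}/b^{h\op{m_n}}}$; expanding its elements as $\sum_{m^+,m^-}c''_{m^+,m^-}t^{m^+-m^-}$ and summing over fibers, the forward inclusion uses only the inequality $a^{-h\op{m^-}}b^{h\op{m^+}}\geq a^{-h^-\op{m}}b^{h^+\op{m}}$, and the converse uses only the existence of one optimal decomposition. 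This is what the paper does, and it makes the exact value of the sup norm irrelevant.
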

\begin{proof}
  Let $m_1, \cdots, m_n$ be a set of generators of $M$.
  \begin{enumerate}
  \item
    The region $\A{M}{K}\!\brack{0,b}$ is defined by $\frac{\abs{t^{m_i}}}{b^{h\op{m_i}}}\leq 1$ in $\A{M}{K}$. Hence
    \begin{align*}
      &\Gamma\op{\A{M}{K}\!\brack{0,b},\O{\A{M}{K}\!\brack{0,b}}}\\
      &=K\angbra{\frac{t^{m_1}}{b^{h\op{m_1}}}, \ldots, \frac{t^{m_n}}{b^{h\op{m_n}}}}\\
      &=\Set{\sum_{\mathbf{k} \in \N^n} c'_{\mathbf{k}}t^{k_1m_1+\cdots+k_nm_n}
        | c'_{\mathbf{k}} \in K, \lim_{\abs{\mathbf{k}} \rightarrow \infty}b^{k_1h\op{m_1}+\cdots+k_nh\op{m_n}}\abs{c'_{\mathbf{k}}} = 0} \\
      &=\Set{\sum_{m \in M} c_mt^m | c_m \in K, \lim_{h\op{m} \rightarrow \infty}\abs{c_m}b^{h\op{m}} = 0}.
    \end{align*}
    At the last equality, we take $m=k_1m_1+\cdots +k_nm_n$ and $c_m=\sum_{m=k_1m_1+\cdots +k_nm_n}c'_{\mathbf{k}}$.
    Note that when $\abs{\mathbf{k}}\rightarrow \infty$, $h\op{k_1m_1+\cdots +k_nm_n}\rightarrow \infty$ and vice versa.
  \item
    The region $\A{M}{K}\!\brack{a,b}$ is defined by $\frac{\abs{t^{m_i}}}{a^{h\op{m_i}}}\geq 1, \frac{\abs{t^{m_i}}}{b^{h\op{m_i}}}\leq 1$ in $\A{M}{K}$. Hence
    \begin{align*}
      & \Gamma\op{\A{M}{K}\!\brack{a,b},\O{\A{M}{K}\!\brack{a,b}}}\\
      &=K\angbra{\frac{a^{h\op{m_1}}}{t^{m_1}}, \ldots, \frac{a^{h\op{m_n}}}{t^{m_n}},
        \frac{t^{m_1}}{b^{h\op{m_1}}}, \ldots, \frac{t^{m_n}}{b^{h\op{m_n}}}}\\
      &=\Set{\sum_{\mathbf{k},\mathbf{k'} \in \N^n} c'_{\mathbf{k},\mathbf{k'}}t^{\sum_{i=1}^nk_im_i-\sum_{i=1}^nk'_im_i}
        | c'_{\mathbf{k},\mathbf{k'}} \in K, \lim_{\abs{\mathbf{k}}+\abs{\mathbf{k'}} \rightarrow \infty} \frac{a^{\sum_{i=1}^nk'_ih\op{m_i}}}
        {b^{\sum_{i=1}^nk_ih\op{m_i}}}\abs{c'_{\mathbf{k},\mathbf{k'}}} = 0} \\
      &=\Set{\sum_{m^{+},m^{-} \in M} c''_{m^{+},m^{-}}t^{m^{+}-m^{-}}
        | c''_{m^{+},m^{-}} \in K, \lim_{h\op{m^{+}}+h\op{m^{-}} \rightarrow \infty}\abs{c''_{m^{+},m^{-}}}a^{-h\op{m^{-}}}b^{h\op{m^{+}}} = 0} \\
      &=\Set{\sum_{m \in M^{\mr{gp}}} c_mt^m | c_m \in K, \lim_{\abs{h}\op{m} \rightarrow \infty}\abs{c_m}a^{-h^{-}\op{m}}b^{h^{+}\op{m}} = 0}.
    \end{align*}
    At the third equality, we take $m^{+}=\sum_{i=1}^nk_im_i, m^{-}=\sum_{i=1}^nk'_im_i$, and
    $c''_{m^{+},m^{-}}=\sum_{m^{+}=k_1m_1+\cdots +k_nm_n}\sum_{m^{-}=k'_1m_1+\cdots +k'_nm_n}c'_{\mathbf{k},\mathbf{k'}}$.
    At the last equality, we take $c_m=\sum_{m=m^{+}-m^{-}} c''_{m^{+},m^{-}}$. Note that
    $a^{-h\op{m^{-}}}b^{h\op{m^{+}}} \geq a^{-h^{-}\op{m}}b^{h^{+}\op{m}}$ for any $m=m^{+}-m^{-}$.
  \end{enumerate}
\end{proof}

\begin{cor}
  $\A{M}{K}\!\brack{0,1}=\Spm\,K\!\angbra{M}$ and $\A{M}{K}\!\brack{1,1}=\Spm\,K\!\angbra{M^{\mr{gp}}}$
  for any fine sharp weighted monoid $M$.
\end{cor}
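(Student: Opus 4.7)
The plan is to deduce both equalities directly from Proposition \ref{GammaAMK} by matching the resulting convergence conditions with the definitions of $K\angbra{M}$ and $K\angbra{M^{\mr{gp}}}$ recalled in the conventions.

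For the first equality, setting $b = 1$ in part (1) of Proposition \ref{GammaAMK} identifies $\Gamma\op{\A{M}{K}\brack{0,1}, \O{\A{M}{K}\brack{0,1}}}$ with the ring of series $\sum_{m \in M} c_m t^m$ such that $\abs{c_m} \to 0$ as $h\op{m} \to \infty$. On the other hand, by the convention, $K\angbra{M}$ consists of those series for which $\Set{m | \abs{c_m} > a}$ is finite for every $a > 0$. The key step is the equivalence of these two conditions, which reduces to showing that for each $N \in \N$ the preimage $h^{-1}\op{\brack{0,N}} \subseteq M$ is finite. This is where the hypotheses on $M$ enter: since $M$ is fine we may fix generators $m_1, \ldots, m_r$, and since $M$ is sharp we have $M^* = \brace{0}$, hence $h\op{m_i} \geq 1$ for each $i$; writing $m = \sum_i k_i m_i$ with $k_i \in \N$, the bound $h\op{m} \leq N$ forces $k_i \leq N$, leaving only finitely many such $m$.

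For the second equality, setting $a = b = 1$ in part (2) of Proposition \ref{GammaAMK} yields the ring of series $\sum_{m \in M^{\mr{gp}}} c_m t^m$ with $\abs{c_m} \to 0$ as $\abs{h}\op{m} \to \infty$. To identify this with $K\angbra{M^{\mr{gp}}}$, I need the corresponding properness statement: $\abs{h}^{-1}\op{\brack{0,N}}$ is finite in $M^{\mr{gp}}$ for every $N$. If $\abs{h}\op{m} \leq N$, then by the very definition of $h^\pm$ there exist $m^\pm \in M$ with $m = m^+ - m^-$ and $h\op{m^\pm} \leq N$, so both $m^\pm$ lie in the finite set $h^{-1}\op{\brack{0,N}}$ produced in the previous paragraph, leaving only finitely many differences $m \in M^{\mr{gp}}$.

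There is no substantive obstacle here: the corollary is a direct book-keeping consequence of Proposition \ref{GammaAMK} combined with the observation that the weighting $h$ is a proper function on a fine sharp monoid. The only step requiring genuine use of the hypotheses is the finiteness of the fibers of $h$, which relies essentially on sharpness; everything else is a formal translation between the two descriptions of convergence.
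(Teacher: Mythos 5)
Your proof is correct and is exactly the argument the paper intends: the corollary is stated without proof as an immediate consequence of Proposition \ref{GammaAMK}, and your translation between the two convergence conditions, together with the finiteness of the fibers of $h$ (and of $\abs{h}$ on $M^{\mr{gp}}$) via sharpness and finite generation, supplies precisely the book-keeping the paper leaves implicit.
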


\begin{rem}
  Let $M$ be a fine sharp weighted monoid.
  Let $M^{\mr{sat}}\coloneqq\Set{m\in M^{\mr{gp}} | \exists n \in \N_{>0}, nm\in M}$.
  Then $h$ can be extended to a map $M^{\mr{sat}}\rightarrow \N$ which is also denoted by $h$ and $M^{\mr{sat}}$ with $h$ is also a weighted monoid.
  We prove that, if $0 \notin I$, $\A{M}{K}\op{I}=\A{M^{\mr{sat}}}{K}\op{I}$.
  Let $m_1, \ldots, m_g$ be a set of generators of $M^{\mr{sat}}$ ($M^{\mr{sat}}$ is fine by Corollary 2.2.5 of \cite{Og}.) For each $m_i$,
  take $m'_i \in M$ such that $m_i+m'_i\in M$ and $n_i\in \N_{>0}$ such that $n_im_i\in M$.
  Let $s\coloneqq \lr{n_1-1}m'_1+ \cdots +\lr{n_g-1}m'_g$. Since $nm_i+\lr{n_i-1}m'_i\in M$ for any $n\in \N$,
  $m+s \in M$ for all $m\in M^{\mr{sat}}$.
  Let $h^{\mr{sat},+}, h^{\mr{sat},-}$ be the $h^{+}, h^{-}$ in Proposition \ref{GammaAMK}
  with $M$ replaced by $M^{\mr{sat}}$. Then $h^{\mr{sat},+}\op{m}\leq h^{+}\op{m}\leq h^{\mr{sat},+}\op{m}+h\op{s}$ and 
  $h^{\mr{sat},-}\op{m}\leq h^{-}\op{m}\leq h^{\mr{sat},-}\op{m}+h\op{s}$. So by Proposition \ref{GammaAMK}, $\A{M}{K}\!\brack{a,b}=\A{M^{\mr{sat}}}{K}\!\brack{a,b}$ for any $a,b \in \Gamma^{*}$ with $0<a\leq b$.
  By definition, for any aligned interval $I\subseteq\lr{0,\infty}$, $\A{M}{K}\op{I}=\A{M^{\mr{sat}}}{K}\op{I}$.
\end{rem}

\begin{prop}\label{polyannuli_are_basis}
  Let $M$ be a fine sharp weighted monoid.
  For any admissible open subset $U\subseteq \A{M}{K}$ containing the vertex, $\A{M}{K}\!\brack{0,a}\subseteq U$ for some $0<a \in \Gamma^{*}$.
\end{prop}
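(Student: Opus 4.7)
The plan is to reduce to a question about rational subdomains of the affinoid $X_0\coloneqq\A{M}{K}\!\brack{0,1}=\Spm\,K\!\angbra{M}$ around the vertex, and then check the desired inclusion by a direct computation with the strong triangle inequality on power series expansions.

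First I would replace $U$ by $U\cap X_0$, so that $U$ becomes an admissible open of the affinoid $X_0$ still containing $\mb{0}$. By the description of admissible opens in affinoid spaces, $U$ contains an affinoid subdomain $V$ of $X_0$ with $\mb{0}\in V$, and Gerritzen-Grauert then provides a rational subdomain
\[W=\Set{x\in X_0 | \abs{f_i\op{x}}\leq \abs{f_0\op{x}},\ i=1,\dots,k}\subseteq V\]
with $\mb{0}\in W$, for some $f_0,\dots,f_k\in K\!\angbra{M}$ which generate the unit ideal. Expanding each $f_i=\sum_{m\in M}c_m^{\lr{i}}t^m$ via Proposition \ref{GammaAMK}, evaluation at the vertex yields $f_i\op{\mb{0}}=c_0^{\lr{i}}$. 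Specializing a relation $\sum f_ig_i=1$ at $\mb{0}$ shows the scalars $c_0^{\lr{i}}\in K$ do not all vanish, and the inequalities $\abs{c_0^{\lr{i}}}\leq\abs{c_0^{\lr{0}}}$ forced by $\mb{0}\in W$ then give $c_0^{\lr{0}}\neq 0$.

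For the main estimate, fix a finite generating set $m_1,\dots,m_n$ of $M$; since $M$ is sharp we have $h\op{m_i}\geq 1$, hence $h\op{m}\geq 1$ for every nonzero $m\in M$. For $a\in\Gamma^*$ with $0<a\leq 1$, Proposition \ref{GammaAMK} supplies the pointwise bound
\[\abs{f_i\op{x}-c_0^{\lr{i}}}\leq \max_{m\neq 0}\abs{c_m^{\lr{i}}}a^{h\op{m}}\leq a\cdot\sup_m\abs{c_m^{\lr{i}}}\]
on $\A{M}{K}\!\brack{0,a}$, the sup being finite by the convergence condition. Choosing $a\in\Gamma^*$ positive and so small that $a\cdot\sup_m\abs{c_m^{\lr{i}}}<\abs{c_0^{\lr{0}}}$ holds for every $i$ (possible because $\Gamma^*$ is dense in $\R_{\geq 0}$ near $0$), the strong triangle inequality gives $\abs{f_0}=\abs{c_0^{\lr{0}}}$ and $\abs{f_i}\leq\abs{c_0^{\lr{0}}}=\abs{f_0}$ on $\A{M}{K}\!\brack{0,a}$, hence $\A{M}{K}\!\brack{0,a}\subseteq W\subseteq U$. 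The main obstacle is the appeal to Gerritzen-Grauert to pass from an arbitrary affinoid neighborhood of $\mb{0}$ to a rational one; after this reduction the estimate is essentially immediate from Proposition \ref{GammaAMK}.
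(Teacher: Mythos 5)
Your proof is correct and follows essentially the same route as the paper: reduce to a rational subdomain $\brace{\abs{f_i}\leq\abs{f_0}}$ containing the vertex, observe that the constant coefficient of $f_0$ is nonzero because the $f_i$ generate the unit ideal, and then choose $a$ small enough that the non-constant terms are dominated by $\abs{c_0^{\lr{0}}}$ on $\A{M}{K}\!\brack{0,a}$. The only difference is that you justify the reduction to a rational subdomain explicitly via Gerritzen--Grauert, a step the paper compresses into ``we may assume.''
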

\begin{proof}
  We may assume that \[U=\Set{x \in \A{M}{K}\!\brack{0,1} | \abs{f_1\op{x}} \leq \abs{g\op{x}},\ldots,\abs{f_n\op{x}} \leq \abs{g\op{x}}}\] for some
  $f_1, \ldots, f_n, g\in K\!\angbra{M}$ such that $\lr{f_1,\ldots,f_n,g}=K\!\angbra{M}$.

  Let $f_i=\sum_{m\in M} c_{i,m}t^m$ for $1 \leq i \leq n$ and $g=\sum_{m\in M} d_mt^m$ which converge on $\A{M}{K}\!\brack{0,1}$. By the assumption that the vertex is contained in $U$,
  $\abs{c_{i,0}} \leq \abs{d_0}$ for all $1\leq i \leq n$.
  If $\abs{d_0}=0$, $\abs{c_{i,0}}=0$ for all $1\leq i \leq n$ and it contradicts the assumption of $f_1,\ldots,f_n,g$. Thus $\abs{d_0}>0$.
  By the convergence of $f_1,\ldots,f_n$ and $g$, we can take $0<a\in \Gamma^{*}$ such that, for all $m \in M\setminus\brace{0}$,
  $\abs{c_{i,m}a^{h\op{m}}} \leq \abs{d_0}$ for all $1\leq i \leq n$ and $\abs{d_ma^{h\op{m}}} < \abs{d_0}$.
  Then for any point $x$ in $\A{M}{K}\!\brack{0,a}$, $\abs{f_i\op{x}}\leq \abs{d_0} = \abs{g\op{x}}$ for all $1\leq i \leq n$,
  so $x\in U$.
\end{proof}

\subsection{Log $\nabla$-modules and exponents}

In this subsection, we define the notion of exponents of log $\nabla$-modules and prove some basic properties.

\begin{lem}\label{lem_for_def_exponents}
  Let $E$ be a finite dimensional vector space over $\Kbar$
  and $\Omega$ a finitely generated abelian group.
  Let $\nabla: E \rightarrow E\tens{\Z} \Omega$ be a $\ov{K}$-linear morphism such that the composition
  \[E\xrightarrow{\nabla}E\tens{\Z}\Omega\xrightarrow{\nabla\tens{}\id}
  E\tens{\Z}\Omega\tens{\Z}\Omega\rightarrow E\tens{\Z}\bigwedge_{\Z}^2\Omega\]
  is zero. Then there exist $\xi_1,\ldots,\xi_n\in \Omega\tens{\Z} \Kbar$
  and a decomposition $E=\bigoplus_{i=1}^n E_i$ such that $E_i\neq 0$, $\nabla\op{E_i}\subseteq E_i\tens{\Z} \Omega$
  and $\nabla-\xi_i\cdot \id$ is nilpotent on $E_i$,
  which means that for any map $\phi: \Omega\rightarrow \Z$ of abelian groups, the composition map
  \[E_i\xrightarrow{\nabla-\xi_i\cdot \id} E_i \tens{\Z} \Omega \xrightarrow{\id\tens{}\phi} E_i\]
  is nilpotent.
\end{lem}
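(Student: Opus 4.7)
The plan is to reduce this to a standard simultaneous generalized eigenspace decomposition for commuting operators.

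First I would dispose of the torsion part of $\Omega$. Since $\Kbar$ is a $\Q$-vector space, $E \tens{\Z} \Omega^{\mr{tor}} = 0$, so the natural map $E \tens{\Z}\Omega \to E\tens{\Z}\Omega^{\mr{free}}$ is an isomorphism; also $\Omega\tens{\Z}\Kbar = \Omega^{\mr{free}}\tens{\Z}\Kbar$, and every $\phi\colon \Omega\to\Z$ factors through $\Omega^{\mr{free}}$. Hence I may assume $\Omega$ is a free abelian group of some rank $r$.

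Next, fix a $\Z$-basis $e_1,\ldots,e_r$ of $\Omega$ and write $\nabla\op{v}=\sum_{j=1}^r \nabla_j\op{v}\tens{}e_j$, where $\nabla_j\coloneqq \lr{\id\tens{}e_j^*}\circ\nabla \in \mathrm{End}_{\Kbar}\op{E}$ using the dual basis $e_j^*\colon\Omega\to\Z$. Expanding the integrability hypothesis in this basis, the image of $v\in E$ in $E\tens{\Z}\bigwedge^2\Omega$ is $\sum_{i<j}[\nabla_i,\nabla_j]\op{v}\tens{}\op{e_i\wedge e_j}$, so the hypothesis is precisely that the $\nabla_j$ pairwise commute. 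Since $E$ is finite-dimensional over the algebraically closed field $\Kbar$, the commuting family $\brace{\nabla_j}_{j=1}^r$ admits a simultaneous generalized eigenspace decomposition $E=\bigoplus_{\ul{\lambda}\in\Kbar^r} E_{\ul{\lambda}}$, where $E_{\ul{\lambda}}$ is the maximal subspace on which each $\nabla_j-\lambda_j\cdot\id$ is nilpotent; discard the summands with $E_{\ul{\lambda}}=0$ and relabel to obtain a finite decomposition $E=\bigoplus_{i=1}^n E_i$ with corresponding tuples $\ul{\lambda}^{(i)}=\op{\lambda^{(i)}_1,\ldots,\lambda^{(i)}_r}$.

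Now I would set $\xi_i\coloneqq \sum_{j=1}^r e_j\tens{}\lambda^{(i)}_j \in \Omega\tens{\Z}\Kbar$. Stability $\nabla\op{E_i}\subseteq E_i\tens{\Z}\Omega$ is immediate because $\nabla\op{v}=\sum_j \nabla_j\op{v}\tens{}e_j$ and each $E_i$ is stable under every $\nabla_j$ (standard for commuting families). For the nilpotence condition, given any $\phi\colon \Omega\to\Z$, write $\phi=\sum_j\phi\op{e_j}\cdot e_j^*$. Then the composition $\lr{\id\tens{}\phi}\circ\lr{\nabla-\xi_i\cdot\id}$ restricted to $E_i$ equals
\[
\sum_{j=1}^r \phi\op{e_j}\cdot\lr{\nabla_j-\lambda^{(i)}_j\cdot\id},
\]
which is a sum of pairwise commuting nilpotent endomorphisms of $E_i$ and is therefore itself nilpotent.

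There is no real obstacle beyond bookkeeping: the only delicate points are to verify that the integrability condition really translates into $[\nabla_i,\nabla_j]=0$ (a short computation in $\bigwedge^2\Omega$) and to handle the fact that $\Omega$ may have torsion (absorbed by the initial reduction). Everything else is the standard simultaneous generalized eigenspace decomposition of commuting operators on a finite-dimensional vector space over an algebraically closed field.
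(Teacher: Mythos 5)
Your proof is correct and follows essentially the same route as the paper's: reduce to $\Omega^{\mr{free}}\cong\Z^r$, observe that the integrability hypothesis makes the component operators $\nabla_j$ pairwise commute, and take the simultaneous generalized eigenspace decomposition (which the paper obtains by iterating the Jordan decomposition with respect to $\partial_1,\ldots,\partial_r$). Your treatment is slightly more explicit about the torsion subgroup of $\Omega$ and about verifying the nilpotence condition for an arbitrary $\phi$, but there is no substantive difference.
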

\begin{proof}
  Take some $\Omega^{\mr{free}}\cong \Z^r$ and let $\mr{pr}_i: \Omega \rightarrow \Z$ be the composition of
  the natural map $\Omega\rightarrow \Omega^{\mr{free}}$ and the $i$-th projection.
  Put $\partial_i\coloneqq \lr{\id_E\tens{} \mr{pr}_i} \circ \nabla$.
  By the assumption, $\partial_1,\ldots,\partial_r$ commute with each other.
  So, when we take the Jordan decomposition $E=\bigoplus_{i=1}^{n_1} E_{1,i}$ with respect to $\partial_1$, then
  $\partial_2,\ldots,\partial_r$ act each $E_{1,i}$ and we can take the Jordan decomposition of each $E_{1,i}$ with respect to $\partial_2$
  and so on. If we repeat this process up to $\partial_r$, we reach the decomposition satisfying the above condition.
\end{proof}

\def\wt#1{\widetilde{#1}}
For a fine log rigid space $\lr{X,\mc{M}}$, the \textit{trivial locus of $\mc{M}$} is $\Set{x \in X | \ov{\mc{M}}_{\ov{x}}=0}$.
A log $\nabla$-module on $\lr{X,\mc{M}}$ is
a locally free $\O{X}$-module $E$ equipped with an integrable connection $\nabla:E\rightarrow E\tens{}\Omega^{\mr{log},1}_{\lr{X,\mc{M}}}$.
We define residues and exponents of log $\nabla$-modules on log rigid spaces in the same way as \cite{Og2}.

\begin{defn}\label{exponents}(cf. Definition 2.1.1 of \cite{Og2})
  \def\x{\overline{x}}
  Let $\lr{X,\mathcal{M}}$ be a fine log rigid space.
  Let $\lr{E,\nabla}$ be a log $\nabla$-module on $\lr{X,\mc{M}}$.
  \begin{enumerate}
  \item
    Let $\overline{x}$ be a geometric point lying over $x\in X$. Let $M \coloneqq \ov{\mc{M}}_{\overline{x}}$.
    Let $\Omega^{\mr{log},1}_{\lr{X,\mc{M}},\overline{x}}\rightarrow M^{\mr{gp}}\tens{\Z}\Kbar$ be the canonical surjective map.

    Let $E\op{\overline{x}}\coloneqq E_{\overline{x}}/m_{\x}E_{\x}$ where $m_{\x}$ is the maximal ideal at $\x$.
    The \textit{residue} of $E$ at $\overline{x}$ is the unique $\ov{K}$-linear map $\rho_{\overline{x}}$ such that the following diagram commute:
    \[\begin{tikzcd}
      E \arrow[r, "\nabla"] \arrow[d,two heads] & E \tens{} \Omega^{\mr{log},1}_{\lr{X,\mc{M}}} \arrow[d, two heads]\\
      E\op{\x} \arrow[r, "\rho_{\x}"] & E\op{\x}\tens{\Z} M^{\mr{gp}}
    \end{tikzcd}\]
    
    Since $E\op{\x}$ is finite dimensional over $\Kbar$,
    for some $\xi_1,\ldots \xi_n \in M^{\mr{gp}} \tens {\Z} \Kbar$,
    $\lr{E\op{x}, \rho_{\overline{x}}}$ has a decomposition as in Lemma \ref{lem_for_def_exponents}.
    $\xi_1,\ldots \xi_n$ are called \textit{exponents} of $E$ at $\overline{x}$.
  \item
    Let $\mathcal{S}\subseteq \ov{\mathcal{M}}^{\mr{gp}}\tens{\Z}\Kbar$ be a subsheaf. If for any geometric point
    $\overline{x}$, the exponents
    of $E$ at $\overline{x}$ are contained in
    $\mathcal{S}_{\overline{x}}$, it is said that the all exponents of $E$ are contained in $\mathcal{S}$.
  \end{enumerate}
\end{defn}

\begin{rem}\label{exponents_rem}
  It is clear by linear algebra that, in the situation
  of Definition \ref{exponents}, exponents of
  any subquotient of $E$ is a subset of the exponents of $E$ at any geometric point.
\end{rem}

For a subsheaf $\mathcal{S} \subseteq \ov{\mathcal{M}}^{\mr{gp}} \tens{\Z} \overline{K}$,
we define $\LNM_{\lr{X,\mc{M}},\mathcal{S}}$ as the category
of log $\nabla$-modules on $\lr{X,\mc{M}}$ such that all the exponents are contained in $\mathcal{S}$.
We will prove that $\LNM_{\lr{X,\mc{M}},\mathcal{S}}$ is an abelian category under some assumptions in this subsection.

\def\wtgp#1#2{\ov{\mc{M}_{#1}}^{\mr{gp}}\tens{\Z}#2}

For a rigid space $Y$ and a weighted monoid $M$,
$Y\times \A{M}{K}\op{I}$ naturally has a log structure induced by $M$.
For a subset $\Sigma \subseteq M^{\mr{gp}} \tens{\Z} \overline{K}$,
we denote the image of $\Sigma$ under the natural map $M^{\mr{gp}}\tens{\Z}\Kbar\rightarrow \ov{\mc{M}}^{\mr{gp}}\tens{\Z}\Kbar$,
where $\mc{M}$ is the log structure of $Y\times \A{M}{K}\op{I}$, also by $\Sigma$. 
$\LNM_{Y \times \A{M}{K}\op{I},\Sigma}$ is the category
of log $\nabla$-modules on $Y \times \A{M}{K}\op{I}$ such that all the exponents are contained in $\Sigma$.

For $Y\times \A{M}{K}\op{I}$,
\[\Omega^{\mr{log},1}_{Y\times \A{M}{K}\op{I}}=\lr{\Omega^{1}_{Y} \tens{K} \O{\A{M}{K}\op{I}}} \oplus \lr{\Omega^{\mr{log},1}_{\A{M}{K}\op{I}} \tens{K}\O{Y}}.\]
We write $d=d^Y+d^{\mr{log}}$
corresponding to this decomposition where $d$ is the derivation on $Y\times \A{M}{K}\op{I}$.
For a log $\nabla$-module $\lr{E,\nabla}$ on $Y\times \A{M}{K}\op{I}$, we write
$\nabla=\nabla^Y+\nabla^{\mr{log}}$ corresponding to this decomposition.

We relate residue and exponents in the above definition with those in the sense of \cite{Shi}.

The next lemma and its proof are essentially the same as
Proposition-Definition 1.24 in \cite{Shi2}.

\begin{lem}\label{exponents_for_R}
  \def\y{\overline{y}}
  Let $Y$ be a smooth connected rigid space
  and $M$ a fine sharp weighted monoid.   
  Let $\lr{E,\nabla}$ be a log $\nabla$-module on $Y\times \A{M}{K}\!\brack{0,0}$.
  Let $\rho$ be the composition of $\nabla$ and the projection $E \tens{\O{Y}} \Omega^{\mr{log},1}_{Y \times \A{M}{K}\!\brack{0,0}}\rightarrow E\tens{\Z}M^{\mr{gp}}$. 
  Then there exist $\xi_1,\ldots,\xi_n \in M^{\mr{gp}}\tens{\Z}\Kbar$ and a decomposition
  $E_{\Kbar}=\bigoplus_{i=1}^nE_i$ such that $E_i\neq 0$, $\rho_{\Kbar}\op{E_i}\subseteq E_i\tens{\Z}M^{\mr{gp}}$
  and $\rho_{\Kbar} - \xi_i \cdot \id$ is nilpotent on $E_i$.

  Moreover, for any finite extension $K'$ of $K$ such that $\xi_1,\ldots,\xi_n \in M^{\mr{gp}}\tens{\Z}K'$, $E_{K'}$ admits such a decomposition.

  For any geometric point $\y \in Y$, the exponents of $E$ at $\y$ in the sense of \ref{exponents} are $\xi_1,\ldots,\xi_n$.
  Especially, if $E$ is an object of $\LNM_{Y\times \A{M}{K}\!\brack{0,0},\Sigma}$ then $\xi_1,\ldots,\xi_n \in \Sigma$.
\end{lem}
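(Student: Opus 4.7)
The plan is to exploit integrability of $\nabla$ to produce a family of pairwise commuting, horizontal, $\O{Y}$-linear endomorphisms of $E$, and then to extract the $\xi_i$ from a global simultaneous generalized-eigenspace decomposition after passing to a finite extension $K'$.

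As in the discussion preceding the lemma, write $\nabla=\nabla^Y+\rho$ according to the splitting
\[\Omega^{\mr{log},1}_{Y\times\A{M}{K}\!\brack{0,0}}=\Omega^1_Y\oplus(M^{\mr{gp}}\tens{\Z}K).\]
Since functions on $Y$ have vanishing $d^{\mr{log}}$, the map $\rho\colon E\to E\tens{\Z}M^{\mr{gp}}$ is $\O{Y}$-linear. Unpacking $\nabla^2=0$ along the induced splitting of $\Omega^{\mr{log},2}$ then yields three identities: $(\nabla^Y)^2=0$, so $(E,\nabla^Y)$ is a flat bundle on $Y$; $\rho\wedge\rho=0$, equivalent to pairwise commutativity of the $\O{Y}$-linear endomorphisms $\partial_\phi\coloneqq(\id_E\tens{}\phi)\circ\rho$ for $\phi\in\Hom\op{M^{\mr{gp}},\Z}$; and a mixed identity saying that each $\partial_\phi$ is horizontal for $\nabla^Y$.

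Choose a $\Z$-basis $m_1,\dots,m_r$ of $M^{\mr{free}}$ with dual functionals $\phi_1,\dots,\phi_r$ and put $\partial_i\coloneqq\partial_{\phi_i}$. Horizontality forces the coefficients of the characteristic polynomial of each $\partial_i$ to be horizontal global sections of $\O{Y}$; on the connected smooth rigid space $Y$ these coefficients are (global) constants, and since their roots are algebraic elements of $\Kbar$ (the eigenvalues of $\partial_i$ at any fiber), they in fact lie in a finite extension of $K$. Let $K'/K$ be a finite extension containing all these coefficients together with all roots $\alpha_{i,k}\in\Kbar$. Over $K'$ the characteristic polynomial of each $\partial_i$ splits into linear factors with constant coefficients, so the generalized eigenspaces $\Ker\op{(\partial_i-\alpha)^N}$ (for $N\gg 0$) are globally defined direct summands of $E_{K'}$ as $\O{Y_{K'}}$-modules; iteratively refining by the pairwise commuting operators $\partial_1,\dots,\partial_r$ (as in the proof of Lemma \ref{lem_for_def_exponents}) produces a global decomposition $E_{K'}=\bigoplus_{i=1}^n E_i$ with $E_i\ne 0$ on which each $\partial_j$ acts as $\alpha_{j,i}\cdot\id$ plus a commuting nilpotent. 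Setting $\xi_i\coloneqq\sum_j\alpha_{j,i}\tens{}m_j\in M^{\mr{free}}\tens{\Z}K'=M^{\mr{gp}}\tens{\Z}K'$ (torsion in $M^{\mr{gp}}$ disappears after tensoring with the $\Q$-algebra $K'$) gives the required $\xi_i$, and $\rho(E_i)\subseteq E_i\tens{\Z}M^{\mr{gp}}$ holds because $\rho=\sum_j\partial_j\tens{}m_j$; nilpotence of $\rho-\xi_i\cdot\id$ on $E_i$ follows from joint nilpotence of the commuting operators $\partial_j-\alpha_{j,i}$, since every $\phi\in\Hom\op{M^{\mr{gp}},\Z}$ is a $\Z$-linear combination of the $\phi_j$.

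For the final assertion, the residue $\rho_{\ov{y}}$ of Definition \ref{exponents} at a geometric point $\ov{y}\in Y$ is simply the specialization of the globally defined $\rho$ at $\ov{y}$, so restricting $E_{\Kbar}=\bigoplus_i E_i\tens{K'}\Kbar$ to the fiber at $\ov{y}$ produces precisely a decomposition of the form predicted by Lemma \ref{lem_for_def_exponents} with the same eigenvalues $\xi_1,\dots,\xi_n$; the $\Sigma$-containment conclusion is then immediate from the definition of $\LNM_{Y\times\A{M}{K}\!\brack{0,0},\Sigma}$. The step I expect to be most delicate is the passage from the fiberwise decomposition given by Lemma \ref{lem_for_def_exponents} to a global decomposition of $E_{K'}$, which rests entirely on horizontality of the $\partial_i$ and connectedness of $Y$: without these, the characteristic polynomials of the $\partial_i$ could vary over $Y$ and no canonical global splitting would exist.
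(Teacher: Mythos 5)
Your proposal is correct and follows essentially the same route as the paper's proof: integrability makes the commuting operators $\partial_i$ horizontal, so the coefficients of their characteristic polynomials are killed by $d_Y$ and hence lie in a finite extension of $K$ by connectedness of $Y$, after which the simultaneous generalized-eigenspace decomposition globalizes and constancy of the ranks of the summands on connected $Y$ gives the fiberwise statement. The only point treated more carefully in the paper is the identification of horizontal functions with elements of the residue field $K(y)$ (rather than asserting they are "constants" outright), but this is a matter of precision, not of substance.
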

\begin{proof}
  \def\res{\mathrm{res}}
  Let $e\coloneqq \mathrm{rk}\,E$.
  Take some $\lr{M^{\mr{gp}}}^{\mr{free}}\cong \Z^r$ and let $\res_j\coloneqq \lr{\id\tens{}\mr{pr}_j}\circ \rho: E \rightarrow E$ where $\mr{pr}_j$ is the projection
  $\Z^r\rightarrow \Z$ to the $j$-th factor.

  We decompose $E_{\ov{K}}$ with respect to $\res_1$.
  For $\xi\in\Kbar$, we define a subspace $H\op{\xi}$ of $E_{\Kbar}$ by
  \[\Gamma\op{V,H\op{\xi}}=\Set{\mathbf{v} \in \Gamma\op{V,E_{\Kbar}} | \exists k>0, \lr{\lr{\res_1}_{\Kbar}-\xi\cdot \id}^k\op{\mathbf{v}}=0}.\]
  Then there exists a natural map
  \begin{equation}\label{HxitoE}
    \bigoplus_{\xi\in \Kbar}H\op{\xi}\rightarrow E_{\Kbar}
  \end{equation}
  which is injective by the definition.
  
  Take an affinoid connected subspace $U=\Spm\,R\subseteq Y$ such that $E|_{U}$ is free.
  Let $Q_1\op{t}\in R\!\brack{t}$ be the characteristic polynomial of $\res_1$.
  Let $F\coloneqq \bigwedge^e E|_{U}\tens{K}K\!\brack{t}$ and
  $\psi\coloneqq \bigwedge^e \lr{t-\res_1}: F\rightarrow F$. Then $\psi$ is the multiplication by $Q_1\op{t}$.
  Let $\nabla_F:F\rightarrow F\tens{} \Omega^{1}_{Y}$ be the morphism induced by $\nabla$.
  By the integrability of $\nabla$, the following diagram commutes:
  \[\begin{tikzcd}[column sep=large]
  F \arrow{r}[name=U]{\psi} \arrow{d}[name=R]{\nabla_F}
  & F\arrow{d}[name=R]{\nabla_F}\\
  F \tens{} \Omega^{1}_{Y} \arrow{r}[name=U]{\psi\tens{}\id} &F \tens{} \Omega^{1}_{Y}.
  \end{tikzcd}\]
  Thus the coefficients of $Q_1\op{t}$ are killed by $d_Y$, the derivation on $Y$.

  Take a point $y\in U$ and let $K\op{y}$ be the residue field at $y$ which is a finite extension of $K$.
  Then $\Ker\op{d_y}\cong K\op{y}$ where $d_y$ is the derivation on $\mathcal{O}_{Y,y}$.
  Since $U$ is integral,
  $R\rightarrow \O{Y,y}$ is injective and so we can regard $Q_1\op{t}$ as an element of $K\op{y}\!\brack{t}\subseteq\overline{K}\!\brack{t}$.
  
  Let $\xi_{1,1},\ldots \xi_{1,n_1}\in \Kbar$ be the roots of $Q_1\op{t}$.
  Since $Q_1\op{\lr{\res_1}_{\Kbar}}=0$ on $U$, $\lr{E|_U}_{\Kbar} = \bigoplus_{i=1}^{n_1}H\op{\xi_{1,i}}|_{U}$.
  So the map (\ref{HxitoE}) is surjective on $U$.
  Because $U$ is arbitrary, (\ref{HxitoE}) is surjective on $X$.

  Since any $H\op{\xi}$ is a direct summand of $E_{\Kbar}$, it is locally free and $\mathrm{rk}\, H\op{\xi}$ is constant on $Y$
  because $Y$ is connected. Especially, $H\op{\xi}\neq 0$ if and only if $\xi\in \brace{\xi_{1,1},\ldots \xi_{1,n_1}}$,
  which means that this set is independent of the choice of $U$.
  Let $E_{1,i}\coloneqq H\op{\xi_{1,i}}$,
  then $E_{\Kbar} = \bigoplus_{j=1}^{n_1} E_{1,i}$ on $Y$.
  By the integrability of $\nabla$,
  each $E_{1,i}$ is a log $\nabla$-module on $Y \times \A{M}{K}\!\brack{0,0}$.
  
  We can take such decomposition of each $E_{1,i}$ with respect to $\res_2$, and so on. If we repeat this process to $\res_r$,
  we reach the decomposition satisfying the required condition.

  If $\xi_{1},\ldots \xi_{n}\in M^{\mr{gp}}\tens{\Z}K'$, then we get elements of $K'$ as the roots of the characteristic polynomial
  on each step, so we can decompose $E_{K'}$ instead of $E_{\Kbar}$.

  For any geometric point $\ov{y} \in Y$, let $E_{i}\op{\ov{y}}\coloneqq \lr{E_i}_{\ov{y}}/m_{\ov{y}}\lr{E_i}_{\ov{y}}$ where $m_{\ov{y}}$ is the
  maximal ideal at $\ov{y}$, then we have $E\op{\ov{y}}=\bigoplus_{i=1}^nE_{i}\op{\ov{y}}$
  and $\rho\tens{}k\op{\ov{y}}-\xi_i\cdot \id$ acts on $E_{i}\op{y}$ nilpotently.
  Since any $E_i$ is a direct summand of $E_{\Kbar}$, it is locally free and $\mathrm{rk}\, E_i$ is constant on $Y$,
  so $E_{i}\op{\ov{y}}\neq 0$. This means the last assertion.
\end{proof}

\begin{defn}
  $\rho$ (resp. $\xi_1,\ldots, \xi_n$) in Lemma \ref{exponents_for_R}
  is called the \textit{residue of $\lr{E,\nabla}$} (resp. \textit{exponents of $\lr{E,\nabla}$}.)
\end{defn}

Let $\phi: \lr{M^{\mr{gp}}}^{\mr{free}} \hookrightarrow \Z^r$ be an injective homomorphism.
$\phi$ induces $\phi\tens{}\id_K: M^{\mr{gp}} \tens{\Z} K \hookrightarrow K^r$.
Then we have an inclusion
\begin{equation}\label{omegalog}
  \Omega^{\mr{log},1}_{\A{M}{K}\op{I}}=\mathcal{O}_{\A{M}{K}\op{I}} \tens{\mathbb{Z}} M^{\mr{gp}}
  \subseteq \bigoplus ^r_{i=1} \mathcal{O}_{\A{M}{K}\op{I}} d\log t_i
\end{equation}
induced by $\phi\tens{}\id_K$, where $\{d\log t_i\}$ is the canonical basis of $K^r$.

\begin{defn}\label{defNID1}
Let $S \subseteq \Kbar$ be a subset. $\Sigma \subseteq M^{\mr{gp}}\otimes_{\Z}{\overline K} $ is called \textit{locally ($S$-D)}
if there exists a homomorphism $\phi: \lr{M^{\mr{gp}}}^{\mr{free}} \hookrightarrow \Z^r$ such that for any two elements
$\lr{\alpha_i}, \lr{\beta_i} \in \lr{\phi\tens{}\id_{\Kbar}}\op{\Sigma}$ and for any $1 \leq i \leq r$,
$\alpha_i-\beta_i \in S$.
\end{defn}

\begin{rem}\label{rem_after_NID}
In Definition \ref{defNID1}, we can also assume that $\phi\tens{}\id_K$ is isomorphic, hence the inclusion (\ref{omegalog}) is an equality.
Indeed, let $\mathbf{e}_1, \ldots, \mathbf{e}_r$ be the standard basis of $\Z^r$.
If $\Z \mathbf{e}_i \cap \Im \, \phi= \brace{0}$ for some $i$,
we can take $\mr{pr}_{\hat{i}}\circ \phi$ instead of $\phi$,
where $\mr{pr}_{\hat{i}}: \Z^r \rightarrow \Z^{r-1}$ is the projection
removing the $i$-th entry. Repeating this process, we arrive at the situation where
$\Z \mathbf{e}_i \cap \Im \, \phi \neq \brace{0}$ for any $i$ or $r=0$.
Then the rank of $\lr{M^{\mr{gp}}}^{\mr{free}}$ is equal to
$r$ and $\phi\tens{}\id_K$ is isomorphic.
\end{rem}

\begin{defn}
  For $\alpha \in \Kbar$, we define its type $\mathrm{type}\op{\alpha}$ as the convergent radius of
  the series $\sum_{s \in \N, s \neq \alpha}\lr{s-\alpha}^{-1}t^s$. If $\mathrm{type}\op{\alpha}<1$ or $\mathrm{type}\op{-\alpha}<1$,
  $\alpha$ is called a \textit{$p$-adic Liouville number}. We put:
  \begin{align*}
  \mr{NI}&=\Kbar\setminus\lr{\Z\setminus\brace{0}},\\
  \mr{PT}&=\Set{\alpha \in \Kbar|\textrm{type}\op{\alpha} > 0},\\
  \mr{NL}&=\Set{\alpha \in \Kbar|\text{ $\alpha$ is not a $p$-adic Liouville number}}.
  \end{align*}
  We omit the hyphen in the notation when we write (NI-D), (PT-D) or (NL-D).
\end{defn}

\begin{defn}
  Let $Y$ be a rigid space,
  $M$ a fine sharp weighted monoid, $\Sigma \subseteq M^{\mr{gp}}\tens{\Z}\Kbar$ a subset and $I \subseteq \RNN$ an aligned subinterval.
  Since we have the equalities
  \begin{align*}
    \Omega^{\mr{log},1}_{Y\times \A{M}{K}\op{I}}
    &=\lr{\Omega^{1}_{Y} \tens{K} \O{\A{M}{K}\op{I}}} \oplus \lr{\Omega^{\mr{log},1}_{\A{M}{K}\op{I}} \tens{K}\O{Y}}\\
    &=\lr{\Omega^{1}_{Y} \tens{K} \O{\A{M}{K}\op{I}}} \oplus \lr{M^{\mr{gp}} \tens{\Z}\O{Y\times\A{M}{K}\op{I}}}\\
    &=\lr{\Omega^{1}_{Y}\oplus M^{\mr{gp}}\tens{\Z}\O{Y}} \tens{K} \O{\A{M}{K}\op{I}}\\
    &=\Omega^{\mr{log},1}_{Y\times \A{M}{K}\!\brack{0,0}} \tens{K} \O{\A{M}{K}\op{I}},
  \end{align*}
  there exists a natural inclusion
  \[\Omega^{\mr{log},1}_{Y\times \A{M}{K}\!\brack{0,0}}\hookrightarrow\Omega^{\mr{log},1}_{Y\times \A{M}{K}\op{I}}.\]
  The functor
  $\mathcal{U}_{I}:\LNM_{Y\times \A{M}{K}\!\brack{0,0},\Sigma} \rightarrow \LNM_{Y\times \A{M}{K}\op{I},\Sigma}$ is defined as follows:
  $\mathcal{U}_I$ sends $\lr{E,\nabla} \in \LNM_{Y\times \A{M}{K}\!\brack{0,0},\Sigma}$ to $E \tens{K} \O{\A{M}{K}\op{I}}$
  endowed with the connection
  \[\mathbf{v}\tens{} t^m \mapsto \lr{\nabla\op{\mathbf{v}}+\mathbf{v}d\log t^m}\tens{}t^m\]
  where $\mathbf{v}$ is a section of $\mathcal{O}_Y$ and $m \in M^{\mr{gp}}$. 
\end{defn}

The following lemma is a key proposition of this subsection.

\begin{lem}\label{1.9}(cf. Lemma 1.9 of \cite{Shi})
  Let $Y=\Spm \, R$ be a smooth connected affinoid space,
  $M$ a fine sharp weighted monoid and $\Sigma \subseteq M^{\mr{gp}}\otimes_{\Z}{\overline K}$ be an locally ($\lr{\mr{NI} \cap  \mr{PT}}$-D) subset.
  Let $a \in \lr{0, \infty}\cap \Gamma^{*}$ and $\lr{E,\nabla}$ an object of $\LNM_{Y \times \A{M}{K}\!\brack{0,a}, \Sigma}$
  such that $E|_{Y\times \mb{0}}$ is free. Then there exists $b \in (0,a] \cap \Gamma^{*}$ such that $E|_{Y \times \A{M}{K}\!\brack{0,b}}$
  is in the essential image of $\mathcal{U}_{\brack{0,b}}: \LNM_{Y \times \A{M}{K}\!\brack{0,0},\Sigma} \rightarrow \LNM_{Y\times \A{M}{K}\!\brack{0,b},\Sigma}$.
\end{lem}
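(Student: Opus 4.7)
The plan is to adapt the proof of Lemma 1.9 of \cite{Shi}. First, since $E|_{Y\times\mb{0}}$ is free and the sets $\A{M}{K}\!\brack{0,b}$ form a cofinal family of neighborhoods of the vertex by Proposition \ref{polyannuli_are_basis}, a Nakayama-type argument (combined with a coherence argument on the affinoid $Y\times \A{M}{K}\!\brack{0,b}$) shows that $E$ becomes free on $Y\times \A{M}{K}\!\brack{0,b_0}$ for some $b_0\in\lr{0,a}\cap \Gamma^*$. Lift a basis of $E|_{Y\times \mb{0}}$ to such a basis. Using Remark \ref{rem_after_NID}, fix an isomorphism $\phi\tens{}\id_K:M^{\mr{gp}}\tens{\Z}K\cong K^r$ witnessing the locally $(\mr{NI}\cap\mr{PT})$-D property, and write the log connection in the chosen basis as $\nabla^Y+\sum_{i=1}^r N_i\,d\log t_i$, where $N_i\in \mr{Mat}\op{\Gamma\op{Y\times\A{M}{K}\!\brack{0,b_0},\O{}}}$.

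The candidate object on the vertex side is $\lr{E_0,\nabla_0}\coloneqq\lr{E|_{Y\times\mb{0}},\nabla|_{Y\times\mb{0}}}$, whose exponents lie in $\Sigma$ by Lemma \ref{exponents_for_R}. To prove $E|_{Y\times \A{M}{K}\!\brack{0,b}}\cong \mc{U}_{\brack{0,b}}\op{E_0,\nabla_0}$, after a finite extension $K'/K$ containing the exponents, decompose $E_0$ by Lemma \ref{exponents_for_R} into blocks on which $N_i\op{0}$ is of the form $\xi_i^{(k)}\id+\text{nilpotent}$, where $\xi^{(k)}\in\Sigma$. Expanding $N_i=N_i\op{0}+\sum_{m\in M\setminus\{0\}} N_{i,m}t^m$ via Proposition \ref{GammaAMK}, the required gauge transformation is sought as $U\op{y,t}=\id+\sum_{m>0} U_m\op{y}t^m$, and imposing the gauge equation block-by-block yields, for each pair of blocks $\lr{k,l}$ and each $m$, a system of the form
\[
\brack{N_i\op{0},U_m^{kl}}-\lr{\phi_i\op{m}}U_m^{kl}=R_{i,m}^{kl},
\]
where $R_{i,m}^{kl}$ depends polynomially on the $N_{j,m'}$ and on $U_{m'}$ with $h\op{m'}<h\op{m}$.

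Because $\Sigma$ is locally $\lr{\mr{NI}}$-D, for $m\neq 0$ the relevant scalar operators $\lr{\xi^{(k)}-\xi^{(l)}-m}$ (interpreted componentwise via $\phi$) are non-zero integers only trivially, so the operator on $U_m^{kl}$ is invertible up to a finite-dimensional nilpotent perturbation, and we can solve for $U_m^{kl}$ uniquely. By Galois descent in $K'/K$, the resulting $U$ is defined over $K$.

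The main obstacle is convergence of $U$ on some $Y\times \A{M}{K}\!\brack{0,b}$. Here the $\lr{\mr{PT}}$-D condition yields a uniform bound of the form $\abs{\lr{\xi^{(k)}-\xi^{(l)}-m}^{-1}}\leq C\cdot c^{\abs{h}\op{m}}$ for some $c\in \Gamma^{*}$, while Proposition \ref{GammaAMK} controls the growth of the $N_{i,m}$ on $\A{M}{K}\!\brack{0,b_0}$; combining these via induction on $h\op{m}$ (tracking the combinatorial sum over decompositions $m=m_1+\cdots+m_s$) gives a bound $\abs{U_m}\leq C'\cdot b^{-h\op{m}}$ for any sufficiently small $b\in\lr{0,b_0}\cap \Gamma^*$, whence $U$ converges on $Y\times \A{M}{K}\!\brack{0,b}$. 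Since $U\op{y,0}=\id$, shrinking $b$ further makes $U$ invertible on this polyannulus, yielding the desired isomorphism $\mc{U}_{\brack{0,b}}\op{E_0,\nabla_0}\cong E|_{Y\times \A{M}{K}\!\brack{0,b}}$. The delicate point compared to \cite{Shi} is that $M$ need not be free, so the combinatorics of $h$, $h^\pm$ replaces the multi-index control used there.
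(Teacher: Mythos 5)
Your overall strategy --- a gauge transformation $U=\id+\sum_{m\neq 0}U_mt^m$ built inductively on $h\op{m}$, with invertibility of the commutator-plus-scalar operators coming from the (NI) part of the hypothesis and convergence from the (PT) part --- is the same as the paper's. However, the convergence step has a genuine gap. The bound you extract from positive type, $\abs{\lr{\xi^{(k)}-\xi^{(l)}-m}^{-1}}\leq C\cdot c^{h\op{m}}$ with $c>1$, is correct term by term but is not strong enough to close the induction. Unrolling the recursion, $\abs{U_m}$ is controlled by a product of such factors along a chain $m^1<m^2<\cdots<m^s=m$ in $M$; the chain can have length comparable to $h\op{m}$ and $\sum_k h\op{m^k}$ can be of order $h\op{m}^2$ (e.g. $m^k=km_0$ for a generator $m_0$ of weight one), so the resulting bound is of size $c^{O\op{h\op{m}^2}}b_0^{-h\op{m}}$, and $c^{O\op{h\op{m}^2}}b^{h\op{m}}$ diverges for \emph{every} $b>0$. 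The missing ingredient is the counting argument (Claim \ref{c2} in the paper, going back to \emph{Claim 2} in the proof of Lemma 1.9 of \cite{Shi}): along any chain, the number of indices $k$ for which the small-denominator factor exceeds $e^{x}$ is at most $O\op{1+h\op{m}e^{-C_2x}}$, because two near-coincidences $\abs{\xi_{i,j}-\xi_{i,j'}-m^k_i},\abs{\xi_{i,j}-\xi_{i,j'}-m^{k'}_i}\leq e^{-x}$ force $m^{k'}_i-m^k_i$ to be $p$-adically small, hence large in archimedean absolute value, hence $h\op{m^{k'}}-h\op{m^k}$ large. Integrating this count is what gives $\log$ of the product along the chain $=O\op{h\op{m}}$ rather than $O\op{h\op{m}^2}$, and only then does $U$ converge on some $Y\times\A{M}{K}\!\brack{0,b}$. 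Without this refinement the convergence claim fails.

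A secondary issue: for a fixed $m\neq 0$ you cannot impose the gauge equation for every $i$ independently, since for indices $i$ with $m_i=0$ the operator $X\mapsto \brack{N_i\op{0},X}$ is not invertible (it kills the identity on diagonal blocks), and the system of $r$ equations for the single unknown $U_m$ is overdetermined. The paper picks one $i_0$ with $m_{i_0}\neq 0$, solves only that equation (Claim \ref{BmC}), and then uses the integrability of $\nabla$ (Claim \ref{c1}) to show the remaining equations hold automatically; your sketch needs this consistency step. The rest of your plan (reduction to the free case, identification of the candidate object on $Y\times\mb{0}$ via Lemma \ref{exponents_for_R}, descent back to $K$) matches the paper, which additionally verifies at the end that $\nabla^Y$ preserves the new lattice --- a point you should also record.
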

\begin{proof}
As the proof of Lemma 1.9 of \cite{Shi}, we can retake $a$ and assume that $E$ is free.
Let $\mathbf{e}= \lr{\mathbf{e}_1, \ldots,\mathbf{e}_n}$ be a basis of $E$.

Take $\phi: M \rightarrow \Z^r$ such that $\phi\tens{}\id_K$ is isomorphic with respect to which the condition of
($\lr{\mr{NI} \cap  \mr{PT}}$-D) is satisfied.
Let $d\log t_1, \ldots, d\log t_r$ be the induced basis of $\Omega^{\mr{log},1}_{\A{M}{K}\!\brack{0,a}}$.
Write $\nabla^{\mr{log}}\op{\mb{v}}=\sum_{i=1}^{r} \partial_{i}\op{\mb{v}} d\log t_{i}$ and let
$A^{i}=\sum_{m \in M} A^{i}_{m} t^m \in \Mat_n\op{R\tens{K}\O{}\lr{\A{K}{M}\!\brack{0,a}}}$
be the matrix representation of $\partial_{i}$ with respect to $\mathbf{e}$.

We construct matrices $B=\sum_{m \in M}B_{m}t^{m}$ and $B'=\sum_{m \in M}B'_{m}t^{m}$ satisfying the following equations for all $1 \leq i \leq r$.

\begin{equation*}
A^iB+\partial_{i}B=BA^{i}_{0},
\end{equation*}
\begin{equation*}
BB'=I.
\end{equation*}

These are equivalent to the equations for all $m \in M$

\begin{equation}\label{**}
A^{i}_{0}B_{m}-B_{m}A^{i}_{0}+m_{i}B_{m}=-\sum_{\substack{m'+m''=m\\m',m'' \neq 0}}A_{m'}B_{m''},
\end{equation}
\begin{equation}\label{**2}
\sum_{m'+m''=m}B_{m'}B'_{m''}=\begin{cases}
I & m=0, \\
O & m\neq 0.
\end{cases}
\end{equation}
Here, $m_{i}$ is the $i$-th entry of $\phi\op{m}$.

By Lemma \ref{exponents_for_R}, we can take the exponents of $E|_{Y\times \A{M}{K}\!\brack{0,0}}$, which we denote by $\xi_{1},\ldots,\xi_{l}\in M^{\mr{gp}}\tens{\Z}\Kbar$.
For $1 \leq i \leq r$, we denote the $i$-th entry of $\lr{\phi\tens{}\id_{\Kbar}}\op{\xi_j}$ by $\xi_{i,j} \in \Kbar$.
By the proof of Lemma \ref{exponents_for_R}, $\brace{\xi_{i,j}}_j$ are the eigenvalues of $A^i_0$.

To construct $B$, we first show the claim which is proven in the proof of Lemma 1.9 in \cite{Shi}.

\begin{claim}\label{BmC}
Let $g : \mathrm{Mat}_n\op{R} \rightarrow \mathrm{Mat}_n\op{R}$ be the map defined as $X \mapsto A^{i}_0 X -X A^i_0$.
Then for any $s \in \Z \setminus \brace{0}$, $g+s\cdot \id$ is invertible and there exist constants
$C\gg0, e\gg0$ independent of $s$ such that
\[\abs{\lr{g+s\cdot \id}^{-1}} \leq C\lr{ \max \brace{\max_{j,j'}\abs{
\xi_{i,j}-\xi_{i,j'}+s}^{-1}, 1}}^e,\]
where $\abs{\cdot}$ is the operator norm.
\end{claim}
\begin{proof}
To prove this, we may enlarge $K$ so that $\brace{\xi_{i,j}}_{i,j} \subseteq K$.

Let $R^n=\bigoplus_{j=1}^{l} E_j$ be the decomposition of $R^n$, where $E_j$ is the $R$-submodule on which $A^i_0-\xi_{i,j}\cdot \id$ is nilpotent.
Then
\[\Mat_n\op{R}=\bigoplus_{j,j'=1}^{l} \Hom_R\op{E_{j'},E_{j}}\]
and $g$ acts on each $\Hom_R\op{E_{j'},E_{j}}$ because $A^{i}_0$ acts on each $E_j$.

Define $g_1$ and $g_2$ as follows:

\begin{equation*}
\begin{array}{cccc}
g_1: & \displaystyle \bigoplus_{j,j'=1}^{l} \Hom_R\op{E_{j'},E_{j}} &
\rightarrow & \displaystyle \bigoplus_{j,j'=1}^{l} \Hom_R\op{E_{j'},E_{j}}, \\
& \rotatebox{90}{$\in$} & & \rotatebox{90}{$\in$} \\
& \lr{X_{j,j'}}_{j,j'} & \mapsto& \lr{\lr{\xi_{i,j}-\xi_{i,j'}}X_{j,j'}}_{j,j'} \\
g_2: & \displaystyle \bigoplus_{j,j'=1}^{l} \Hom_R\op{E_{j'},E_{j}} &
\rightarrow & \displaystyle \bigoplus_{j,j'=1}^{l} \Hom_R\op{E_{j'},E_{j}}. \\
& \rotatebox{90}{$\in$} & & \rotatebox{90}{$\in$} \\
& \lr{X_{j,j'}}_{j,j'} & \mapsto& \lr{\lr{A^i_0-\xi_{i,j}}X_{j,j'}
-X_{j,j'}\lr{A^i_0-\xi_{i,j'}}}_{j,j'}
\end{array}
\end{equation*}
Then $g=g_1+g_2$. $g_1 + s\cdot \id$ is invertible for any $s \in \Z
\setminus \brace{0}$ because $\xi_{i,j}-\xi_{i,j'}+s\neq 0$
for any $j,j'$ by the assumption. $g_2$ is nilpotent because
$A^i_0-\xi_{i,j}$ acts $E_j$ nilpotently. So $g+s\cdot \id$ is invertible.

We can take a constant $c\gg0$ independently of $s$ such that
$\abs{\lr{g_1+s \cdot \id}^{-1}} \leq c \max_{j,j'}\abs{\xi_{i,j}-\xi_{i,j'}+s}^{-1}$,
$\abs{g_2} \leq c$.

Take $e\gg0$ such that $g_2^e=0$, then $\lr{g+s\cdot \id}^{-1}=\sum_{k=0}^{e-1}\lr{-1}^k \lr{g_1+s\cdot \id}^{-k-1}g_2^k$. So
\begin{align*}
\abs{\lr{g+s\cdot \id}^{-1}} &\leq
\max_{0\leq k <e} \abs{g_1+s\cdot \id}^{-k-1}\abs{g_2}^k \\
&\leq c^{2e-1} \lr{ \max \brace{\max_{j,j'}\abs{\xi_{i,j}-\xi_{i,j'}+s}^{-1}, 1}}^e.
\end{align*}
Hence we obtain the required inequality by taking $C=c^{2e-1}$.
\end{proof}

We also prove the following claim.

\begin{claim}\label{c1}
Fix $m^0 \in M, 1 \leq i_0,i_1 \leq r$. Assume:
\begin{itemize}
\item (\ref{**}) and (\ref{**2}) are true when $i=i_0$, $m \leq m^0$.
\item (\ref{**}) is true when $i=i_1$, $m<m^0$.
\item $m^0_{i_0} \neq 0$.
\end{itemize}
Then (\ref{**}) is true when $i=i_1$, $m=m^0$.
\end{claim}
\begin{proof}
Let $J$ be the $R$-submodule of $R\!\brack{\brack{M}}$ generated
by $\Set{ t^{m} | m \nleq m^0 }$. It is an ideal.
The first assumption means:
\[A^{i_0}B+\partial_{i_0}B\equiv BA_0^{i_0} \pmod J,\]
\[BB'\equiv I \pmod J.\]

By the integrability of $\lr{E,\nabla}$
\[\partial_{i_0}\partial_{i_1}\op{\mathbf{e}B}=\partial_{i_1}\partial_{i_0}\op{\mathbf{e}B}.\]

The left hand side is
\begin{align*}
\partial_{i_1}\partial_{i_0}\op{\mathbf{e}B}
&=\partial_{i_1}\op{\mathbf{e}\lr{ A^{i_0}B+\partial_{i_0}B}} \\
&\equiv \partial_{i_1}\op{\mathbf{e}BA^{i_0}_0} \pmod J \\
&=\mathbf{e}\lr{ A^{i_1}B+\partial_{i_1}B} A^{i_0}_0\\
&\equiv \mathbf{e}B\lr{ B' A^{i_1}B+B'\partial_{i_1}B} A^{i_0}_0 \pmod J.
\end{align*}

The right hand side is
\begin{align*}
\partial_{i_0}\partial_{i_1}\op{\mathbf{e}B}
&=\partial_{i_0}\op{ \mathbf{e}\lr{ A^{i_1}B+\partial_{i_1}B}}\\
&\equiv \partial_{i_0}\op{ \mathbf{e}B\lr{ B'A^{i_1}B+B'\partial_{i_1}B}} \pmod J\\
&\equiv \mathbf{e}B\lr{ A_0^{i_0}+\partial_{i_0} }\lr{ B' A^{i_1}B+B'\partial_{i_1}B } \pmod J.
\end{align*}

Let $A'=B'A^{i_1}B+B'\partial_{i_1}B$, then
\begin{equation}\label{A'}
A_0^{i_0}A'+\partial_{i_0} A' \equiv A'A^{i_0}_{0} \pmod J.
\end{equation}

By the comparison of the coefficients of $t^{m^0}$ in (\ref{A'})
\begin{equation}\label{XXX}
  A^{i_0}_{0} A'_{m^0} - A'_{m^0}A^{i_0}_{0}+m^0_{i_0} A'_{m^0}=O.
\end{equation}
Here, $A'_{m^0}$ is the coefficient of $t^{m^0}$ in $A'$.

By (\ref{XXX}), Claim \ref{BmC} and the assumption that $m^0_{i_0} \neq 0$, $A'_{m^0}=0$. By this and the second assumption,
$A'\equiv A^{i_1}_{0} \pmod J$, so
$ A^{i_1}B+\partial_{i_1}B \equiv B A^{i_1}_{0} \pmod J$.
Therefore (\ref{**}) is true when $i=i_1$, $m=m^0$.

\end{proof}

Let $h$ be the weighting of $M$.
We construct $B_m$ and $B'_m$ inductively with respect to $h\op{m}$.

First, we let $B_0=B'_0=I$.

Take $m \in M, m \neq 0$. Take $1 \leq i_0 \leq r$ such that $m_{i_0} \neq 0$
(this is always possible because $M \cap \lr{M^{\mr{gp}}}^{\mr{tor}}=\brace{0}$ by the assumption of $M$.)
We assume that $B_{m'}, B'_{m'}$ are constructed for all $m'$ with $h\op{m'}<h\op{m}$, especially for all $m'<m$.

By Claim \ref{BmC} we can take $B_m$
satisfying (\ref{**}) for $i=i_0$, and can easily take $B'_m$ satisfying (\ref{**2}).
Then by Claim \ref{c1}, (\ref{**}) is true for any $i$.

Next, we prove that $B=\sum_{m\in M}B_mt^m$ converges on $\A{M}{K}\!\brack{0,b}$ for some $b>0$.

For all $m \in M$ and $i$ such that $m_i \neq 0$, let

\[z_{m,i}=\max \left\lbrace \max_{j,j'}\left\vert \xi_{i,j}-\xi_{i,j'}-m_i \right\vert^{-1}
, 1 \right\rbrace.\]

For $m\in M$, let

\[Z_{m}=\max_{m^1 < m^2 < \cdots < m^s=m} \prod_{k=1}^s \min_{i, m^{k}_i \neq 0}
z_{m^k,i}.\]

Take $C\gg0, e\gg0$ which satisfy the inequality of Claim \ref{BmC}.
We can also assume that $\left\vert A^{i}_m\right\vert \leq Ca^{-h\op{m}}$ for all $i$ and $m \in M$
because all $A^{i}$'s converge in $\A{M}{K}\!\brack{0,a}$.

We prove inductively that

\begin{equation}\label{B_m}
\abs{B_m}\leq Z_m^eC^{2h\op{m}}a^{-h\op{m}}.
\end{equation}

The case $m=0$ is clear.

Assume this is true for all $m'<m$. By Claim \ref{BmC} and (\ref{**})
\begin{align*}
\vert B_m \vert & \leq 
\min_{i,m_i\neq 0} C z_{m,i}^e \left \vert \sum_{\substack{m'+m''=m \\ m',m''\neq 0}}
A^{i}_{m'}B_{m''}\right \vert \\
& \leq 
\min_{i,m_i\neq 0} C z_{m,i}^e \max_{\substack{m'+m''=m \\ m',m''\neq 0}}
\left \vert A^{i}_{m'} \right \vert \left \vert B_{m''}\right \vert \\
& \leq 
\min_{i,m_i\neq 0} C z_{m,i}^e \max_{\substack{m'+m''=m \\ m',m''\neq 0}}
C a^{-h\op{m'}} \cdot Z_{m''}^e C^{2h\op{m''}}a^{-h\op{m''}} \\
& = 
\max_{\substack{m'+m''=m \\ m',m''\neq 0}}\lr{ \min_{i,m_i\neq 0} z_{m,i} Z_{m''}} ^e C^{2h\op{m''}+2}a^{-h\op{m}} \\
& \leq 
Z_{m}^e C^{2h\op{m}}a^{-h\op{m}}. \\
\end{align*}

We must estimate $Z_{m}$.

\begin{claim}\label{c2}
There exist constants $C_1, C_2 > 0$
such that for any $m^1 < m^2 < \cdots < m^s$
and $x>0$ with $\abs{\Set{1\leq k\leq s| \log \min_{i,m^k_i\neq 0} z_{m^k,i} \geq x }} > rn^2$,
$h\op{m^s}-h\op{m^1} \geq C_1e^{C_2x}$.
\end{claim}
\begin{proof}
By the pigeonhole principle, there exist $m^{k} < m^{k'}$ and $i, j, j'$ such that
$m^{k}_i, m^{k'}_i \neq 0$ and that
$\abs{\xi_{i,j}-\xi_{i,j'}-m^{k}_i }, \abs{\xi_{i,j}-\xi_{i,j'}-m^{k'}_i } \leq e^{-x}$.
There exists $m' \in M$ such that $m^{k} +m' = m^{k'}$. Because $m'_i=m^{k'}_i-m^{k}_i$,
$\left\vert m'_i \right\vert \leq e^{-x}$.

On the other hand, there exists a constant $c_1>0$ which depends only on the valuation of $K$ such that
for any $a \in \Z$, $\log \abs{a} \geq - c_1 \log \abs{a}_{\R}$,
where $\abs{\cdot}_{\R}$ is the usual absolute value of $\R$. There also exists a constant $c_2>0$ which depends only on $h$ such that
for any $1 \leq i \leq r$ and $m\in M$, $\left \vert m_i\right \vert_{\mathbb{R}} \leq c_2h\op{m}$.

Then
\[x \leq -\log \abs{m'_i} \leq c_1 \log \abs{m'_i}_{\mathbb{R}} \leq  c_1\lr{ \log c_2+\log h\op{m'}}.\]

Therefore, if we put $C_1\coloneqq c_2^{-1}, C_2\coloneqq c_1^{-1}$, then $h\op{m^s}-h\op{m^1} \geq h\op{m'} \geq C_1e^{C_2x}$.
\end{proof}

By Claim \ref{c2}, for $m^1<m^2 < \cdots < m^s=m$ and $x>0$,

\[\abs{ \Set{1\leq k \leq s| \log \min_{i,m^k_i\neq 0} z_{m^k,i} \geq x }}\leq\lr{ rn^2+1}\lr{ 1 + \frac{h\op{m}}{C_1} e^{-C_2x} }. \]
Therefore, if we put
\[v_m\coloneqq\max_{m'< m}\log\op{\min_{i,m'_i\neq 0} z_{m',i}},\]
$\log Z_m$ is estimated as follows:
\begin{align*}
\log Z_m &= \max_{m^1< \cdots <  m^s=m}\sum_{j=1}^s \log\op{\min_{i,m^j_i\neq 0} z_{m^j,i} } \\
&\leq  \int_{0}^{v_m}\lr{ rn^2+1 }\lr{ 1+\frac{h\op{m}}{C_1}e^{-C_2x} } dx \\
&\leq\lr{ rn^2+1 }\lr{ v_m +\int_{0}^{\infty} \frac{h\op{m}}{C_1}e^{-C_2x} dx }\\
&=\lr{ rn^2+1 }\lr{ v_m + \frac{h\op{m}}{C_1C_2}}.
\end{align*}

By the assumption about exponents, $\xi_{i,j}-\xi_{i,j'}$ has positive type for any
$i,j,j'$. Hence
\[ \log \op{\min_{i,m'_i\neq 0} z_{m',i}} \in O\op{\max_{i} \abs{m_i}_{\mathbb{R}}} \subseteq O\op{h\op{m}},\]
and so $v_m \in O\op{h\op{m}}$.

Therefore $\log Z_m \in O\op{h\op{m}}$. Thus, by (\ref{B_m}), $B$ converges on $Y \times \A{M}{K}\!\brack{0,b}$
for some $ 0 < b \ll a$.

Let $\mathbf{f}=\mathbf{e}B$ as a basis of $E|_{Y \times \A{M}{K}\!\brack{0,b}}$.
Then the matrix representation of $\partial_i$
with respect to $\mathbf{f}$ is $A^i_{0} \in \Mat_{n}\op{R}$.
Thus the $R$-module generated by $\mathbf{f}$ is stable under $\partial_i$.

Finally, we show that the $R$-module generated by $\mathbf{f}$ is stable under $\nabla^Y$.
Let $\omega_1,\ldots, \omega_{r'}$ be a local basis of $\Omega^{1}_{Y}$. We write
$\nabla^Y=\sum_{i'=1}^{r'} \partial'_{i'}\omega_{i'}$ and let $D^{i'}=\sum_{m\in M} D^{i'}_m t^m$
be the matrix representation of $\partial'_i$ with respect to $\mathbf{f}$.

Let $m\in M\setminus\brace{0}$. Take $1 \leq i \leq r$ such that $m_i\neq 0$. By the integrability of $\nabla$,
$\partial_i\partial'_{i'}\op{\mathbf{f}}=\partial'_{i'}\partial_{i}\op{\mathbf{f}}$.
So we have $A^{i}_0D^{i'}+\partial^{i}D^{i'}=D^{i'}A^{i}_0$,
which means
\[A^{i}_0D^{i'}_m+m_iD^{i'}_m-D^{i'}_mA^{i}_0=O.\]
By Claim \ref{BmC}, $D^{i'}_m=O$.

Therefore $D^{i'}=D^{i'}_0$ and $\nabla^Y$ is also
defined on the $R$-module generated by $\mathbf{f}$.

So the $R$-module generated by $\mathbf{f}$ equipped with $\nabla_Y$
and $\partial_i$ for $1\leq i \leq r$ defines an object of $\LNM_{Y \times \A{M}{K}\!\brack{0,0},\Sigma}$, whose image under $\mathcal{U}_{\brack{0,b}}$ is equal to $E_{Y \times \A{M}{K}\!\brack{0,b}}$.
\end{proof}

Next, we define a global version of the notion of ($S$-D).

\begin{defn}\label{global S-D}
  Let $S$ be a subset of $\Kbar$ and  $M$ a fine monoid. A subset $\Sigma \subseteq M^{\mr{gp}} \tens{\Z} \Kbar$
  is called \textit{($S$-D)} if, for all face $F$ of $M$, the image of $\Sigma$ in $\lr{M/F}^{\mr{gp}} \tens{\Z} \Kbar$
  is locally ($S$-D).
\end{defn}

\begin{lem}
  Let $M$ be a semi-saturated monoid. For any facet $F$ of $M$, $\lr{M/F}^{\mr{gp}}\cong\Z$.
\end{lem}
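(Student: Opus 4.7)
Set $N \coloneqq M/F$. By Proposition~\ref{semi-saturated}~(3), $N$ is semi-saturated; it is also sharp since $F$ is a face of $M$, so by Proposition~\ref{semi-saturated}~(2) the group $N^{\mr{gp}}$ is torsion-free. Moreover, the faces of $N$ are in bijection with the faces of $M$ containing $F$ (via $G \mapsto \pi^{-1}\op{G}$, where $\pi\colon M\rightarrow N$ is the projection), so the facet hypothesis on $F$ forces $N$ to have only the two faces $\brace{0}$ and $N$ itself. (One tacitly assumes $F \subsetneq M$, since otherwise $N=0$ and the statement is vacuous.)

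My strategy is to pass to the saturation and appeal to the classical structure theory of toric monoids. Put $N^{\mr{sat}} \coloneqq \Set{x \in N^{\mr{gp}} | nx \in N \text{ for some } n \in \N_{>0}}$. By Corollary~2.2.5 of \cite{Og}, $N^{\mr{sat}}$ is fine; it is saturated by construction and sharp because $N^{\mr{gp}}$ is torsion-free, and $\lr{N^{\mr{sat}}}^{\mr{gp}} = N^{\mr{gp}}$. The key step is to establish a bijection between the faces of $N$ and the faces of $N^{\mr{sat}}$ via the mutually inverse assignments
\[
G \longmapsto \widetilde{G} \coloneqq \Set{x \in N^{\mr{sat}} | mx \in G \text{ for some } m \in \N_{>0}},\qquad G' \longmapsto G' \cap N.
\]
The nontrivial verification is that $\widetilde{G}$ is a face of $N^{\mr{sat}}$: given $x+y \in \widetilde{G}$ with $x,y \in N^{\mr{sat}}$, one clears denominators by choosing $m \in \N_{>0}$ large enough so that $mx, my \in N$ and $m\lr{x+y} \in G$ simultaneously, and then the face property of $G$ inside $N$ forces $mx, my \in G$, giving $x,y \in \widetilde{G}$. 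Consequently $N^{\mr{sat}}$ also has only the two faces $\brace{0}$ and $N^{\mr{sat}}$.

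Finally, I would invoke the standard structure theorem for fine sharp saturated monoids: such a monoid is the set of lattice points of a strongly convex rational polyhedral cone in its associated free abelian group, and its faces correspond bijectively to the faces of that cone. A nonzero strongly convex cone with no nontrivial proper face is precisely a ray, so $N^{\mr{sat}} \cong \N$, and therefore $N^{\mr{gp}} = \lr{N^{\mr{sat}}}^{\mr{gp}} \cong \Z$.

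The step I expect to be the main obstacle is the face correspondence between $N$ and $N^{\mr{sat}}$. It is classical when one stays within saturated monoids, but here $N$ itself is only semi-saturated, so the checks that both assignments above are well-defined (in particular that $\widetilde{G}$ is a face, and that the two compositions are the identity) must all be carried out by the ``clear denominators'' device above, rather than by any direct appeal to saturation of $N$.
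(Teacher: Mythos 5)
Your proof is correct, but it reaches the key intermediate fact --- that $\lr{M/F}^{\mr{gp}}$ has rank one --- by a different route than the paper. Both arguments share the same endgame: the quotient of a semi-saturated monoid by a face has torsion-free Grothendieck group (Proposition \ref{semi-saturated}, parts (2) and (3)), so it suffices to show the rank is $1$. For the rank, the paper simply observes that a facet makes $M/F$ a monoid of dimension one in the sense of (5.4) of \cite{Kat2} (chains of prime ideals, equivalently chains of faces) and cites Proposition (5.5) of \cite{Kat2}, which identifies that dimension with the rank of the Grothendieck group. You instead reprove this from scratch: you transport the ``only two faces'' property through the saturation $N^{\mr{sat}}$ via the clear-denominators face correspondence, and then invoke the cone description of fine sharp saturated monoids, under which a nonzero monoid with no nontrivial proper face must be a ray spanning a rank-one lattice. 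Your correspondence $G\mapsto\widetilde{G}$, $G'\mapsto G'\cap N$ checks out (that $\widetilde{G}$ is a face and that both composites are the identity all reduce to the face property of $G$ or $G'$ after multiplying by a suitable positive integer), and the final convex-geometry step is standard, since a strongly convex cone of dimension $d$ has a chain of faces of every dimension up to $d$. The trade-off is that your argument is self-contained modulo the structure theory of toric monoids, whereas the paper's is a two-line citation of Kato's dimension theory; the content you unwind is essentially that of (5.5) of \cite{Kat2}. One definitional remark applies equally to both proofs: the paper's stated definition of facet literally allows $F=M$, for which the conclusion fails, so one must (as you note) read ``facet'' as ``maximal proper face.''
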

\begin{proof}
  By the definition of facet, The dimension of $M/F$ in the sense of (5.4) of \cite{Kat2} is $1$.
  By Proposition (5.5) of \cite{Kat2}, the rank of $\lr{M/F}^{\mr{gp}}$ is $1$.
  Since $M$ is semi-saturated, $\lr{M/F}^{\mr{gp}}$ is torsion-free. So $\lr{M/F}^{\mr{gp}}\cong\Z$. 
\end{proof}

\begin{prop}\label{propSD}
  In the situation of Definition \ref{global S-D},
  assume that $M$ is semi-saturated and that, for all $s \in \Kbar$ and $n \in \N_{>0}$ such that $ns \in S $, $s \in S$.
  Then $\Sigma$ is ($S$-D) if and only if for all facet $F$ of $M$ and for any two elements $\alpha, \beta$
  in the image of $\Sigma$ in $\lr{M/F}^{\mr{gp}} \tens{\Z} \Kbar = \Kbar$, $\alpha-\beta$ is contained in $S$.
\end{prop}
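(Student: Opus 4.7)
The plan is to reduce both implications to the preceding lemma, which identifies $\lr{M/F}^{\mr{gp}}$ with $\Z$ whenever $F$ is a facet of a semi-saturated monoid, so that locally ($S$-D) on each facet quotient becomes a purely scalar statement in $\Kbar$.

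For the forward direction, fix a facet $F$ of $M$. The hypothesis that $\Sigma$ is ($S$-D) specializes to saying that the image of $\Sigma$ in $\lr{M/F}^{\mr{gp}}\tens{\Z}\Kbar=\Kbar$ is locally ($S$-D), so there is an injection $\phi\colon \Z\hookrightarrow\Z^r$ along which coordinate-wise differences of images of pairs lie in $S$. Choose any coordinate $i$ with $n_i:=\phi\op{1}_i\neq 0$. For any two elements $\alpha,\beta$ of the image, both $n_i\op{\alpha-\beta}\in S$ and (applying the condition to the swapped pair) $-n_i\op{\alpha-\beta}\in S$; hence $\abs{n_i}\op{\alpha-\beta}\in S$, and the divisibility hypothesis on $S$ forces $\alpha-\beta\in S$.

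For the reverse direction, fix an arbitrary face $F$ of $M$ and set $M':=M/F$. By Proposition \ref{semi-saturated}(3), $M'$ is again semi-saturated, and being sharp, $\lr{M'}^{\mr{gp}}$ is torsion-free and equals its free part. The faces of $M'$ biject with the faces of $M$ containing $F$, and under this bijection the facets of $M'$ correspond exactly to the facets $F_i$ of $M$ that contain $F$, with $M'/\lr{F_i/F}=M/F_i$. I would take
\[
\phi\colon \lr{M'}^{\mr{gp}}\longrightarrow \prod_{i=1}^{s}\lr{M/F_i}^{\mr{gp}}\cong \Z^s
\]
to be the product of these facet quotients, where each factor is identified with $\Z$ by the preceding lemma. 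The $i$-th coordinate of $\phi\tens{}\id_{\Kbar}$ on the image of $\Sigma$ is precisely the image of $\Sigma$ in $\lr{M/F_i}^{\mr{gp}}\tens{\Z}\Kbar=\Kbar$, and by hypothesis pairwise differences there lie in $S$, giving the required coordinate-wise condition.

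The main obstacle is showing $\phi$ is injective, since $M'$ is only semi-saturated. I plan to handle this by passing to the saturation $\lr{M'}^{\mr{sat}}$, which is fine sharp saturated, has the same group $\lr{M'}^{\mr{sat},\mr{gp}}=\lr{M'}^{\mr{gp}}$, and has a face lattice in natural bijection with that of $M'$; then $\phi$ is identified with the analogous facet-quotient product for $\lr{M'}^{\mr{sat}}$. For a fine sharp saturated monoid, this combined map is injective on the group---this is the classical toric statement that a strictly convex rational polyhedral cone is cut out by the supporting hyperplanes of its facets---which yields the injectivity of $\phi$ and completes the argument.
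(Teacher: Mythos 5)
Your proposal is correct and follows essentially the same route as the paper: the forward direction extracts a nonzero coordinate of $\phi$ and uses the symmetry of pairwise differences together with the divisibility hypothesis on $S$, and the reverse direction uses exactly the paper's map $\lr{M/F}^{\mr{gp}}\rightarrow\prod_{F\subseteq F'}\lr{M/F'}^{\mr{gp}}$ over the facets containing $F$. The only difference is that you explicitly justify the injectivity of that map (via saturation and the strict convexity of the associated cone), a point the paper asserts without comment.
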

\begin{proof}
  If $\Sigma$ is ($S$-D), for all facet $F$ of $M$, there exists a map $\phi: \lr{M/F}^{\mr{gp}}=\Z \hookrightarrow \Z$
  satisfying condition in Definition \ref{defNID1}. By retaking $-\phi$ as $\phi$ if necessary, we may assume that $\phi\op{1}>0$. By the assumption about $S$,
  we may assume that $\phi=\id$. Then $\Sigma$ satisfies the latter condition.

  Conversely, assume that $\Sigma$ satisfies the latter condition. For any face $F$ of $M$, the map
  \[\lr{M/F}^{\mr{gp}} \rightarrow \prod_{F\subseteq F'} \lr{M/F'}^{\mr{gp}},\]
  where $F'$ runs through all facets of $M$ containing $F$, satisfies the condition of being locally ($S$-D).
\end{proof}

\begin{rem}\label{MtoN}
  By Proposition \ref{propSD} and Remark \ref{rem_after_NID},
  if $S$ satisfies the condition of \ref{propSD}, $M$ is semi-saturated and $\Sigma$ is ($S$-D),
  we can take $\phi$ in Definition \ref{defNID1} such that
  $\phi\op{M}\subseteq \N^r$ and $\phi\tens{}\id_{K}$ is isomorphic.
\end{rem}

\begin{rem}
  NI, PT and NL satisfy the assumption of Proposition \ref{propSD} because for any $\alpha \in \Kbar, n \in \N_{>0}$
  \[\mathrm{type}\op{n\alpha} \leq \mathrm{type}\op{\alpha}^{\frac{1}{n}}.\]
\end{rem}

\def\M{\mathcal{M}}
\def\x{\overline{x}}
\begin{defn}\label{def_of_global_S-D}
  Let $S$ be a subset of $\Kbar$,
  $\lr{X,\mc{M}}$ a fine log rigid space
  and $\mathcal{S} \subseteq \mathcal{\overline M}^{\mr{gp}} \otimes_{\Z} \overline{K}$ a subsheaf.
  $\mathcal{S}$ is called \textit{($S$-D)} if,
  for any geometric point $\overline{x}$ of $X$, it admits a good chart $M \rightarrow \M$
  on some neighborhood of $\overline{x}$
  and an ($S$-D) subset $\Sigma \subseteq M^{\mr{gp}}\tens{\Z} \Kbar$
  such that $\mathcal{S}$ is the image of $\Sigma$
  under the induced map $M^{\mr{gp}}\tens{\Z}\Kbar \rightarrow \overline{\M}^{\mr{gp}}\tens{\Z}\Kbar$.
\end{defn}

\begin{rem}\label{S-D_remark}
  The definition of ($S$-D)-ness is independent of the choice of a good chart.
  Indeed, let $\alpha: M \rightarrow \mc{M}$ and $\beta: M \rightarrow \mc{M}$ be two good charts at $\ov{x}$ on some neighborhood of $\ov{x}$.
  $M\xrightarrow{\alpha} \mc{M}\rightarrow \ov{\mc{M}}\rightarrow \ov{\mc{M}}_{\ov{x}}=M$ and $M\xrightarrow{\beta}\mc{M}\rightarrow \ov{\mc{M}}\rightarrow \ov{\mc{M}}_{\ov{x}}=M$ coincide, so $M\xrightarrow{\alpha} \mc{M}\rightarrow \ov{\mc{M}}$ and $M\xrightarrow{\beta} \mc{M}\rightarrow \ov{\mc{M}}$ coincide after shrinking the neighborhood of $\ov{x}$. Then the two maps $M^{\mr{gp}}\tens{\Z}\Kbar \rightarrow \overline{\M}^{\mr{gp}}\tens{\Z}\Kbar$ induced by $\alpha$ and $\beta$ coincide.
  
\end{rem}

\begin{rem}\label{YM is S-D}
  Let $S$ be a subset of $\Kbar$.
  For any rigid space $Y$, a semi-saturated weighted monoid $M$, an aligned subinterval $I \subseteq \RNN$ and
  an ($S$-D) subset $\Sigma \subseteq M^{\mr{gp}}\tens{\Z} \Kbar$, the subsheaf on $Y \times \A{M}{K}\op{I}$ defined by $\Sigma$
  is ($S$-D). Indeed, for any geometric point $\lr{\overline{y},\overline{x}}$ of $Y \times \A{M}{K}\op{I}$, put
  $F\coloneqq \Set{m\in M| t^m\op{\overline{x}}\neq 0}$. Then $M/F=\overline{\mathcal{M}}_{\lr{\overline{y},\overline{x}}}$
  where $\mathcal{M}$ is the log structure induced by $M$. $\A{M}{K}\op{I}\cap\A{F^{-1}M}{K}$ is an open subset of $\A{M}{K}\op{I}$
  containing the image of $\ov{x}$.
  By Lemma \ref{monoid_section}, the map $F^{-1}M\rightarrow M/F$ has
  a section $M/F\rightarrow F^{-1}M$, which gives a good chart at $\lr{\overline{y},\overline{x}}$ on $Y \times \lr{\A{M}{K}\op{I}\cap\A{F^{-1}M}{K}}$.
  Let $\Sigma_F$ be the image of $\Sigma$ under the map $M^{\mr{gp}}\tens{\Z}\Kbar\twoheadrightarrow \lr{M/F}^{\mr{gp}}\tens{\Z}\Kbar$,
  which is also ($S$-D) by definition. The subsheaf defined by $\Sigma$ is the image of $\Sigma_F$ under the map
  $\lr{M/F}^{\mr{gp}}\tens{\Z}\Kbar\rightarrow \ov{\mc{M}}^{\mr{gp}}\tens{\Z}\Kbar$ on $Y\times \lr{\A{M}{K}\op{I}\cap\A{F^{-1}M}{K}}$.
\end{rem}

\begin{prop}\label{LNM_is_abelian}(cf. Proposition 1.11 in \cite{Shi})
  Let $\lr{X,\mc{M}}$ be a log smooth rigid space over $K$
  such that $\overline{\mathcal{M}}^{\mr{gp}}_{\overline{x}}$ is torsion-free at any geometric point $\overline{x}$ of $X$.
  Let $\mathcal{S} \subseteq \overline{\mathcal{M}}^{\mr{gp}} \otimes_{\Z} \overline{K}$ be an ($\lr{\mr{NI}\cap\mr{PT}}$-D) subsheaf.
  Then $\LNM_{\lr{X,\mc{M}},\mathcal{S}}$ is an abelian category.
\end{prop}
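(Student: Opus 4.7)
The plan is to verify that $\LNM_{\lr{X,\mc{M}},\mc{S}}$ is closed under kernels, cokernels, images, and coimages inside the ambient category of coherent $\O{X}$-modules equipped with an integrable log connection. Additivity and the existence of direct sums are immediate, and by Remark \ref{exponents_rem} any subquotient of an object with exponents in $\mc{S}$ again has exponents in $\mc{S}$, so the exponent condition propagates automatically once the underlying sheaves are known to be locally free. The essential content is therefore to show, for a horizontal morphism $f : E \to F$ in $\LNM_{\lr{X,\mc{M}},\mc{S}}$, that $\ker f$ and $\mathrm{coker}\,f$ are locally free as $\O{X}$-modules --- equivalently, that $f$ has locally constant rank.

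I would argue \'etale-locally around each geometric point $\ov{x}$ of $X$. Set $M \coloneqq \ov{\mc{M}}_{\ov{x}}$; this is fine, sharp and has torsion-free $M^{\mr{gp}}$ by hypothesis, so Lemma \ref{3.1.1.B} produces a good chart at $\ov{x}$ and a smooth morphism from some neighborhood of $\ov{x}$ to $\A{M}{K}\!\brack{0,1}$. After further \'etale localization I may work on a space of the form $Y \times \A{M}{K}\!\brack{0,a}$, where $Y$ is a smooth connected affinoid with trivial log structure and $a \in \Gamma^{*}\cap(0,\infty)$. By Proposition \ref{propSD} and Remark \ref{MtoN}, enlarging $M$ to its semi-saturation if needed and choosing the injection $(M^{\mr{gp}})^{\mr{free}}\hookrightarrow \Z^r$ appropriately, I may assume $\mc{S}$ pulls back to a locally ($\lr{\mr{NI}\cap\mr{PT}}$-D) subset $\Sigma \subseteq M^{\mr{gp}}\tens{\Z}\Kbar$ (cf.\ Remark \ref{YM is S-D}). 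Applying Lemma \ref{1.9} to $E\oplus F$ yields $b \in (0,a]\cap\Gamma^{*}$ and objects $E_0, F_0 \in \LNM_{Y\times\A{M}{K}\!\brack{0,0},\Sigma}$ with $E \cong \mc{U}_{[0,b]}(E_0)$ and $F \cong \mc{U}_{[0,b]}(F_0)$; after shrinking $Y$ further I may assume $E_0$ and $F_0$ are free.

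The main step is to analyze $f$ on the central fibre $Y \times \A{M}{K}\!\brack{0,0}$. By Lemma \ref{exponents_for_R}, after a finite (harmless) extension of $K$ both $E_0$ and $F_0$ decompose as $\bigoplus_\xi E_{0,\xi}$ and $\bigoplus_\xi F_{0,\xi}$ according to their exponents $\xi \in \Sigma$. Expanding $f$ as $\sum_{m\in M^{\mr{gp}}} f_m t^m$ in bases adapted to these decompositions and comparing coefficients in the horizontality equation, each off-diagonal block (mapping $E_{0,\xi}$ to $F_{0,\xi'}$ with $\xi \neq \xi'$) satisfies a Sylvester-type recursion whose leading linear operator has the form $X \mapsto \rho_{F,\xi'} X - X\rho_{E,\xi} + s\cdot X$, where $s$ is the integer coordinate vector of $m$ under the chosen $(M^{\mr{gp}})^{\mr{free}}\hookrightarrow \Z^r$. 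The $(\mr{NI})$-part of the hypothesis guarantees that $\xi - \xi'$ has no coordinate in $\Z\setminus\brace{0}$, so by the argument of Claim \ref{BmC} these operators are invertible for every relevant $s$, forcing all off-diagonal $f_m$ to vanish inductively. Thus $f = \bigoplus_\xi f_\xi$ with $f_\xi : E_{0,\xi} \to F_{0,\xi}$, and each $f_\xi$ is a morphism of ordinary $\nabla$-modules on the smooth affinoid $Y$, for which local freeness of kernels and cokernels is classical. Applying $\mc{U}_{[0,b]}$ block by block recovers $\ker f$ and $\mathrm{coker}\,f$ as locally free log $\nabla$-modules on $Y\times\A{M}{K}\!\brack{0,b}$ with exponents in $\Sigma$; these descend to $X$ and glue across the \'etale charts by faithfully flat descent.

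The principal obstacle is the exponent-block decomposition of $f$ on the central fibre: it is not enough to decompose source and target separately, one must show that \emph{every} horizontal morphism respects the decomposition. This requires the Sylvester analysis above together with the invertibility estimate of Claim \ref{BmC}, which is precisely where the $(\mr{NI})$-D hypothesis is used --- the $(\mr{PT})$-D part having already done its work inside Lemma \ref{1.9}. Once this decomposition is established, the remaining verifications (local freeness on the smooth $Y$, preservation of exponents for subquotients via Remark \ref{exponents_rem}, and descent across charts) are either standard or already recorded in earlier lemmas.
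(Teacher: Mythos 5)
Your proposal is correct, and it follows the paper's proof verbatim through the local reduction to $Y\times\A{M}{K}\!\brack{0,a}$ (via Lemma \ref{3.1.1.B} and Proposition \ref{polyannuli_are_basis}) and the application of Lemma \ref{1.9}; the two arguments diverge only in the endgame. The paper, following Kedlaya and Shiho, inducts on the ranks of $E$ and $F$: it picks a single simultaneous eigenvector $\mathbf{e}_1$ of the residues on $E'$, puts $\partial_1$ on $F'$ in Jordan form, and checks coefficient by coefficient that $f\op{\mathbf{e}_1}$ is a constant section of $F'$, which is all the induction needs. You instead decompose both central-fibre objects by exponents and show at once that the entire matrix of $f$ is constant and exponent-block-diagonal; the key invertibility is the same in both versions, namely that the operator $X\mapsto \rho_F X - X\rho_E + m_i X$ on a block $\Hom\op{E_{0,\xi},F_{0,\xi'}}$ has eigenvalues $\xi'_i-\xi_i+m_i$ plus nilpotents, and these are nonzero by the (NI-D) hypothesis whenever $m_i\neq 0$ (and, for your $m=0$ off-diagonal blocks, whenever $\xi\neq\xi'$). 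One small correction: after Lemma \ref{1.9} the connection matrices are constant, so what you call a Sylvester-type recursion is in fact a single homogeneous linear equation for each coefficient $f_m$, with no convolution term; this only simplifies your argument. What your route buys is a stronger intermediate statement (every horizontal map between objects in the image of $\mc{U}_{\brack{0,b}}$ is itself constant and exponent-diagonal) and a reduction to the classical local freeness of kernels and cokernels of horizontal maps on the smooth base $Y$, at the mild extra cost of invoking flatness of $\O{Y\times\A{M}{K}\brack{0,b}}$ over $\O{Y}$ to transport the kernel back from the central fibre; the paper's rank induction sidesteps that point. The remaining routine issues (finite extensions of $K$ are harmless for local freeness, and local freeness is an \'etale-local property, so no genuine descent of objects is needed) are handled the same way in both arguments.
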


\begin{proof}
The proof is almost the same as the proof of Proposition 1.11 in \cite{Shi}.

Let $f : \lr{E, \nabla_E} \rightarrow \lr{F, \nabla_F}$ be a morphism in $\LNM_{\lr{X,\mc{M}},\mc{S}}$.
It is enough to show that $\Ker\op{f}$ and $\Coker\op{f}$ are locally free because of Remark \ref{exponents_rem}.

By Lemma 3.2.14 of \cite{Ked}, it is enough to show very locally, i.e.,
it is enough to show local freeness for some open neighborhood of any point.
Take $x \in X$ and a geometric point $\overline{x}$ lying over $x$.
Let $M\coloneqq\overline{\mathcal{M}}_{\overline{x}}$.
By Lemma \ref{3.1.1.B}, there exist an \'etale neighborhood $U \rightarrow X$ of $\overline{x}$ and a good chart
$M \rightarrow \M|_U$ at $\ov{x}$ such that the induced morphism
$U \rightarrow \A{M}{K}\!\brack{0,1}$ is smooth.
We may assume that this morphism has a decomposition $U \rightarrow \mathbb{A}^n_K \times \A{M}{K}\!\brack{0,1} \rightarrow \A{M}{K}\!\brack{0,1}$ for some $n$
where the first morphism is \'etale and the second morphism is the second projection.
By the assumption about $\mathcal{S}$ and Remark \ref{S-D_remark}, we may also assume that there exists an ($\lr{\mr{NI}\cap\mr{PT}}$-D) subset $\Sigma \subseteq M^{\mr{gp}}\tens{\Z}\Kbar$ such that $\mathcal{S}|_{U}$ is induced by it.

We may replace $K$ by its finite extension and assume that
$\overline{x}$ maps to a $K$-rational point of $U$.
By Lemma 3.1.5 of \cite{JP}, on some open neighborhood of the image of $\overline{x}$ in $U$,
both $U \rightarrow X$ and $U \rightarrow \mathbb{A}^n_K \times\A{M}{K}\!\brack{0,1}$
are open immersions. Take some $h: M\rightarrow \N$ and regarded $M$ as a weighted monoid by $h$,
then there exists an open neighborhood of $\overline{x}$ which is
isomorphic to $Y \times \A{M}{K}\!\brack{0,a}$ for some smooth affinoid space $Y$
and $a>0$ by Proposition \ref{polyannuli_are_basis}.
So we may assume that $\lr{X,\mc{M}}=Y\times \A{M}{K}\!\brack{0,a}$ and that $\mathcal{S}=\Sigma$.

We may assume that $E$ and $F$ are free.
Then, by Lemma \ref{1.9}, we may assume that $E$ and $F$ are in the image of $\mathcal{U}_{\brack{0,a}}$.

Take some morphism $M \rightarrow\N^r$ as in Remark \ref{MtoN}
and denote by $\partial_i$ the composition of $\nabla^{\mr{log}}$ and the map induced by the projection
$M^{\mr{gp}}\rightarrow\Z^r\rightarrow \Z$ to the $i$-th factor.
Let $E'$, $F'$ be the objects of $\LNM_{Y\times \A{M}{K}\!\brack{0,0},\Sigma}$ such that
$\mathcal{U}_{\brack{0,a}}\op{E'}=E, \mathcal{U}_{\brack{0,a}}\op{F'}=F$.
After we replace $K$ by its finite extension if necessary, we can take a basis $\mathbf{e}=\op{\mathbf{e}_1, \ldots, \mathbf{e}_l}$
of $E'$ such that
$\mathbf{e}_1$ is an eigenvector of all $\partial_i$'s.

It is enough to show that $f\op{\mathbf{e}_1} \in F'$
because of the induction of the ranks of $E$ and $F$. (See the proof of Proposition 3.2.14 in \cite{Ked}
or the proof of Proposition 1.11 in \cite{Shi}.)

There exists $\lr{\xi_1, \ldots, \xi_r} \in \Sigma$
such that $\partial_i\op{\mathbf{e}_1}=\xi_i \mathbf{e}_1$.
Write $f\op{\mathbf{e}_1} = \sum_{m \in M} \mathbf{a}_m t^m$ where $\mathbf{a}_m \in F'$.

We will prove that for any $1 \leq i \leq r$, if $m_i \neq 0$ then $\mathbf{a}_m=0$.

We may assume that $i=1$. After we replace $K$ by its finite extension if necessary, we can take a basis $\mathbf{f}=\lr{\mathbf{f}_1, \ldots, \mathbf{f}_n}$ of $F'$
such that the matrix representation of $\partial_1$ with respect to it is the Jordan standard form.
Then there exist $\lr{\eta_1, \ldots, \eta_r} \in \Sigma$ and
$0=n_0 < n_1 < \cdots < n_k=n$ such that
\[\partial_1\mathbf{f}_{n_i}=\eta_i\mathbf{f}_{n_i} \ \lr{1\leq i \leq k},\]
\[\partial_1\mathbf{f}_{j}=\eta_i\mathbf{f}_{j} +\mathbf{f}_{j+1} \ \lr{1\leq i < k,\  n_i < j <n_{i+1}}.\]

Let $\mathbf{a}_m=\sum_{i=1}^r a_{m,i}\mathbf{f}_i$.
Then by the commutativity of $f$ and the connections
\[\lr{f\otimes \mathrm{id}_{\Omega^{\mr{log},1}_{\lr{X,\mc{M}}}}}
\lr{\nabla_E\op{\mathbf{e}}}=\nabla_F\op{f\op{\mathbf{e}_1}}\]
and comparing the coefficients of $d\log t_1$ on both hand sides, we obtain the equalities:
\[\lr{\xi_1-\eta_{i+1}}a_{m,n_i+1}=m_1 a_{m,n_i+1} \ \lr{0 \leq i < k},\]
\[\lr{\xi_1-\eta_{i+1}}a_{m,j}=m_1 a_{m,j} +a_{m,j-1} \ \lr{0 \leq i < k,\  m_i+2 \leq j \leq m_{i+1}}.\]

By the assumption for $\Sigma$, $\xi_1-\eta_{i+1}$ is not a non-zero integer.
So if $m_1 \neq 0$, $a_{m,n_i+1}=0$ because of the first equality.
Then by the second equality, we see that $a_{m,j}=0$ for all $j$.
\end{proof}

\begin{cor}\label{cor_Y}
For any smooth rigid space $Y$, any fine sharp semi-saturated monoid $M$,
a subset $\Sigma \subseteq M^{\mr{gp}}\otimes_{\Z} \overline{K}$ which is ($\lr{\mr{NI}\cap\mr{NL}}$-D)
and any aligned subinterval $I \subseteq \clop{0,\infty}$,
$\LNM_{Y \times \A{M}{K}\op{I},\Sigma}$ is an abelian category.
\end{cor}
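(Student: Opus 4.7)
The plan is to deduce this corollary directly from Proposition \ref{LNM_is_abelian} by verifying its three hypotheses for the log rigid space $X = Y \times \A{M}{K}\op{I}$ equipped with the log structure $\mc{M}$ induced by $M$, and the subsheaf $\mc{S} \subseteq \ov{\mc{M}}^{\mr{gp}}\tens{\Z}\Kbar$ induced by $\Sigma$.

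First, I would check that $X$ is log smooth over $K$. Since $Y$ is smooth with trivial log structure and $\A{M}{K}\op{I}$ is log smooth over $K$ (as $M$ is fine, and the polyannulus is an admissible open of $\A{M}{K} = \Spm\,K\!\angbra{M}$ after choosing a chart based on a weighting $h$), the product is log smooth.

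Second, I would verify the torsion-freeness hypothesis on stalks of $\ov{\mc{M}}^{\mr{gp}}$. At a geometric point $\ov{x}$ of $X$, as noted in Remark \ref{YM is S-D}, one has $\ov{\mc{M}}_{\ov{x}} = M/F$ where $F = \Set{m \in M | t^m\op{\ov{x}}\neq 0}$ is a face of $M$. Since $M$ is semi-saturated by assumption, Proposition \ref{semi-saturated}(4) gives that $\lr{M/F}^{\mr{gp}}$ is torsion-free.

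Third, I would verify that $\mc{S}$ is $\lr{\mr{NI}\cap\mr{PT}}$-D. Since any non-Liouville number has positive type (this is immediate from the definitions of $\mr{PT}$ and $\mr{NL}$), we have $\mr{NL}\subseteq \mr{PT}$, so $\mr{NI}\cap \mr{NL}\subseteq \mr{NI}\cap \mr{PT}$. Thus $\Sigma$ is $\lr{\mr{NI}\cap \mr{PT}}$-D, and then Remark \ref{YM is S-D} shows that the induced subsheaf $\mc{S}$ on $Y\times \A{M}{K}\op{I}$ is $\lr{\mr{NI}\cap \mr{PT}}$-D as well.

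Once the three hypotheses are verified, Proposition \ref{LNM_is_abelian} applies and gives that $\LNM_{Y\times \A{M}{K}\op{I},\Sigma}$ is abelian. There is no real obstacle here since each step is a direct consequence of a result already established in the paper; the only point requiring care is matching the chart-level definition of $\lr{S\text{-D}}$ in Definition \ref{defNID1} with the sheaf-level definition of Definition \ref{def_of_global_S-D}, but this is precisely what Remark \ref{YM is S-D} arranges via the good charts $M/F \rightarrow F^{-1}M$ provided by Lemma \ref{monoid_section}.
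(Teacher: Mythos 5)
Your proposal is correct and follows exactly the route the paper takes: the paper's own proof is the one-line citation ``By Remark \ref{YM is S-D} and Proposition \ref{LNM_is_abelian}'', and your three verifications (log smoothness of the product, torsion-freeness of $\lr{M/F}^{\mr{gp}}$ via semi-saturatedness, and $\mr{NL}\subseteq\mr{PT}$ so that ($\lr{\mr{NI}\cap\mr{NL}}$-D) implies ($\lr{\mr{NI}\cap\mr{PT}}$-D)) are precisely the details left implicit there.
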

\begin{proof}
  By Remark \ref{YM is S-D} and Proposition \ref{LNM_is_abelian}.
\end{proof}

\subsection{Unipotence}
In this subsection, we define the notion of $\Sigma$-unipotence of log $\nabla$-modules and prove some basic properties.

\begin{defn}(cf. 1.3 of \cite{Shi})
Let $Y$ be a smooth rigid space, $M$ a fine sharp semi-saturated monoid, $I\subseteq \clop{0,\infty}$ an aligned subinterval, and $\Sigma \subseteq M^{\mr{gp}} \otimes _{\mathbb{Z}} \Kbar$ be an ($\lr{\mr{NI}\cap\mr{NL}}$-D) subset. Let $\pi_1$ and $\pi_2$ be the first and second projections of $Y\times \A{M}{K}\op{I}$.
\begin{enumerate}
\item An object $\lr{E, \nabla}$ of $\LNM_{Y\times \A{M}{K}\op{I},\Sigma}$
is \textit{$\Sigma$-constant} if
it is isomorphic to $\pi_1^*E_Y \tens{} \pi_2^*C_{\xi}$ for some $\nabla$-module $E_Y$ on $Y$
and $\xi \in \Sigma \cap \lr{M^{\mr{gp}}\tens{\Z} K}$, 
where $C_{\xi}=\lr{\mathcal{O}_{A_{M,K}\op{I}}, d + \xi\cdot \id}$.
Note that $M^{\mr{gp}}\tens{\Z} K$ is injected to $\Omega^{\mr{log},1}_{\A{M}{K}\op{I}}$ and $d+\xi\cdot \id$ is the map $f \in \O{\A{M}{K}\op{I}} \mapsto df + f\xi$. 
\item An object $\lr{E, \nabla}$ of $\LNM_{Y\times \A{M}{K}\op{I},\Sigma}$
is \textit{$\Sigma$-unipotent} if
it admits a filtration by subobjects $0=E_0 \subseteq E_1 \subseteq \cdots \subseteq E_n=E$ in $\LNM_{Y\times \A{M}{K}\op{I},\Sigma}$
such that any successive quotient is $\Sigma$-constant.
We denote by $\ULNM_{Y \times \A{M}{K}\op{I},\Sigma}$ the full subcategory of $\LNM_{Y\times \A{M}{K}\op{I},\Sigma}$
whose objects are $\Sigma$-unipotent objects.
\item An object $\lr{E, \nabla}$ of $\LNM_{Y\times \A{M}{K}\op{I},\Sigma}$ is
\textit{potentially $\Sigma$-constant} (resp. \textit{potentially $\Sigma$-unipotent})
if there exists a finite extension $K'$ of $K$ such that
$E_{K'}$ is $\Sigma$-constant (resp. $\Sigma$-unipotent).
We denote by $\ULNM'_{Y\times \A{M}{K}\op{I},\Sigma}$ the full subcategory of $\LNM_{Y \times \A{M}{K}\op{I},\Sigma}$
whose objects are potentially $\Sigma$-unipotent objects.
\end{enumerate}
\end{defn}

\begin{rem}\label{rem_for_00}
  In the case $I=\brack{0,0}$, an object of $\LNM_{Y\times \A{M}{K}\!\brack{0,0},\Sigma}$ is $\Sigma$-unipotent
  if and only if its all exponents are contained in $\lr{M^{\mr{gp}}\tens{\Z} K}$, because of Lemma \ref{exponents_for_R}.
  In particular,
  $\LNM_{Y \times \A{M}{K}\!\brack{0,0},\Sigma}=\ULNM'_{Y\times \A{M}{K}\!\brack{0,0},\Sigma}$ (cf. Remark 1.13 of \cite{Shi}.)
\end{rem}

\begin{rem}\label{unipotence/Z}
  In the case $0\notin I$, for any $\xi_1,\xi_2\in M^{\mr{gp}}\tens{\Z}K$ such that $\xi_1-\xi_2\in M^{\mr{gp}}$,
  $C_{\xi_1}\cong C_{\xi_2}$ on $\A{M}{K}\op{I}$. Indeed, let $m\coloneqq\xi_1-\xi_2\in M^{\mr{gp}}$, then
  the morphism $f\in \O{\A{M}{K}\op{I}} \mapsto ft^m \in \O{\A{M}{K}\op{I}}$ gives an isomorphism $C_{\xi_1}\cong C_{\xi_2}$ as log $\nabla$-modules.
  So $\Sigma$-constantness and $\Sigma$-unipotence of log $\nabla$-modules are only dependent on the image of $\Sigma$ in $M^{\mr{gp}}\tens{\Z}\lr{\Kbar/\Z}$.
\end{rem}

The next proposition and its proof are analogues of
3.3.2 of \cite{Ked} and 1.14 of \cite{Shi}.

\def\UI{\mathcal{U}_I}
\begin{prop}\label{UI_Ext_Isom}
  Let $Y$ be a smooth rigid space,
  $M$ a fine sharp semi-saturated  monoid,
  $I \subseteq \clop{ 0, \infty }$ a quasi-open subinterval of positive length,
  and $\Sigma \subseteq M^{\mr{gp}}\tens{\Z}\Kbar$ an ($\lr{\mr{NI}\cap\mr{NL}}$-D) subset.
  Then the morphisms
  \[\Ext^i\op{E,E'} \rightarrow \Ext^i\op{\UI\op{E},\UI\op{E'}}\]
  induced by $\UI: \LNM_{Y \times \A{M}{K}\!\brack{0,0},\Sigma} \rightarrow \LNM_{Y\times \A{M}{K}\op{I},\Sigma}$
  are isomorphic for all $E,E' \in \LNM_{Y \times \A{M}{K}\!\brack{0,0},\Sigma}$ and $i\geq 0$.
\end{prop}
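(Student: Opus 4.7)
The plan is to adapt the strategy of Kedlaya's Proposition 3.3.2 and Shiho's Proposition 1.14 to the generalized polyannulus setting. The idea is to realize $\Ext^i$ in $\LNM$ as hypercohomology of a log de Rham complex, Fourier-expand along $t^m$, and use the non-Liouville hypothesis on $\Sigma$ to show that only the zeroth Fourier mode contributes.

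First I would reduce to the case where $Y=\Spm\,R$ is a smooth connected affinoid and both $E$ and $E'$ are free, as in the proof of Proposition \ref{LNM_is_abelian}. By Remark \ref{MtoN}, using that $M$ is semi-saturated and $\Sigma$ is $\lr{\mr{NI}\cap\mr{NL}}$-D, I fix $\phi:M^{\mr{gp}}\hookrightarrow \Z^r$ with $\phi\tens{}\id_K$ an isomorphism, producing coordinate log derivations $\partial_1,\ldots,\partial_r$. In this setup, $\Ext^i_{\LNM_{Y\times\A{M}{K}\op{I},\Sigma}}\op{E,E'}$ is computed as $H^i$ of the total complex of the log de Rham double complex attached to $\mathcal{H}om\op{E,E'}$: one direction is the $Y$-de Rham part, the other is the Koszul complex of the commuting log derivations $\partial_1,\ldots,\partial_r$. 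This is the standard computation for abelian categories of modules with integrable connections over a smooth affinoid base.

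Next I would Fourier-expand. Writing $E=\UI\op{E_0}$, $E'=\UI\op{E_0'}$ and $F_0 = \mathcal{H}om\op{E_0,E_0'}$, any section of $\mathcal{H}om\op{E,E'}=\UI\op{F_0}$ decomposes by Proposition \ref{GammaAMK} as $\sum_{m\in M^{\mr{gp}}} s_m t^m$ with $s_m$ a section of $F_0$, and the $i$-th log derivation acts on the $m$-th mode as $\partial_i^0+m_i$, where $\partial_i^0$ is the induced derivation on $F_0$. Thus the total complex splits as a direct sum, indexed by $m\in M^{\mr{gp}}$, of tensor products of the $Y$-de Rham complex on $F_0$ with the Koszul complex $K_\bullet\op{\partial_1^0+m_1,\ldots,\partial_r^0+m_r}$. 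For $m\neq 0$, choosing $i$ with $m_i\neq 0$, the operator $\partial_i^0+m_i$ on $F_0$ has eigenvalues $\xi_{i,j}+m_i$ whose pairwise differences $\xi_{i,j}-\xi_{i,j'}+m_i$ are nonzero by the $\mr{NI}$-part of the hypothesis, so it is invertible and the corresponding Koszul factor is exact. Hence only the $m=0$ summand survives, and it computes $\Ext^i\op{E_0,E_0'}$ by construction, giving the desired isomorphism.

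The \emph{main obstacle} is to guarantee that the formal, term-by-term inverses assemble into honest sections on $Y\times \A{M}{K}\op{I}$ in the Banach norms of Proposition \ref{GammaAMK}. Here the $\mr{NL}$-part of the hypothesis is essential: for quasi-open intervals of the form $I=\lr{a,b}$ with $a>0$, one works with two-sided Laurent expansions, so the inverse norms $\abs{\xi_{i,j}-\xi_{i,j'}+m_i}^{-1}$ must grow sub-exponentially in both $h^{+}\op{m}$ and $h^{-}\op{m}$ directions. A quantitative resolvent bound along the lines of Claim \ref{BmC}, combined with a two-sided analogue of the $Z_m$-estimate used in Claim \ref{c2} and the convergence analysis at the end of the proof of Lemma \ref{1.9}, should then secure the required convergence and complete the argument.
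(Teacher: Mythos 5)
Your proposal is correct and is in essence the paper's own argument: both realize $\Ext^i$ as hypercohomology of the log de Rham complex of $F=E^{\vee}\tens{}E'$, reduce to affinoid $Y$ (Katz--Oda plus \v{C}ech), and contract the nonzero Fourier modes by an explicit homotopy whose convergence is the real content and is secured by the ($\lr{\mr{NI}\cap\mr{NL}}$-D) hypothesis together with the quasi-openness of $I$. The one genuine difference is organizational. The paper first enlarges $K$ and filters $E,E'$ by $\Sigma$-constant subquotients --- available since every object of $\LNM_{Y\times\A{M}{K}\!\brack{0,0},\Sigma}$ is potentially $\Sigma$-unipotent by Remark \ref{rem_for_00} --- and then applies the Five Lemma, so its homotopy $\varphi$ only divides by the scalars $m_l+\xi'_l-\xi_l$ and the non-Liouville estimate is a one-line scalar bound. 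You skip the filtration and must instead invert the operator $\partial^0_l+m_l$ on $F_0$ itself, which requires the matrix resolvent bound of Claim \ref{BmC} (whose proof applies verbatim with the map $X\mapsto A'^{i}_0X-XA^{i}_0$ built from the residue matrices of $E'_0$ and $E_0$). That trade is fine, and your instinct that assembling the mode-wise inverses into convergent sections is the main obstacle is exactly right: mode-by-mode exactness alone proves nothing, because the complex is a completed rather than algebraic direct sum of its modes. One pointer to adjust: Claim \ref{c2} and the $Z_m$-estimate are not what you need here --- they control iterated products of resolvents along chains $m^1<\cdots<m^s$ arising from the recursion in Lemma \ref{1.9}, whereas your homotopy applies a single resolvent per mode. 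A single application of the Claim \ref{BmC} bound, combined with the non-Liouville decay $s^{\abs{m_l}_{\R}}\abs{\xi_{i,j}-\xi_{i,j'}+m_l}^{-1}\rightarrow 0$, the comparison $\abs{m_l}_{\R}\leq C\abs{h}\op{m}$, and a slight shrinking of the annulus permitted by the quasi-openness of $I$, already yields the required convergence, exactly as at the end of the paper's proof.
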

\begin{proof}
Let $F=E^{\vee}\tens{}E'$. By 3.3.1 of \cite{Ked}, we have isomorphisms
\[\Ext^i\op{E,E'} = H^i\op{Y, F \tens{}\Omega^{\mr{log}, \bullet}_{Y \times \A{M}{K}\!\brack{0,0}}},\]
\[\Ext^i\op{\UI\op{E},\UI\op{E'}} = H^i\op{Y \times \A{M}{K}\op{I},
\UI\op{F}\tens{}\Omega^{\mr{log}, \bullet}_{Y \times \A{M}{K}\op{I}}}\]
for any $i \geq 0$. 

Note that
\[\UI\op{F}\tens{}\Omega^{\mr{log},1}_{Y \times \A{M}{K}\op{I}}=F \tens{}\Omega^{\mr{log},1}_{Y \times \A{M}{K}\!\brack{0,0}} \tens{K} \O{\A{M}{K}\op{I}}.\]

We may assume that $Y$ is affinoid because, in general case, the both cohomologies can be calculated by making an admissible affinoid cover of $Y$ and using spectral sequences
provided by corresponding \v{C}ech complexes.

By considering Katz-Oda type spectral sequence for
$Y\times \A{M}{K}\!\brack{0,0}\rightarrow Y\rightarrow \Spm\,K$, we have the spectral sequence
\begin{align*}
  E_2^{p,q}&=H^p\op{\Gamma\op{Y,\Omega^{\bullet}_{Y}}\tens{}H^q\op{Y,F\tens{}\Omega^{\mr{log}, \bullet}_{Y\times \A{M}{K}\!\brack{0,0}/Y}}}\\
  &\Rightarrow H^{p+q}\op{Y, F \tens{}\Omega^{\mr{log}, \bullet}_{Y \times \A{M}{K}\!\brack{0,0}}}.
\end{align*}

On the other hand, 
by considering Katz-Oda type spectral sequence for
$Y\times \A{M}{K}\op{I}\rightarrow Y\rightarrow \Spm\,K$, we have the spectral sequence
\begin{align*}
  E_2^{p,q}&=H^p\op{\Gamma\op{Y,\Omega^{\bullet}_{Y}}\tens{}H^q\op{Y\times \A{M}{K}\op{I},\mc{U}_I\op{F}\tens{}\Omega^{\mr{log}, \bullet}_{Y\times \A{M}{K}\op{I}/Y}}}\\
  &\Rightarrow H^{p+q}\op{Y\times \A{M}{K}\op{I},\mc{U}_I\op{F} \tens{}\Omega^{\mr{log}, \bullet}_{Y \times \A{M}{K}\op{I}}}.
\end{align*}

So we have to prove that the map
\begin{equation}\label{HiHi}
H^i\op{Y, F \tens{}\Omega^{\mr{log}, \bullet}_{Y\times \A{M}{K}\!\brack{0,0}/Y}}
\rightarrow H^i\op{Y \times \A{M}{K}\op{I}, \UI\op{F}\tens{}\Omega^{\mr{log}, \bullet}_{Y\times\A{M}{K}\op{I}/Y}}
\end{equation}
induced by the natural map of complexes
\[g_1: \Gamma\op{Y, F \tens{}\Omega^{\mr{log}, \bullet}_{Y \times \A{M}{K}\!\brack{0,0}/Y}}
\rightarrow \Gamma\op{Y \times \A{M}{K}\op{I}, \UI\op{F}\tens{}\Omega^{\mr{log}, \bullet}_{Y \times \A{M}{K}\op{I}/Y}}\]
is isomorphic.

We may enlarge $K$ and assume that $E$ and $E'$ are $\Sigma$-unipotent.
Applying Five Lemma to (\ref{HiHi}), we may also assume that $E$ and $E'$ are $\Sigma$-constant.
We can write $E=E_Y\tens{}C_{\xi}, E'=E'_Y \tens{}C_{\xi'}$ for some $\xi, \xi' \in \Sigma$
and then $F=E_Y^{\vee}\tens{}E'_Y\tens{}C_{\xi'-\xi}$.
Let
\[g_2: \Gamma\op{Y \times \A{M}{K}\op{I}, \UI\op{F}\tens{}\Omega^{\mr{log}, \bullet}_{Y \times \A{M}{K}\op{I}/Y}}
\rightarrow  \Gamma\op{Y, F \tens{}\Omega^{\mr{log}, \bullet}_{Y \times \A{M}{K}\!\brack{0,0}/Y}}\]
be the map induced by the `taking constant coefficient' map
\begin{equation*}
\begin{array}{ccc}
\Gamma\op{\A{M}{K}\op{I},\O{\A{M}{K}\op{I}}} &\rightarrow & K. \\
\rotatebox{90}{$\in$} & & \rotatebox{90}{$\in$}\\
\dsum_{m \in M^{\mr{gp}}} a_m t^m & \mapsto & a_0
\end{array}
\end{equation*}
Then $g_2 \circ g_1$ is the identity. We show that $g_1\circ g_2$ is homotopic
to the identity by constructing a homotopy $\varphi$.

Take $\phi: M \hookrightarrow \N^r$ as in Remark \ref{MtoN}, which induces
\[\Omega^{\mr{log},1}_{\A{M}{K}\op{I}}= \bigoplus_{i=1}^r \O{\A{M}{K}\op{I}}d\log t_i.\]

For $m\in M^{\mr{gp}}\setminus \brace{0}$ and $1 \leq i_1 < \cdots < i_k \leq r$,
let $l=l\op{m}$ be the least integer such that $m_l \neq 0$,
and we define
\begin{equation*}
\varphi\op{t^m \bigwedge_{j=1}^k d\log t_{i_j}}= \begin{cases}
\ \displaystyle \frac{\lr{-1}^{s-1}}{m_l+\xi'_l-\xi_l} t^m \bigwedge_{\substack{1 \leq j \leq k \\ j \neq s}} d \log t_{i_j}
& \text{if }i_s = l \text{ for some $s$,}\\
0 & \text{otherwise.}
\end{cases}
\end{equation*}
Note that by the assumption for $\Sigma$, $m_l+\xi'_l-\xi_l \neq 0$.
We also define $\varphi\op{t^0\bigwedge_{j=1}^k d\log t_{i_j}}=0$.

We will show that $\varphi$ can be extended naturally to the map of graded modules,
\[\varphi: \Gamma\op{Y \times \A{M}{K}\op{I}, \UI\op{F}\tens{}\Omega^{\mr{log}, \bullet}_{Y \times\A{M}{K}\op{I}}}
\rightarrow \Gamma\op{Y \times \A{M}{K}\op{I}, \UI\op{F}\tens{}\Omega^{\mr{log}, \bullet-1}_{Y \times \A{M}{K}\op{I}}}.\]

To show this, it is enough to show that, if $\sum_{m \in M^{\mr{gp}}} c_m t^m$ converges on $Y \times \A{M}{L}\op{I}$,
\begin{equation}\label{seriesconverge}
\sum_{\substack{m \in M^{\mr{gp}}\setminus\brace{0}
\\ l\op{m}=l}} \frac{c_m}{m_l +\xi'_l-\xi_l} t^m
\end{equation}
also converges on $Y \times \A{M}{L}\op{I}$ for any $l$.

Let $\brack{a,b} \subseteq I$ be an aligned closed subinterval. By the assumption,
$\abs{c_m} a^{-h^{-}\op{m}}b^{h^{+}\op{m}} \rightarrow 0$ when $\abs{h}\op{m}\rightarrow \infty$.
Since $\xi'_l-\xi_l$ is not a $p$-adic Liouville number, for any $s<1$,
\[\frac{s^{\abs{m_l}_{\R}}}{\abs{m_l+\xi'_l-\xi_l}}\rightarrow 0\] when $\abs{m_l}_{\R}\rightarrow \infty$.
Take a constant $C$ such that $\abs{m_l}_{\R} \leq C\abs{h}\op{m}$ for any $m\in M^{\mr{gp}}$.
Then
\[\abs{\frac{c_m}{m_l +\xi'_l-\xi_l}} a^{-h^{-}\op{m}}b^{h^{+}\op{m}}s^{C\abs{h}\op{m}} \rightarrow 0.\]
So the series (\ref{seriesconverge}) converges on $\A{M}{K}\!\brack{a/s^C, bs^C}$.
Since $I$ is quasi-open, (\ref{seriesconverge}) converges on $\A{M}{K}\op{I}$.

To finish the proof, We show that $\varphi$ gives a homotopy between $g_1 \circ g_2$
and the identity.

Take $m \neq 0$ and $1 \leq i_1 < \cdots < i_k \leq r$,
and let $l=l\op{m}$. If $i_s = l$ for some $s$,
\begin{align*}
& \nabla_{F} \varphi\op{t^m \bigwedge_{j=1}^k d\log t_{i_j}}
= \nabla_{F}\op{\frac{\lr{-1}^{s-1}}{m_l+\xi'_l-\xi_l} t^m \bigwedge_{j \neq s} d \log t_{i_j}}\\
& = t^m \bigwedge_{j=1}^k d\log t_{i_j} + \lr{-1}^{s-1} \sum_{i' \notin \brace{i_j}}
\frac{m_{i'}+\xi'_{i'}-\xi_{i'}}{m_{l}+\xi'_{l}-\xi_{l}}t^m d\log t_{i'} \wedge 
\bigwedge_{j \neq s} d \log t_{i_j},
\end{align*}
\begin{align*}
\varphi\nabla_{F}\op{t^m \bigwedge_{j=1}^k d\log t_{i_j}}
&= \varphi\op{\sum_{i' \notin \brace{i_j}}
\lr{m_{i'}+\xi'_{i'}-\xi_{i'}}t^m d\log t_{i'} \wedge \bigwedge_{j=1}^k d\log t_{i_j}}\\
&= \lr{-1}^{s}\sum_{i' \notin \brace{i_j}}
\frac{m_{i'}+\xi'_{i'}-\xi_{i'}}{m_{l}+\xi'_{l}-\xi_{l}}t^m d\log t_{i'} \wedge 
\bigwedge_{j \neq s} d \log t_{i_j}.\\
\end{align*}
Otherwise, $\varphi\op{t^m \bigwedge_{j=1}^k d\log t_{i_j}}=0$ and
\begin{align*}
\varphi\nabla_{F}\op{t^m \bigwedge_{j=1}^k d\log t_{i_j}}
&=\varphi\op{\sum_{i' \notin \brace{i_j}}
\lr{m_{i'}+\xi'_{i'}-\xi_{i'}}t^m d\log t_{i'} \wedge \bigwedge_{j=1}^k d\log t_{i_j}}\\
&=t^m \bigwedge_{j=1}^k d\log t_{i_j}.
\end{align*}
Also, for $m=0$, $\nabla_{F}\varphi\op{t^0 \bigwedge_{j=1}^k d\log t_{i_j}}=\varphi\nabla_{F}\op{t^0 \bigwedge_{j=1}^k d\log t_{i_j}}=0$.

So $\nabla_{F}\varphi+\varphi\nabla_{F}=\id-g_1\circ g_2$.
\end{proof}

\def\AZ{Y \times \A{M}{K}\!\brack{0,0}}
\def\AI{Y \times \A{M}{K}\op{I}}

The following corollary is the analogue of Corollary 1.15 and Corollary 1.16 in \cite{Shi}.

\begin{cor}\label{UULNM}
  Let $Y$, $M$, $\Sigma$ be as in \ref{UI_Ext_Isom} and $I \subseteq \clop{0,\infty}$ an aligned subinterval
  of positive length. Then the functors
  \begin{align*}
    \mathcal{U}_{I}: \ULNM_{Y \times \A{M}{K}\!\brack{0,0},\Sigma} &\rightarrow \ULNM_{Y \times \A{M}{K}\op{I},\Sigma}\\
    \mathcal{U}_{I}: \ULNM'_{Y \times \A{M}{K}\!\brack{0,0},\Sigma} &\rightarrow \ULNM'_{Y \times \A{M}{K}\op{I},\Sigma}\\
  \end{align*}
  are fully-faithful. Moreover, if $I$ is quasi-open, these functors are equivalences of categories.
\end{cor}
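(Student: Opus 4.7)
The plan is to deduce both claims from the Ext-isomorphism of Proposition \ref{UI_Ext_Isom}. For full-faithfulness when $I$ is quasi-open, it suffices to take $i=0$ in that proposition (noting that $\ULNM$ and $\ULNM'$ are full subcategories of $\LNM$). For a general aligned $I$ of positive length, I would fix a quasi-open subinterval $J\subseteq I$ (for instance its interior, or $\clop{0,b}$ with $0<b\in\Gamma^{*}$ if $0 \in I$). The restriction functor $\mr{res}_{I,J}:\LNM_{\AI,\Sigma}\to \LNM_{Y\times \A{M}{K}\op{J},\Sigma}$ is faithful because morphisms between locally free log $\nabla$-modules on a rigid space are determined by their restriction to any non-empty admissible open. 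Since $\mc{U}_J=\mr{res}_{I,J}\circ \mc{U}_I$ is already fully-faithful by the quasi-open case, so is $\mc{U}_I$.

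For essential surjectivity in the $\ULNM$ case with $I$ quasi-open, I would induct on the length $n$ of a $\Sigma$-unipotence filtration $0=E_0\subseteq\cdots\subseteq E_n=E$. When $n=1$, a $\Sigma$-constant object $\pi_1^{*}E_Y\tens{}\pi_2^{*}C_\xi$ is by inspection the image under $\mc{U}_I$ of the $\Sigma$-constant object on $\AZ$ given by $E_Y$ endowed with residue $\xi\cdot\id$. For the inductive step, the inductive hypothesis applied to $E_{n-1}$ and to the $\Sigma$-constant quotient $E/E_{n-1}$ yields $E'_{n-1}\in\ULNM_{\AZ,\Sigma}$ and a $\Sigma$-constant $C\in\ULNM_{\AZ,\Sigma}$ with $\mc{U}_I\op{E'_{n-1}}\cong E_{n-1}$ and $\mc{U}_I\op{C}\cong E/E_{n-1}$. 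The extension class of $E$ lives in $\Ext^1_{\LNM_{\AI,\Sigma}}\op{\mc{U}_I\op{C},\mc{U}_I\op{E'_{n-1}}}$, and by Proposition \ref{UI_Ext_Isom} it is the image of a class in $\Ext^1_{\LNM_{\AZ,\Sigma}}\op{C,E'_{n-1}}$; the corresponding extension $E'\in\LNM_{\AZ,\Sigma}$ satisfies $\mc{U}_I\op{E'}\cong E$ and is automatically $\Sigma$-unipotent of length $n$.

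For the $\ULNM'$-version, given $E\in\ULNM'_{\AI,\Sigma}$, I would choose a finite extension $K'/K$ over which $E_{K'}$ is $\Sigma$-unipotent, and apply the previous step over $K'$ to obtain $E'_{K'}\in\ULNM_{\AZ_{K'},\Sigma}$ with $\mc{U}_I\op{E'_{K'}}\cong E_{K'}$. The full-faithfulness of $\mc{U}_I$ over $K'$ makes this preimage essentially unique, so the canonical Galois-descent datum on $E_{K'}$ transfers to $E'_{K'}$; Galois descent then produces $E'\in\LNM_{\AZ,\Sigma}$ with $\mc{U}_I\op{E'}\cong E$, placing $E'$ automatically in $\ULNM'_{\AZ,\Sigma}$.

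The decisive step is the extension-lifting in the inductive construction; all the hard $p$-adic analysis is already packaged inside Proposition \ref{UI_Ext_Isom}, whose proof makes essential use of the non-Liouville hypothesis. The only residual care required is the Galois descent in the $\ULNM'$-case, which I expect to be routine once full-faithfulness over $K'$ is established.
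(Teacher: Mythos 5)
Your proof is correct and follows essentially the same route as the paper, which simply defers to the proofs of Corollaries 1.15 and 1.16 in \cite{Shi}: full-faithfulness from the $i=0$ case of Proposition \ref{UI_Ext_Isom} (reducing a general aligned interval to a quasi-open subinterval via the faithful restriction functor), essential surjectivity by induction on the length of the unipotence filtration using the $i=1$ case to lift extension classes, and Galois descent for the potentially unipotent variant.
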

\begin{proof}
  The proof is the same as the proofs of Corollary 1.15 and Corollary 1.16 in \cite{Shi}.
\end{proof}

To prove Proposition \ref{1.17} below, we show the following lemma, which is a generalization of Lemma $3.2.19$ in \cite{Ked}.

\begin{lem}\label{Lem3.2.19}
  Let $Y$ be an affinoid space, $M$ a fine sharp monoid such that $M^{\mr{gp}}$ is torsion-free,
  $I \subseteq \RNN$ an aligned closed subinterval and $E$
  a log $\nabla$-module on $\AI$. Take $\mathbf{e}_1, \ldots, \mathbf{e}_n \in \Gamma\op{\AI,E}$
  which are linearly independent over $\O{Y}$.
  Assume that there exists $\xi \in M^{\mr{gp}}\tens{\Z} \Kbar$ such that $\nabla^{\mr{log}}_{E}\op{\mathbf{e}_i}=\xi\mathbf{e}_i$ for each $i$.
  Then $\mathbf{e}_1, \ldots, \mathbf{e}_n$ are linearly independent over $\O{\AI}$.
\end{lem}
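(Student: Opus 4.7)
The plan is to prove this by induction on $n$, following the template of Lemma 3.2.19 of \cite{Ked} but with the ordinary derivations replaced by log derivations. Since $M^{\mr{gp}}$ is finitely generated and torsion-free, fix a $\Z$-basis to identify $M^{\mr{gp}}=\Z^r$; this produces an $\O{\A{M}{K}\op{I}}$-basis $d\log t_1,\ldots,d\log t_r$ of $\Omega^{\mr{log},1}_{\A{M}{K}\op{I}}$ with dual log derivations $\partial_j^{\mr{log}}:\O{\AI}\rightarrow \O{\AI}$ determined by $\partial_j^{\mr{log}}\op{t^m}=m_j t^m$. After a finite extension of $K$ we may write $\xi=\sum_j \xi_j\,d\log t_j$ with $\xi_j\in K$, so the eigensection hypothesis becomes $\partial_j^{\mr{log}}\op{\mathbf{e}_i}=\xi_j \mathbf{e}_i$ for all $i,j$ (with $\partial_j^{\mr{log}}$ acting on sections of $E$ through $\nabla^{\mr{log}}$). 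The case $I=\brack{0,0}$ is trivial, since then $\O{\AI}=\O{Y}$, so assume $I$ has positive length.

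For the inductive step, suppose for contradiction that $\sum_{i=1}^n f_i \mathbf{e}_i=0$ is a nontrivial relation with $f_i\in \Gamma\op{\AI,\O{\AI}}$ and, after relabeling, $f_n\neq 0$. The operator $D_j\coloneqq \partial_j^{\mr{log}}-\xi_j\cdot\id$ annihilates each $\mathbf{e}_i$ and satisfies $D_j\op{f\mathbf{e}_i}=\partial_j^{\mr{log}}\op{f}\mathbf{e}_i$, so applying $D_j$ to the relation produces the second relation $\sum_i \partial_j^{\mr{log}}\op{f_i}\mathbf{e}_i=0$. Combining this with the original to eliminate the $n$-th term gives
\[
\sum_{i=1}^{n-1}\lr{f_n\partial_j^{\mr{log}}\op{f_i}-f_i\partial_j^{\mr{log}}\op{f_n}}\mathbf{e}_i=0.
\]
The subfamily $\mathbf{e}_1,\ldots,\mathbf{e}_{n-1}$ remains $\O{Y}$-linearly independent and still consists of eigensections for $\xi$, so by the inductive hypothesis each coefficient vanishes: $f_n\partial_j^{\mr{log}}\op{f_i}=f_i\partial_j^{\mr{log}}\op{f_n}$ for all $i<n$ and all $j$.

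To conclude, reduce to $Y$ irreducible (working locally where $\O{Y}$ is a domain) so that $\O{\AI}$ is an integral domain. By the quotient rule, $\partial_j^{\mr{log}}\op{f_i/f_n}=0$ in $\mr{Frac}\op{\O{\AI}}$ for all $i,j$. The common kernel of the $\partial_j^{\mr{log}}$ on $\O{\AI}$ equals $\O{Y}$: this is immediate from the explicit series expansion of Proposition \ref{GammaAMK}, since $\partial_j^{\mr{log}}\op{\sum_m c_m t^m}=\sum_m m_j c_m t^m$ together with the injectivity of $M^{\mr{gp}}\rightarrow \Z^r$ forces $c_m=0$ for $m\neq 0$. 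The analogous statement on the fraction field follows, so $f_i=g_i f_n$ with $g_i\in \mr{Frac}\op{\O{Y}}$ and $g_n=1$. Substituting into the original relation and dividing by $f_n$ inside $E\otimes_{\O{\AI}}\mr{Frac}\op{\O{\AI}}$ (valid since $E$ is locally free) yields $\sum_i g_i \mathbf{e}_i=0$; clearing denominators produces a nontrivial $\O{Y}$-linear dependence, contradicting the hypothesis. The main technical obstacle is the transfer of the kernel computation from $\O{\AI}$ to the fraction field; this can alternatively be avoided by working directly with the weight gradings on the series expansions of $f_n$ and $f_i$ via Proposition \ref{GammaAMK}, since the cross-relations $f_n\partial_j^{\mr{log}}\op{f_i}=f_i\partial_j^{\mr{log}}\op{f_n}$ force the Newton polygons of $f_i$ and $f_n$ to coincide up to an $\O{Y}$-scalar.
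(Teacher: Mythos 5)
Your strategy --- Wronskian-style elimination followed by induction on $n$ and a field-of-constants argument --- is genuinely different from the paper's. The paper does no induction and never leaves the ring $\O{Y\times\A{M}{K}\op{I}}$: after shrinking $I$ so that $0\notin I$ and multiplying a putative relation $\sum_k c_k\mathbf{e}_k=0$ by a monomial $t^m$ so that some $c_k$ has nonzero constant term, it applies the operators
$D_l=\prod_{i=1}^r\prod_{0<\abs{j}\leq l}\tfrac{1}{j}\lr{\partial_i-j}$,
shows that $D_l\op{f}$ converges to the constant term $f_0$ for every section $f$ (the relevant coefficients are products of binomial coefficients, hence $p$-adic integers), and deduces a relation $\sum_k c_{k,0}\mathbf{e}_k=0$ with coefficients in $\O{Y}$, not all zero. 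This extracts the contradiction directly and bypasses fraction fields and integrality questions entirely.

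The step you yourself flag as the main technical obstacle is a genuine gap, not a routine verification. Your argument needs that an element of $\mr{Frac}\op{\O{Y\times\A{M}{K}\op{I}}}$ killed by all $\partial_j^{\mr{log}}$ lies in $\mr{Frac}\op{\O{Y}}$, and this does not follow from the corresponding statement for the ring: a quotient $f_i/f_n$ has no canonical convergent series expansion on $Y\times\A{M}{K}\op{I}$, so the coefficientwise computation you invoke for the ring of constants simply does not apply, and the Newton-polygon alternative is not developed enough to assess (over a general base $Y$ and a general fine sharp $M$ it is far from clear that the relations $f_n\partial_j^{\mr{log}}\op{f_i}=f_i\partial_j^{\mr{log}}\op{f_n}$ control the supports of the series in the way you suggest). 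A second unaddressed point is the reduction to $\O{Y\times\A{M}{K}\op{I}}$ being an integral domain: that this ring is a domain when $Y$ is irreducible is itself nontrivial (one has to pass to a complete multiplicatively normed overfield of $\O{Y}$ and use a Gauss-norm argument on the polyannulus), and restriction to irreducible components is not an admissible localization, so both the preservation of $\O{Y}$-linear independence of the $\mathbf{e}_i$ and the survival of a nontrivial relation on some component require checking; note that even the base case $n=1$ and the final division by $f_n$ lean on this integrality. None of these claims is obviously false, but as written the proof is incomplete precisely at its central step, whereas the paper's $D_l$-argument avoids all of these issues.
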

\begin{proof}
  The case when $I=\brack{0,0}$ is clear. Assume $I \neq [0,0]$. Making $I$ smaller if necessary, we may assume that $0 \notin I$.
  Take some $M^{\mr{gp}} \cong \Z^r$. Write $\brace{d\log t_i}$ as the induced basis of $\Omega^{\mr{log},1}_{\A{M}{K}\op{I}}$
  and $d^{\mr{log}}=\sum_{i=1}^r \partial_i d\log t_i$, $\xi=\sum_{i=1}^{r} \xi_i d\log t_i$. For $l>0$, define an operator $D_l$ on $\O{\AI}$ by
  \[D_l=\prod_{i=1}^r\prod_{\substack{-l \leq j \leq l \\ j \neq 0}} \frac{1}{j}\lr{\partial_i-j}\]
  (cf. the proof of 3.4.1 in \cite{Ked}).

  We prove that for all $f=\sum_{m\in M^{\mr{gp}}} f_mt^m \in \O{\AI}$, $D_l\op{f}$ converges to $f_0$ when $l\rightarrow \infty$.
  Let \[M^{\mr{gp}}_l\coloneqq \Set{m\in M^{\mr{gp}}\setminus\brace{0}| m_i< -l\text{ or }m_i=0\text{ or }m_i>l\text{ for any $1 \leq i\leq r$}}.\]
  Then
  \[D_l\op{f}-f_0=\sum_{m\in M^{\mr{gp}}_l} \prod_{i=1}^r\prod_{\substack{-l \leq j \leq l \\ j \neq 0}} \frac{m_i-j}{j}f_mt^m.\]
  For any $m$, $\prod_{i=1}^r\prod_{\substack{-l \leq j \leq l \\ j \neq 0}} \frac{m_i-j}{j}$ is an integer because it is a product of
  binomial coefficients. Since $\min_{m \in M^{\mr{gp}}_l}\abs{h}\op{m}\rightarrow \infty$ when $l \rightarrow \infty$,
  $\sup_{m\in M^{\mr{gp}}_l} \abs{f_mt^m} \rightarrow 0$ on $\A{M}{K}\op{I}$. Therefore $\lr{D_l\op{f}-f_0}\rightarrow 0$ on $\A{M}{K}\op{I}$.
  
  Assume $\sum_{k=1}^n c_k\mathbf{e}_k=0$ for some $c_1, \ldots, c_n \in \O{\AI}$. Applying $\nabla_{E}$, we have
  \[\sum_{k=1}^n \lr{\partial_i\op{c_k}\mathbf{e}_k+ c_k\xi_i\mathbf{e}_k}=0,\]
  so $\sum_{k=1}^n \partial_i\op{c_k}\mathbf{e}_k=0$ for each $i$.
  Therefore $\sum_{k=1}^n D_{l}\op{c_k}\mathbf{e}_k=0$ for any $l$.

  If $c_k\neq 0$ for some $k$, we may assume $c_{k,0} \neq 0$, where $c_{k,0}$ is the constant term of $c_k$,
  by multiplying $t^m$ for some $m \in M^{\mr{gp}}$ to all $c_k$.
  $D_{l}\op{c_k}$ converges to $c_{k,0}$, so $\sum_{k=1}^n c_{k,0} \mathbf{e}_k=0$. This contradicts the
  assumption that $\mathbf{e}_1, \ldots, \mathbf{e}_n$ are linearly independent over $\O{Y}$.
\end{proof}

\begin{prop}\label{1.17}(cf. Proposition 1.17 of \cite{Shi})
  Let $Y$, $M$, $\Sigma$ be as in Proposition \ref{UI_Ext_Isom} and $I \subseteq \clop{0,\infty}$ be a quasi-open subinterval or
  a closed aligned subinterval of positive length. Any subquotient of an object of $\ULNM_{Y\times\A{M}{K}\op{I},\Sigma}$
  (resp. $\ULNM'_{Y\times\A{M}{K}\op{I},\Sigma}$) in $\LNM_{Y\times\A{M}{K}\op{I},\Sigma}$ is an object of $\ULNM_{Y\times\A{M}{K}\op{I},\Sigma}$ (resp. $\ULNM'_{Y\times\A{M}{K}\op{I},\Sigma}$.) In particular, $\ULNM_{Y\times\A{M}{K}\op{I},\Sigma}$
  and $\ULNM'_{Y\times\A{M}{K}\op{I},\Sigma}$ are abelian subcategories of $\LNM_{Y\times\A{M}{K}\op{I},\Sigma}$.
\end{prop}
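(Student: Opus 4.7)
The strategy is to parallel Proposition~1.17 of \cite{Shi}, using as main tools the generization lemma (Lemma~\ref{1.9}), the fully faithful functor $\mathcal{U}_{I}$ (Proposition~\ref{UI_Ext_Isom} and Corollary~\ref{UULNM}), and the characterization of $\Sigma$-unipotence at the vertex (Remark~\ref{rem_for_00}). I first reduce the whole assertion to the subobject case. If $F \twoheadrightarrow G$ is a surjection in $\LNM_{Y\times\A{M}{K}\op{I},\Sigma}$ with $F\in\ULNM$, then its kernel $K$ is a subobject of $F$, hence $\Sigma$-unipotent by the subobject case. Writing $F = \mathcal{U}_I(F_0)$ and $K = \mathcal{U}_I(K_0)$ via Corollary~\ref{UULNM}, the inclusion descends by full faithfulness to $K_0 \hookrightarrow F_0$; the exactness of $\mathcal{U}_I$ (it is a tensor product over the field $K$) then identifies $G$ with $\mathcal{U}_I(F_0/K_0)$, and Remarks~\ref{exponents_rem} and~\ref{rem_for_00} force $F_0/K_0$ to be $\Sigma$-unipotent, hence so is $G$.

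For the subobject case, let $E \subseteq F$ with $F = \mathcal{U}_I(F_0)$ as above. After a finite extension of $K$ and shrinking $Y$ so that $E$ is free on the vertex slice $Y \times \mb{0}$, Lemma~\ref{1.9} produces some $b > 0$ and $E_0 \in \LNM_{Y\times\A{M}{K}\!\brack{0,0},\Sigma}$ with $E|_{Y\times\A{M}{K}\!\brack{0,b}} = \mathcal{U}_{\brack{0,b}}(E_0)$. Restricting to the quasi-open subinterval $\clop{0,b}$, Proposition~\ref{UI_Ext_Isom} makes $\mathcal{U}_{\clop{0,b}}$ fully faithful, so the inclusion $E \hookrightarrow F$ descends to an injection $E_0 \hookrightarrow F_0$ in $\LNM_{Y\times\A{M}{K}\!\brack{0,0},\Sigma}$. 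The exponents of $E_0$ are contained in those of $F_0$ by Remark~\ref{exponents_rem}, hence lie in $\Sigma \cap (M^{\mr{gp}}\tens{\Z}K)$, so Remark~\ref{rem_for_00} forces $E_0$ to be $\Sigma$-unipotent.

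To finish, $\mathcal{U}_I(E_0)$ is $\Sigma$-unipotent on $Y\times\A{M}{K}\op{I}$ and comes with a morphism $\mathcal{U}_I(E_0) \to F$ agreeing with the given inclusion $E \hookrightarrow F$ on $Y\times\A{M}{K}\!\brack{0,b}$. The composite $\mathcal{U}_I(E_0) \to F \to F/E$ is a morphism of locally free coherent sheaves on the connected rigid space $Y\times\A{M}{K}\op{I}$ that vanishes on a nonempty admissible open, so it vanishes identically; consequently $\mathcal{U}_I(E_0) \to F$ factors through $E$, and a rank comparison on the overlap upgrades the resulting injection into $E$ to an isomorphism $\mathcal{U}_I(E_0) \cong E$. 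The cases $0 \notin I$ and the closed case of positive length are handled by slight modifications: for the former, one extends $F = \mathcal{U}_I(F_0)$ to $\mathcal{U}_{\brack{0,d}}(F_0)$ on a polyannulus containing the vertex and runs the descent argument inside this extension; for the latter, one first restricts to a quasi-open proper subinterval $J \subsetneq I$ of positive length and then globalizes by the same rigidity argument back to $\op{I}$. The potentially $\Sigma$-unipotent version $\ULNM'$ follows by performing the required finite extension of $K$ throughout.

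The main obstacle I foresee is the globalization step identifying the locally constructed $\mathcal{U}_I(E_0)$ with $E$ across the entire polyannulus $Y\times\A{M}{K}\op{I}$; tracking this rigidity argument across all shapes of $I$ (quasi-open or closed, containing $0$ or not) is the most delicate part, requiring connectedness of $Y\times\A{M}{K}\op{I}$ together with the vanishing of a morphism of locally free coherent sheaves that is detected on a nonempty admissible open.
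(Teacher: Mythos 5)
Your proposal has a genuine gap in the case that actually carries the content of the proposition, namely $0\notin I$. Your construction of $E_0$ rests on applying Lemma \ref{1.9} to the subobject $E$, but that lemma takes as input a log $\nabla$-module on $Y\times\A{M}{K}\!\brack{0,a}$, i.e.\ one already defined on a neighbourhood of the vertex. When $0\notin I$ (e.g.\ $I=\lr{a,1}$ or $\brack{b,c}$ with $b>0$, which is the situation in which the proposition is used throughout Section 3), the subobject $E$ lives only on $Y\times\A{M}{K}\op{I}$ and there is no a priori extension of it across the vertex --- producing such an extension is essentially equivalent to what you are trying to prove. Your proposed fix, extending $F=\mc{U}_I\op{F_0}$ to $\mc{U}_{\brack{0,d}}\op{F_0}$, extends the ambient module but gives you no handle on $E$ itself, so the ``descent argument'' cannot be run. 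A secondary issue: for closed aligned $I$, Corollary \ref{UULNM} gives only full faithfulness of $\mc{U}_I$, not essential surjectivity onto $\ULNM$, so even the starting point $F=\mc{U}_I\op{F_0}$ is not available there without the reduction to constant objects.

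The paper's proof proceeds quite differently and avoids the vertex entirely. It first reduces (as in Shiho) to showing that a quotient, or equivalently a subobject, of a single $\Sigma$-\emph{constant} module is $\Sigma$-constant, and to $I$ closed. The case $Y=\Spm\,K$ is settled by Lemma \ref{Lem3.2.19}: the image of the space $F$ of $\xi$-eigensections under a surjection spans the quotient and stays linearly independent over $\O{\A{M}{K}\op{I}}$. For general $Y$, after a Galois-descent reduction one evaluates at a rational point of $\A{M}{K}\op{I}$ to build the candidate constant subobject $E''$, and verifies $E''\tens{}\pi_2^*C_{\xi}=E'$ by base-changing to the completion $L$ of the fraction field of $\O{Y}$ and invoking the already-proved field case. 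If you want to salvage your approach, you would need at minimum this dévissage to constant objects together with an argument (such as Lemma \ref{Lem3.2.19} and the passage to $L$) that works on annuli not containing the vertex; the identity-theorem globalization at the end is then unnecessary.
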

\begin{proof}
  It is enough to show the case of $\ULNM_{Y\times\A{M}{K}\op{I},\Sigma}$.
  
  As in the proof of 1.17 in \cite{Shi}, we may assume that $I$ is a closed aligned subinterval and it is enough to show that
  any quotient of a $\Sigma$-constant module is also $\Sigma$-constant
  or that any subobject of a $\Sigma$-constant module is also $\Sigma$-constant.

  First, we prove this in the case when $Y=\Spm\,K$.
  Let $E$ be a $\Sigma$-constant log $\nabla$-module on $Y\times\A{M}{K}\op{I}$ and $f:E \twoheadrightarrow E' $ be a surjection in $\LNM_{Y\times\A{M}{K}\op{I},\Sigma}$.
  There exists an element $\xi \in \Sigma$ such that $E=F\tens{K}\O{\A{M}{K\op{I}}}$, where $F=\brace{\mathbf{e}\in E| \nabla^{\mr{log}}_E\op{\mathbf{e}}=\xi \mathbf{e}}$.
  Let $F'$ be the image of $F$ under $f$. The map $F'\tens{K}\O{\A{M}{K}\op{I}} \rightarrow E'$ is surjective by the surjectivity of $f$ and
  injective by Lemma \ref{Lem3.2.19}. So $E'$ is $\Sigma$-constant.

  Next we prove it for general $Y$.
  Let $E$ be a $\Sigma$-constant log $\nabla$-module on $Y\times\A{M}{K}\op{I}$ and $f:E' \hookrightarrow E $ be an injection in $\LNM_{Y\times\A{M}{K}\op{I},\Sigma}$.
  
  It is enough to show that $E'$ is $\Sigma$-constant after replacing $K$ by its finite Galois extension $K'$.
  Indeed, if $E'_{K'}$ is $\Sigma$-constant, there exists a $\Sigma$-constant object $E'_{K',0}$ of $\LNM_{Y\times \A{M}{K'}\!\brack{0,0}}$
  such that $\mc{U}_I\op{E'_{K',0}}=E'_{K'}$. For each $\sigma \in \mathrm{Gal}\op{K'/K}$, there exists a natural isomorphism
  $\iota_{\sigma}:\sigma^*E'_{K'}\xrightarrow{\sim} E'_{K'}$. By Proposition \ref{UULNM}, there exists a unique morphism
  $\iota_{\sigma,0}:\sigma^*E'_{K',0}\xrightarrow{\sim}E'_{K',0}$ such that $\mc{U}_I\op{\iota_{\sigma,0}}=\iota_{\sigma}$.
  So there exists a $\Sigma$-constant object $E'_{0}$ of $\LNM_{Y\times \A{M}{K}\!\brack{0,0}}$ such that $\lr{E'_0}_{K'}=E'_{K',0}$ by Galois descent.
  Since $\mc{U}_I\op{E'_0}=E'$, $E'$ is $\Sigma$-constant.
  
  Thus we may assume that $\A{M}{K}\op{I}$ has some $K$-rational point $x:\Spm\,K\hookrightarrow \A{M}{K}\op{I}$.
  We denote the morphism $Y\hookrightarrow Y\times \A{M}{K}\op{I}$ induced by $x$ also by $x$.
  Let $\pi_1$ and $\pi_2$ be the first and second projections of $Y\times \A{M}{K}\op{I}$.
  Take $\xi\in \Sigma$ such that $E\tens{}\pi_2^*C_{-\xi}$ is $\brace{0}$-constant.
  Put \[E''\coloneqq \pi_1^*\Im\op{x^*\lr{E'\tens{}\pi_2^*C_{-\xi}}\rightarrow x^*\lr{E\tens{}\pi_2^*C_{-\xi}}},\]
  then
  $E''$ is a $\brace{0}$-constant log $\nabla$-module and
  $E''\tens{}\pi_2^*C_{\xi}$ is a subobject of
  $E$. We have to show $E''\tens{}\pi_2^*C_{\xi}=E'$. Since it is enough to show it very locally on $Y$,
  we may assume that $Y=\AA{K}^n\!\brack{0,p^{-m}}$. Let $L$ be the completion of fraction field of $\O{Y}$ with respect to the spectrum norm.
  It is enough to show $E''\tens{}\pi_2^*C_{\xi}=E'$ after scalar extension to $L$. The equality $E''_L\tens{}\pi_2^*C_{\xi}=E'_L$ is true on $\A{M}{L}\op{I}$ since we have already proved that $E'_{L}$ is $\Sigma$-constant.
  So $E'$ is $\Sigma$-constant.
  
\end{proof}

\def\M{\mathcal{M}}

\subsection{Generization propositions}
In this section, we adapt the
Proposition 2.5 of \cite{Shi} called generization to our situation.

Let $Y$ be a smooth rigid space, $M$ a fine sharp weighted monoid, $I\subseteq\RNN$ an aligned subinterval.
For a log $\nabla$-module $E$ on $Y\times \A{M}{K}\op{I}$ and $\xi\in M^{\mr{gp}}\tens{\Z}\Kbar$, we put:
\[H^0_{\xi}\op{Y\times \A{M}{K}\op{I},E}
\coloneqq\Set{\mathbf{v}\in \Gamma\op{Y\times \A{M}{K}\op{I},E}| \nabla^{\mr{log}}\op{\mathbf{v}}=\xi\mathbf{v}}.\]

\begin{lem}\label{2.3}(cf. Lemma 2.3 of \cite{Shi})
  Let $Y=\Spm \,R$ be a smooth affinoid space,
  $\Sigma\subseteq M^{\mr{gp}}\tens{\Z}\Z_p$ an ($\lr{\mr{NI}\cap\mr{NL}}$-D) subset, $I\subseteq\RNN$ a quasi-open subinterval of positive length.
  Let $L\supseteq R$ be one of the following:
  \begin{enumerate}
  \item an affinoid algebra over $K$ such that $\Spm\,L$ is smooth
    and the supremum norm of $L$ restricts to the supremum norm of $R$.
  \item a complete field with respect to a multiplicative norm which restricts to the supremum norm on $R$.
  \end{enumerate}

  For the first case, we put $\A{M}{L}\op{I}\coloneqq \Spm\,L \times \A{M}{K}\op{I}$. For the second case $\A{M}{L}\op{I}$ means
  the polyannulus over $L$ as usual.
  
  Let $E$ be an object of $\LNM_{Y\times \A{M}{K}\op{I},\Sigma}$ such that the induced object $F\coloneqq E_L \in \LNM_{\A{M}{L}\op{I},\Sigma}$
  is $\Sigma$-unipotent. Then for any aligned closed subinterval $[b,c]\subseteq I$, there exists an element $\xi\in \Sigma$
  such thatly valued
  $H^0_{\xi}\lr{Y\times \A{M}{K}\!\brack{b,c},E}\neq 0$.
\end{lem}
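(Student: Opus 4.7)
The plan is to use the $\Sigma$-unipotence of $F = E_L$ to produce a nonzero $\xi$-horizontal section of $F$ for some $\xi \in \Sigma$, and then to descend this section to one of $E$ itself.

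First, I would apply Proposition \ref{1.17} to $F|_{\A{M}{L}\brack{b,c}}$. Since $F$ is $\Sigma$-unipotent on $\A{M}{L}\op{I}$ and the restriction of any $\Sigma$-constant object remains $\Sigma$-constant on a smaller polyannulus, $F|_{\A{M}{L}\brack{b,c}}$ is also $\Sigma$-unipotent. By Proposition \ref{1.17}, $\ULNM_{\A{M}{L}\brack{b,c},\Sigma}$ is closed under subobjects, so I take the bottom nonzero piece $F_1 \subseteq F|_{\A{M}{L}\brack{b,c}}$ of the unipotent filtration. Since $F_1$ is $\Sigma$-constant, it is isomorphic to $\pi_1^{*} F_{1,L} \otimes \pi_2^{*} C_\xi$ for some $\xi \in \Sigma \cap (M^{\mr{gp}} \tens{\Z} K)$ and some $\nabla$-module $F_{1,L}$ on the base. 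Choosing any nonzero section of $F_{1,L}$ and tensoring with the unit of $C_\xi$ yields a nonzero global section $\mathbf{w} \in \Gamma(\A{M}{L}\brack{b,c}, F)$ satisfying $\nabla^{\mr{log}} \mathbf{w} = \xi \mathbf{w}$.

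Second, I would descend $\mathbf{w}$ to a nonzero section of $E$ by a flatness argument. Picking an embedding $\phi\colon M^{\mr{gp}} \hookrightarrow \Z^r$ as in Remark \ref{MtoN} and writing $\xi = \sum_i \xi_i\, d\log t_i$, the space $H^0_{\xi}(Y \times \A{M}{K}\brack{b,c}, E)$ is the intersection of the kernels of the continuous $R$-linear operators $\partial_i - \xi_i \cdot \id$ on the Banach $R$-module $\Gamma(Y \times \A{M}{K}\brack{b,c}, E)$, and analogously over $L$ for $F$. Since $\mathbf{w}$ is a nonzero element of the $L$-kernel and the extension $R \to L$ is faithfully flat — trivially in case (2) because $L$ is a field, and in case (1) because the matching of supremum norms makes $R \to L$ an isometric embedding of affinoid $K$-algebras — a standard descent argument for kernels of continuous linear operators on Banach modules yields a nonzero element of the $R$-kernel, which is the desired element of $H^0_{\xi}(Y \times \A{M}{K}\brack{b,c}, E)$.

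The principal obstacle is justifying the descent step in case (1) rigorously: concretely, that the completed-tensor extension $R \hat{\otimes}_K \O{\A{M}{K}\brack{b,c}} \to L \hat{\otimes}_K \O{\A{M}{K}\brack{b,c}}$ is faithfully flat and commutes with the formation of the kernels of $\partial_i - \xi_i \cdot \id$. The sup-norm matching hypothesis ensures $R \to L$ is an isometric embedding, and combined with smoothness of $\Spm L$ this should yield the required flatness, but the precise verification — passing from a linear-algebraic statement about affinoid algebras to one about their completed tensor products with $\O{\A{M}{K}\brack{b,c}}$, and then to one about kernels of continuous differential operators on finite-rank Banach modules — is the technical heart of the argument.
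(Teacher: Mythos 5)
Your first step is sound and matches what the paper extracts from the unipotence of $F$ (the paper does it via the object $W$ with $F=\mathcal{U}_I(W)$ and the residue decomposition rather than via Proposition \ref{1.17}, but the output --- a distinguished $\xi\in\Sigma$ together with a nonzero $\xi$-horizontal section of $F$ over $\A{M}{L}[b,c]$ --- is the same). The gap is in your second step, and it is not a technicality that "should" work out: there is no standard descent argument identifying $H^0_{\xi}(Y\times \A{M}{K}[b,c],E)\hat{\otimes}_R L$ with $H^0_{\xi}(\A{M}{L}[b,c],F)$. The implication you need ($L$-kernel nonzero $\Rightarrow$ $R$-kernel nonzero) requires surjectivity of $\ker(\partial_i-\xi_i)\hat{\otimes}_R L \to \ker((\partial_i-\xi_i)_L)$, and this is exactly the direction that fails for completed tensor products: $\Gamma(\A{M}{L}[b,c],F)$ is the \emph{completion} of $\Gamma(Y\times\A{M}{K}[b,c],E)\otimes_R L$, the operators $\partial_i-\xi_i$ are non-strict continuous operators on infinite-dimensional Banach modules, and a limit of non-horizontal elements can be horizontal. (Flatness of $R\to L$ is also not automatic from the isometric-embedding hypothesis in case (1), and in case (2) $L$ is typically a completed fraction field, where ordinary flat base change says nothing about these completed kernels.) Concretely, the section $\mathbf{w}=\sum_m \mathbf{w}_m t^m$ over $L$ is built from a recursion dividing by quantities of the form $m_i-(\text{differences of exponents})$, and nothing in an abstract flatness argument produces a convergent solution with coefficients in $R$. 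Descending horizontal sections from $L$ to $R$ is precisely the content of the lemma (and of generization in general), so invoking it as a "standard" fact is circular.

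The paper's proof runs in the opposite direction: it uses the $L$-side only to read off the exponents $\xi_{i,k}$ of $W$ (which lie in $\Z_p\subseteq K$ because $\Sigma\subseteq M^{\mr{gp}}\tens{\Z}\Z_p$), the polynomials $Q_i$ and the nilpotency order $q$, and from these data builds explicit operators $D_l$ on $E$ itself, defined over $R$ since all their coefficients lie in $\Z_p$. The non-Liouville hypothesis yields the uniform estimates showing that $D_l(\mathbf{v})$ converges, for $\mathbf{v}\in\Gamma(Y\times\A{M}{K}[d,e],E)$, to an element $f(\mathbf{v})$ of $H^0_{\xi_1}(Y\times\A{M}{K}[b,c],E)$; the nonvanishing of $f$ is then deduced from the density of $\Gamma(Y\times\A{M}{K}[d,e],E)\otimes_R L$ in $\Gamma(\A{M}{L}[b,c],F)$ together with the fact that the corresponding operator on $F$ has image $\prod_i Q_i(\mathrm{res}_i)(W)\neq 0$ --- a use of base change only in the easy (injectivity-detecting) direction. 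To repair your outline you would have to replace the abstract descent by such an explicit construction of horizontal sections over $R$.
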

\begin{proof}
  \def\res{\mathrm{res}}
  This proof is almost same as the proof of Lemma 2.3 of \cite{Shi}.
  
  By Proposition \ref{UULNM},  we can take $W \in \ULNM_{\A{M}{L}\!\brack{0,0},\Sigma}$ such that $F=\mathcal{U}_I\op{W}$.

  Let $\rho$ be the residue of $W$ and
  $\xi_1,\ldots,\xi_n$ the exponents of $W$. Let $W=\bigoplus_{k=1}^nW_k$ be the decomposition such that
  $\rho-\xi_k\cdot\id$ acts on $W_i$ nilpotently. Let $\psi_k: W \rightarrow W_k$ be the projection.
  We also put
  \[W'_k\coloneqq \Set{\mathbf{w}\in W| \rho\op{\mathbf{w}}=\xi_k\mathbf{w}}\]

  Take a homomorphism $M\hookrightarrow \N^r$ as in Remark \ref{MtoN}.
  Let $\partial_i$ be composition of $\nabla^{\mr{log}}$ and the $i$-th projection of $\Z^r$.
  Let $\res_i$ be the composition of $\rho$ and the $i$-th projection of $\Z^r$.
  We denote the $i$-th entry of $\xi_k$ by $\xi_{i,k}$.
  We can take polynomials $Q_1,\ldots,Q_r$ such that $Q_i$ divides the minimal polynomial of $\res_i$ and
  the image of $\prod_{i=1}^rQ_i\op{\res_i}$ is not zero and contained in $W'_1$.

  By the definition of $W_k$, we can take $q\in \N$ such that $\lr{\res_i-\xi_{i,k}\cdot\id}^q\op{W_k}=0$ for all $1\leq i \leq r$ and $1 \leq k \leq n$.

  For $l\in \N$, We define the operator $D_l$ on $E$ as follows:
  \[D_l\coloneqq \prod_{i=1}^r\lr{Q_i\op{\partial_i}\lr{\prod_{k=1}^n\prod_{j=1}^l \frac{j-\lr{\partial_i-\xi_{i,k}}}{j-\lr{\xi_{i,1}-\xi_{i,k}}}
      \cdot\frac{j+\lr{\partial_i-\xi_{i,k}}}{j+\lr{\xi_{i,1}-\xi_{i,k}}}}^q}.\]

  \def\v{\mathbf{v}}
  Take two intervals $\brack{b,c}\subseteq\brack{d,e}\subseteq I$ such that $c<e$ and if $b>0$, then $d<b$ (if $b=0$, then $d=0$).
  We prove that for any $\v\in \Gamma\op{Y\times \A{M}{K}\!\brack{d,e}}$, $D_l\op{\v}$ converges to an element of
  $H^0_{\xi_1}\op{Y\times \A{M}{K}\!\brack{b,c},E}$ when $l \rightarrow \infty$.

  Put $\v=\sum_{m\in M^{\mr{gp}}}\v_mt^m$ where $\v_m \in W$. Let $\v_{m,k}\coloneqq\psi_{k}\op{\v_m}$.
  \begin{align*}
    D_l\op{\v}&=\sum_{k=1}^nD_l\op{\sum_{m\in M^{\mr{gp}}}\v_{m,k}t^m}\\
    &=\sum_{k=1}^n\sum_{m\in M^{\mr{gp}}}t^m\lr{\prod_{i=1}^rQ_i\op{\res_i+m_i}\prod_{k=1}^n\prod_{j=1}^l
    \frac{j-\lr{\res_i+m_i-\xi_{i,k}}}{j-\lr{\xi_{i,1}-\xi_{i,k}}}
    \cdot\frac{j+\lr{\res_i+m_i-\xi_{i,k}}}{j+\lr{\xi_{i,1}-\xi_{i,k}}}}^q\op{\v_{m,k}}\\
  \end{align*}
  For the term of $t^0$, since $\prod_{i=1}^rQ_i\op{\res_i}\op{\v}\in W'_1$,
  \[\prod_{i=1}^rQ_i\op{\res_i}\lr{\prod_{k=1}^n\prod_{j=1}^l
    \frac{j-\lr{\res_i-\xi_{i,k}}}{j-\lr{\xi_{i,1}-\xi_{i,k}}}
    \cdot\frac{j+\lr{\res_i-\xi_{i,k}}}{j+\lr{\xi_{i,1}-\xi_{i,k}}}}^q\op{\v_{0,k}}=\prod_{i=1}^rQ_i\op{\res_i}\op{\v_{0,k}}\]
  For the term of $t^m$ such that $0<\abs{m_i}_{\R}\leq l$ for some $i$, it is equal to zero since
  this expression contains $\lr{\res_i-\xi_{k,i}}^q\op{\v_{m,k}}$ for any $k$.

  For other terms, the coefficient of $t^m$ is written as
  \[\sum_{\alpha_1,\ldots,\alpha_r=0}^{q-1}\prod_{i=1}^rc^{m_i,l}_{i,\alpha_i}\lr{\res_i-\xi_{i,k}}^{\alpha_i}\op{\v_{m,k}}\]
  where $c^{m_i,l}_{i,\alpha_i}$ are some constants.
  Let
  \[M^{\mr{gp}}_l\coloneqq \Set{m\in M^{\mr{gp}}\setminus\brace{0}| m_i< -l\text{ or }m_i=0\text{ or }m_i>l\text{ for any $1 \leq i\leq r$}}.\]
  We write
  \[D_l\op{\v}-\prod_{i=1}^rQ_i\op{\res_i}\op{\v_{0}}=\sum_{k=1}^n\sum_{\alpha_1,\ldots,\alpha_r=0}^{q-1}\lr{\sum_{m\in M^{\mr{gp}}_l}
  \prod_{i=1}^rc^{m_i,l}_{i,\alpha_i}t^m\lr{\res_i-\xi_{i,k}}^{\alpha_i}\op{\v_{m,k}}}\]

  By \textit{Claim 3} in the proof of Lemma 2.3 in \cite{Shi}, it is proven that
  for any $\delta>1$, $\abs{c^{j,l}_{i,\alpha}}\leq \lr{\mathrm{const}}\delta^{\abs{j}_{\R}}$.

  We prove that $\sum_{m\in M^{\mr{gp}}}\prod_{i=1}^nc^{m_i,l}_{i,\alpha_i}t^m\lr{\res_i-\xi_{i,k}}^{\alpha_i}\op{\v_{m,k}}$
  converges in $Y\times \A{M}{K}\!\brack{b,c}$ for any $k,\alpha$ which means that $D_l\op{\v}-\prod_{i=1}^rQ_i\op{\res_i}\op{\v_{0}}$
  converges to zero.

  Take $C>0$ such that $\sum_{i=1}^r\abs{m_i}_{\R}\leq C \abs{h}\op{m}$.
  Take $\delta>1$ such that $d\leq \delta^{-C} b$, $\delta^C  c\leq e$.
  Then
  \begin{align*}
    & \abs{\prod_{i=1}^rc^{m_i,l}_{i,\alpha_i}\lr{\res_i-\xi_{i,k}}^{\alpha_i}\op{\v_{m,k}}}b^{-h^{-}\op{m}}c^{h^+\op{m}}\\
    &\leq \lr{\mathrm{const}}\delta^{\sum{\abs{m_i}_{\R}}} b^{-h^{-}\op{m}}c^{h^+\op{m}} \abs{\v_{m,k}}\\
    &\leq \lr{\mathrm{const}}\delta^{C\abs{h}\op{m}} b^{-h^{-}\op{m}}c^{h^+\op{m}} \abs{\v_{m,k}}\\
    &\leq \lr{\mathrm{const}} \lr{\delta^{-C}b}^{-h^{-}\op{m}}\lr{\delta^Cc}^{h^+\op{m}} \abs{\v_{m,k}}
  \end{align*}
  By the assumption that $\v=\sum_{m\in M^{\mr{gp}}} \v_mt^n$ converges in $Y\times \A{M}{K}\!\brack{d,e}$,
  this converges to zero when $\abs{h}\op{m}\rightarrow \infty$.

  So $D_l\op{\v}$ converges to \[f\op{\v}\coloneqq\prod_{i=1}^rQ_i\op{\res_i}\op{\v_{0}}\in H^0_{\xi_1}\op{Y\times \A{M}{K}\!\brack{b,c},E}.\]
  Because $F=\mathcal{U}_I\op{W}$, the image of $\Gamma\op{Y\times \A{M}{L}\!\brack{b,c},F}$ under the map \[\v=\sum_{m\in M^{\mr{gp}}}\v_mt^m\mapsto\prod_{i=1}^rQ_i\op{\res_i}\op{\v_0}\] is $\prod_{i=1}^rQ_i\op{\res_i}\op{W}$.
  Since $\Gamma\op{Y\times \A{M}{K}\!\brack{d,e},E}\tens{R}L$ is dense in $\Gamma\op{Y\times \A{M}{L}\!\brack{b,c},F}$,
  $\Im\op{f}\tens{R}L$ is dense in $\prod_{i=1}^rQ_i\op{\res_i}\op{W}$, so $f\neq 0$. Therefore $H^0_{\xi_1}\op{Y\times \A{M}{K}\!\brack{b,c},E}\neq 0$.
\end{proof}

\begin{prop}\label{generization}(generization, cf. Proposition 2.4 in \cite{Shi})
  In the condition of Lemma \ref{2.3}, we also assume that $0\notin I$.
  If $F\coloneqq E_{L} \in \LNM_{\A{M}{L}\op{I},\Sigma}$ is $\Sigma$-unipontent,
  $E$ is also $\Sigma$-unipotent.
\end{prop}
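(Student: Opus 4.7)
The plan is to argue by induction on $n = \mathrm{rank}\, E$, with the trivial base case $n = 0$. For the inductive step, it suffices to produce a non-zero $\Sigma$-constant sub-log $\nabla$-module $E_1 \subseteq E$: then $(E_1)_L$ is a sub-log $\nabla$-module of the $\Sigma$-unipotent $F$, so by Proposition \ref{1.17} the quotient $(E/E_1)_L \cong F/(E_1)_L$ is $\Sigma$-unipotent; the inductive hypothesis applied to $E/E_1$ together with $E_1$ itself then yields the $\Sigma$-unipotence of $E$.

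To build $E_1$, I would first fix an aligned closed subinterval $[b,c] \subseteq I$ of positive length (possible since $I$ is quasi-open of positive length with $0 \notin I$) and apply Lemma \ref{2.3} to obtain $\xi \in \Sigma$ together with a non-zero horizontal section $\mathbf{v} \in H^0_\xi(Y \times \A{M}{K}\!\brack{b,c}, E)$. The key step is to extend $\mathbf{v}$ to a section $\tilde{\mathbf{v}} \in H^0_\xi(Y \times \A{M}{K}\op{I}, E)$. By Corollary \ref{UULNM}, the $\Sigma$-unipotence of $F$ yields $F \cong \mathcal{U}_I(W)$ for a unique $W \in \ULNM_{\A{M}{L}[0,0], \Sigma}$, and full-faithfulness of $\mathcal{U}_{[b,c]}$ similarly gives $F|_{\A{M}{L}[b,c]} \cong \mathcal{U}_{[b,c]}(W)$. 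Hence both $H^0_\xi(\A{M}{L}\op{I}, F)$ and $H^0_\xi(\A{M}{L}\!\brack{b,c}, F)$ are canonically identified with the $\xi$-eigenspace of the residue on $W$, so the restriction map between them is bijective and $\mathbf{v}_L$ lifts uniquely to $\tilde{\mathbf{v}}_L \in H^0_\xi(\A{M}{L}\op{I}, F)$. Expanding $\tilde{\mathbf{v}}_L$ as a Laurent series in a local basis of $E$, uniqueness of Laurent expansions together with $\tilde{\mathbf{v}}_L|_{\A{M}{L}\!\brack{b,c}} = \mathbf{v}_L$ forces its coefficients to equal those of $\mathbf{v}$, hence to lie in $R$; since by hypothesis the norm on $L$ restricts to the supremum norm on $R$, convergence of this series on $\A{M}{L}\op{I}$ implies convergence on $Y \times \A{M}{K}\op{I}$, producing the desired $\tilde{\mathbf{v}}$.

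Finally, define $E_1 \subseteq E$ to be the smallest sub-log $\nabla$-module containing $\tilde{\mathbf{v}}$, equivalently the $\mathcal{O}$-submodule generated by $\tilde{\mathbf{v}}$ and its iterated $\nabla^Y$-derivatives (each of which, by integrability, again lies in $H^0_\xi(Y \times \A{M}{K}\op{I}, E)$). Applying Lemma \ref{Lem3.2.19} to a maximal $\mathcal{O}_Y$-linearly independent subset of these derivatives shows that they remain linearly independent over $\mathcal{O}_{Y \times \A{M}{K}\op{I}}$, so $E_1$ is locally free of the corresponding rank, and being generated by $\xi$-horizontal sections it is $\Sigma$-constant. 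The hard part will be the descent of $\tilde{\mathbf{v}}_L$ from $L$ to $K$: since $L$ may be an arbitrary smooth affinoid algebra or complete field extension of $R$ rather than a Galois extension, classical descent is not available, and the argument instead must exploit the uniqueness of Laurent expansions together with the isometric inclusion $R \hookrightarrow L$.
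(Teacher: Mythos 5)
Your proof is correct, and its skeleton coincides with the paper's: induct on the rank, use Lemma \ref{2.3} to produce $\xi\in\Sigma$ with $H^0_{\xi}\op{Y\times\A{M}{K}\!\brack{b,c},E}\neq 0$, turn the resulting horizontal sections into a $\Sigma$-constant subobject via Lemma \ref{Lem3.2.19}, and pass to the quotient using Proposition \ref{1.17} and the abelian-category structure. Where you genuinely diverge is in how the full interval $I$ is reached. The paper stays on each closed aligned subinterval $\brack{b,c}$, builds the constant subobject $\mc{U}_{\brack{b,c}}\op{H_E}$ there, concludes unipotence on every $\brack{b,c}\subseteq I$, and only at the very last line asserts that this implies unipotence on $\A{M}{K}\op{I}$ (a gluing statement it inherits from Shiho/Kedlaya without spelling out). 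You instead promote the horizontal section itself from $\brack{b,c}$ to all of $I$: over $L$ both $H^0_{\xi}$'s are identified with the $\xi$-eigenspace of the residue of $W$ (this uses that $\Sigma$ is (NI-D), so no nonzero $t^m$-coefficient can be $(\xi-m)$-horizontal), and the resulting $\tilde{\mb{v}}_L$ descends to $R$ because its Laurent coefficients agree with those of $\mb{v}$ and the inclusion $R\hookrightarrow L$ is isometric. This buys you a constant subobject defined on all of $Y\times\A{M}{K}\op{I}$ at once and makes the closed-to-quasi-open passage explicit rather than deferred; the cost is the extension-plus-descent step, where you should be slightly careful about local freeness of $E$ (work with a presentation of $E$ as a direct summand of a free module defined over $K$ before comparing Laurent coefficients) and should replace ``a maximal $\O{Y}$-linearly independent subset of the derivatives'' by ``a local basis of the $R$-span $H$ of the derivatives'': $H$ is a coherent $\nabla$-module on the smooth $Y$, hence locally free, and every element of $H$ is $\xi$-horizontal, so Lemma \ref{Lem3.2.19} applies to a local basis and yields both the injectivity of $\pi_1^*H\tens{}\pi_2^*C_{\xi}\rightarrow E$ and the $\Sigma$-constancy of its image.
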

\begin{proof}
  Let $\brack{b,c}\subseteq I$ be a closed aligned subinterval of positive length.
  By Lemma \ref{2.3}, there exists $\xi \in \Sigma$ such that
  $H_E\coloneqq H^0_{\xi}\op{Y\times \A{M}{K}\!\brack{b,c},E}\neq 0$.

  Let $\pi_Y:Y\times \A{M}{K}\op{I}\rightarrow Y$ be the projection.
  $H_E$ is a finitely generated $R$-module and $\pi_Y^*\op{H_E}\rightarrow E$ is injective on $Y\times \A{M}{K}\!\brack{b,c}$ by the proof of \textit{Claim 1} of Proposition 2.4 in \cite{Shi}.
  Also, by the proof of \textit{Claim 2} of Proposition 2.4 in \cite{Shi}, $H_E$ is a $\nabla$-module on $Y$,
  so it is locally free. Moreover, $H_E$ admits a map
  $H_E\rightarrow H_E\tens{\Z}M^{\mr{gp}}$ induced by $\nabla^{\mr{log}}$ of $E$.
  So $H_E$ can be regarded as an object of $\LNM_{Y\times \A{M}{K}\!\brack{0,0},\Sigma}$.
  Put $G\coloneqq \mc{U}_{\brack{b,c}}\op{H_E}$.
  Then $G$ is a $\brace{\xi}$-constant object of $\LNM_{Y\times \A{M}{K}\!\brack{b,c},\Sigma}$ and
  $G=\pi_{Y}^*\op{H_E}\rightarrow E$ is injective.  
  Then, by the induction on the rank of $E$, we see that $E$ is $\Sigma$-unipotent
  on $Y\times \A{M}{K}\!\brack{b,c}$ for any aligned closed subinterval $\!\brack{b,c}\subseteq I$.
  So $E$ is $\Sigma$-unipotent on $Y\times \A{M}{K}\op{I}$.
  
\end{proof}

\subsection{Transfer theorem}
In this section, we adapt the
Proposition 2.5 of \cite{Shi} called the transfer theorem to our situation.
To do this, we define the notion of log-convergence.

\begin{defn}(Definition 2.4.1 of \cite{Ked})
  Let $X$ be an affinoid space and $E$ a coherent $\O{X}$-module.
  For $\eta>0$, multi-indexed series $\brace{\mb{v}_{\mb{k}}}_{\mb{k} \in \N^n}$ of elements of $\Gamma\op{X,E}$
  is \textit{$\eta$-null} if for any multi-indexed series $\brace{c_{\mb{k}}}_{\mb{k}\in \N^n}$ of elements of $K$ such that
  $\abs{c_{\mb{k}}}\leq \eta^{\abs{\mb{k}}}$, $\brace{c_{\mb{k}}\mb{v}_{\mb{k}}}_{\mb{k}\in \N^n}$ converges to zero when $\abs{\mb{k}}\rightarrow \infty$.
\end{defn}

\begin{defn}(cf. Definition 2.9 in \cite{Shi})\label{log convergence}
  Let $Y=\Spm\,R$ be a smooth connected affinoid space, $M$ a fine sharp semi-saturated weighted monoid and $a$ an element of $\lr{0,1}\cap\Gamma^*$.
  Let $\phi: M\hookrightarrow \Z^r$
  be a homomorphism which induces an isomorphism $M^{\mr{gp}}\tens{\Z}\Kbar \cong \Kbar^r$.
  Let $\partial_1,\ldots,\partial_r$ be the dual basis of the basis of $\Omega^{\mr{log},1}_{\A{M}{K}}$ induced by $\phi$.
  A log $\nabla$-module $E$ on $Y\times \A{M}{K}\!\clop{0,a}$ is 
  called \textit{log-convergent with respect to $\phi$} if for any $a'\in \lr{0,a}\cap\Gamma^*$, $\eta\in\lr{0,1}$ and
  $\mathbf{v}\in \Gamma\op{Y\times \A{M}{K}\!\brack{0,a'},E}$
  \begin{equation}\label{log-conv-seq}
    \brace{\frac{1}{k_1!k_2!\cdots k_{r}!}\lr{\prod_{i=1}^r\prod_{j=0}^{k_i-1}\lr{\partial_i-j}}\op{\mathbf{v}}}_{k_1,k_2,\ldots,k_r\in \N}
  \end{equation}
  is $\eta$-null on $Y\times \A{M}{K}\!\brack{0,a'}$.
\end{defn}

\begin{rem}
  Any subquotient of a log-convergent object is also log-convergent.
\end{rem}

\begin{rem}\label{log-congergence-generator}
  By the same calculation as that after Remerk 2.10 of \cite{Shi}, to check log-convergence, it is enough to check the $\eta$-nullity of (\ref{log-conv-seq}) for a set of generators $\mathbf{v}$ of $\Gamma\op{Y\times \A{M}{K}\!\brack{0,a'},E}$.

  Indeed, by induction of $k$, one can check the following equation:
  \[\prod_{j=0}^{k-1}\lr{\partial_i-j}\op{f\mathbf{v}}=\sum_{k'=0}^{k}\binom{k}{k'}\lr{\prod_{j=0}^{k'-1}\lr{\partial_i-j}\op{f}}\lr{\prod_{j=0}^{k-k'-1}\lr{\partial_i-j}\op{\mathbf{v}}}\]
  for $1 \leq i \leq r$, $\mathbf{v}\in \Gamma\op{Y\times \A{M}{K}\!\brack{0,a'},E}$ and $f \in \Gamma\op{Y\times \A{M}{K}\!\brack{0,a'},\mathcal{O}}$.
  Let $P_{\mathbf{k}}\coloneqq\frac{1}{k_1!k_2!\cdots k_{r}!}\lr{\prod_{i=1}^r\prod_{j=0}^{k_i-1}\lr{\partial_i-j}}$ for $\mathbf{k}=\lr{k_1,\ldots,k_r}$, then
  \[P_{\mathbf{k}}\op{f\mathbf{v}}=\sum_{\mathbf{k}'\leq\mathbf{k}}P_{\mathbf{k}'}\op{f}P_{\mathbf{k}-\mathbf{k}'}\op{\mathbf{v}}.\]

  For $m \in M$, $P_{\mathbf{k}}\op{t^m}=\prod_{i=1}^r\binom{m_i}{k_i}t^m$ where $\phi\op{m}=\lr{m_1,\ldots,m_r}$. So $\abs{P_{\mathbf{k}}\op{f}}\leq \abs{f}$.
  Thus,
  \[\abs{P_{\mathbf{k}}\op{f\mathbf{v}}}\leq\max_{\mathbf{k}'\leq\mathbf{k}}\abs{f}\abs{P_{\mathbf{k}-\mathbf{k}'}\op{\mathbf{v}}}.\]
  Therefore, when $\mathbf{v}=\sum_{s=0}^gf_s\mathbf{v}_s$
  \begin{align*}
    \abs{P_{\mathbf{k}}\op{\mathbf{v}}}\eta^{\abs{\mb{k}}}&\leq\max_{s,\mathbf{k}'\leq\mathbf{k}}\abs{f_s}\abs{P_{\mathbf{k}-\mathbf{k}'}\op{\mathbf{v_s}}}\eta^{\abs{\mathbf{k}}}\\
    &\leq \max_{s,\mathbf{k}'\leq\mathbf{k}}\lr{\abs{f_s}\eta^{\abs{\mathbf{k}'}}}\lr{\abs{P_{\mathbf{k}-\mathbf{k}'}\op{\mathbf{v_s}}}\eta^{\abs{\mathbf{k}-\mathbf{k}'}}}
  \end{align*}
  for $\eta<1$.
\end{rem}

To prove the transfer theorem (Proposition \ref{transfer}), we prove the following
lemma which is  an analogue of Lemma 3.1.6 of \cite{Ked}.

\begin{lem}\label{3.1.6}
  Let $R$ be an affinoid algebra endowed with a norm $\abs{\cdot}$ and let $M$ be a fine weighted monoid. For $a\in \Gamma^*\cap \R_{>0}$ and a formal power series $f=\sum_{m\in M}c_mt^m$ with $c_m \in R$, we put
  \[\abs{f}_{a}\coloneqq\sup_{m \in M}\brace{\abs{c_m}a^{h\op{m}}}.\]
  Then for $a,b\in \Gamma^*\cap \R_{>0}$ and $c\in \brack{0,1}$,
  \[\abs{f}_{a^{c}b^{1-c}}\leq \abs{f}_{a}^c\abs{f}_{b}^{1-c}.\]
\end{lem}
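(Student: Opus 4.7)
The plan is to prove the inequality termwise by exploiting the multiplicativity of real exponentiation, so no convergence or monoid-structural arguments are actually needed beyond the definition of $\abs{\cdot}_a$ as a supremum of weighted coefficient norms.

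First I would fix an arbitrary $m \in M$ and extract from the definition of the norms the two pointwise bounds
\[
\abs{c_m}\, a^{h(m)} \leq \abs{f}_a, \qquad \abs{c_m}\, b^{h(m)} \leq \abs{f}_b,
\]
which hold simply because $\abs{f}_a$ and $\abs{f}_b$ are the respective suprema. Next, I would raise the first inequality to the (real) power $c$ and the second to the power $1-c$; since $c, 1-c \in [0,1]$ and all quantities are nonnegative reals, these operations preserve the inequality. Multiplying the two resulting bounds and regrouping exponents using $\abs{c_m}^c \abs{c_m}^{1-c} = \abs{c_m}$ and $a^{c\,h(m)} b^{(1-c)\,h(m)} = (a^c b^{1-c})^{h(m)}$ gives
\[
\abs{c_m}\, (a^c b^{1-c})^{h(m)} \leq \abs{f}_a^{\,c}\, \abs{f}_b^{\,1-c}.
\]
Taking the supremum over $m \in M$ on the left-hand side then yields the claimed log-convexity $\abs{f}_{a^c b^{1-c}} \leq \abs{f}_a^{\,c} \abs{f}_b^{\,1-c}$.

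There is essentially no obstacle here: the result is the standard log-convexity of the Gauss norm, and the proof is a one-line termwise estimate. The only very minor care required is that $\abs{f}_a$ or $\abs{f}_b$ could be $+\infty$, in which case the inequality is trivially true with the conventions $\infty^c = \infty$ for $c > 0$ and $\infty^0 = 1$; the assumption $c \in [0,1]$ makes these conventions unambiguous. Also, the fact that $a^c b^{1-c}$ need not lie in $\Gamma^*$ is harmless, since $\abs{\cdot}_r$ is defined for any $r \in \R_{>0}$ by the same formula regardless of whether $r \in \Gamma^*$.
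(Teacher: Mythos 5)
Your proof is correct and is essentially the same termwise log-convexity argument as the paper's: the paper writes $\abs{c_m}(a^cb^{1-c})^{h(m)}=(\abs{c_m}a^{h(m)})^c(\abs{c_m}b^{h(m)})^{1-c}$ and bounds the supremum of the product by the product of the suprema, which is exactly your estimate read in the other order. No gaps.
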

\begin{proof}
  \begin{align*}
    \abs{f}_{a^{c}b^{1-c}} &= \sup_{m\in M}\brace{\abs{c_m}a^{ch\op{m}}b^{\lr{1-c}h\op{m}}}\\
    &= \sup_{m\in M}\brace{\lr{\abs{c_m}a^{h\op{m}}}^{c}\lr{\abs{c_m}b^{h\op{m}}}^{1-c}}\\
    &\leq \sup_{m\in M}\brace{\abs{c_m}a^{h\op{m}}}^{c}\sup_{m\in M}\brace{\abs{c_m}b^{h\op{m}}}^{1-c}\\
    &= \abs{f}_a^c\abs{f}_{b}^{1-c}.
  \end{align*}
\end{proof}

\begin{prop}\label{transfer}(transfer theorem, cf. Proposition 2.12 in \cite{Shi})
  Let $Y=\Spm\,R$ be a smooth connected affinoid space, $M$ a fine sharp semi-saturated weighted monoid.
  Let $\Sigma\subseteq M^{\mr{gp}}\tens{\Z} \Z_p$ be an ($(\mr{NI}\cap\mr{NL})$-D) subset.
  Take $\phi:M\rightarrow \Z^r$ as in Definition \ref{log convergence}.
  For any an object $E$ of
  $\LNM_{Y\times \A{M}{K}\!\clop{0,1},\Sigma}$, if $E$ is log-convergent with respect to $\phi$, then $E$ is $\Sigma$-unipotent.
\end{prop}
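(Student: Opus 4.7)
The plan is to adapt the proof of Proposition 2.12 in \cite{Shi} to the monoidal setting: first use Lemma \ref{1.9} to trivialize $E$ in terms of a log $\nabla$-module on the vertex over a small closed polydisk, then extend the resulting eigen-decomposition of the residue to the full open polydisk by means of differential-operator projectors whose convergence is guaranteed by the log-convergence hypothesis.

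Since $\mr{NL}\subseteq \mr{PT}$, Lemma \ref{1.9} (applied after shrinking $Y$ so that $E|_{Y\times\mb{0}}$ is free, with $a$ any element of $\lr{0,1}\cap\Gamma^*$) gives some $b\in\lr{0,1}\cap\Gamma^*$ and some $E_0\in\LNM_{Y\times\A{M}{K}\!\brack{0,0},\Sigma}$ with $\mc{U}_{\brack{0,b}}\op{E_0}\cong E|_{Y\times\A{M}{K}\!\brack{0,b}}$. Because $\Sigma\subseteq M^{\mr{gp}}\tens{\Z}\Z_p\subseteq M^{\mr{gp}}\tens{\Z}K$, Lemma \ref{exponents_for_R} (after possibly enlarging $K$ to a finite extension, which is harmless via Galois descent) decomposes $E_0=\bigoplus_k E_{0,k}$ into generalized eigenspaces of the residue, with exponents $\xi_k\in\Sigma$. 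Write $\partial_1,\ldots,\partial_r$ for the basis of log derivations dual to the basis of $\Omega^{\mr{log},1}$ induced by $\phi$, and denote by $\xi_{i,k}$ the $i$-th coordinate of $\phi\op{\xi_k}$.

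The heart of the argument is to construct, for each $k$ and each $l\in\N$, a polynomial operator $D_l^{(k)}$ in $\partial_1,\ldots,\partial_r$ built as an appropriate product of factors of the form $\lr{\partial_i-\xi_{i,k'}-j}/\lr{\xi_{i,k}-\xi_{i,k'}-j}$ (ranging over $k'\neq k$ and $\abs{j}\leq l$) raised to a power large enough to annihilate the nilpotent parts of the residue action, designed so that on the vertex $Y\times\mb{0}$ the limit $\lim_{l\to\infty}D_l^{(k)}$ is the idempotent projector onto $E_{0,k}$. The key technical step is to show that $D_l^{(k)}$ converges on $Y\times\A{M}{K}\!\brack{0,a'}$ for every $a'\in\lr{0,1}\cap\Gamma^*$: log-convergence supplies $\eta$-null estimates for the action of polynomials in the $\partial_i$ on sections of $E$, while the hypothesis that $\Sigma$ is $\lr{\mr{NI}\cap\mr{NL}}$-D forces the denominators $\xi_{i,k}-\xi_{i,k'}-j$ to remain $p$-adically bounded below uniformly in $j$. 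Balancing these two estimates in the style of the proof of Lemma \ref{2.3} yields the required convergence, and the limit operators give commuting idempotents on $\Gamma\op{Y\times\A{M}{K}\!\clop{0,1},E}$ that decompose $E$ globally into pieces $E_k$ on which $\nabla^{\mr{log}}$ has the single exponent $\xi_k$.

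By induction on the rank of $E$ it then suffices to treat the case of a single exponent $\xi\in\Sigma$. For this, applying the same operator construction to $E\tens{}\pi_2^*C_{-\xi}$ extends a non-zero horizontal section on $Y\times\A{M}{K}\!\brack{0,b}$ (which exists by the vertex decomposition above) to a horizontal section on all of $Y\times\A{M}{K}\!\clop{0,1}$, and its $\O{Y}$-span yields a $\brace{\xi}$-constant subobject of $E$ that serves as the first step of a $\Sigma$-unipotent filtration. The main obstacle is the convergence analysis of the operators $D_l^{(k)}$, which requires a careful bookkeeping of the $\eta$-null estimates from log-convergence against the $p$-adic lower bounds on denominators, uniformly in the indices $k$, $k'$ and $j$.
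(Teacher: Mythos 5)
Your overall strategy is the paper's: trivialize $E$ on a small closed polydisk $Y\times \A{M}{K}\!\brack{0,b}$ via Lemma \ref{1.9}, read off the residue decomposition at the vertex, build operators $D_l$ out of the $\partial_i$ whose limit produces a horizontal section with a single exponent, and induct on the rank. Two of your claims, however, overshoot what the construction delivers and one of them hides the actual crux. First, the operators one can build here (as in Lemma \ref{2.3}) do not converge to commuting idempotents giving a global direct-sum decomposition $E=\bigoplus_k E_k$ on all of $Y\times\A{M}{K}\!\clop{0,1}$; their limit is of the form $\mb{v}=\sum_m \mb{v}_m t^m\mapsto \prod_i Q_i\op{\mathrm{res}_i}\op{\mb{v}_0}$, a non-idempotent map landing in the $\xi_1$-eigenspace, and the paper only extracts from it one non-zero $\Sigma$-constant subobject before inducting. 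Since you also fall back on induction on the rank, this is overreach rather than a fatal error, but the decomposition as stated is unproven.

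The genuine gap is in the convergence of $D_l\op{\mb{v}}$ on $\A{M}{K}\!\brack{0,a'}$. Your claim that the non-Liouville hypothesis keeps the denominators $\xi_{i,k}-\xi_{i,k'}-j$ ``$p$-adically bounded below uniformly in $j$'' is false: non-Liouvilleness only guarantees that $\abs{\xi_{i,k}-\xi_{i,k'}-j}^{-1}$ grows slower than $\delta^{\abs{j}_{\R}}$ for every $\delta>1$. Consequently, on a disk $\brack{0,a}$ the two available estimates are: $\rho$-nullity of the relevant series for every $\rho<1$ (from log-convergence combined with the subexponential denominator bounds), and $\eta$-nullity for some $\eta>1$ only on the small disk $\brack{0,b}$ where $E$ has been trivialized. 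Neither alone gives $1$-nullity, i.e.\ actual convergence, on $\brack{0,a'}$ for $a'$ close to $a$, and the annulus-shrinking trick of Lemma \ref{2.3} is not available on a disk. The missing ingredient is the log-convexity interpolation of Lemma \ref{3.1.6}: interpolating the $\rho$-null estimate on $\brack{0,a}$ against the $\eta$-null estimate on $\brack{0,b}$ gives $\rho^{1-c}\eta^{c}$-nullity on $\brack{0,a^{1-c}b^{c}}$, and choosing $c$ with $\rho^{1-c}\eta^{c}=1$ and then letting $\rho\to 1^-$ pushes the radius of convergence $a^{1-c}b^c$ arbitrarily close to $a$. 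Without this step the ``balancing'' you invoke does not close the argument.
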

\begin{proof}
  \def\res{\mr{res}}
  \def\v{\mb{v}}
  By induction on the rank of $E$, it is enough to show that for any log-convergent object $E$ of $\LNM_{Y\times \A{M}{K}\!\brack{0,a},\Sigma}$
  there exists a $\Sigma$-constant subobject of $E$ in $\LNM_{Y\times \A{M}{K}\!\brack{0,a'},\Sigma}$ for any $0<a'<a<1$.
  
  Let $W\coloneqq E|_{Y\times \A{M}{K}\!\brack{0,0}}$. We define $\xi_i\in \Z_p$, $\res_i$, $\psi_k$, $e\in \N$, $Q_i$, $D_l$ as in the proof of Proposition \ref{2.3} from this $W$.
  
  As in the proof of Proposition 2.12 in \cite{Shi},
  we want to show that
  for any $0<a'<a<1$ and $\mathbf{v}\in \Gamma\op{Y\times \A{M}{K}\!\brack{0,a},E}$, $D_l\op{\mathbf{v}}$ converges on $\A{M}{K}\!\brack{0,a'}$ when $l \rightarrow \infty$.

  First, we show that $D_{l}\op{\mathbf{v}}$ is $\rho$-null on $\A{M}{L}\!\brack{0,a}$ for any $\rho\in \lr{0,1}$. 
  Put \[P_l\op{y}\coloneqq \prod_{j=1}^l\frac{j-1-y}{j}\] and \[c_{ikl}\coloneqq \prod_{j=1}^l\frac{j}{j-\lr{\xi_{1,k}-\xi_{i,k}}}\cdot\frac{j}{j+\lr{\xi_{1,k}-\xi_{i,k}}}.\]
  Note that $P_l\op{\xi}\in \Z_p$ for any $\xi \in \Z_p$.
  Then,
  \begin{align*}
    &\prod_{j=1}^l\frac{j-\lr{\partial_i-\xi_{i,k}}}{j-\lr{\xi_{1,k}-\xi_{i,k}}}\cdot\frac{j+\lr{\partial_i-\xi_{i,k}}}{j+\lr{\xi_{1,k}-\xi_{i,k}}}\\
    &=c_{ikl}P_l\op{\partial_i-\xi_{i,k}-1}P_l\op{-\partial_i+\xi_{i,k}-1}\\
    &=c_{ikl}\lr{\sum_{q=0}^lP_{l-q}\op{-\xi_{i,k}-1}P_q\op{\partial_i}}\lr{\sum_{q=0}^lP_{l-q}\op{\xi_{i,k}-1}P_q\op{-\partial_i}}.
    \end{align*}
    $P_{q}\op{-y}$ is written as a linear combination of $P_{q}\op{y}, P_{q-1}\op{y}, \ldots, P_{0}\op{y}$
    with integer coefficients.
    $P_{q_1}\op{y}P_{q_2}\op{y}$ is written as a linear combination of $P_{q_1+q_2}\op{y}$, $P_{q_1+q_2-1}\op{y}$, $\ldots$, $P_{0}\op{y}$
    with integer coefficients. 
    So we can write
  \[D_l=\lr{\prod_{i=1}^rQ_i\op{\partial_i}\prod_{k=1}^nc_{ikl}}\lr{\sum_{q_1,\ldots,q_r=0}^{2nle}\lr{\mr{const.}}P_{q_1}\op{\partial_1}\cdots P_{q_r}\op{\partial_r}}.\]
  where the constants are contained in $\Z_p$. Thus
  \[\abs{D_l\op{\mb{v}}}\leq\lr{\prod_{i=1}^r\prod_{k=1}^n\abs{c_{ikl}}}\max_{0\leq q_1,\ldots,q_r\leq 2nlp}\abs{P_{q_1}\op{\partial_1}\cdots P_{q_r}\op{\partial_r}\op{\prod_{i=1}^rQ_i\op{\partial_i}\op{\mb{v}}}}.\]
  By the assumption on $\Sigma$ and log-convergence, this is $\rho$-null for any $\rho\in\lr{0,1}$. 
  
  We can take $b\in \left(0,a\right]$ such that $E|_{Y\times \A{M}{K}\!\brack{0,b}}=\mc{U}_{\brack{0,b}}\op{W}$. Indeed, if we take a finite admissible affinoid covering $Y=\bigcup U_{i}$ such that $W|_{U_{i}}$ is free, by Lemma \ref{1.9}, we may assume $E|_{U_i\times \A{M}{K}\!\brack{0,b}}=\mc{U}_{\brack{0,b}}\op{W|_{U_i}}$, so $E|_{Y\times\A{M}{K}\!\brack{0,b}}=\mc{U}_{\brack{0,b}}\op{W}$ by Corollary \ref{UULNM}.
  Then $\lr{D_{l+1}\op{\mathbf{v}}-D_{l}\op{\mathbf{v}}}$ is $\eta$-null on $Y\times\A{M}{K}[0,b]$ for some $\eta>1$ by the \textit{claim 2} in the proof of Lemma 2.3 of \cite{Shi}.

  Because $E$ is locally free, it is a direct summand of a free module $F$.
  We fix a basis of $F$.
  Applying Lemma \ref{3.1.6} to each component with respect to the basis of $F$, $\lr{D_{l+1}\op{\mathbf{v}}-D_{l}\op{\mathbf{v}}}$ is $\rho^{1-c}\eta^c$-null on $Y\times\A{M}{K}[0,a'']$ where $c\in \lr{0,1}$ and $a''\coloneqq a^{1-c}b^c$. Take $c$ such that $\rho^{1-c}\eta^c=1$, then $D_l\op{\mathbf{v}}$ converges on $Y\times\A{M}{K}[0,a'']$.
  If we take $\rho<1$ arbitrarily close to $1$, $c$ can be arbitrarily close to $0$ and $a''$ can be arbitrarily close to $a$.

  So, $f\op{\mb{v}}\coloneqq \lim_l D_l\op{\mb{v}}$ exists on $Y\times \A{M}{K}\!\brack{0,a'}$ for any $0<a'<a$ and it is equal
  to $\prod_{i=1}^r Q_i\op{\mr{res}_i}\op{\mb{v}}$ on $Y\times \A{M}{K}\!\brack{0,b}$ by the calculation in the proof of Proposition \ref{2.3}.
  Thus $f\op{\mb{v}}$ is an element of $H^0_{\xi_1}\op{Y\times \A{M}{K}\!\brack{0,a'},E}$ and 
  $f\neq 0$ by the argument similar to the proof of Proposition \ref{2.3}.
  Hence $H_E\coloneqq H^0_{\xi_1}\op{Y\times \A{M}{K}\!\brack{0,a'},E}$ is non-zero.

  By the same argument as the proof of Proposition 2.12 in \cite{Shi},
  we see that $\pi_{a'}^*H_E$ is $\Sigma$-constant subobject of $E$ where $\pi_{a'}: Y\times \A{M}{K}\!\brack{0,a'}\rightarrow Y$ is the projection.
\end{proof}

\section{Log isocrystals}
In this section, we assume that $\OK$ is a discrete valuation ring.

Let $N$ be a fine monoid and $\alpha: N \rightarrow \OK$ a monoid homomorphism.
$\lr{\Spf\,\OK, N}$ denotes $\Spf\,\OK$ with the log structure induced by $\alpha$.
We assume that $\alpha$ is a good chart at the unique point.
We also assume that the image of $\alpha$ does not contain $0$, i.e., $\lr{\Spf\,\OK,N}$ induces a trivial
log structure on $\Spm\,K$.
$\lr{\Spec\,k, N}$ denotes $\Spec\,k$ with the log structure induced by $\alpha_k:N \rightarrow \OK\rightarrow k$.

\def\OKa{\lr{\Spf\,\OK, N}}
\def\ka{\lr{\Spec\,k, N}}
\def\tA#1{\hat{\mathbb{A}}_{{#1},{\OK}}}
\def\tAA{\hat{\mathbb{A}}_{\OK}}
In this section, a log variety over $\ka$ means a log scheme over $\ka$ such that the underlying scheme is separated and of finite type over $k$.

For a monoid $M$, $\A{M}{k}$ denotes $\Spec\,k\!\brack{M}$ as a log scheme over $k$ with the log structure defined by the natural map $M\rightarrow k\!\brack{M}$. $\tA{M}$ denotes $\Spf\,\OK\!\angbra{M}$ as a log formal scheme over $\OK$ with the log structure defined by the natural map $M\rightarrow \OK\!\angbra{M}$.

\subsection{Convergent log isocrystals and convergent log $\nabla$-modules}
Let $\lr{\ov{X},\mc{M}}$ be a fine log variety over $\ka$ and $X$ an open subscheme of $\ov{X}$.
In this subsection, we prove that, locally on $\ov{X}$, convergent log isocrystals correspond to convergent log $\nabla$-modules.

First, we recall the definition of log tubes and sheaves of overconvergent sections.

\def\E{\mathcal{E}}
\begin{defn}\label{log tube}
  Let $\lr{\ov{X},\mc{M}}$ be a fine log variety over $\ka$ and $i:\lr{\ov{X},\mc{M}}\hookrightarrow \lr{P,\mc{L}}$ a closed immersion into
  a fine log formal scheme which is separated and topologically of finite type over $\OKa$. By Proposition-Definition 2.10. of \cite{Shi6}, there exists a unique homeomorphic exact closed immersion $i^{\mathrm{ex}}:\lr{\ov{X},\mc{M}}\hookrightarrow \lr{P^{\mathrm{ex}},\mc{L}^\mathrm{ex}}$ such that $i$ factors through $i^{\mathrm{ex}}$ and $i^{\mathrm{ex}}$ is universal among such immersions, i.e., for any homeomorphic exact closed immersion $i':\lr{\ov{X},\mc{M}}\hookrightarrow \lr{P',\mc{L}'}$ such that $i$ factors through $i'$, it is uniquely decomposed as $i'=j\circ i^{\mathrm{ex}}$ where $j:\lr{P',\mc{L}'} \rightarrow \lr{P^{\mathrm{ex}},\mc{L}^\mathrm{ex}}$.

  Let $\mathrm{sp}: P^{\mathrm{ex}}_K \rightarrow \ov{X}$ be the specialization map. For an open subscheme $X\subseteq \ov{X}$,
  The \textit{log tube} $\tube{X}^{\mr{log}}_P$ is defined to be the inverse image of $X$ under $\mathrm{sp}$. In particular, $\tube{\ov{X}}^{\mr{log}}_P=P^{\mathrm{ex}}_K$.
\end{defn}

\begin{rem}(cf. Definition 6.1.4 of \cite{Ked})
  Let $\lr{\ov{X},\mc{M}}$ and $i:\lr{\ov{X},\mc{M}}\hookrightarrow \lr{P,\mc{L}}$ be as in Definition \ref{log tube}.
  Assume $i$ has a factorization
  \[\lr{\ov{X},\mc{M}}\xhookrightarrow{i'} \lr{P',\mc{L}'}\xrightarrow{f'}\lr{P,\mc{L}}\]
  such that $i'$ is an exact closed immersion and $f'$ is formally log \'etale. Then $P^{\mathrm{ex}}$ is the completion of $P'$ along $\ov{X}$. So $\tube{X}^{\mr{log}}_P=\tube{X}_{P'}$ for any open subscheme $X\subseteq \ov{X}$.
\end{rem}

\begin{defn}
  Let $\lr{\ov{X},\mc{M}}$ and $i:\lr{\ov{X},\mc{M}}\hookrightarrow \lr{P,\mc{L}}$ be as in Definition \ref{log tube}.
  Let $X\subseteq \ov{X}$ be an open subscheme.
  We put
  \[j^{\dagger}\O{\tube{\ov{X}}^{\mr{log}}_{P}}\coloneqq \lim_{\substack{\longrightarrow\\ V}} \iota_{V*}\iota_{V}^{-1}\O{\tube{\ov{X}}^{\mr{log}}_P}\]
  Where $V$ runs through strict neighborhoods of $\tube{X}^{\mr{log}}_{P}$ in $\tube{\ov{X}}^{\mr{log}}_{P}$ and $\iota_V: V\hookrightarrow \tube{\ov{X}}^{\mr{log}}_{P}$ is the open immersion, and call this sheaf the sheaf of overconvergent sections.
\end{defn}

We recall the definition of overconvergent log isocrystals in \S 4 of \cite{Shi5} or \S 10 of \cite{diPr2}.

\begin{defn}\label{log isocrystal}(cf. Definition 6.1.7 of \cite{Ked})
  Let $\lr{\ov{X},\mc{M}}$ be a fine log variety over $\ka$ and $X\subseteq \ov{X}$ an open subscheme.
  Assume there exists a commutative diagram:
  \[\begin{tikzcd}
  \lr{\ov{X},\mc{M}}\arrow[hook]{r}{i} \arrow{d}& \lr{P,\mc{L}} \arrow{d}\\
  \ka\arrow[hook]{r}& \OKa
  \end{tikzcd}\]
  where $i$ is a closed immersion and $\lr{P,\mc{L}}$ is a $p$-adic fine log formal scheme separated, topologically of finite type and formally log smooth over $\OKa$.

  Let $\lr{P\op{i},\mc{L}\op{i}}$ be the ($i+1$)-fold fiber product of $\lr{P,\mc{L}}$ over $\OKa$.
  We denote the projections by $\pi_0,\pi_1: \lr{P\op{1},\mc{L}\op{1}} \rightarrow \lr{P,\mc{L}}$,
  $\pi_{0,1},\pi_{1,2},\pi_{0,2}: \lr{P\op{2},\mc{L}\op{2}} \rightarrow \lr{P\op{1},\mc{L}\op{1}}$
  and the diagonal map by $\Delta: \lr{P,\mc{L}} \rightarrow \lr{P\op{1},\mc{L}\op{1}}$.

  A \textit{overconvergent log isocrystal on $\lr{X,\ov{X},\mc{M}}$ over $\OKa$} is a coherent
  $j^{\dagger}\O{\tube{\ov{X}}^{\mr{log}}_P}$-module $\E$  equipped with a $j^{\dagger}\O{\tube{\ov{X}}^{\mr{log}}_{P\op{1}}}$-module isomorphism
  $\epsilon: \pi^*_1\op{\E}\rightarrow \pi^*_0\op{\E}$ such that
  $\Delta^*\op{\epsilon}=\id_{\E}$
  and the cocycle condition $\pi_{0,1}^*\op{\epsilon}\circ\pi_{1,2}^*\op{\epsilon}=\pi_{0,2}^*\op{\epsilon}$ holds on $\tube{\ov{X}}^{\mr{log}}_{P\op{2}}$.
  $\mc{E}$ is \textit{locally free} if $\mc{E}$ is locally free as a $j^{\dagger}\O{\tube{\ov{X}}^{\mr{log}}_P}$-module.

  If $X=\ov{X}$, we call them \textit{convergent log isocrystals on $\lr{\ov{X},\mc{M}}$ over $\OKa$}.
\end{defn}

A locally free overconvergent log isocrystal naturally induces a locally free $j^{\dagger}\O{\tube{\ov{X}}^{\mr{log}}_P}$-module with an integrable connection.
So it defines a log $\nabla$-module on some strict neighborhood of $\tube{X}^{\mr{log}}_P$ in $\tube{\ov{X}}^{\mr{log}}_P$.


We can always take $\lr{P,\mc{L}}$ as in Proposition \ref{log isocrystal} \'etale locally on $\ov{X}$.
  Hence, for general $\lr{\ov{X},\mc{M}}$, we can define the category of overconvergent log isocrystals on $\lr{X,\ov{X},\mc{M}}$
  using an appropriate \'etale hypercovering.

\begin{defn}
  Let $\lr{\ov{X},\mc{M}}$ be a fine log variety over $\ka$ and $X\subseteq \ov{X}$ an open subscheme.
  We take an \'etale covering $\lr{\ov{X}^0,\mc{M}^0}\rightarrow \lr{\ov{X},\mc{M}}$ such that $\lr{\ov{X}^0,\mc{M}^0}$
  has an immersion $\lr{\ov{X}^0,\mc{M}^0}\hookrightarrow \lr{P^0,\mc{L}^0}$ satisfying the condition of Definition \ref{log isocrystal}.
  For n=1,2, let $\lr{\ov{X}^n,\mc{M}^n}$ be the $\lr{n+1}$-fold fiber product of $\lr{\ov{X}^0,\mc{M}^0}$ over $\lr{\ov{X},\mc{M}}$,
  let $\lr{\ov{P}^n,\mc{L}^n}$ be the $\lr{n+1}$-fold fiber product of $\lr{\ov{P}^0,\mc{L}^0}$ over $\OKa$
  and let $X^n\coloneqq X\fibpro{\ov{X}}\ov{X}^n$.
  For $n=0, 1, 2$, let $I^{\dagger,n}$ be the category of overconvergent log isocrystals on $\lr{X^n,\ov{X}^n,\mc{M}^n}$ over $\OKa$ defined by $\lr{\ov{X}^n,\mc{M}^n}\hookrightarrow\lr{P^n,\mc{L}^n}$.
  The category of \textit{overconvergent log isocrystals on $\lr{X,\ov{X},\mc{M}}$ over $\OKa$} is defined as the category of descent data with respect to
  \[\begin{tikzcd}
  I^{\dagger,0}
  \arrow[r,shift left]\arrow[r, shift right] &
  I^{\dagger,1}
  \arrow[r]\arrow[r,shift left=2]\arrow[r, shift right=2] &
  I^{\dagger,2}.
  \end{tikzcd}\]
\end{defn}

\begin{rem}
  The above definition does not depend on the choice of an \'etale covering $\lr{\ov{X}^0,\mc{M}^0}\rightarrow \lr{\ov{X},\mc{M}}$ and an immersion $\lr{\ov{X}^0,\mc{M}^0}\hookrightarrow \lr{P^0,\mc{L}^0}$ by Lemma 4.5 of \cite{Shi5}.
\end{rem}

To define the notion of convergent log $\nabla$-modules, we introduce the notion of charted standard small frame.

\begin{defn}\label{chart}(cf. Definition 3.3 of \cite{Shi})
  A \textit{charted standard small frame of $\lr{\ov{X},\mc{M}}$} is the following data:
  \begin{itemize}
  \item an exact closed immersion $\lr{\ov{X},\mc{M}}\hookrightarrow \lr{P,\mc{L}}$ over $\OKa$ where $P$ is an affine formally log smooth $p$-adic formal scheme
    over $\OKa$,
  \item a chart
    \[\begin{tikzcd}
    N\arrow{r}{\alpha} \arrow{d}{g} & \OK\arrow{d}\\
    M\arrow{r}{\beta} &\O{P}
    \end{tikzcd}\]
    of the structure morphism $\lr{P,\mc{L}}\rightarrow \OKa$,
  \item
    $\gamma_1,\ldots \gamma_d \in \O{P}$,
  \end{itemize}
  such that $P_k=\ov{X}$ and the morphism
  $P \rightarrow \AA{\OK}^d\times\A{M}{\OK}\fibpro{\A{N}{\OK}}\Spf\,\OK$
  induced by $\gamma_1,\ldots,\gamma_d$ and the chart is formally \'etale.

  A charted standard small frame is \textit{good at $\ov{x}$}
  if $M=\mc{M}_{\ov{x}}$.
\end{defn}

\begin{rem}\label{exist_chart}
  Assume that $\lr{\ov{X},\mc{M}}$ is log smooth over $\OKa$.
  Then, \'etale locally on $\ov{X}$, there exists a charted standard small frame.
  Indeed, \'etale locally we can take a chart
  \[\begin{tikzcd}
  N\arrow{r}{\alpha} \arrow{d} & k \arrow{d}\\
  M\arrow{r} &\O{\ov{X}}
  \end{tikzcd}\]
  such that the morphism $\ov{X}\rightarrow \A{M}{k}\fibpro{\A{N}{k}}\Spec\,k$
  is the composition of an \'etale morphism $\ov{X}\rightarrow \AA{k}^d\times\A{M}{k}\fibpro{\A{N}{k}}\Spec\,k$ and the projection
  $\AA{k}^d\times\A{M}{k}\fibpro{\A{N}{k}}\Spec\,k\rightarrow \A{M}{k}\fibpro{\A{N}{k}}\Spec\,k$ for some $d\in \N$.
  We can take a lift $P$ of $\ov{X}$ such that the following diagram is Cartesian
  \[\begin{tikzcd}
  \ov{X}\arrow{r} \arrow[hook]{d} & \AA{k}^d\times \A{M}{k}\fibpro{\A{N}{k}}\Spec\,k \arrow[hook]{d}\\
  P \arrow{r}& \AA{\OK}^d\times\A{M}{\OK}\fibpro{\A{N}{\OK}}\Spf\,\OK
  \end{tikzcd}\]
  and the bottom horizontal morphism is formally \'etale. We define the log structure $\mc{L}$ on $P$ over $\OKa$
  by $P\rightarrow \A{M}{\OK}\fibpro{\A{N}{\OK}}\Spf\,\OK$.
  The exact closed immersion $\lr{\ov{X},\mc{M}}\rightarrow \lr{P,\mc{L}}$ forms a charted standard small frame
  with $\gamma_1,\ldots \gamma_d \in \O{P}$ and the chart associated to the above diagram.

  Moreover, for any geometric point $\ov{x}$ of $\ov{X}$, if $N\rightarrow \ov{\mc{M}}_{\ov{x}}$ is injective and
  $\lr{\ov{\mc{M}}_{\ov{x}}/N}^{\mr{gp}}$ is torsion-free, there exists a charted standard small frame
  which is good at $\ov{x}$. Indeed, by Lemma \ref{3.1.1.A},
  we can take a good one as the above chart.
\end{rem}

In the rest of this subsection, we assume the existence of a charted standard small frame $\lr{P,M,\gamma_1,\ldots, \gamma_d}$.
Let $X$ be a open subscheme of $\ov{X}$.

Let $\lr{P\op{i},\mc{L}\op{i}}$ be the ($i+1$)-st fiber product of $\lr{P,\mc{L}}$ over $\OKa$.

\begin{defn}(cf. Definition 6.3.1 of \cite{Ked}, Definition 3.4 of \cite{Shi})
  A log $\nabla$-module $E$ on $P_K$ is \textit{convergent} if it the restriction of $E$ to some strict neighborhood
  of $\tube{X}_P$ in $P_K$ comes from a locally free overconvergent log isocrystal on $\lr{X,\ov{X},\mc{M}}/\OKa$.
\end{defn}

\begin{prop}\label{cat_equiv}(cf. Theorem 6.4.1 of \cite{Ked})
  If $X$ is dense in $\ov{X}$,
  there is an equivalence between the category $I_{\mr{conv}}\op{\lr{\ov{X},\mc{M}}/\OKa}^{\mr{lf}}$ of locally free convergent log isocrystals on $\lr{\ov{X},\mc{M}}/\OKa$
  and the category $\widehat{MIC}_{\mr{conv}}\op{\lr{\ov{X},\mc{M}}/\OKa}$ of convergent log $\nabla$-modules on $P_K$.
\end{prop}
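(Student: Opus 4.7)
The plan is to adapt the proof of Theorem 6.4.1 of \cite{Ked} to the logarithmic setting by constructing quasi-inverse functors. One direction is essentially tautological: from a locally free convergent log isocrystal $\lr{\mc{E},\epsilon}$ one extracts the integrable log connection $\nabla:\mc{E}\rightarrow\mc{E}\otimes\Omega^{\mr{log},1}_{P_K}$ by restricting $\epsilon$ from $\tube{\ov{X}}^{\mr{log}}_{P\op{1}}=P\op{1}_K$ to the first log-infinitesimal neighborhood of the diagonal. The resulting log $\nabla$-module is automatically convergent in the sense of Kedlaya's definition, since the stratification $\epsilon$ itself restricts to an overconvergent log isocrystal structure on $\lr{X,\ov{X},\mc{M}}/\OKa$ defined on all of $P_K$, which is in particular a strict neighborhood of $\tube{X}_P$.

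For the quasi-inverse, let $\lr{E,\nabla}$ be a convergent log $\nabla$-module on $P_K$. By hypothesis there exist a strict neighborhood $V$ of $\tube{X}_P$ in $P_K$ and a locally free overconvergent log isocrystal $\lr{\mc{E}^{\dagger},\epsilon^{\dagger}}$ on $\lr{X,\ov{X},\mc{M}}/\OKa$ (with respect to $\lr{\ov{X},\mc{M}}\hookrightarrow\lr{P,\mc{L}}$) whose underlying $\nabla$-module recovers $\lr{E,\nabla}|_V$. The plan is to extend $\epsilon^{\dagger}$, which is a priori defined on a strict neighborhood of $\tube{X}_{P\op{1}}$, to all of $\tube{\ov{X}}^{\mr{log}}_{P\op{1}}=P\op{1}_K$ via a Taylor series and then verify the cocycle condition on $P\op{2}_K$. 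Using the charted standard small frame, introduce on $P\op{1}_K$ the \'etale coordinates $u_i\coloneqq \pi_1^{*}\gamma_i-\pi_0^{*}\gamma_i$ ($1\leq i\leq d$) and, after choosing generators $m_1,\ldots,m_s$ of $M$, the logarithmic coordinates $v_j\coloneqq \pi_1^{*}t^{m_j}\cdot\pi_0^{*}t^{-m_j}-1$; the log tube $P\op{1}_K$ is then the locus where the $u_i$ are topologically nilpotent and $\abs{v_j}<1$. Denoting by $\partial_i,\partial_{\log,j}$ the derivations on $P_K$ dual to $d\gamma_i$ and $d\log t^{m_j}$, the candidate for $\epsilon$ is the Taylor series
\[\epsilon\op{\pi_1^{*}e}=\sum_{\mathbf{k}\in\N^d,\mathbf{l}\in\N^s}\frac{1}{\mathbf{k}!}\partial^{\mathbf{k}}\prod_{j=1}^{s}\binom{\partial_{\log,j}}{l_j}\op{e}\cdot\mathbf{u}^{\mathbf{k}}\mathbf{v}^{\mathbf{l}},\]
which must coincide with $\epsilon^{\dagger}$ on the overlap by uniqueness of Taylor expansions.

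The main obstacle is establishing convergence of this series on all of $P\op{1}_K$. The overconvergence of $\epsilon^{\dagger}$ on $V$ is expected to translate into $\eta$-null growth estimates (in the sense of Definition 2.4.1 of \cite{Ked}, recalled at the opening of \S 2.5) on the \'etale derivations $\partial_i$ for some $\eta<1$, giving convergence of the \'etale factor on the whole tube; meanwhile the logarithmic operators $\binom{\partial_{\log,j}}{l_j}$ have integer-valued coefficients when expanded in a suitable basis, so they do not worsen norms, and the logarithmic factor converges wherever $\abs{v_j}<1$, which is precisely the defining condition of the log tube. Once $\epsilon$ is constructed on $P\op{1}_K$, the cocycle identity $\pi_{0,1}^{*}\op{\epsilon}\circ\pi_{1,2}^{*}\op{\epsilon}=\pi_{0,2}^{*}\op{\epsilon}$ holds on a strict neighborhood of $\tube{X}_{P\op{2}}$ by the corresponding property of $\epsilon^{\dagger}$; because $X$ is dense in $\ov{X}$, any such neighborhood is dense in $P\op{2}_K$, so the identity extends by analytic continuation. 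Density of $X$ in $\ov{X}$ likewise yields uniqueness of the extension, so the two constructions are quasi-inverse.
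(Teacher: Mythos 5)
Your route is genuinely different from the paper's. The paper does not construct a quasi-inverse by hand: it identifies the category of log $\nabla$-modules on $P_K$ with the category of isocrystals on the log infinitesimal site (\S 3 of \cite{diPr2}), invokes Theorem 3 of \cite{diPr2} for full faithfulness of $I_{\mr{conv}}\op{\lr{\ov{X},\mc{M}}/\lr{\Spf\,\OK,N}}^{\mr{lf}}\rightarrow I_{\mr{inf}}\op{\lr{\ov{X},\mc{M}}/\lr{\Spf\,\OK,N}}$, and invokes Proposition 8 of \cite{diPr2} to pin down the essential image: a log $\nabla$-module on $P_K$ whose restriction to $X$ comes from a convergent isocrystal already lies in the image. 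Your Taylor-series construction is the analogue of Kedlaya's original proof of Theorem 6.4.1 and would make the statement self-contained, at the cost of redoing convergence estimates that the paper instead imports from \cite{diPr2} (and, for the log directions, proves separately in Lemma \ref{6.3.4}).

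As written, though, your convergence step has a real gap, concentrated exactly where $\mc{M}$ is nontrivial. First, overconvergence of $\epsilon^{\dagger}$ gives, for every $\eta<1$, $\eta$-nullity of the Taylor coefficients only over $\tube{X}_P$; to sum the series over all of $\tube{\ov{X}}^{\mr{log}}_{P\op{1}}\cong \AA{K}^{r+d}\!\clop{0,1}\times\tube{\ov{X}}_P$ you must transfer these estimates from $\tube{X}_P$ to $\tube{\ov{X}}_P$, and this is where the density of $X$ in $\ov{X}$ is actually needed (the spectral seminorm on $\Gamma\op{\tube{\ov{X}}_P,\mc{O}}$ restricts to that on $\Gamma\op{\tube{X}_P,\mc{O}}$), not merely for the cocycle identity. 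Note also that $\eta$-nullity ``for some $\eta<1$'' only yields convergence on the closed polydisc of radius $\eta$; you need it for every $\eta<1$. Second, the claim that the logarithmic factor converges automatically on $\abs{v_j}<1$ because the operators $\binom{\partial_{\log,j}}{l_j}$ have integral coefficients is false: the uniform $\eta$-nullity of $\frac{1}{l!}\prod_{x=0}^{l-1}\lr{\partial_{\log,j}-x}\op{\mb{v}}$ is precisely the nontrivial log-convergence condition of Definition \ref{log convergence}; integrality controls these operators on $\mc{O}$ but not on $E$, where iterating the residue produces denominators. The log directions must be treated exactly like the \'etale ones --- convergence over $\tube{X}_P$ from the overconvergent structure, then the density argument --- which is the content of Lemma \ref{6.3.4}. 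Two smaller points: $\tube{\ov{X}}^{\mr{log}}_{P\op{1}}$ is the generic fiber of the exactification of $P\op{1}$, not $P\op{1}_K$ itself; and the $v_j$ should be indexed by lifts of a basis of $\lr{M/N}^{\mr{gp}}$ rather than by generators of $M$, since otherwise the dual derivations $\partial_{\log,j}$ are not well defined.
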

\begin{proof}
Let $I_{\mr{inf}}\op{\lr{\ov{X},\mc{M}}/\OKa}$ be the category of isocrystals on the infinitesimal site $\lr{\lr{\ov{M},\mc{M}}/\OKa}_{\mr{inf}}$ which is defined in Definition 8 of \cite{diPr2}. By \S 3 of \cite{diPr2}, the category $\widehat{MIC}\op{\lr{\ov{X},\mc{M}}/\OKa}$ of log $\nabla$-modules on $P_K$ is equivalent to $I_{\mr{inf}}\op{\lr{\ov{X},\mc{M}}/\OKa}$. By Theorem 3 of \cite{diPr2}, the natural functor from $I_{\mr{conv}}\op{\lr{\ov{X},\mc{M}}/\OKa}^{\mr{lf}}$ to $I_{\mr{inf}}\op{\lr{\ov{X},\mc{M}}/\OKa}$ is fully faithful, so the natural functor from $I_{\mr{conv}}\op{\lr{\ov{X},\mc{M}}/\OKa}^{\mr{lf}}$ to $\widehat{MIC}\op{\lr{\ov{X},\mc{M}}/\OKa}$ is fully faithful.  We have to show that the essential image of this functor is equal to $\widehat{MIC}_{\mr{conv}}\op{\lr{\ov{X},\mc{M}}/\OKa}$.

It is clear that the image is contained by $\widehat{MIC}_{\mr{conv}}\op{\lr{\ov{X},\mc{M}}/\OKa}$. For $E \in \widehat{MIC}_{\mr{conv}}\op{\lr{\ov{X},\mc{M}}/\OKa}$, the restriction of $E$ to $X$ comes from a convergent log isocrystal on $\lr{X,\mc{M}|_X}/\OKa$. So by Proposition 8 of \cite{diPr2}, $E$ is contained in the image of $I_{\mr{conv}}\op{\lr{\ov{X},\mc{M}}/\OKa}^{\mr{lf}}$.

\end{proof}

\subsection{Exponents of convergent log isocrystals}

\begin{defn}
  Let $\lr{\ov{X},\mc{M}}$ be a log smooth variety over $\ka$.
  Let $\ov{x}$ be a geometric point of $\ov{X}$. Put $M\coloneqq \ov{\mc{M}}_{\ov{x}}$.
  Assume that $N\rightarrow M$ is injective and $\lr{M/N}^{\mr{gp}}$ is torsion-free.
  Let $\Sigma$ be a subset of $M^{\mr{gp}}\tens{\Z}\Kbar$.

  Let $\E$ be a locally free convergent log isocrystal on $\lr{\ov{X},\mc{M}}/\OKa$.
  Take a charted standard small frame $\lr{\lr{U,\mc{M}|_U}\hookrightarrow \lr{P,\mc{L}},M\rightarrow \O{P},\gamma_1,\ldots,\gamma_d}$ of some neighborhood $U$ of $\ov{x}$ in $\ov{X}$ which is good at $\ov{x}$.
  Let $E_P$ be the log $\nabla$-module on $P_K$ induced by $\E$. 
  We say that $\E$ \textit{has exponents in $\Sigma$ at $\ov{x}$} if
  the exponents of $E_P$ are contained in $\Sigma$.
\end{defn}

\begin{prop}\label{well-def_exp}
  The above definition is independent of the choice of charted standard small frames.
\end{prop}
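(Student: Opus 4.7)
The plan is to compare exponents computed from two charted standard small frames by building a third frame that dominates both. Suppose two charted standard small frames good at $\ov{x}$ are given on a common neighborhood $U$ of $\ov{x}$, with underlying immersions $\lr{U,\mc{M}|_U}\hookrightarrow \lr{P_i,\mc{L}_i}$ for $i=1,2$; by goodness, both charts use the same monoid $M=\ov{\mc{M}}_{\ov{x}}$. First I would form the fiber product $\lr{P_3,\mc{L}_3}\coloneqq \lr{P_1,\mc{L}_1}\fibpro{\OKa}\lr{P_2,\mc{L}_2}$ in the category of fine log formal schemes and pass to the exactification $\lr{P_3^{\mr{ex}},\mc{L}_3^{\mr{ex}}}$ of the (closed but non-exact) diagonal immersion $\lr{U,\mc{M}|_U}\hookrightarrow \lr{P_3,\mc{L}_3}$, using the existence result invoked in Definition \ref{log tube}. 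The resulting projections $\pi_i:\lr{P_3^{\mr{ex}},\mc{L}_3^{\mr{ex}}}\to\lr{P_i,\mc{L}_i}$ identify $\tube{U}^{\mr{log}}_{P_3}$ with a locally closed piece of $\tube{U}_{P_i}$ on each side.

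Next I would exploit the isomorphism $\epsilon$ in the datum of the convergent log isocrystal $\mc{E}$ to produce a horizontal isomorphism
\[\epsilon_{12}:\pi_1^{*}E_{P_1}\xrightarrow{\sim}\pi_2^{*}E_{P_2}\]
of log $\nabla$-modules on some strict neighborhood of $\tube{U}^{\mr{log}}_{P_3}$ inside $P_{3,K}^{\mr{ex}}$. Because $\epsilon_{12}$ intertwines the log connections, it identifies the residue operators of the two sides at every geometric point of $P_{3,K}^{\mr{ex}}$. The task then reduces to producing a geometric point $\ov{y}$ of $P_{3,K}^{\mr{ex}}$ specializing to $\ov{x}$ at which the log-stalk maps $\ov{\mc{L}_i}^{\mr{gp}}_{\pi_i\op{\ov{y}}}\to \ov{\mc{L}_3^{\mr{ex}}}^{\mr{gp}}_{\ov{y}}$ induced by the projections are isomorphisms compatible with the canonical identifications $\ov{\mc{L}_i}^{\mr{gp}}_{\pi_i\op{\ov{y}}}=M^{\mr{gp}}$ coming from the goodness of each frame at $\ov{x}$. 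At such a point, the residue of $\pi_i^{*}E_{P_i}$ at $\ov{y}$ agrees, after these identifications, with the residue of $E_{P_i}$ at $\pi_i\op{\ov{y}}$, and the horizontality of $\epsilon_{12}$ forces the Jordan-type decompositions of Lemma \ref{lem_for_def_exponents} of the two residues to coincide. This yields the equality of the two sets of exponents.

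The step I expect to be the main obstacle is the local chart-level analysis of the exactification at $\ov{x}$. The stalk of $\mc{L}_3$ at a point of $P_3$ above $\ov{x}$ is the pushout $M\oplus_N M$, and one must verify that the exactification replaces it by $M$ along the codiagonal, so that both projections induce the identity of $M^{\mr{gp}}$ on log stalks at a point of the exactified locus specializing to $\ov{x}$. Granted that chart picture, the existence of a geometric point $\ov{y}$ in $P_{3,K}^{\mr{ex}}$ at which the log stalk equals $M$ follows from an argument analogous to Proposition \ref{polyannuli_are_basis} applied to a local formal chart of $P_3^{\mr{ex}}$ at $\ov{x}$.
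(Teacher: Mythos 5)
Your setup is the same as the paper's: both arguments form the fiber product $\lr{P_1,\mc{L}_1}\fibpro{\lr{\Spf\,\OK,N}}\lr{P_2,\mc{L}_2}$, exactify the diagonal immersion of $\ov{X}$ (the paper does this explicitly, pulling back along $\hat{\mathbb{A}}_{M\op{1}',\OK}\rightarrow\hat{\mathbb{A}}_{M\op{1},\OK}$ where $M\op{1}'$ is generated by $M\amsum{N}M$ and the kernel of the codiagonal --- precisely the chart-level verification you flag as the main obstacle), and use the crystal structure to identify the two pullbacks of $\mc{E}$. The gap is in your final reduction. ``Has exponents in $\Sigma$ at $\ov{x}$'' is defined as containment of the exponents of $E_{P}$ in the subsheaf induced by $\Sigma$ at \emph{every} geometric point of $P_K$, not only at points lying over the deepest stratum. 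Producing a single geometric point $\ov{y}$ of $P^{\mr{ex}}_{3,K}$ at which the log stalks of both projections are isomorphic to $M$ only compares the residues of $E_{P_1}$ and $E_{P_2}$ at the two images of that one point. It says nothing about the exponents of $E_{P_2}$ at the remaining geometric points of $P_{2,K}$ --- in particular at points of shallower strata, where the log stalk is $M/F$ for a nontrivial face $F$ and the residue is a different operator valued in $\lr{M/F}^{\mr{gp}}\tens{\Z}\Kbar$. There is no a priori constancy of exponents across strata that would let one point control all of $P_{2,K}$, so the proposed reduction does not prove the statement.

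The paper closes exactly this gap by making the comparison global rather than pointwise. From the exactified product it builds an \'etale, strict morphism $f:\lr{P_3,\mc{L}_3}\rightarrow \hat{\mathbb{A}}^{r+d}_{\OK}\times\lr{P_2,\mc{L}_2}$ out of the units $\beta\op{m_i}_2/\beta\op{m_i}_1-1$ and the differences of the $\gamma$'s; the strong fibration lemma then gives $\tube{\ov{X}}_{P_3}\cong \AA{K}^{r+d}\!\clop{0,1}\times\tube{\ov{X}}_{P_2}$. Composing the zero section $s$ of this product with $\pi_1$ yields a morphism $\pi_1\circ s:\tube{\ov{X}}_{P_2}\rightarrow\tube{\ov{X}}_{P_1}$ with $E_{P_2}=\lr{\pi_1\circ s}^{*}E_{P_1}$ and, crucially, $\lr{\pi_1\circ s}^{*}\beta_1\op{m}=\beta_2\op{m}$ for all $m\in M$; the latter identity is what guarantees that the subsheaf induced by $\Sigma$ via the first chart pulls back to the one induced via the second chart. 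This gives containment of exponents at every geometric point of $P_{2,K}$ simultaneously. To repair your argument you would need to replace the single point $\ov{y}$ by such a section (or, for each geometric point of $P_{2,K}$, a compatible lift to $P^{\mr{ex}}_{3,K}$ together with the chart compatibility), at which point you have reconstructed the paper's proof.
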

\begin{proof}
  We assume that $\ov{X}=U$ has two charted standard small frames\[\lr{\lr{\ov{X},\mc{M}}\hookrightarrow \lr{P_1,\mc{L}_1},\beta_1:M\rightarrow \O{P_1},\gamma_{1,1},\ldots,\gamma_{1,d}}\] and \[\lr{\lr{\ov{X},\mc{M}}\hookrightarrow \lr{P_2,\mc{L}_2},\beta_2:M\rightarrow \O{P_2},\gamma_{2,1},\ldots,\gamma_{2,d}}\]
  which are good at $\ov{x}$.

  Let $\lr{P_{1,2},\mc{L}_{1,2}}\coloneqq \lr{P_1,\mc{L}_1}\fibpro{\OKa} \lr{P_2,\mc{L}_2}$.
  Let $\pi_1: \lr{P_{1,2},\mc{L}_{1,2}}\rightarrow \lr{P_1,\mc{L}_1}$ and $\pi_2: \lr{P_{1,2},\mc{L}_{1,2}} \rightarrow \lr{P_2,\mc{L}_2}$ be the projections.   Let $m_1,\ldots,m_r\in M^{\mr{gp}}$ be a set of lifts of free generators of $\lr{M/N}^{\mr{gp}}$.
  Put $\beta\op{m}_i\coloneqq \pi^{*}_i\op{\beta_i\op{m}}$ for $i=1,2$.
  Let $M\op{1}$ be the amalgamated sum of $2$ copies of $M$ over $N$ in the category of fine monoids.
  Let $M\op{1}'$ be the submonoid of $M\op{1}^{\mr{gp}}$ generated by $M\op{1}$ and the kernel of the codiagonal homomorphism $M\op{1}^{\mr{gp}}\rightarrow M^{\mr{gp}}$.
  There exists a chart $M\op{1}\rightarrow \O{P_{1,2}}$ of $\mc{L}_{1,2}$ induced by $\beta_1$ and $\beta_2$.
  Let $\lr{P_3,\mc{L}_3}\coloneqq \lr{P_{1,2},\mc{L}_{1,2}} \fibpro{\tA{M\lr{1}}}\tA{M\lr{1}'}$.
  We denote the natural map $\lr{P_3,\mc{L}_3}\rightarrow \lr{P_{1,2},\mc{L}_{1,2}}$ by $\iota$.
  The following diagram commutes:
  \[\begin{tikzcd}
  \lr{X,\mc{M}}\arrow{d}\arrow{r}&\tA{M}\arrow{r}&\tA{M\op{1}'}\arrow{d}\\
  \lr{P_{1,2},\mc{L}\op{j}}\arrow{rr}& &\tA{M\op{1}}
  \end{tikzcd}\]
  where the left vertical morphism is the diagonal map.
  Thus there exists the diagonal map $\lr{X,\mc{M}} \rightarrow \lr{P_3,\mc{L}_3}$.
  Since $\iota^*\op{\frac{\beta\op{m}_2}{\beta\op{m}_1}}$ is invertible on $P_3$ for any $m \in M$,
  the map $M \ni m \mapsto \iota^*\op{\beta\op{m}_1}$ is a fine chart of $\mc{L}_3$. Thus
  $\lr{\ov{X},\mc{M}} \rightarrow \lr{P_3,\mc{L}_3}$
  is an exact closed immersion. So
  \[\tube{\ov{X}}^{\mr{log}}_{P_{1,2}}=\tube{\ov{X}}_{P_3}\]

  On the other hand, we define the morphism
  \[f: \lr{P_3,\mc{L}_3}\rightarrow \tAA^{r+d} \times \lr{P_2,\mc{L}_2}\]
  by $\frac{\beta\op{m_1}_2}{\beta\op{m_1}_1}-1,\ldots,\frac{\beta\op{m_r}_2}{\beta\op{m_r}_1}-1,
  \pi^*_1\op{\gamma_{1,1}}-\pi^*_2\op{\gamma_{2,1}},\ldots,\pi^*_1\op{\gamma_{1,d}}-\pi^*_2\op{\gamma_{2,d}}\in \O{P_3}$,
  regarded as $P_3\rightarrow \tAA^{1}$, and the projection $\pi_2\circ\iota$.
  This is log \'etale and strict, so \'etale. So,
  \[\tube{\ov{X}}_{P_3}\cong\tube{\ov{X}}_{\tAA^{r+d} \times P_2}=\AA{K}^{r+d}\!\clop{0,1}\times\tube{\ov{X}}_{P_2}\]
  by Proposition 1.3.1 of \cite{Ber}.

  The following diagram commutes:
  \[\begin{tikzcd}
    & & \lr{\ov{X},\mc{M}} \arrow[hook]{lld} \arrow[hook]{ld} \arrow[hook]{d} \arrow[hook]{rd} &\\
    \lr{P_1,\mc{L}_1}&\lr{P_3,\mc{L}_3}\arrow{l}{\pi_1\circ\iota}\arrow[swap]{r}{f}&\arrow[shift left]{r}{\mr{pr}_2}\tAA^{r+d} \times \lr{P_2,\mc{L}_2}&\arrow[shift left]{l}{s}\lr{P_2,\mc{L}_2},
  \end{tikzcd}\]
  where $\mr{pr_2}$ is the projection and $s$ is the section to
  $\brace{\mb{0}} \times P_2$. We denote the morphism $\tube{\ov{X}}_{P_2}\rightarrow \tube{\ov{X}}_{\tAA^{r+d}\times P_2}$ induced by $s$
  also by $s$ and denote the morphism $\tube{\ov{X}}_{\tAA^{r+d}\times P_2}\cong\tube{\ov{X}}_{P_3}\rightarrow \tube{\ov{X}}_{P_1}$ induced by $\pi_1\circ\iota$ also by $\pi_1$. Then $E_{P_2}=\lr{\pi_1\circ s}^{*}\op{E_{P_1}}$. Since
  \[\lr{\pi_1\circ s}^{*}\op{\beta_1\op{m}}=s^{*}\op{\beta\op{m}_1}=\beta_2\op{m}\]
  on $\tube{\ov{X}}_{P_2}$ for any $m \in M$, the pullback of the sheaf induced by $\Sigma$ on $P_1$ by $\pi_1\circ\iota\circ s$ is equal to
  the sheaf induced by $\Sigma$ on $P_2$.
  Therefore if the exponents of $E_{P_1}$ are contained in $\Sigma$, the exponents of $E_{P_2}$ are contained $\Sigma$.
\end{proof}

\begin{defn}
  Let $\lr{\ov{X},\mc{M}}$ be a log smooth variety over $\ka$ such that
  $N\rightarrow \ov{\mc{M}}_{\ov{x}}$ is injective and $\lr{\ov{\mc{M}}_{\ov{x}}/N}^{\mr{gp}}$ is torsion-free
  at any geometric point $\ov{x}$ of $\ov{X}$.
  Let $\mc{S}\subseteq \ov{\mc{M}}^{\mr{gp}}\tens{\Z} \Kbar$ be a subsheaf.
  A locally free convergent log isocrystal $\E$ on $\lr{\ov{X},\mc{M}}/\OKa$ \textit{has exponents in $\mc{S}$} if for any geometric point $\ov{x}$,
  $\E$ has exponents in $\mc{S}_{\ov{x}}$ at $\ov{x}$.
\end{defn}

\subsection{Unipotence of overconvergent log isocrystals}\label{unipotence}

We consider the following situation:
\begin{situation}\label{situation}
  Let $\lr{\ov{X},\mc{M}_0\oplus\mc{M}}$ be a log variety over $\ka$ satisfying the following conditions:
  \begin{itemize}
  \item
    $\mc{M}$ and $\mc{M}_0$ are fine. 
  \item
    The map $N\rightarrow \mc{M}_0\oplus\mc{M}$ factors as $N\rightarrow  \mc{M}_0\rightarrow \mc{M}_0\oplus{}\mc{M}$.
  \item
    $\lr{\ov{X},\mc{M}_0\oplus\mc{M}}$ is log smooth over $\ka$.
  \item $N\rightarrow \lr{\overline{\mathcal{M}_0}}_{\overline{x}}$ is injective and $\lr{\lr{\ov{\mathcal{M}_0}}_{\overline{x}}/N}^{\mr{gp}}$ is torsion-free at any geometric point $\overline{x}$ of $\ov{X}$.
  \item
    $\overline{\mathcal{M}}^{\mr{gp}}_{\overline{x}}$ is torsion-free at any geometric point $\overline{x}$ of $\ov{X}$.    
  \end{itemize}
  Let $X$ be the trivial locus of $\mc{M}$ which is an open dense subset of $\ov{X}$ as shown below.
\end{situation}

  Let $\ov{x}$ be a geometric point of $\ov{X}$. Let $M_0\coloneqq\lr{\mc{M}_0}_{\ov{x}}$ and $M\coloneqq \mc{M}_{\ov{x}}$.
  Take a charted standard small frame $\lr{\lr{\ov{X},\mc{M}}\hookrightarrow\lr{P,\mc{L}},\beta:M_0\oplus M\rightarrow \O{P}, \gamma_1,\ldots,\gamma_d}$ which is good at $\ov{x}$. (We assume that it can be taken globally.) Then there exists a natural \'etale morphism
  \[f:\lr{P,\mc{L}} \rightarrow \tAA^d\times\lr{\tA{M_0}\fibpro{\tA{N}}\OKa}\times \tA{M}\]
  for some $d$.
  Let $Q$ be the inverse image of the vertex of $\tA{M}$ under the map $P\rightarrow \tA{M}$ and $Z\coloneqq Q_k \subseteq \ov{X}$. Then $\ov{x}\in Z$.
  Note that $Q\rightarrow \tAA^d\times\lr{\tA{M_0}\fibpro{\tA{N}}\Spf\,\OK}$ is \'etale.
  Let $\mc{L}_0$ be the log structure on $Q$ over $\OKa$ induced by $Q\rightarrow \tA{M_0}\fibpro{\tA{N}}\Spf\,\OK$.

  $X$ is dense in $\ov{X}$ since it is the inverse image of a dense subset $\tAA^d\times\lr{\tA{M_0}\fibpro{\tA{N}}\OKa}\times \tA{M^{\mr{gp}}}$ under the \'etale morphism $f$.

  Note that $\tube{Z}_Q=Q_K$.

  Note also that $M$ is semi-saturated. Indeed, for any face $F$ of $M$, there exists a geometric point $\ov{y}$ of $\ov{X}$
  such that $\ov{\mc{M}}_{\ov{y}}=M/F$. By the assumption, $\lr{M/F}^{\mr{gp}}$ is torsion-free.
  So $M$ is semi-saturated by Proposition \ref{semi-saturated}.
  
  \begin{lem}\label{tubeZ}
    In this situation,
    \[\tube{Z}_P\cong\tube{Z}_Q\times \A{M}{K}\!\clop{0,1}.\]
  \end{lem}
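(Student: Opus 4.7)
The strategy is to exploit the étale morphism $f:P\rightarrow \tAA^d\times\lr{\tA{M_0}\fibpro{\tA{N}}\OKa}\times \tA{M}$ to reduce the computation of $\tube{Z}_P$ to a pullback against the tube of the vertex of $\tA{M}$, and then to identify the resulting pullback as the stated product. Write $R\coloneqq \tAA^d\times\lr{\tA{M_0}\fibpro{\tA{N}}\OKa}$ throughout, so that $f:P\rightarrow R\times \tA{M}$ is étale. By construction, $Q$ is the preimage under $f$ of the vertex section $R\hookrightarrow R\times \tA{M}$, hence $Q\rightarrow R$ is étale as well.

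First, I would record the identification $\tube{\mb{0}}_{\tA{M}}=\A{M}{K}\!\clop{0,1}$. Unfolding the specialization map $\tA{M}_K=\A{M}{K}\!\brack{0,1}\rightarrow \A{M}{k}$, a point $x$ specializes to the vertex exactly when $\abs{t^m\op{x}}<1$ for every $m\in M\setminus\brace{0}$, which by finite generation of $M$ and the definition $\A{M}{K}\!\clop{0,1}=\bigcup_{\brack{a,b}\subseteq\clop{0,1}}\A{M}{K}\!\brack{a,b}$ describes precisely $\A{M}{K}\!\clop{0,1}$. Together with the product formula $\tube{X\times Y}_{P_1\times P_2}=\tube{X}_{P_1}\times \tube{Y}_{P_2}$ for tubes (immediate from the componentwise nature of the specialization map), this gives $\tube{R_k\times \mb{0}}_{R\times \tA{M}}=R_K\times \A{M}{K}\!\clop{0,1}$.

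Next I would apply Berthelot's result that tubes are compatible with étale morphisms (Proposition 1.3.1 of \cite{Ber}) to $f$ together with the closed subscheme $R_k\times \mb{0}$ of $\lr{R\times \tA{M}}_k$, whose preimage under $f_k$ is exactly $Z=Q_k$. This yields
\[\tube{Z}_P\cong f_K^{-1}\op{R_K\times \A{M}{K}\!\clop{0,1}}.\]

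The crux of the proof is to identify this with $Q_K\times \A{M}{K}\!\clop{0,1}$. For this I would introduce the auxiliary étale morphism $g:Q\times \tA{M}\rightarrow R\times \tA{M}$ assembled from the étale $Q\rightarrow R$ and the identity on $\tA{M}$. Both $f$ and $g$ are étale covers of $R\times \tA{M}$ whose restrictions over the vertex section $R\times \mb{0}$ are the common étale morphism $Q\rightarrow R$. Since $R\times \tA{M}$ is $I$-adically complete along the ideal $I$ generated by $M\setminus \brace{0}$, the pair consisting of its $\lr{p,I}$-adic completion and the closed subscheme $R\times \mb{0}$ is henselian; the equivalence between étale covers of a henselian pair and étale covers of its closed subscheme then furnishes a canonical isomorphism of formal completions $\widehat{P}_{Q}\cong \widehat{\lr{Q\times \tA{M}}}_{Q\times \mb{0}}$ over the completion of $R\times \tA{M}$ along $R\times \mb{0}$. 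Since $\tube{Z}_P$ depends only on this formal completion, applying the product formula for tubes once more and using $\tube{Z}_Q=Q_K$ yields
\[\tube{Z}_P\cong \tube{Q_k\times \mb{0}}_{Q\times \tA{M}}=\tube{Z}_Q\times \A{M}{K}\!\clop{0,1}.\]

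The principal obstacle is the henselian / étale rigidity argument in the last step: one must verify in the setting of $p$-adic formal schemes that the two étale covers $P$ and $Q\times \tA{M}$ of $R\times \tA{M}$, which agree over the vertex section, become canonically isomorphic after completion along it. This rests on the invariance of the étale site under complete nilpotent thickenings applied to the $I$-adic topology, a standard fact which nonetheless requires careful assembly from the explicit chart of Definition \ref{chart} together with the $p$-adic topology already present on $R\times \tA{M}$.
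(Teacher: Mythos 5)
Your proof is correct, but it follows a genuinely different route from the paper's. The paper forms the fiber product $\lr{P,\mc{L}}\fibpro{\OKa}\lr{Q,\mc{L}_0}$, modifies it by the chart $M_0\op{1}'$ so that the diagonal immersion of $Z$ becomes exact, and then writes down two explicit \'etale morphisms out of the resulting $\lr{P',\mc{L}'}$ --- one to $\lr{P,\mc{L}}\times\tAA^{d+r'}$ and one to $\tA{M}\times\tAA^{d+r'}\times\lr{Q,\mc{L}_0}$, built from the differences $\lr{\gamma_l}_2-\lr{\gamma_l}_1$ and the ratios $\beta\op{m'_j}_2/\beta\op{m'_j}_1$; Proposition 1.3.1 of \cite{Ber} applied to each gives two descriptions of $\tube{Z}_{P'}$, each carrying an extra factor $\AA{K}^{d+r'}\!\clop{0,1}$, and the lemma follows by comparing them. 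You instead compare $P$ and $Q\times\tA{M}$ directly as \'etale covers of $R\times\tA{M}$ agreeing over the vertex section, completing along that section and invoking unique lifting of adic \'etale morphisms along formal thickenings. Your approach buys economy --- no exactification of log structures, no auxiliary polydisk factors, no explicit coordinates --- at the cost of resting on two facts the paper does not set up for ordinary tubes: that $\tube{Z}_P$ depends only on the formal completion $\widehat{P}_Z$ (consistent with how the paper defines log tubes via $P^{\mr{ex}}$), and the invariance of the adic \'etale site under completion along a non-$p$-adic ideal. Two small cautions on your phrasing: $R\times\tA{M}$ is \emph{not} $I$-adically complete before completing (e.g.\ $1+t^m$ is not invertible in $\OK\!\angbra{M}$), and the henselian-pair equivalence you cite is an equivalence only for \emph{finite} \'etale covers, so for the non-finite covers $P$ and $Q\times\tA{M}$ you genuinely need the formal-scheme statement you mention at the end. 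Alternatively, you could route your comparison through the honest fiber product $P\fibpro{R\times\tA{M}}\lr{Q\times\tA{M}}$, whose two projections are \'etale and restrict to isomorphisms on the diagonal copy of $Z$; applying Proposition 1.3.1 of \cite{Ber} twice then completes your argument with exactly the tool the paper uses, while keeping your simpler choice of base.
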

  \begin{proof}
  Let $\lr{P\times Q,\mc{L}_{P\times Q}}\coloneqq \lr{P,\mc{L}}\fibpro{\OKa}\lr{Q,\mc{L}_0}$.
  Let $\pi_1:\lr{P\times Q,\mc{L}_{P\times Q}}\rightarrow \lr{P,\mc{L}}$ and $\pi_2:\lr{P\times Q,\mc{L}_{P\times Q}}\rightarrow \lr{Q,\mc{L}_0}$ be the first and second projections. $\lr{\cdot}_i$ denotes $\pi^*_i\op{\cdot}$ for $i=1,2$.
  Let $\ov{m}'_1,\ldots,\ov{m}'_{r'}\in M_0^{\mr{gp}}/N^{\mr{gp}}$ be a set of free generators of $M_0^{\mr{gp}}/N^{\mr{gp}}$
  and let $m'_1,\ldots,m'_{r'}\in M_0^{\mr{gp}}$ be their lifts.
  Let $m_1,\ldots,m_{r}\in M^{\mr{gp}}$ be a set of free generators of $M^{\mr{gp}}$.
  Let $M_0\op{1}\coloneqq M_0\amsum{N}M_0$ and $M_0\op{1}'$ the submonoid of $M_0\op{1}^{\mr{gp}}$ generated by $M_0\op{1}$
  and the kernel of the codiagonal homomorphism $M_0\op{1}^{\mr{gp}}\rightarrow M_0^{\mr{gp}}$.
  Let $\lr{P',\mc{L}'}\coloneqq \lr{P\times Q,\mc{L}_{P\times Q}}\fibpro{\tA{M_0\op{1}}}\tA{M_0\op{1}'}$.
  Let $\iota: \lr{P',\mc{L}'}\rightarrow \lr{P\times Q,\mc{L}_{P\times Q}}$ be the projection.
  There exists a natural closed immersion $\lr{Z,\mc{M}|_Z}\hookrightarrow \lr{P',\mc{L}'}$.

  
  Since $\iota^*\op{\frac{\beta\op{m'}_2}{\beta\op{m'}_1}}$ is an invertible element
  of $\O{P'}$ for any $m' \in M_0$, $M_0\oplus M\ni m'\oplus m \mapsto \beta\op{m'\oplus m}_1$ is a fine chart of $\mc{L}'$. So $\lr{Z,\mc{M}|_Z}\hookrightarrow \lr{P',\mc{L}'}$ is an exact closed immersion.

  We define a morphism
  \[\lr{P',\mc{L}'}\rightarrow \lr{P,\mc{L}}\times \tAA^{d+r'}\]
  by the projection $\pi_1\circ\iota$ and 
  elements $\iota^*\op{\lr{\gamma_1}_2-\lr{\gamma_1}_1},\ldots,\iota^*\op{\lr{\gamma_d}_2-\lr{\gamma_d}_1}$,
  $\iota^{*}\op{\frac{\beta\op{m'_1}_2}{\beta\op{m'_1}_1}}-1,\ldots,\iota^{*}\op{\frac{\beta\op{m'_{r'}}_2}{\beta\op{m'_{r'}}_1}}-1$ of $\O{P'}$ regarded as morphisms
  $P'\rightarrow \tAA^1$. Then this is \'etale (since log \'etale and strict)
  and $\lr{Z,\mc{M}|_Z}\hookrightarrow \lr{P',\mc{L}'}\rightarrow \lr{P,\mc{L}}\times \tAA^{d+r'}$ is the closed immersion to $\lr{Z,\mc{M}|_Z}\times \mb{0}$ where $\mb{0}$ is the vertex of
  $\tAA^{d+r'}$.
  So,
  \[\tube{Z}_{P'}\cong \tube{Z}_{P\times \tAA^{d+r'}}
  =\tube{Z}_P\times \AA{K}^{d+r'}\!\clop{0,1}.\]
  
  On the other hand, we define a morphism
  \[\lr{P',\mc{L}'}\rightarrow \tA{M} \times \tAA^{d+r'}\times \lr{Q,\mc{L}_0}\]
  by $\lr{P',\mc{L}'}\xrightarrow{\pi_1\circ\iota} \lr{P,\mc{L}}\rightarrow \tA{M}$, elements $\iota^*\op{\lr{\gamma_1}_2-\lr{\gamma_1}_1},\ldots,\iota^*\op{\lr{\gamma_d}_2-\lr{\gamma_d}_1}$,
  $\iota^*\op{\frac{\beta\op{m'_1}_2}{\beta\op{m'_1}_1}}-1,\ldots,\iota^*\op{\frac{\beta\op{m'_{r'}}_2}{\beta\op{m'_{r'}}_1}}-1$ of $\O{P'}$ regarded as morphisms
  $P'\rightarrow \tAA^1$ and the projection $\pi_2\circ\iota$.
  Then this is also \'etale and $\lr{Z,\mc{M}|_Z}\hookrightarrow \lr{P',\mc{L}'}\rightarrow\tA{M} \times \tAA^{d+r'}\times \lr{Q,\mc{L}_0}$ is the closed immersion to
  $\mb{0}\times\mb{0}\times \lr{Z,\mc{L}_0|_Z}$ where the first $\mb{0}$ is the vertex of $\tA{M}$ and the second $\mb{0}$ is the vertex of $\tAA^{d+r'}$.
  So,
  \[\tube{Z}_{P'}\cong \tube{Z}_{\tA{M} \times \tAA^{d+r'}\times Q}
  =\A{M}{K}\!\clop{0,1}\times \AA{K}^{d+r'}\!\clop{0,1}\times \tube{Z}_Q.\]
  
  Comparing two isomorphisms, we have $\tube{Z}_P\cong\tube{Z}_Q\times \A{M}{K}\!\clop{0,1}$.
  \end{proof}
  \begin{lem}\label{6.3.4}(cf. Proposition 2.2.13 of \cite{Ber} or Lemma 6.3.4 of \cite{Ked})
    Let $E$ be a convergent log $\nabla$-module on $P_K$. Then the restriction of $E$ to $\tube{Z}_P\cong\tube{Z}_Q\times \A{M}{K}\!\clop{0,1}$ is log-convergent.
  \end{lem}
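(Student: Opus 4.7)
The plan is to derive log-convergence from the Taylor expansion of the compatibility isomorphism $\epsilon$ that defines the convergent log isocrystal underlying $E$. This is the direct generalization of Berthelot's Proposition 2.2.13 and Kedlaya's Lemma 6.3.4 to our setting, where a general fine sharp semi-saturated monoid $M$ replaces $\N^r$.

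First, I would carry out an analysis of $\tube{\ov{X}}^{\mr{log}}_{P \fibpro{\OKa} P}$ near the diagonal, parallel to Lemma \ref{tubeZ}. Using the \'etale chart $f$ of $P$, a lift $m_1,\ldots,m_r$ of free generators of $M^{\mr{gp}}$, and the same pushout construction $M\op{1}'$ for $\mc{M}_0$ employed in the proof of Lemma \ref{tubeZ}, the log tube near the diagonal is identified with a product
\[\tube{Z}_Q \times \A{M}{K}\!\clop{0,1} \times \AA{K}^{r+d+r'}\!\clop{0,1},\]
where the new $r$ coordinates are parametrized by $u_i \coloneqq \beta(m_i)^{(1)}/\beta(m_i)^{(0)} - 1$, the $d$ coordinates by the differences $\gamma_j^{(0)} - \gamma_j^{(1)}$, and the remaining $r'$ by the analogous logarithmic ratios coming from $\mc{M}_0$.

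Second, I would expand $\epsilon$ as a Taylor series in these coordinates. A direct log-Leibniz computation using $t_i^{(1)} = t_i^{(0)}(1+u_i)$ shows that, for any local section $v$ of $E$ over $\tube{Z}_Q \times \A{M}{K}\!\brack{0,a'}$,
\[\epsilon(v) = \sum_{\mb{k},\mb{l},\mb{s}} \frac{1}{\mb{k}!\,\mb{l}!\,\mb{s}!}\left(\prod_{i=1}^r\prod_{j=0}^{k_i-1}(\partial_i-j)\right)\partial^{\mb{l}}_{\mc{M}_0}\partial^{\mb{s}}_{\gamma}(v)\cdot u^{\mb{k}}\, v^{\mb{l}}\, w^{\mb{s}},\]
where the logarithmic Pochhammer factors $\prod_{j=0}^{k_i-1}(\partial_i-j)$ appear in the $u$-direction precisely because $u_i$ is a logarithmic coordinate. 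The convergence of the underlying log isocrystal $\mc{E}$ says exactly that this series converges on every strict neighborhood of the diagonal in $\tube{\ov{X}}^{\mr{log}}_{P \fibpro{\OKa} P}$, hence on arbitrarily large polydisks (in the $u,v,w$ variables) over $\tube{Z}_Q \times \A{M}{K}\!\brack{0,a'}$. Comparing coefficients of $u^{\mb{k}}$ in this expansion then yields $\eta$-nullity of the sequence $\{\frac{1}{\mb{k}!}\prod_i\prod_{j=0}^{k_i-1}(\partial_i-j)v\}_{\mb{k}}$ on $\tube{Z}_Q\times \A{M}{K}\!\brack{0,a'}$ for every $\eta < 1$, which is precisely log-convergence with respect to the chosen $\phi: M \hookrightarrow \Z^r$.

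The main obstacle is the second step: establishing the precise form of the Taylor expansion with the logarithmic Pochhammer operators in the $M$-direction. Once the log tube of the diagonal has been identified and one picks free generators of $M^{\mr{gp}}$ compatible with the map $\phi$ of Definition \ref{log convergence}, the computation reduces to the same algebraic identity verified in the classical $\N^r$-case, applied componentwise along the $u_i$; the remaining smooth and $\mc{M}_0$-directions only contribute a polydisk factor and do not affect the $\eta$-nullity in the $M$-direction.
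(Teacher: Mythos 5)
Your overall strategy---identify the log tube of the diagonal in $P\fibpro{\OKa}P$ as a polydisk bundle, Taylor-expand $\epsilon$ with logarithmic Pochhammer operators in the $M$-directions, and read off $\eta$-nullity from the coefficients---is the same as the paper's, and your first two steps are sound: the product decomposition with the coordinates $u_i$ and the differences of the $\gamma_j$ matches the paper's construction of $\lr{P\op{1}',\mc{L}\op{1}'}$, and the expansion of $\epsilon$ with the factors $\prod_{j=0}^{k_i-1}\lr{\partial_i-j}$ is exactly the paper's Claim (which it proves by induction on log infinitesimal neighborhoods using the cocycle condition).

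The gap is in where that expansion is valid. $E$ is a convergent log $\nabla$-module in the sense of this paper: only its restriction to some strict neighborhood of $\tube{X}_P$ comes from an overconvergent log isocrystal, so $\epsilon$ and its Taylor expansion exist only on a strict neighborhood $W$ of $\tube{X}^{\mr{log}}_{P\op{1}}$. Such a $W$ contains polydisks of radius $\eta<1$ only over strict neighborhoods of $\tube{X}_P$; it never contains a polydisk over $\tube{Z}_Q\times\A{M}{K}\!\brack{0,a'}$, since that region lies in $\tube{Z}_P$, which is disjoint from $\tube{X}_P$ (as $Z\cap X=\emptyset$). So your assertion that convergence of $\mc{E}$ yields convergence of the series ``on arbitrarily large polydisks over $\tube{Z}_Q\times\A{M}{K}\!\brack{0,a'}$'' does not follow; what the expansion actually gives is $\eta$-nullity of $\brace{\frac{1}{\mb{k}!}\prod_i\prod_j\lr{\partial_i-j}\op{\mb{v}}}$ on $\tube{X}_P$ only. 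The missing idea is the transfer of this estimate across the tube: because $X$ is dense in $\ov{X}$, the spectral seminorm on $\Gamma\op{\tube{X}_P,\mc{O}_{P}}$ restricts to that on $\Gamma\op{\tube{\ov{X}}_P,\mc{O}_{P}}$, so $\eta$-nullity on $\tube{X}_P$ of a family of \emph{global} sections over $\tube{\ov{X}}_P$ implies $\eta$-nullity on all of $\tube{\ov{X}}_P\supseteq\tube{Z}_P$. One must then still pass from global sections of $E$ on $\tube{\ov{X}}_P$ to arbitrary sections over $\tube{Z}_Q\times\A{M}{K}\!\brack{0,a'}$, which is where Remark \ref{log-congergence-generator} enters together with the fact that the global sections generate the sections on $\tube{Z}_Q\times\A{M}{K}\!\brack{0,a}$ for $a<1$. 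Without these two steps the argument never reaches the region where log-convergence is asserted, and that transfer is the real content of the lemma.
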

  \begin{proof}
    Let $\lr{P\op{1},\mc{L}\op{1}}$ be the fiber product of $2$ copies of $\lr{P,\mc{L}}$ over $\OKa$.
    Let $\pi_1,\pi_2: \lr{P\op{1},\mc{L}\op{1}}\rightarrow \lr{P,\mc{L}}$ be the two projections.  Put $\lr{\cdot}_i\coloneqq \pi^{*}_i\op{\cdot}$ for $i=1,2$.

    Let $\lr{M\oplus M_0}\op{1}\coloneqq \lr{M\oplus M_0}\amsum{N}\lr{M\oplus M_0}$ and $\lr{M\oplus M_0}\op{1}'$ the submonoid of $\lr{M\oplus M_0}\op{1}^{\mr{gp}}$ generated by $\lr{M\oplus M_0}\op{1}$
    and the kernel of the codiagonal homomorphism $\lr{M\oplus M_0}\op{1}^{\mr{gp}}\rightarrow \lr{M\oplus M_0}^{\mr{gp}}$.
    Let $\lr{P\op{1}',\mc{L}\op{1}'}\coloneqq \lr{P\op{1},\mc{L}\op{1}} \fibpro{\tA{\lr{M\oplus M_0}\op{1}}} \tA{\lr{M\oplus M_0}\op{1}'}$. Then $\tube{\ov{X}}_{\lr{P\op{1},\mc{L}\op{1}}}^{\mr{log}}=\tube{\ov{X}}_{P\op{1}'}$ as in the proof of Proposition \ref{well-def_exp}.

    Let $m_1,\ldots m_r$ and $m'_1,\ldots m'_{r'}$ be as in the previous proof and 
    let $m_{r+i}\coloneqq m'_{i}$ for $1\leq i \leq r'$.
    
    By the proof of Proposition \ref{well-def_exp},
    $\Omega^{\mr{log},1}_{\lr{P\op{1}',\mc{L}\op{1}'}/\OKa}$ is generated by $d\lr{\gamma_1}_1,\ldots,d\lr{\gamma_d}_1$, $d\lr{\gamma_1}_2,\ldots,d\lr{\gamma_d}_2$, $d\lr{\frac{\beta\op{m_1}_1}{\beta\op{m_1}_2}},\ldots,d\lr{\frac{\beta\op{m_{r+r'}}_1}{\beta\op{m_{r+r'}}_2}}$ and $d\log \lr{m_1}_2,\ldots,d\log \lr{m_{r+r'}}_2$.
    Let $u_k\coloneqq\frac{\beta\op{m_k}_1}{\beta\op{m_k}_2}$ for $1\leq k \leq r+r'$ and $\tau_l\coloneqq \lr{\gamma_l}_2-\lr{\gamma_l}_1$ for $1\leq l \leq d$.

    By the definition of convergent log $\nabla$-modules, there exists a locally free convergent log isocrystal $\mc{E}$ on $\lr{X,\ov{X},\mc{M}}$  over $\OKa$ which coincides with $E$ on some strict neighborhood of $\tube{X}_{P\op{1}'}$. We can regard $\mc{E}$ as a locally free module on some strict neighborhood $V$ of $\tube{X}_{P}$ equipped with an isomorphism $\epsilon:\pi_2^*\op{\mc{E}}\rightarrow \pi_1^*\op{\mc{E}}$ on some strict neighborhood $W$ of $\tube{X}_{P\op{1}'}$ contained in $\pi_1^{-1}\op{V}\cap\pi_2^{-1}\op{V}$.

    \begin{claim}
    The isomorphism $\epsilon$ is written by
    \[\epsilon\op{1\tens{}\mb{v}}=\sum_{i_1,\ldots,i_{r+r'+d}=0}^{\infty}\lr{\prod_{k=1}^{r+r'}\frac{\lr{u_k-1}^{i_k}}{i_k!}\prod_{l=1}^d\frac{\tau_l^{i_{r+r'+l}}}{i_{r+r'+l}!}
      \tens{}\lr{\prod_{k=1}^{r+r'}\prod_{x=0}^{i_k-1}\lr{\partial_i-x}\prod_{l=1}^d\lr{\frac{\partial}{\partial \gamma_l}}^{i_{r+l}}}\op{\mb{v}}}\]
    for any section $\mb{v}$ of $E$.
    \end{claim}
    \begin{proof}
    Let $\mc{I}\coloneqq \Ker\op{\mc{O}_{P\op{1}'}\rightarrow \mc{O}_P}$ and define the $n$-th log infitesimal neighborhood $P^n\op{1}$ of $P$ by $\mc{O}_{P^n\op{1}}\coloneqq \mc{O}_{P\op{1}'}/\mc{I}^{n+1}$ for $n \in \N$. Let $\epsilon^n$ be the restriction of $\epsilon$ to $W \cap P^n\op{1}$. We define $\varepsilon^n:\pi_2^*\op{\mc{E}}|_{W\cap P^n\op{1}}\rightarrow \pi_1^*\op{\mc{E}}|_{W\cap P^n\op{1}}$ as
    \[\varepsilon^n\op{1\tens{}\mb{v}}=\sum_{i_1+\cdots+i_{r+r'+d}\leq n}\lr{\prod_{k=1}^{r+r'}\frac{\lr{u_k-1}^{i_k}}{i_k!}\prod_{l=1}^d\frac{\tau_l^{i_{r+r'+l}}}{i_{r+r'+l}!}
      \tens{}\lr{\prod_{k=1}^{r+r'}\prod_{x=0}^{i_k-1}\lr{\partial_i-x}\prod_{l=1}^d\lr{\frac{\partial}{\partial \gamma_l}}^{i_{r+l}}}\op{\mb{v}}}.\]
    Then $\epsilon^1$ and $\varepsilon^1$ coincide on $W\cap P^1\op{1}$.

    By the cocycle condition, for $n \in \N$, the following diagram commutes:
    \begin{tikzcd}
      \mc{O}_{P^{n}\op{1}}\tens{\mc{O}_P} \mc{O}_{P^n\op{1}}\tens{\mc{O}_P}\mc{E}\arrow{r}{\id\tens{}\epsilon^n} \arrow[equal]{dd}& \mc{O}_{P^{n}\op{1}}\tens{\mc{O}_P}\mc{E}\tens{\mc{O}_P} \mc{O}_{P^n\op{1}}\arrow{d}{\epsilon^n\tens{}\id} \\
      & \mc{E}\tens{\mc{O}_P} \mc{O}_{P^n\op{1}}\tens{\mc{O}_P} \mc{O}_{P^n\op{1}}\arrow[equal]{d}\\
      \mc{O}_{P^{n}\op{1}}\tens{\mc{O}_P} \mc{O}_{P^n\op{1}}\tens{\mc{O}_{P^{2n}\op{1}}}\lr{\mc{O}_{P^{2n}\op{1}}\tens{\mc{O}_P}\mc{E}} \arrow{r}{\id\tens{}\epsilon^{2n}} &
      \mc{O}_{P^{n}\op{1}}\tens{\mc{O}_P} \mc{O}_{P^n\op{1}}\tens{\mc{O}_{P^{2n}\op{1}}}\lr{\mc{E}\tens{\mc{O}_P}\mc{O}_{P^{2n}\op{1}}}\\
    \end{tikzcd}
    where $\mc{O}_{P^{2n}\op{1}}\rightarrow \mc{O}_{P^{n}\op{1}}\tens{\mc{O}_P} \mc{O}_{P^n\op{1}}$ is the natural map. This map is injective.

    The above diagram also commutes even if we replace $\epsilon^n,\epsilon^{2n}$ by $\varepsilon^n,\varepsilon^{2n}$, respectively. Thus if $\epsilon^n$ and $\varepsilon^n$ coincide, $\epsilon^{2n}$ and $\varepsilon^{2n}$ coincide and so $\epsilon^{m}$ and $\varepsilon^{m}$ coincide for $m\leq 2n$.
    By induction, $\epsilon^n$ and $\varepsilon^n$ coincide for all $n\in \N$.
    \end{proof}

    By the proof of Proposition \ref{well-def_exp},
    \[\tube{\ov{X}}_{P\op{1}'}=\AA{K}^{r+r'+d}\clop{0,1}\times\tube{\ov{X}}_{P}\]
    where the coodinates of $\AA{K}^{r+r'+d}\clop{0,1}$ are $u_1-1,\ldots,u_{r+r'}-1,\tau_1,\ldots,\tau_{d}$.
    Thus for any $\eta < 1$ and any $\mathbf{v} \in \Gamma\op{\tube{\ov{X}}_{P},E}$, the multi-indexed series
    \[\lr{\frac{1}{i_1!\cdots i_r!}\lr{\prod_{k=1}^{r+r'}\prod_{x=0}^{i_k-1}\lr{\partial_i-x}}\op{\mb{v}}}_{i_1,\ldots,i_r}\]
    is $\eta$-null on $\tube{X}_P$.  Since $X$ is dense $\ov{X}$, the spectral seminorm on $\Gamma\op{\tube{X}_P,\mc{O}_P}$ restricts to the spectral seminorm on $\Gamma\op{\tube{\ov{X}}_P,\mc{O}_P}$. Thus this series is $\eta$-null on $\tube{\ov{X}}_P$ .

    For any $a<1$, sections of $E$ on $\tube{\ov{X}}_P$ generate all sections of $E$ on $\tube{Z}_Q\times \A{M}{K}\!\brack{0,a}\subseteq \tube{Z}_Q\times \A{M}{K}\!\clop{0,1}\cong\tube{Z}_P\subseteq \tube{\ov{X}}_P$.
    So, by Remark \ref{log-congergence-generator}, the restriction of $E$ to $\tube{Z}_P\cong\tube{Z}_Q\times \A{M}{K}\!\clop{0,1}$ is log-convergent.
  \end{proof}

  Put $R\coloneqq \tAA^d\times\lr{\tA{M_0}\fibpro{\tA{N}}\OKa}$
  and let $f_M:\lr{P,\mc{L}}\rightarrow \tA{M}$ be the composition of $f$ and the projection $R\times \tA{M}\rightarrow \tA{M}$.
    
  Let $F\subseteq M$ be a face. There exists a natural closed immersion $\tA{F}\hookrightarrow \tA{M}$
  of underlying formal schemes defined by
  $\OK\!\angbra{M} \ni \sum_{m\in M}c_mt^m\mapsto \sum_{m\in F}c_mt^m\in \OK\!\angbra{F}$.
  Put
  \begin{align*}
    Q_F&\coloneqq f_M^{-1}\op{\tA{F}}=\Set{p \in P| \forall m \in M\setminus F: \beta\op{m}\op{p}=0},\\
    \tilde{P}_F&\coloneqq f_M^{-1}\op{\tA{F^{-1}M}}=\Set{p\in P|\forall m\in F: \beta\op{m}\op{p}\neq 0},\\
    \tilde{Q}_F&\coloneqq Q_F\cap \tilde{P}_F.
  \end{align*}
  $Q_F$ is closed
  and $\tilde{P}_F$ is open in $P$. Let $\mc{L}_F$ be the log structure on $Q_F$ over $\OKa$ induced by $Q_F\rightarrow \lr{\tA{M_0}\fibpro{\tA{N}}\Spf\,\OK}\times \tA{F}$.
  Note that $P=\sqcup_{F} \tilde{Q}_F$ where $F$ runs through all faces of $M$. Put $Z_F\coloneqq \lr{Q_F}_k$,
  $\tilde{X}_F\coloneqq \lr{\tilde{P_F}}_k$ and $\tilde{Z}_F\coloneqq \lr{\tilde{Q}_F}_k$. Note that $X=\tilde{X}_M$.

  Applying the Lemma \ref{monoid_section} to $M\rightarrow M/F$, we have an isomorphism $F^{-1}M\cong M/F\oplus F^{\mr{gp}}$. Take an isomorphism $F^{\mr{gp}}\cong \Z^{r_F}$ where
  $r_F$ is the rank of $F^{\mr{gp}}$. Then the morphism $f$ and these isomorphisms
  induce an \'etale morphism
  \begin{equation}\label{induced chart}
    \lr{\tilde{P}_F,\mc{L}|_{\tilde{P}_F}}\rightarrow R\times \tAA^{r_F}\times \tA{M/F}=\tAA^{d+r_F}\times\lr{\tA{M_0}\fibpro{\tA{N}}\OKa}\times \tA{M/F}.
  \end{equation}
  The lifting $\lr{\tilde{X}_F,\lr{\mc{M}_0\oplus\mc{M}}|_{\tilde{X}_F}}\hookrightarrow \lr{\tilde{P}_F,\mc{L}|_{\tilde{P}_F}}$, the chart induced by $\lr{\tilde{P}_F,\mc{L}|_{\tilde{P}_F}} \rightarrow \lr{\tA{M_0}\fibpro{\tA{N}}\OKa}\times \tA{M/F}$ and the sections of $\O{\tilde{P}_F}$ inducing $\tilde{P}_F\rightarrow \tAA^{d+r_F}$ forms a charted standard small frame which is good at points of $\tilde{Z}_F$.
  Thus by Lemma \ref{tubeZ},
  \begin{equation}\label{ZPcong}
    \tube{\tilde{Z}_F}_P\cong\tube{\tilde{Z}_F}_{Q_F}\times \A{M/F}{K}\!\clop{0,1}.
  \end{equation}
  Note that $\tube{\tilde{Z}_F}_{Q_F}=\lr{\tilde{Q}_F}_K$.
  
  An overconvergent log isocrystal $\mc{E}$ on $\lr{X,\ov{X},\mc{M}_0\oplus\mc{M}}/\OKa$ defines a log $\nabla$-module $E_F$ on $\tube{\tilde{Z}_F}_{Q_F}\times \A{M/F}{K}\lr{a,1}$ for some $a \in \lr{0,1}\cap\Gamma^{*}$.
  \begin{defn}
    Let $\Sigma\subseteq M^{\mr{gp}}\tens{\Z}\Z_p$ be an ($\lr{\mr{NI}\cap\mr{NL}}$-D) subset. $\mc{E}$ is \textit{$\Sigma$-unipotent with respect to $\lr{P,\mc{L}}$} if for any face $F$ of $M$, the induced log $\nabla$-module on $\tube{\tilde{Z}_F}_{Q_F}\times \A{M/F}{K}\lr{a,1}$ is $\Sigma_F$-unipotent, where $\Sigma_F$ is the image of $\Sigma$ under the projection $M^{\mr{gp}}\tens{\Z}\Z_p\twoheadrightarrow \lr{M^{\mr{gp}}/F^{\mr{gp}}}\tens{\Z}\Z_p$.
  \end{defn}
  \begin{rem}
    By Proposition \ref{UULNM}, this definition is independent of the choice of $a$.
  \end{rem}
  \begin{rem}\label{Sigma_F unipotence}
    If $\E$ is $\Sigma$-unipotent with respect to a charted standard small frame $\lr{P,\mc{L}}$,
    then $\E$ is also $\Sigma_F$-unipotent with respect to the charted standard small frame of 
    $\lr{\tilde{X}_F,\mc{M}_0\oplus\mc{M}|_{\tilde{X}_F}}$ induced by (\ref{induced chart})
    for any face $F$ of $M$. Indeed, any face $\ov{F'}$ of $M/F$ corresponds with a face $F'$ of $M$ containing $F$
    and $\tube{\tilde{Z}_{\ov{F'}}}_{\tilde{P}_F}=\tube{\tilde{Z}_{F'}}_{P}$.
   \end{rem}
  \begin{rem}\label{unipotence/Z2}
    By Remark \ref{unipotence/Z}, $\Sigma$-unipotence of overconvergent log isocrystals only depends on
    the image of $\Sigma$ in $M^{\mr{gp}}\tens{\Z}\lr{\Z_p/\Z}$.
  \end{rem}
  
\begin{rem}
  In the subsection \ref{extension}, we will prove that the definition is independent of the choice of charted standard small frames
  good at $\ov{x}$ under some assumptions.
\end{rem}

\subsection{Overconvergent generization}

In this subsection, we adapt Proposition 3.5.3 of \cite{Ked} or Proposition 2.7 of \cite{Shi} to our situation.

\begin{prop}\label{overconvergent-generization}(overconvergent generization, cf. Proposition 2.7 in \cite{Shi})
  Let $\lr{P,\mc{L}_0\oplus\mc{L}}$ be an affine connected $p$-adic fine log formal scheme log smooth over $\OKa$
  such that the structure morphism $N\rightarrow \mc{L}_0\oplus\mc{L}$ factors through
  $\mc{L}_0\rightarrow \mc{L}_0\oplus\mc{L}$.
  Let $M'\rightarrow \O{P}$ be a fine chart of $\mc{L}$.
  Assume that $\mc{L}_0$ is trivial on $P_K$.
  Let $X$ be the trivial locus of $\mc{L}|_{P_k}$.
  Let $M$ be a fine sharp semi-saturated monoid.
  Let $\Sigma \subseteq M^{\mr{gp}}\tens{\Z}\Z_p$ be an ($(\mr{NI}\cap\mr{NL})$-D) subset
  and $I \subseteq \lr{0,1}$ a quasi-open subinterval of positive length.
  Let $V$ be a strict neighborhood of $\tube{X}_P$
  in $P_K$ and $E$
  an object of $\LNM_{V\times \A{M}{K}\op{I},\Sigma}$ whose restriction to $\tube{X}_P\times \A{M}{K}\op{I}$
  is $\Sigma$-unipotent.
  Then, for any closed aligned subinterval $\brack{b,c}\subseteq I$ of positive length, there exists a
  strict neighborhood $V'$ of $\tube{X}_P$ in $P_K$ contained in $V$, such that the restriction of $E$ to 
  $V'\times \A{M}{K}\!\brack{b,c}$ is $\Sigma$-unipotent.
\end{prop}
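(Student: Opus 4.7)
My plan is to adapt the proof of Shiho's Proposition 2.7 of \cite{Shi} to the polyannulus $\A{M}{K}\op{I}$, combining the techniques of Lemma \ref{2.3} (horizontal sections via $D_l$-operators on the tube) with an interpolation argument in the spirit of the transfer theorem.

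By induction on the rank of $E$, it suffices to construct a non-zero $\Sigma$-constant sub-log-$\nabla$-module $G \subseteq E|_{V' \times \A{M}{K}\!\brack{b,c}}$ for some strict neighborhood $V'$ of $\tube{X}_P$ in $V$. Given such a $G$, the quotient $E/G$ has strictly smaller rank, lies in $\LNM_{V' \times \A{M}{K}\!\brack{b,c},\Sigma}$ by Proposition \ref{LNM_is_abelian}, and its restriction to $\tube{X}_P \times \A{M}{K}\!\brack{b,c}$ is still $\Sigma$-unipotent by Proposition \ref{1.17}; applying the inductive hypothesis to $E/G$ yields the proposition. After shrinking, we may assume that $V$ is affinoid and connected.

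To construct $G$, apply Corollary \ref{UULNM} to $E|_{\tube{X}_P \times \A{M}{K}\op{I}}$ to obtain a $\Sigma$-unipotent object $W \in \LNM_{\tube{X}_P \times \A{M}{K}\!\brack{0,0},\Sigma}$ with $\mc{U}_I(W) \cong E|_{\tube{X}_P \times \A{M}{K}\op{I}}$. Pick an exponent $\xi \in \Sigma$ of $W$ and define operators $D_l$ on sections of $E$ over $V \times \A{M}{K}\!\brack{b,c}$ exactly as in the proof of Lemma \ref{2.3}. For any local section $\mb{v}$, set $f(\mb{v}) := \lim_{l \to \infty} D_l(\mb{v})$; where this limit exists, it is a horizontal section of weight $\xi$. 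Lemma \ref{2.3} guarantees that the restriction of $f$ to $\tube{X}_P \times \A{M}{K}\!\brack{b,c}$ is non-zero, and Lemma \ref{Lem3.2.19} shows that sufficiently many such limits form a linearly independent system spanning a $\Sigma$-constant sub-log-$\nabla$-module.

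The main obstacle is showing that $\lim_l D_l(\mb{v})$ converges not just on $\tube{X}_P \times \A{M}{K}\!\brack{b,c}$ but on $V' \times \A{M}{K}\!\brack{b,c}$ for some strict neighborhood $V' \subseteq V$ of $\tube{X}_P$. Following the proof of Proposition \ref{transfer}, the strategy is two-fold: first, the (NI$\cap$NL)-D condition on $\Sigma$ yields $\rho$-nullity of $D_l(\mb{v})$ on the whole of $V \times \A{M}{K}\!\brack{b,c}$ for every $\rho < 1$; second, since on $\tube{X}_P \times \A{M}{K}\!\brack{b,c}$ the module $E$ coincides with $\mc{U}_{\brack{b,c}}(W)$, the explicit action of $D_l$ on sections of the form $\sum_m \mb{w}_m t^m$ (as computed in the proof of Lemma \ref{2.3}) yields $\eta$-nullity of the differences $D_{l+1}(\mb{v}) - D_l(\mb{v})$ for some $\eta > 1$ on the tube. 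An interpolation of norms via Lemma \ref{3.1.6}, between the supremum norm on $V$ and the spectral seminorm on $\tube{X}_P$, then yields convergence of $D_l(\mb{v})$ on an intermediate strict neighborhood $V'$. The principal technical care needed is in passing from the spectral seminorm on $\tube{X}_P$ to the supremum norms on affinoid enlargements exhausting it, and in ensuring that the resulting $V'$ remains an admissible open strict neighborhood of $\tube{X}_P$ in $V$.
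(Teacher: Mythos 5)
Your overall strategy is the paper's: run the $D_l$ operators of Lemma \ref{2.3} on sections over a slightly larger interval, use the $\Sigma$-unipotence on the tube to get an $\eta$-null estimate with $\eta>1$ there, get a cruder nullity estimate on the strict neighborhood, interpolate to force convergence of $D_l\op{\mb{v}}$ on a smaller strict neighborhood, and then induct on the rank via the resulting nonzero $\Sigma$-constant subobject $\mc{U}_{\brack{b',c'}}\op{H_E}$. Two points in your convergence step, however, need repair.

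First, Lemma \ref{3.1.6} is not the right interpolation tool. It interpolates the weighted sup-norms $\abs{\cdot}_a$ in the radial parameter of the polyannulus $\A{M}{K}$, i.e.\ in the $\A{M}{K}$-factor; what you must interpolate here is between the spectral seminorm on $\tube{X}_P$ and the sup norm on $V$, i.e.\ in the parameter $\lambda$ of the strict neighborhoods $V_{\lambda}=\Set{x\in P_K | \abs{t^{m'_1+\cdots+m'_g}\op{x}}\geq\lambda}$ cut out by the chart $M'$ of $\mc{L}$. Since $P_K$ is not a polyannulus over $M'$ (it is only \'etale over a model built from $M'$), Lemma \ref{3.1.6} does not apply to it; the paper instead invokes Proposition 3.5.2 of \cite{Ked}, which says precisely that a sequence which is $\eta$-null on the tube for some $\eta>1$ and $\rho$-null on a strict neighborhood for some $\rho>0$ is $1$-null on a smaller strict neighborhood. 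You should cite that (or prove its analogue), working on affinoid pieces $W\subseteq V\times\A{M}{K}\!\brack{b',c'}$ on which $E$ is free and discarding, by the maximum modulus principle, those $W$ that miss the tube. Second, your claim that the ($\lr{\mr{NI}\cap\mr{NL}}$-D) hypothesis alone yields $\rho$-nullity of $\brace{D_l\op{\mb{v}}}$ on all of $V\times\A{M}{K}\!\brack{b,c}$ for \emph{every} $\rho<1$ is an overstatement: that strong bound is exactly what log-convergence buys in the transfer theorem, and $E$ is not log-convergent here. What is actually available, from the boundedness of the connection matrices on the affinoid $W$, is $\rho$-nullity for \emph{some} fixed $\rho>0$, which together with the $\eta$-null estimate on the tube is all that Kedlaya's interpolation needs. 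With these substitutions (and taking care to nest $\brack{b,c}\subseteq\lr{b',c'}$ with $\brack{b',c'}\subseteq I$ so the estimates have room), your argument matches the paper's.
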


\begin{proof}
  This proof is essentially the same as the proof of Proposition 2.7 in \cite{Shi}.

  We may assume that $V$ is affinoid.
  Assume that $E$ is $\Sigma$-unipotent on $\tube{X}_P\times \A{M}{K}\op{I}$.
  Let $\brack{b',c'}\subseteq\brack{d,e}\subseteq I$ be aligned closed subintervals such that $d<b'< b$, $c <c'<e$.
  We define $D_l$ as in the proof of Lemma \ref{2.3}.
  As shown in the proof of Lemma \ref{2.3} and in the proof of Proposition \ref{transfer},
  for some $\xi\in \Sigma$ and some $\eta>1$,
  for any $\mb{v}\in \Gamma\op{V\times \A{M}{K}\!\brack{d,e},E}$,
  $\brace{\lr{D_{l+1}-D_l}\op{\mb{v}}}$ is $\eta$-null
  on $\tube{X}_P\times \A{M}{K}\!\brack{b',c'}$ and
  $\brace{D_l\op{\mb{v}}}$ converges to an element $f\op{\mb{v}}$ of
  $H^0_{\xi}\op{\tube{X}_P\times\A{M}{K}\!\brack{b',c'},E}$.
  Moreover, for some $\mb{v}\in \Gamma\op{V\times \A{M}{K}\!\brack{d,e},E}$,
  $f\op{\mb{v}}\neq 0$.

  Let $m'_1,\ldots,m'_g\in M'$ be a set of generators of $M'$.
  For $\lambda\in\lr{0,1}\cap \Gamma^*$, put $V_{\lambda}\coloneqq \Set{x\in P_K|\abs{t^{m'_1+\cdots+m'_g}\op{x}}\geq\lambda}$.
  Let $W\subseteq V\times \A{M}{K}\!\brack{b',c'}$ be an affinoid subspace
  such that $E$ is free on $W$. We can show that $\brace{\lr{D_{l+1}-D_l}\op{\mb{v}}}$ is $\rho$-null for some $\rho>0$ on $W \cap \lr{V_{\lambda}\times \A{M}{K}\!\brack{b',c'}}$ for some $\lambda\in\lr{0,1}\cap \Gamma^*$ by the same calculation as in the proof
  of Proposition 2.7 in \cite{Shi}. If $W\cap \lr{\tube{X}_P\times \A{M}{K}\!\brack{b',c'}}=\emptyset$, by the maximum modulus principle, $W \cap \lr{V_{\lambda}\times \A{M}{K}\!\brack{b',c'}}=\emptyset$ for some $\lambda\in\lr{0,1}\cap \Gamma^*$.
  Otherwise, by Proposition 3.5.2 of \cite{Ked},
  there exists $\lambda\in\lr{0,1}\cap \Gamma^*$ such that $\brace{\lr{D_{l+1}-D_l}\op{\mb{v}}}$ is $1$-null on $W\cap \lr{V_{\lambda}\times \A{M}{K}\!\brack{b',c'}}$. 

  So we can take $\lambda\in\lr{0,1}\cap \Gamma^*$ such that
  $\brace{D_l\op{\mb{v}}}$ converges over $V_\lambda\times \A{M}{K}\!\brack{b',c'}$.
  Put $H_E\coloneqq H^0_{\xi}\op{V_{\lambda}\times \A{M}{K}\!\brack{b',c'}, E}\neq 0$.
  Then 
  as shown in the proof of Proposition \ref{generization}, $H_E$ can be regarded as an object of $\LNM_{V_{\lambda}\times \A{M}{K}\!\brack{0,0}}$.
  Let $F\coloneqq \mc{U}_{\brack{b',c'}}\op{H_E}$. Then $F$ is a $\Sigma$-constant subobject of $E|_{V_{\lambda}\times \A{M}{K}\!\brack{b',c'}}$
  as proven in the proof of Proposition \ref{generization}.
  By induction of the rank of $E$, we can assume that $E/F$ is $\Sigma$-unipotent on $V_{\lambda'}\times \A{M}{K}\!\brack{b,c}$
  for some $\lambda'\leq\lambda$. Hence $E$ is $\Sigma$-unipotent on $V_{\lambda'}\times \A{M}{K}\!\brack{b,c}$.
\end{proof}

\subsection{Extension of overconvergent log isocrystals}\label{extension}

  In this section, we prove that unipotent overconvergent log isocrystals can be
  extended to convergent log isocrystals. The well-definedness of unipotence is proven by this extension property.
  
  Let $\lr{\ov{X},\mc{M}_0\oplus\mc{M}}$ and $X$ be as in Situation \ref{situation}.
  Let $\ov{x}$ be some geometric point of $\ov{X}$.
  Put $M_0\coloneqq\lr{\ov{\mc{M}_0}}_{\ov{x}}$ and $M\coloneqq\ov{\mc{M}}_{\ov{x}}$.
  We assume that there exists a charted standard small frame \[\lr{\lr{P,\mc{L}},M_0\oplus M\rightarrow \O{P},\gamma_1,\ldots,\gamma_d}\]
  which is good at $\ov{x}$.
  We continue to use the symbols defined in \S \ref{unipotence}.
  Note that $P_K=\bigsqcup_F \tube{\tilde{Z}_F}_P$ by definition,  where $F$ runs throught all faces of $M$. We show that this covering can be enlarged to an admissible covering.

\begin{lem}\label{admissible covering}
  For each $F$, there exists a strict neighborhood $V_F$ of $\tube{\tilde{Z}_F}_P$ in $\tube{Z_F}_P$
  and an isomorphism $V_F\cong V_F'$ which is an extension of (\ref{ZPcong}),
  where $V_F'$ is some strict neighborhood of $\tube{\tilde{Z}_F}_{Q_F}\times \A{M/F}{K}\!\clop{0,1}$
  in $\lr{Q_F}_K\times \A{M/F}{K}\!\clop{0,1}$. Moreover, $P_K=\bigcup_F V_F$ is an admissible covering.
\end{lem}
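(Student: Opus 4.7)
For each face $F\subseteq M$ and each $\lambda\in(0,1)\cap\Gamma^*$ close to $1$, the plan is to set
\[ V_{F,\lambda}:=\Set{x\in\tube{Z_F}_P | \abs{\beta\op{m}\op{x}}\geq\lambda \text{ for every generator } m \text{ of } F}. \]
Since $\abs{\beta\op{m}}=1$ on $\tube{\tilde Z_F}_P$ for every $m\in F$, this is a strict neighborhood of $\tube{\tilde Z_F}_P$ in $\tube{Z_F}_P$. The inequality $\abs{\beta\op{m}}\geq\lambda>0$ forces $V_{F,\lambda}\subseteq(\tilde P_F)_K$, so we may use the \'etale morphism (\ref{induced chart}) together with the splitting $F^{-1}M\cong F^{\mr{gp}}\oplus M/F$ from Lemma \ref{monoid_section}.

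To extend (\ref{ZPcong}) from $\tube{\tilde Z_F}_P$ to $V_{F,\lambda}$, I would revisit the proof of Lemma \ref{tubeZ} applied to the charted standard small frame on $\tilde P_F$ (where $M$ is replaced by $M/F$ and $M_0$ remains). The two \'etale morphisms constructed there are defined on the whole auxiliary fiber product, not just on tubes, and Proposition 1.3.1 of \cite{Ber} produces an isomorphism of tubes. Taking tubes of $Z_F$ rather than of $\tilde Z_F$ and intersecting with the locus $\abs{\beta\op{m}}\geq\lambda$, where the construction remains well-defined, yields an isomorphism
\[ V_{F,\lambda}\cong W_F\times\A{M/F}{K}\!\clop{0,1}=:V_F', \]
where $W_F$ is the strict neighborhood of $(\tilde Q_F)_K$ in $(Q_F)_K$ defined by the same inequalities $\abs{\beta\op{m}}\geq\lambda$ for generators $m\in F$. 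By construction, this extends (\ref{ZPcong}).

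For the admissibility of $P_K=\bigcup_F V_{F,\lambda}$, observe that each $x\in P_K$ determines a face $F(x):=\Set{m\in M | \abs{\beta\op{m}\op{x}}=1}$ by multiplicativity of the supremum norm, and $x\in\tube{\tilde Z_{F(x)}}_P\subseteq V_{F(x),\lambda}$, so the union is $P_K$ set-theoretically. To obtain a finite admissible refinement by rational domains, I would fix generators $n_1,\ldots,n_g$ of $M$ and choose $g+1$ pairs $0<\lambda_j<\mu_j<1$ in $\Gamma^*$ with pairwise disjoint open intervals $\lr{\lambda_j,\mu_j}$; then define
\[ V_{F,j}:=\Set{x\in P_K | \abs{\beta\op{m}\op{x}}\geq\mu_j \text{ for generators } m\in F,\ \abs{\beta\op{m}\op{x}}\leq\lambda_j \text{ for generators } m\in M\setminus F}, \]
which is a rational subdomain contained in $V_{F,\lambda_j}$. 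For $\mu_j<1$, the set $\Set{m\in M | \abs{\beta\op{m}\op{x}}\geq\mu_j}$ is again a face, and a pigeonhole argument applied to the $g$ values $\abs{\beta\op{n_i}\op{x}}$ across the $g+1$ disjoint intervals shows that each $x$ lies in some $V_{F,j}$. This finite rational cover refines $\{V_{F,\lambda}\}_F$, proving admissibility.

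The main obstacle I anticipate is verifying that the \'etale identifications from the proof of Lemma \ref{tubeZ} really do glue to a product isomorphism on the entire strict neighborhood $V_{F,\lambda}$, not merely on the smaller tube $\tube{\tilde Z_F}_P$ where the product is already known; one must check that the auxiliary exact closed immersion obtained by adjoining the kernel of the codiagonal in the monoid direction extends, and that the two resulting \'etale morphisms remain compatible once the source is enlarged from the tube of $\tilde Z_F$ to that of $Z_F$ intersected with $\{\abs{\beta\op{m}}\geq\lambda\}$.
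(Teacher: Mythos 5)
Your overall strategy (extend the \'etale comparison maps underlying Lemma \ref{tubeZ} from the tube to a strict neighborhood) is the same as the paper's, but the proposal leaves gaps at exactly the points that carry the content of the lemma. First, the claim that $\abs{\beta\op{m}}\geq\lambda>0$ forces $V_{F,\lambda}\subseteq\lr{\tilde P_F}_K$ is false: $\lr{\tilde{P}_F}_K=\tube{\tilde{X}_F}_P$ is the locus where $\abs{\beta\op{m}}=1$ for all $m\in F$ (there $\beta\op{m}$ must reduce to a unit), whereas $V_{F,\lambda}$ contains points with $\lambda\leq\abs{\beta\op{m}\op{x}}<1$; so you cannot work inside the chart (\ref{induced chart}), which lives on the formal scheme $\tilde{P}_F$. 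Second, Proposition 1.3.1 of \cite{Ber} only yields an isomorphism of the tubes of $\tilde{Z}_F$; upgrading this to an isomorphism of strict neighborhoods of $\tube{\tilde{Z}_F}_P$ inside $\tube{Z_F}_P$ is precisely the step you defer as ``the main obstacle,'' and it is the heart of the lemma. The paper resolves it by choosing a section $s:M/F\rightarrow F^{-1}M$, forming $M_s=M+\Im\op{s}$ and the auxiliary space $\lr{P_s',\mc{L}_s'}$, producing two morphisms $\lr{P_s',\mc{L}_s'}\rightarrow\lr{P,\mc{L}}$ and $\lr{P_s',\mc{L}_s'}\rightarrow\lr{Q_F,\mc{L}_F}\times\tA{M/F}$ which are \'etale on a neighborhood of $\tilde{Q}_F$ and compatible with the closed immersion of $\tilde{Z}_F$, and then applying the \emph{strong} fibration lemma (not the weak tube version) to each. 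That citation and construction are what your argument is missing.

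The admissibility argument also has a flaw: for $\mu<1$ the set $\Set{m\in M | \abs{\beta\op{m}\op{x}}\geq\mu}$ is in general not a face --- it is not even a submonoid, since $\abs{\beta\op{m+m'}}=\abs{\beta\op{m}}\cdot\abs{\beta\op{m'}}$ can fall below $\mu$ --- so the pigeonhole refinement is not well defined as stated. The paper's route is both shorter and cleaner: since $\tube{Z_F}_P\setminus\tube{\tilde{Z}_F}_P=\bigcup_{F'\subsetneq F}\tube{Z_{F'}}_P$, the very definition of strict neighborhood makes $V_F$ together with the $\tube{Z_{F'}}_P$ for $F'\subsetneq F$ an admissible cover of $\tube{Z_F}_P$, and descending induction on the face lattice starting from $\tube{Z_M}_P=P_K$ gives admissibility of $P_K=\bigcup_F V_F$ by transitivity of admissible coverings. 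I recommend replacing your refinement argument with this induction.
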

\begin{proof}
  If $V_F$ is a strict neighborhood of $\tube{\tilde{Z}_F}_P$ in $\tube{Z_F}_P$,
  the covering
  \[\tube{Z_F}_P=V_F\cup \bigcup_{F'\subsetneq F} \tube{Z_{F'}}_P\]
  is admissible, where $F'$ runs through proper faces of $F$.
  Thus, if we take $V_F$ for each face $F$ of $M$, the covering $P_K=\bigcup_{F}V_F$ is admissible.
  
  Let $F$ be a face of $M$.
  Take a section $s:M/F\rightarrow F^{-1}M$ of the natural projection $M\twoheadrightarrow M/F$
  as Lemma \ref{monoid_section} and let $M_s\coloneqq M+\Im\,s\subseteq F^{-1}M$.
  The morphism $\tA{M_s}\rightarrow \tA{F}\times\tA{M/F}$ which is defined by the inclusion $F\rightarrow M$ and $s$ is log \'etale
  since $F^{\mr{gp}}\oplus \lr{M/F}^{\mr{gp}}\cong M^{\mr{gp}}$.
  By Lemma \ref{monoid_section}, $M_s\cap F^{\mr{gp}}=F$. $F$ is face of $M_s$. Indeed, if $a+b\in F$ for $a,b\in M_s$, then
  $a,b\in F^{\mr{gp}}$ because $F^{\mr{gp}}$ is a face of $F^{-1}M$, so $a,b\in M_s$.
  Thus there exists a natural closed immersion $\tA{F}\rightarrow \tA{M_s}$
  of underlying formal schemes and $\tA{F}\rightarrow \tA{M_s}\rightarrow \tA{M/F}$ is the morphism to the vertex of $\tA{M/F}$.
  Let $\lr{P_s,\mc{L}_s}\coloneqq \lr{P,\mc{L}}\fibpro{\tA{M}}\tA{M_s}$ which is log \'etale over $\lr{P,\mc{L}}$ and contains $Q_F$. Let
  \[\lr{P_s',\mc{L}'_s}\coloneqq \lr{P_s,\mc{L}_s}\fibpro{R\times\tA{F}\times\tA{M/F}}\lr{\lr{Q_F,\mc{L}_F}\times\tA{M/F}}.\]

  Note that $F^{-1}M=F^{-1}M_s\cong F^{\mr{gp}}\oplus M/F$.
  Then the following diagram commutes:
  \[\begin{tikzcd}
  &\lr{Q_F,\mc{L}|_{Q_F}}\arrow[hook]{r}& \lr{P,\mc{L}}\arrow{r}{f} &R\times\tA{M}\\
  \lr{\tilde{Q}_F,\mc{L}|_{\tilde{Q}_F}}\arrow[hook]{ru}\arrow[hook]{r} \arrow[hook]{rd} \arrow[hook]{rdd}
  & \lr{Q_F,\mc{L}_s|_{Q_F}}\arrow{u}\arrow[hook]{r}& \lr{P_s,\mc{L}_s}\arrow{u}\arrow{r}{f} &R\times \arrow[hook]{u}\tA{M_s}\arrow{dd}\\
  & \lr{Q_F,\mc{L}_s'|_{Q_F}}\arrow{u}\arrow{d}\arrow[hook]{r} & \lr{P_s',\mc{L}'_s}\arrow{u}\arrow{ru}\arrow{d}&\\
  & \lr{Q_F,\mc{L}_{F\times M/F}}\arrow[hook]{r}& \lr{Q_F,\mc{L}_F}\times\tA{M/F}\arrow{r}{\lr{f,\id}}&R\times \tA{F}\times\tA{M/F},
  \end{tikzcd}\]
  where $Q_F\hookrightarrow Q_F\times \tA{M/F}$ is the closed immersion to the vertex of $\tA{M/F}$ and
  $\mc{L}_{F\times M/F}$ is the log structure on $Q_F$ which is the pullback by this morphism.
  
  $\lr{P_s',\mc{L}_s'}\rightarrow \lr{P,\mc{L}}$ and $\lr{P_s',\mc{L}_s'}\rightarrow \lr{Q,\mc{L}_F}\times \tA{M/F}$ are log \'etale, so \'etale on some neighborhood of $\tilde{Q}_F$.
  Thus, by the strong fibration lemma, there exists an isomorphism between some strict neighborhood of $\tube{\tilde{Z}_F}_P$
  in $\tube{Z_F}_Q$ and some strict neighborhood of $\tube{\tilde{Z}_F}_{\tilde{Q}_F\times \tA{M/F}}=\lr{\tilde{Q}_{F}}_{K}\times \A{M/F}{K}\!\clop{0,1}$
  in $\tube{Z_F}_{Q_F\times \tA{M/F}}=\lr{Q_F}_K\times \A{M/F}{K}\!\clop{0,1}$.
\end{proof}

\begin{prop}\label{local extension}
  We also assume that $N\rightarrow \lr{\ov{\mc{M}_0}}_{\ov{x}}$
  is vertical for any geometric point $\ov{x}$ of $\ov{X}$.
  Let $\Sigma\subseteq M^{\mr{gp}}\tens{\Z}\Z_p$ be an ($\lr{\mr{NI}\cap\mr{NL}}$-D) subset.
  The category of overconvergent log isocrystals
  over $\lr{X,\ov{X},\mc{M}_0\oplus\mc{M}}/\OKa$
  which is $\Sigma$-unipotent with respect to $\lr{P,\mc{L}}$ is equivalent to the
  category of convergent log $\nabla$-modules on $P_K$ which has exponents in $\Sigma$.
\end{prop}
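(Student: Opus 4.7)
The plan is to exhibit an equivalence induced by restriction to a strict neighborhood of $\tube{X}_P$. By Proposition \ref{cat_equiv}, locally free convergent log isocrystals on $\lr{\ov{X},\mc{M}_0\oplus\mc{M}}/\OKa$ are equivalent to convergent log $\nabla$-modules on $P_K$, so it suffices to show that this restriction establishes an equivalence between convergent log $\nabla$-modules on $P_K$ with exponents in $\Sigma$ and $\Sigma$-unipotent overconvergent log isocrystals with respect to $\lr{P,\mc{L}}$. The admissible covering $P_K=\bigcup_F V_F$ from Lemma \ref{admissible covering}, indexed by the faces $F$ of $M$, will serve both to verify $\Sigma$-unipotence face-by-face and to construct extensions face-by-face.

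For the direction ``convergent $\Rightarrow$ $\Sigma$-unipotent overconvergent'', for each face $F$ of $M$, I will consider the induced charted standard small frame on $\lr{\tilde{X}_F,\lr{\mc{M}_0\oplus\mc{M}}|_{\tilde{X}_F}}$ associated to (\ref{induced chart}), which is good at the generic geometric points of $\tilde{Z}_F$ and has monoid stalk $M/F$. Applying Lemma \ref{6.3.4} to this induced frame shows that the restriction of a convergent log $\nabla$-module $E$ on $P_K$ to $\tube{\tilde{Z}_F}_{Q_F}\times\A{M/F}{K}\!\clop{0,1}$ is log-convergent. Since the exponents of $E$ lie in $\Sigma$, they restrict along the pullback to exponents in $\Sigma_F$, so Proposition \ref{transfer} yields $\Sigma_F$-unipotence on the whole polyannulus $\tube{\tilde{Z}_F}_{Q_F}\times\A{M/F}{K}\!\clop{0,1}$, hence on the subpolyannulus $\tube{\tilde{Z}_F}_{Q_F}\times\A{M/F}{K}\lr{a,1}$.

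For the reverse direction, given a $\Sigma$-unipotent overconvergent log isocrystal with underlying log $\nabla$-module $E$ on some strict neighborhood $V$ of $\tube{X}_P$ in $P_K$, I construct extensions of $E$ to each $V_F$ separately and glue. For $F=M$ this is trivial (shrink $V_M$ to lie inside $V$). For $F\subsetneq M$, the isomorphism $V_F\cong V_F'$ of Lemma \ref{admissible covering} realizes the restriction of $E$ as a log $\nabla$-module on some $V_0'\times\A{M/F}{K}\lr{a,1}$ whose restriction to $\tube{\tilde{Z}_F}_{Q_F}\times\A{M/F}{K}\lr{a,1}$ is $\Sigma_F$-unipotent. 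Proposition \ref{overconvergent-generization} then yields a strict neighborhood $V'\subseteq V_0'$ of $\tube{\tilde{Z}_F}_{Q_F}$ and a closed aligned subinterval $\brack{b,c}\subseteq\lr{a,1}$ such that $E|_{V'\times\A{M/F}{K}\!\brack{b,c}}$ is $\Sigma_F$-unipotent. Applying Corollary \ref{UULNM} to a quasi-open subinterval $\lr{b',c'}\subsetneq\brack{b,c}$ produces a unique $W_F\in\ULNM_{V'\times\A{M/F}{K}\!\brack{0,0},\Sigma_F}$ with $\mc{U}_{\lr{b',c'}}\op{W_F}\cong E|_{V'\times\A{M/F}{K}\lr{b',c'}}$; fully-faithfulness of $\mc{U}_{\brack{b,c}}$ on $\ULNM$ then forces $\mc{U}_{\brack{b,c}}\op{W_F}\cong E|_{V'\times\A{M/F}{K}\!\brack{b,c}}$. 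The object $\mc{U}_{\clop{0,1}}\op{W_F}$ provides the sought $\Sigma_F$-unipotent extension on (a strict neighborhood inside) $V_F$, automatically with exponents in $\Sigma$.

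The principal obstacle I anticipate is the compatibility of the face-by-face extensions on overlaps $V_{F_1}\cap V_{F_2}$, needed to assemble them into a single log $\nabla$-module on $P_K$. I plan to handle this by induction on the face $F$, shrinking from $F=M$ downward: at each step, the uniqueness of the preimage $W_F$ provided by the equivalence in Corollary \ref{UULNM}, together with the fully-faithfulness of the $\mc{U}_I$ functors on $\ULNM$, will force the new extension on $V_F$ to agree with the previously-constructed ones on $V_{F'}\cap V_F$ for $F'\supsetneq F$, since both agree on a common quasi-open region inside $V$ where the original overconvergent data lives. A secondary subtlety is the compatible choice of strict neighborhoods $V'$ across faces; since strict neighborhoods of a fixed tube form a cofiltered system, one can replace all the neighborhoods by a single shrunken version at the end without affecting the construction.
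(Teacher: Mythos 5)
Your overall strategy coincides with the paper's: both directions go through the face-indexed admissible covering of Lemma \ref{admissible covering}, with Lemma \ref{6.3.4} plus Proposition \ref{transfer} handling ``convergent $\Rightarrow$ unipotent'' exactly as in the paper, and overconvergent generization plus Corollary \ref{UULNM} producing the face-by-face extensions. Two points, however, are genuine gaps rather than omitted routine detail. First, in the extension step you invoke Proposition \ref{overconvergent-generization} for a single closed interval $\brack{b,c}$ and then pass to $\mc{U}_{\clop{0,1}}\op{W_F}$. The resulting module agrees with the original data $E$ only on $V'\times\A{M/F}{K}\!\brack{b,c}$; since $c<1$, nothing forces it to agree with $E$ on the outer region $V'\times\A{M/F}{K}\lr{c,1}$, which is precisely where it must match the data on $V_M$ for the glued object to restrict to $\mc{E}$ on a strict neighborhood of $\tube{X}_P$. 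The paper avoids this by applying generization to an exhausting family $\brack{a,b_i}$ with $b_i\rightarrow 1$ (with $a$ the left endpoint down to which the overconvergent data is already defined) and taking the union $\bigcup_i V_{b_i}\times\A{M/F}{K}\!\clop{0,b_i}$.

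Second, and more seriously, your resolution of the ``principal obstacle'' does not engage with the actual difficulty. For $F'\supsetneq F$, the extension on $V_{F'}$ is unipotent with respect to the factor $\A{M/F'}{K}$ of one product decomposition, while the extension on $V_F$ is unipotent with respect to the factor $\A{M/F}{K}$ of another; to compare them on $V_{F'}\cap V_F$ via the uniqueness in Corollary \ref{UULNM}, one must first show that the two product structures can be chosen compatibly, i.e.\ that $\A{M/F}{K}$ splits as $\A{F'/F}{K}\times\A{M/F'}{K}$ in a way matching the frame, so that the $F$-extension restricted to the overlap becomes a $\Sigma_{F'}$-unipotent module over the base $\lr{\text{strict nbhd}}\times\A{F'/F}{K}\lr{a,1}$ and the uniqueness can be applied in the $M/F'$-direction. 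This is exactly what the paper's choice of compatible sections $s_1=\lr{F_1^{-1}s_2}\circ\ov{s_1}$ and the large commutative diagram in Claim \ref{twofaces} accomplish, together with the preliminary reduction of an arbitrary pair $F_1,F_2$ to the comparable pairs $\lr{F_i,F_1\cap F_2}$. Saying that both extensions ``agree on a common quasi-open region inside $V$'' is not sufficient: that region is a product of the required form only after these compatibility choices are made, and without them the uniqueness statement you want to invoke has no base over which to be stated.
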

\begin{proof}
    We continue to use the symbols defined in the proof of Lemma \ref{admissible covering}.

    Let $\E$ be a $\Sigma$-unipotent overconvergent log isocrystal on $\lr{X,\ov{X},\mc{M}_0\oplus\mc{M}}$.
    Let $\Sigma_F$ be the image of $\Sigma$ under $M^{\mr{gp}}\tens{\Z}\Z_p\rightarrow \lr{M/F}^{\mr{gp}}\tens{\Z}\Z_p$.
    $\E$ defines a log $\nabla$-module $E_F$ on $V_F' \times \A{M/F}{K}\!\clop{a,1}$ for some $a\in \Gamma^*\lr{0,1}$,
    where $V_F'$ is some strict neighborhood of $\lr{\tilde{Q}_F}_K$ in $\lr{Q_F}_K$.
    By the assumption, the restriction of $E_F$ on $\lr{\tilde{Q}_F}_K\times \A{M/F}{K}\!\clop{a,1}$ is $\Sigma_F$-unipotent.
    Since the log structure on $\Spm\,K$ induced by $\lr{\Spf\,\OK,N}$ is trivial and $N\rightarrow \lr{\ov{\mc{M}_0}}_{\ov{x}}$ is vertical for any geometric point $\ov{x}$ of $\ov{X}$, $\mc{M}_0$ is trivial on $\lr{Q_F}_K$.
    By overconvergent generization (Proposition \ref{overconvergent-generization}), for any $b\in\lr{a,1}\cap \Gamma^*$,
    there exists a strict neighborhood $V_b\subseteq V$ of $\lr{\tilde{Q}_F}_{K}$ in $\lr{Q_F}_K$ such that
    the restriction of $E_F$ to $V_b\times \A{M/F}{K}\!\brack{a,b}$ is $\Sigma_F$-unipotent.
    By Proposition \ref{UULNM}, this can be uniquely extended to a $\Sigma_F$-unipotent $\nabla$-module on $V_b\times \A{M/F}{K}\!\clop{0,b}$.
    Take an increasing sequence $\ov{b}=\lr{b_i}_{i\in \N}$
    such that $b_i\rightarrow 1$ when $i\rightarrow \infty$. Let $V_{\ov{b}}\coloneqq \bigcup_i V_{b_i}\times \A{M/F}{K}\!\clop{0,b_i}$.
    Then $E_F$ can be uniquely extended to $V_{\ov{b}}$, which is a strict neighborhood of $\lr{\tilde{Q}_F}_{F}\times \A{M/F}{K}\!\clop{0,1}$
    in $\lr{Q_F}_K\times \A{M/F}{K}\!\clop{0,1}$. So $\E$ can be uniquely extended to some strict neighborhood $V_F$ of $\tube{\tilde{Z}_F}_{P}$
    in $\tube{Z_F}_{P}$.

    To prove that $\E$ can be extended to $P_K$, we need to show that the extensions of $\E$ on $V_F$ for various faces $F$ can be glued together.
    It is enough to consider two faces. Indeed, if the following claim (\ref{twofaces}) is true, for any pairs of faces $F, F'$ of $M$, we can take a strict neighborhood $V_{F,F'}$ of $\tube{\tilde{Z}_{F}}_P$ in $\tube{Z_{F}}_P$ and a strict neighborhood $V_{F',F}$ of $\tube{\tilde{Z}_{F'}}_P$ in $\tube{Z_{F'}}_P$ such that
      $\E$ can be uniquely extended to $W_1 \cup W_2$ for any strict neighborhood $W_1\subseteq V_{F,F'}$  of $\tube{\tilde{Z}_{F}}_P$ in $\tube{Z_{F_1}}_P$ and any strict neighborhood $W_2\subseteq V_{F',F}$  of $\tube{\tilde{Z}_{F'}}_P$ in $\tube{Z_{F'}}_P$. For any face $F$ of $M$, let $W_F\coloneqq \cap_{F'} V_{F,F'}$ where $F'$ runs through all faces of $M$, then $W_F$ is a strict neighborhood of $\tube{\tilde{Z}_{F}}_P$ in $\tube{Z_{F}}_P$ because the number of faces of $M$ is finite. The extensions of $\E$ on $W_F$ for all faces $F$ of $M$ can be glued together.
    \begin{claim}\label{twofaces}
      For any two faces $F_1$, $F_2$ of $M$, for any sufficiently small strict neighborhood $V_1$ of $\tube{\tilde{Z}_{F_1}}_P$
      in $\tube{Z_{F_1}}_P$ and any sufficiently small strict neighborhood $V_2$ of $\tube{\tilde{Z}_{F_2}}_P$ in $\tube{Z_{F_2}}_P$,
      $\E$ can be uniquely extended to $V_1 \cup V_2$.
    \end{claim}
    \begin{proof}
      It is enough to show the claim in the case of $F_1$ and $F_1\cap F_2$ and in the case of $F_2$ and $F_1\cap F_2$.
      Indeed, $F_1\cap F_2$ is also face and $Z_{F_1\cap F_2}=Z_{F_1}\cap Z_{F_2}$.
      For any strict neighborhood $V_{1,2}$ of $\tube{\tilde{Z}_{F_1\cap F_2}}_P$ in $\tube{Z_{F_1\cap F_2}}_P$,
      there exist a strict neighborhood $V_{1}$ of $\tube{\tilde{Z}_{F_1}}_P$ in $\tube{Z_{F_1}}_P$
      and a strict neighborhood $V_2$ of $\tube{\tilde{Z}_{F_2}}_P$ in $\tube{Z_{F_2}}_P$
      such that $V_1\cap V_2\subseteq V_{1,2}$.
      So for any sufficiently small strict neighborhood $V_{1}$ of $\tube{\tilde{Z}_{F_1}}_P$ in $\tube{Z_{F_1}}_P$
      and any sufficiently small strict neighborhood $V_2$ of $\tube{\tilde{Z}_{F_2}}_P$ in $\tube{Z_{F_2}}_P$,
      $V_1\cap V_2$ is a sufficiently small strict neighborhood of $\tube{\tilde{Z}_{F_1\cap F_2}}_P$ in $\tube{Z_{F_1\cap F_2}}_P$.
      If the claim is true in the case of $F_1$ and $F_1\cap F_2$ and in the case of $F_2$ and $F_1\cap F_2$,
      $\E$ can be uniquely extended to $V_1 \cup \lr{V_1\cap V_2}$ and to $V_2 \cup\lr{V_1\cap V_2}$, so it can be also
      uniquely extended to $V_1 \cup V_2$.
      Hence we may assume that $F_1\supseteq F_2$.

      Take a section $s_2:M/F_2\rightarrow F_2^{-1}M$ of $F^{-1}_2M_2\twoheadrightarrow M/F_2$ and
      a section $\ov{s_1}: M/F_1\rightarrow F_1^{-1}M/F_2$ of $F_1^{-1}M/F_2\twoheadrightarrow M/F_1$.
      $s_2$ can be extended to $F_1^{-1}s_2: F_1^{-1}M/F_2\rightarrow F_1^{-1}M$.
      Let $s_1\coloneqq \lr{F_1^{-1}s_2}\circ \ov{s_1}: M/F_1\rightarrow F_1^{-1}M$ which is a section of ${F_1}^{-1}M\twoheadrightarrow M/F_1$
      
      Let $M_{s_i}=\Im\op{s_i}+M$,
      \[\lr{P_{s_i}',\mc{L}_{s_1}'}=\lr{\lr{P,\mc{L}}\fibpro{\tA{M}}\tA{M_i}}\fibpro{R\times \tA{F_i}\times\tA{M/F_i}}\lr{\lr{Q_{F_i},\mc{L}_{F_i}}\times \tA{M/F_i}}\]
      as above for $i=1,2$.
      
      Let $\underline{s_2}\coloneqq s_2|_{F_1/F_2}:F_1/F_2\rightarrow F_2^{-1}F_1$.
      Let $F_{1,s_2}\coloneqq \Im\op{\underline{s_2}}+F_1/F_2$,
      \[\lr{Q_{F_1,s_2}',\mc{L}_{F_1,s_2}'}\coloneqq\lr{\lr{Q_{F_1},\mc{L}_{F_1}}\fibpro{\tA{F_1}}\tA{F_{1,s_2}}}\fibpro{R\times \tA{F_2}\times\tA{F_1/F_2}}\lr{\lr{Q_{F_2},\mc{L}_{F_2}}\times \tA{M/F_2}}.\]

      Let $\lr{M/F_2}_{s_1}=\Im\op{\ov{s_1}}+M/F_2$.

      Then the following diagram commutes:
      \[\begin{tikzcd}
        &\lr{Q_{F_1},\mc{L}|_{Q_{F_1}}}\arrow{r}& \lr{P,\mc{L}}\\
        &\lr{Q_{F_1},\mc{L}_{s_1}'|_{Q_{F_1}}}\arrow{r}\arrow{u}\arrow{d}& \lr{P_{s_1}',\mc{L}_{s_1}'}\arrow{u}\arrow{d}\\
      \lr{\tilde{Q}_{F_1},\mc{L}|_{\tilde{Q}_{F_1}}}\arrow[hook]{ruu}\arrow[hook]{ru}\arrow[hook]{r}&\lr{Q_{F_1},\mc{L}_{F_1,M/F_1}}\arrow{r}&\lr{Q_{F_1},\mc{L}_{F_1}}\times \tA{M/F_1}\\
        &\lr{Q_{F_2},\mc{L}_{F_1,s_2,M/F_1}}\arrow{r}\arrow{u}\arrow{d}& \lr{Q_{F_1,s_2}',\mc{L}_{F_1,s_2}'}\times \tA{M/F_1}\arrow{u}\arrow{d}\\      
      &\lr{Q_{F_2},\mc{L}_{F_2,F_1/F_2,M/F_1}}\arrow{r}&\lr{Q_{F_2},\mc{L}_{F_2}}\times \tA{F_1/F_2}\times \tA{M/F_1}\\
      \lr{\tilde{Q}_{F_2},\mc{L}|_{\tilde{Q}_{F_2}}}\arrow[hook]{uuu} \arrow[hook]{ruu}\arrow[hook]{ru}\arrow[hook]{r}\arrow[hook]{rd}\arrow[hook]{rdd}\arrow[hook]{rddd}  & \lr{Q_{F_2},\mc{L}_{F_2,\lr{M/F_2}_{s_1}}}\arrow{r}\arrow{u}\arrow{d}& \lr{Q_{F_2},\mc{L}_{F_2}}\times \tA{\lr{M/F_2}_{s_1}}\arrow{u}\arrow{d}\\
        & \lr{Q_{F_2},\mc{L}_{F_2,M/F_2}}\arrow{r}& \lr{Q_{F_2},\mc{L}_{F_2}}\times \tA{M/F_2}\\
        & \lr{Q_{F_2},\mc{L}_{s_2}'|_{Q_{F_2}}}\arrow{r}\arrow{d}\arrow{u}& \lr{P_{s_2}',\mc{L}_{s_2}'}\arrow{d}\arrow{u}\\
        & \lr{Q_{F_2},\mc{L}|_{Q_{F_2}}}\arrow{r}& \lr{P,\mc{L}}
      \end{tikzcd}\]
      where $\mc{L}_{F_1,M/F_1}$,$\mc{L}_{F_1,s_2,M/F_1}$, $\mc{L}_{F_2,F_1/F_2,M/F_1}$, $\mc{L}_{F_2,\lr{M/F_2}_{s_1}}$ and $\mc{L}_{F_2,M/F_2}$
      are log structures which are defined as the pullbacks by the horizontal arrows in the above diagram.
      Any sufficiently small strict neighborhood $V_1$ of $\tube{\tilde{Z}_{F_1}}_P$ in $\tube{Z_{F_1}}_P$ is isomorphic to some strict neighborhood $V_1'$ of
      $\lr{\tilde{Q}_{F_1}}_{K}\times \A{M/F_1}{K}\!\clop{0,1}$ in $\lr{Q_{F_1}}_K\times \A{M/F_1}{K}\!\clop{0,1}$. We may assume $V_1'$
      is of the form of $\bigcup_{i\in\N}V_{1,i}\times \A{M/F_1}{K}\!\clop{0,b_i}$ where $V_{1,i}$'s are
      strict neighborhoods of $\lr{\tilde{Q}_{F_1}}_K$ in $\lr{Q_{F_1}}_K$ and
      $b_i\rightarrow 1$ because the set of strict neighborhoods of this form forms a fundamental system of strict neighborhoods.

      On the other hand, any sufficiently small strict neighborhood $V_2$ of $\tube{\tilde{Z}_{F_2}}_P$ in $\tube{Z_{F_2}}_P$ is isomorphic to some strict neighborhood $V_2'$ of
      $\lr{\tilde{Q}_{F_2}}_{K}\times \A{F_1/F_2}{K}\!\clop{0,1}\times\A{M/F_1}{K}\!\clop{0,1}$
      in $\lr{Q_{F_2}}_K\times \A{F_1/F_2}{K}\!\clop{0,1}\times \A{M/F_1}{K}\!\clop{0,1}$. Likewise, we may assume $V_2'$
      is of the form of $\bigcup_{i\in\N}V_{2,i}\times \A{F_1/F_2}{K}\!\clop{0,b_i}\times \A{M/F_1}{K}\!\clop{0,b_i}$ where $V_{2,i}$'s are
      strict neighborhoods of $\lr{\tilde{Q}_{F_2}}_{K}$ in $\lr{Q_{F_2}}_K$ and $b_i\rightarrow 1$.

      Any sufficiently small strict neighborhood $V_3$ of $\tube{\tilde{Z}_{F_2}}_{Q_{F_1}}$ in $\tube{Z_{F_2}}_{Q_{F_1}}$ is isomorphic to some strict neighborhood $V_3'$
      of $ \lr{\tilde{Q}_{F_2}}_{K}\times \A{F_1/F_2}{K}\!\clop{0,1}$ in $\lr{Q_{F_2}}_K\times \A{F_1/F_2}{K}\!\clop{0,1}$.
      Let $g:V_3\xrightarrow{\sim} V_3'$ be the isomorphism. We may also assume that $V_{2,i}\times \A{F_1/F_2}{K}\!\clop{0,b_i}\subseteq V_3'$ for each $i\in \N$.
      We have to prove that the extensions of $\mc{E}$ coincide on $\lr{V_{1,i}\cap g^{-1}\op{V_{2,i}\times \A{F_1/F_2}{K}\!\clop{0,b_i}}}\times \A{M/F_1}{K}\!\clop{0,b_i}$.
      Fix $i$. If $b_i$ is close enough to $1$, $\mc{E}$ defines a log $\nabla$-module on $V_{2,i}\times \A{F_1/F_2}{K}\op{a,b_i}\times \A{M/F_1}{K}\op{a,b_i}$ for some $a\in \lr{0,b_i}$. We may assume that it is $\Sigma$-unipotent by overconvergent generization.
      We may assume that \[g\op{V_{1,i}\cap g^{-1}\op{V_{2,i}\times \A{F_1/F_2}{K}\!\clop{0,b_i}}}\subseteq V_{2,i}\times\A{F_1/F_2}{K}\op{a,b_i}.\]
      By Proposition \ref{UULNM}, the extension of $\Sigma$-unipotent $\nabla$-module on \[g\op{V_{1,i}\cap g^{-1}\op{V_{2,i}\times \A{F_1/F_2}{K}\!\clop{0,b_i}}}\times \A{M/F_1}{K}\op{a,b_i}\] to \[g\op{V_{1,i}\cap g^{-1}\op{V_{2,i}\times \A{F_1/F_2}{K}\!\clop{0,b_i}}}\times \A{M/F_1}{K}\!\clop{0,b_i}\] is unique. So the extensions of $\mc{E}$ coincides.
    \end{proof}
    
    Conversely, let $E_P$ be a convergent log $\nabla$-module on $P_K$
    which has the exponents in $\Sigma$.
    Let $F$ be a face of $M$. The restriction $E_{\tilde{P}_F}$ of $E_P$ to
    $\tilde{P}_F$ is also a convergent log $\nabla$-module.
    Since $N\rightarrow \lr{\ov{\mc{M}_0}}_{\ov{x}}$ is vertical for any geometric point $\ov{x}$ of $\ov{X}$, the assumption of Lemma \ref{6.3.4} holds.
    By Lemma \ref{6.3.4}, the restriction of $E_{\tilde{P}_F}$ to
    $\tube{\tilde{Z}_F}_{P}=\tube{\tilde{Z}_F}_{Q_F}\times \A{M/F}{K}\!\clop{0,1}$ is log-convergent and has the exponents in $\Sigma_F$. By Proposition
    \ref{transfer}, this is $\Sigma_F$-unipotent. So a convergent log $\nabla$-module on $P_K$ which has the exponents in $\Sigma$ is restricted to
    a $\Sigma$-unipotent overconvergent log isocrystal.
\end{proof}
By this proposition, we have the well-definedness of $\Sigma$-unipotence.
\begin{prop}
  In Situation \ref{situation}, we also assume that $N\rightarrow \lr{\ov{\mc{M}_0}}_{\ov{x}}$ is vertical
  at a geometric point $\ov{x}$.
  Let $\Sigma\subseteq M^{\mr{gp}}\tens{\Z}\Z_p$ be an ($\lr{\mr{NI}\cap\mr{NL}}$-D) subset.
  The $\Sigma$-unipotence does not depend the choice of good charted standard small frames, i.e.,
  for two charted standard small frames \[\lr{\lr{P_1,\mc{L}_1},M_0\oplus M\rightarrow \O{P_1},\gamma_{1,1},\ldots \gamma_{1,d}}\] and \[\lr{\lr{P_2,\mc{L}_2},M_0\oplus M\rightarrow \O{P_2},\gamma_{2,1},\ldots \gamma_{2,d}}\] which are good at $\ov{x}$,
  if an overconvergent isocrystal $\mc{E}$ on $\lr{X,\ov{X},\mc{M}_0\oplus\mc{M}}/\OKa$ satisfies the condition of $\Sigma$-unipotence
  with respect to the first frame, then $\mc{E}$ also satisfies the condition of $\Sigma$-unipotence with respect to the second frame.
\end{prop}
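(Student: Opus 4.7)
The plan is to apply Proposition \ref{local extension} to each of the two frames in order to translate $\Sigma$-unipotence into the existence of a convergent extension of $\mc{E}$ whose exponents lie in $\Sigma$, and then invoke Proposition \ref{well-def_exp} to conclude that this extension condition is frame-independent. So first, suppose $\mc{E}$ is $\Sigma$-unipotent with respect to the first frame $\lr{\lr{P_1,\mc{L}_1},\beta_1,\gamma_{1,\bullet}}$. By Proposition \ref{local extension} applied to this frame (whose hypotheses hold by the verticality assumption at $\ov{x}$, which we can propagate to a neighborhood by shrinking), $\mc{E}$ comes as the restriction to the trivial locus of a convergent log $\nabla$-module $E_1$ on $\lr{P_1}_K$ whose exponents lie in $\Sigma$. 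Through the equivalence of Proposition \ref{cat_equiv}, $E_1$ corresponds to a locally free convergent log isocrystal $\mc{E}'$ on $\lr{\ov{X},\mc{M}_0\oplus\mc{M}}/\OKa$ that extends $\mc{E}$.

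Next, define $\mc{S}\subseteq\ov{\mc{M}}^{\mr{gp}}\tens{\Z}\Kbar$ to be the subsheaf obtained as the image of $\Sigma$ under the good chart of frame 1 at $\ov{x}$. By Remark \ref{S-D_remark}, $\mc{S}$ depends only on $\Sigma$ and on the map $M\rightarrow \ov{\mc{M}}$ near $\ov{x}$, so the good chart of the second frame at $\ov{x}$ induces the same subsheaf $\mc{S}$ on a neighborhood of $\ov{x}$. The exponents-in-$\Sigma$ condition on $E_1$ translates, via the chart of frame 1, exactly to the intrinsic condition that $\mc{E}'$ has exponents in $\mc{S}$ in the sense of Definition \ref{def_of_global_S-D}; this is because at any geometric point $\bar{z}$ of $\ov{X}$, the exponents of $E_1$ at a lift of $\bar{z}$ in $\lr{P_1}_K$ are exactly the exponents of $\mc{E}'$ at $\bar{z}$ computed via the frame, and Proposition \ref{well-def_exp} shows these agree with the intrinsic exponents of $\mc{E}'$.

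Finally, by Proposition \ref{well-def_exp} the condition ``$\mc{E}'$ has exponents in $\mc{S}$'' is intrinsic to $\mc{E}'$ and independent of the frame chosen. Applying Proposition \ref{cat_equiv} to the second frame, $\mc{E}'$ gives rise to a convergent log $\nabla$-module $E_2$ on $\lr{P_2}_K$, whose exponents at each geometric point of $\lr{P_2}_K$ are by construction the intrinsic exponents of $\mc{E}'$ at the specialization, hence lie in $\mc{S}$ and therefore in the image of $\Sigma$ under the chart of frame 2. Reversing the arrow of Proposition \ref{local extension} for the second frame, we conclude that $\mc{E}$ is $\Sigma$-unipotent with respect to frame 2. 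The main obstacle is the bookkeeping identification between the exponents of the log $\nabla$-module on each $\lr{P_i}_K$ and the intrinsic exponents of $\mc{E}'$; but this is precisely what Proposition \ref{well-def_exp} provides, since both frames are good at $\ov{x}$ and induce the same $\mc{S}$ by Remark \ref{S-D_remark}.
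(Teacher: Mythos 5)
Your proposal is correct and follows essentially the same route as the paper: extend $\mc{E}$ to a convergent object via the first frame using Proposition \ref{local extension}, transfer the exponents-in-$\Sigma$ condition to the second frame via Proposition \ref{well-def_exp}, and then apply the quasi-inverse direction of Proposition \ref{local extension} for the second frame. The only cosmetic difference is that the paper constructs the realization on $\lr{P_2}_K$ directly as $\lr{\pi_1\circ s}^*E_{P_1}$ and re-runs Lemma \ref{6.3.4} plus Proposition \ref{transfer} face by face, whereas you route through the category of convergent log isocrystals via Proposition \ref{cat_equiv}; both rest on the same ingredients.
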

\begin{proof}
  Take $P_3$ and $s$ as in the proof of Proposition \ref{well-def_exp}.
  If $\E$ is $\Sigma$-unipotent with respect to the first frame, $\E$ is extended to a convergent log $\nabla$-module $E_{P_1}$ on $\lr{P_1}_K$
  by Proposition \ref{local extension}.
  Let $E_{P_2}=\lr{\pi_1\circ s}^*\op{E_{P_1}}$. Then it is a convergent log $\nabla$-module which has exponents in $\Sigma$.
  Take a face $F$ of $M$. Let $\Sigma_F$ be as in the definition of $\Sigma$-unipotence.
  Put
  \begin{align*}
    Q_{2,F}&\coloneqq\Set{p \in P_2| \forall m \in M\setminus F: \beta\op{m}\op{p}=0},\\
    \tilde{P}_{2,F}&\coloneqq\Set{p\in P_2|\forall m\in F: \beta\op{m}\op{p}\neq 0},\\
    \tilde{Q}_{2,F}&\coloneqq Q_{2,F}\cap \tilde{P}_{2,F}.
  \end{align*}
  and $Z_{2,F}\coloneqq \lr{Q_{2,F}}_k$, $\tilde{Z}_{2,F}\coloneqq \lr{\tilde{Q}_{2,F}}_k$.
  The restriction of $E_{P_2}$ to $\tube{\tilde{Z}_{2,F}}_{P_2}\cong\tube{\tilde{Z}_{2,F}}_{Q_{2,F}}\times \A{M/F}{K}\!\clop{0,1}$ is a convergent log $\nabla$-module which has exponents in $\Sigma_F$.
  By Lemma \ref{6.3.4}, it is log-convergent. By Proposition \ref{transfer}, it is $\Sigma_F$-unipotent. The log $\nabla$-module on $\tube{\tilde{Z}_{2,F}}_{Q_{2,F}}\times \A{M/F}{K}\lr{a,1}$ for some $a\in\lr{0,1}\cap \Gamma^*$ defined by $\E$ is
  the restriction of $E_{P_2}$, so it is also $\Sigma_F$-unipotent. Thus $\E$ is $\Sigma$-unipotent with respect to the second frame.
\end{proof}

Finally, we consider the global case.

\begin{defn}
  Let $\lr{\ov{X},\mc{M}}$ be a log variety over $\ka$.
  Let $\mc{S}\subseteq \ov{\mc{M}}^{\mr{gp}}\tens{\Z}\Kbar$ be a subsheaf and $S\subseteq \Kbar$ a subset.
  $\mc{S}$ is called \textit{($S$-D)} if for any geometric point $\ov{x}$ of $\ov{X}$,
  $\mc{S}_{\ov{x}}\subseteq \ov{\mc{M}}^{\mr{gp}}_{\ov{x}}\tens{\Z}\Kbar$ is ($S$-D) and
  there exists a good chart $\ov{\mc{M}}_{\ov{x}}\rightarrow \mc{M}$ on some neighborhood $U$ of $\ov{x}$ such that
  $\mc{S}$ is equal to the image of $\mc{S}_{\ov{x}}$ under the map $\ov{\mc{M}}^{\mr{gp}}_{\ov{x}}\tens{\Z}\Kbar\rightarrow \ov{\mc{M}}^{\mr{gp}}\tens{\Z}\Kbar$ on $U$.
\end{defn}
\begin{defn}
  In Situation \ref{situation},
  we also assume that $N\rightarrow \lr{\ov{\mc{M}_0}}_{\ov{x}}$
  is vertical at any geometric point $\ov{x}$.
  For a geometric point $\ov{x}$ of $\ov{X}$ and an ($\lr{\mr{NI}\cap\mr{NL}}$-D) subset $\Sigma\subseteq \ov{\mc{M}}_{\ov{x}}^{\mr{gp}}\tens{\Z}\Z_p$,
  An overconvergent log isocrystal $\mc{E}$ on $\lr{X,\ov{X},\mc{M}_0\oplus\mc{M}}$ is \textit{$\Sigma$-unipotent at $\ov{x}$} if
  $\E$ is $\Sigma$-unipotent with respect to charted standard small frames which are good at $\ov{x}$.
  Let $\mc{S}\subseteq \ov{\mc{M}}^{\mr{gp}}\tens{\Z}\Kbar$ be an ($\lr{\mr{NI}\cap\mr{NL}}$-D) subsheaf.
  $\mc{E}$ is \textit{$\mc{S}$-unipotent} if, for any geometric point $\ov{x}$ of $\ov{X}$,
  $\E$ is $\mc{S}_{\ov{x}}$-unipotent at $\ov{x}$.
\end{defn}

\begin{rem}
  By Remark \ref{Sigma_F unipotence}, if $\mc{E}$ is $\mc{S}_{\ov{x}}$-unipotent at $\ov{x}$, there exists some neighborhood $U$ of $\ov{x}$ such that
  $\mc{E}$ is $\mc{S}_{\ov{y}}$-unipotent at any geometric point $\ov{y}$ of $U$.
\end{rem}

The following theorem is the main result of this paper.

\begin{thm}
  In Situation \ref{situation},
  we also assume that $N\rightarrow \lr{\ov{\mc{M}_0}}_{\ov{x}}$
  is vertical at any geometric point $\ov{x}$.
  Let $\mc{S}\subseteq \ov{\mc{M}}^{\mr{gp}}\tens{\Z}\Z_p$ be an ($\lr{\mr{NI}\cap\mr{NL}}$-D)
  subsheaf.
  There exists a equivalence between the category of $\mc{S}$-unipotent overconvergent log isocrystals on $\lr{X,\ov{X},\mc{M}_0\oplus\mc{M}}/\OKa$
  and the category of locally free convergent log isocrystals on $\lr{\ov{X},\mc{M}_0\oplus\mc{M}}/\OKa$ which have exponents in $\mc{S}$.
\end{thm}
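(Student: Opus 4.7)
The plan is to reduce the theorem to the local equivalence already established in Proposition \ref{local extension} and glue via \'etale descent, using the uniqueness of extensions.

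First, I would construct the functor from locally free convergent log isocrystals with exponents in $\mc{S}$ to $\mc{S}$-unipotent overconvergent log isocrystals. Restriction from $\tube{\ov{X}}^{\mr{log}}_P$ to a strict neighborhood of $\tube{X}^{\mr{log}}_P$ defines a natural functor to overconvergent log isocrystals. To see that the image is $\mc{S}$-unipotent, fix a geometric point $\ov{x}$. By Remark \ref{exist_chart}, after passing to an \'etale neighborhood of $\ov{x}$, we obtain a charted standard small frame $(P, \mc{L})$ which is good at $\ov{x}$. By Proposition \ref{cat_equiv} the given convergent log isocrystal corresponds to a convergent log $\nabla$-module on $P_K$, and by Proposition \ref{local extension} this log $\nabla$-module has exponents in $\mc{S}_{\ov{x}}$ precisely when its restriction is $\mc{S}_{\ov{x}}$-unipotent at $\ov{x}$ with respect to $(P,\mc{L})$. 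Hence the restriction is $\mc{S}$-unipotent.

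For the reverse direction, let $\mc{E}$ be an $\mc{S}$-unipotent overconvergent log isocrystal. By Remark \ref{exist_chart}, choose an \'etale covering $\{U_i \to \ov{X}\}$ such that on each $U_i$ there exists a charted standard small frame $(P_i,\mc{L}_i)$ good at the geometric points in the image of $U_i$ (one may refine further if necessary, since there are only finitely many stalks of $\ov{\mc{M}}$ up to isomorphism by quasi-compactness). Pulling $\mc{E}$ back to $U_i$ and applying Proposition \ref{local extension}, we obtain, for each $i$, a locally free convergent log isocrystal $\mc{E}_i$ on $(U_i, \mc{M}_0\oplus\mc{M}|_{U_i})/\OKa$, or equivalently a convergent log $\nabla$-module on $(P_i)_K$ with exponents in $\mc{S}$, whose associated overconvergent log isocrystal is $\mc{E}|_{U_i}$.

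To glue, we compare extensions on double intersections $U_i \times_{\ov{X}} U_j$. On such an intersection both restrictions $\mc{E}_i$ and $\mc{E}_j$ give convergent log isocrystals whose associated overconvergent log isocrystals agree (both equal the restriction of $\mc{E}$). By the full faithfulness half of Proposition \ref{local extension} applied to charted standard small frames on a common \'etale refinement (which exists again by Remark \ref{exist_chart}), the isomorphism between the restrictions of $\mc{E}_i$ and $\mc{E}_j$ as overconvergent log isocrystals lifts uniquely to an isomorphism of convergent log isocrystals. The cocycle condition on triple intersections follows from the same uniqueness, yielding descent data for a locally free convergent log isocrystal on $(\ov{X}, \mc{M}_0\oplus\mc{M})/\OKa$ whose exponents at every geometric point lie in $\mc{S}$.

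The two functors are quasi-inverse at each geometric point by Proposition \ref{local extension}, hence globally. The main obstacle I expect is the gluing step: a priori the extensions produced by Proposition \ref{local extension} depend on the choice of charted standard small frame, so verifying that the independence statement (established in the proposition preceding the theorem) is strong enough to give descent data on an arbitrary \'etale cover requires checking that, on overlaps, any two good frames can be compared after further \'etale refinement using the fiber-product construction in the proof of Proposition \ref{well-def_exp}. Once this compatibility of frames is in place, the uniqueness of $\Sigma$-unipotent extensions via Proposition \ref{UULNM} makes the gluing automatic.
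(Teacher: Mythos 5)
Your proposal is correct and follows essentially the same route as the paper: the paper's proof simply reduces to the étale local case (which is legitimate because both categories are defined via descent data for an étale hypercovering) and then invokes Proposition \ref{local extension} together with Proposition \ref{cat_equiv}. The gluing and frame-compatibility issues you flag are exactly what the frame-independence proposition preceding the theorem is there to handle, so your more detailed descent argument is an expansion of, not a departure from, the paper's argument.
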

\begin{proof}
  It is enough to show it \'etale locally, so we can assume the existence of a charted standard small frame.
  Then it is the consequence of Proposition \ref{local extension} and Proposition \ref{cat_equiv}.
\end{proof}
\begin{rem}\label{codimension 2}
  The definition of unipotence in this paper is stronger than that of \cite{Shi}, because only monodromies at codimension one components
  are considered in \cite{Shi}. In our situation, the consideration to codimension one components is not enough.

  For example, assume $p\neq 2$ and let $M\coloneqq \Set{\lr{a_1,a_2}\in \N^2| a_1+a_2\equiv 0 \pmod{2}}$. Then $M$ is a fine sharp semi-saturated monoid.
  Put $\lr{\ov{X},\mc{M}}\coloneqq \A{M}{k}$ and $\lr{P,\mc{L}}\coloneqq\tA{M}$.
  Let $x\coloneqq t^{\lr{2,0}}$, $y\coloneqq t^{\lr{0,2}}$ and $z\coloneqq t^{\lr{1,1}}$. The trivial locus $X$ of $\mc{M}$
  is $\ov{X}\setminus\lr{\brace{x=0}\cup\brace{y=0}}$.
  Let $\lr{E,\nabla}$ be a free $\nabla$-module on $P_K\setminus \lr{\brace{x=0}\cup\brace{y=0}}$ of rank $1$ such that
  \[\nabla\op{\mb{e}}=\mb{e}\frac{dx}{2x},\]
  where $\mb{e}$ is a basis of $E$.
  Let $\mc{E}$ be an overconvergent log isocrystal on $\lr{X,\ov{X},\mc{M}}/K$ associated to $\lr{E,\nabla}$.
  Clearly $\mc{E}$ is $\brace{0}$-constant at the points of $\brace{x\neq 0,y=0}$. Moreover, $\mc{M}$
  is also $\brace{0}$-constant at the points of $\brace{x=0,y\neq 0}$. Indeed,
  \[\nabla\op{z^{-1}\mb{e}}=-z^{-1}\mb{e}\frac{dy}{2y}\]
  and $z^{-1}\mb{e}$ is also a basis of $E$. But $\mc{E}$ is not $\brace{0}$-unipotent at the vertex of $\ov{X}$.

  This phenomenon is caused by the following fact on monoids: Let $F_1$ and $F_2$ be the two facets of $M$.
  Then $M\rightarrow M/F_1\oplus M/F_2$ is injective.
  But $\Sigma$-unipotence is only dependent on the image of $\Sigma$ in $M^{\mr{gp}}\tens{\Z}\lr{\Z_p/\Z}$ by Remark
  \ref{unipotence/Z2} and $M^{\mr{gp}}\tens{\Z}\lr{\Z_p/\Z} \rightarrow \lr{\lr{M/F_1}^{\mr{gp}}\tens{\Z}\lr{\Z_p/\Z}}\oplus \lr{\lr{M/F_2}^{\mr{gp}}\tens{\Z}\lr{\Z_p/\Z}}$ is not injective. Indeed,
  $\lr{2,0}\tens{}\ov{\frac{1}{2}}\in M^{\mr{gp}}\tens{\Z}\lr{\Z_p/\Z}$ is a non-zero element of the kernel of this map.
  So the $\brace{0}$-unipotence at $\tilde{Z}_{F_1}$ and $\tilde{Z}_{F_2}$ does not imply the $\brace{0}$-unipotence at $\tilde{Z}_{F_1\cap F_2}$.
\end{rem}

\end{document}